\definecolor{mediumblue}{rgb}{0.0, 0.0, 0.8}
\colorlet{darkgreen}{green!50!black}
\renewcommand*{\backref}[1]{}
\renewcommand*{\backrefalt}[4]{%
 \ifcase #1 No citations.
 \or [Page #2.]
 \else [Pages #2.]
 \fi%
}
\renewcommand{\geq}{\geqslant}
\renewcommand{\leq}{\leqslant}
\renewcommand{\trianglerighteq}{\trianglerighteqslant}
\renewcommand{\trianglelefteq}{\trianglelefteqslant}
\tikzset{wei/.style=
{red,double=red,double
distance=0.5pt}}
\tikzset{wei2/.style={red,double=red,double
distance=0.5pt}}
\numberwithin{equation}{section}
\newtheorem{thm}{Theorem}[section]
\newtheorem{cor}[thm]{Corollary}
\newtheorem{lem}[thm]{Lemma}
\newtheorem{prop}[thm]{Proposition}
\newtheorem*{prop*}{Proposition}
\newtheorem*{thm*}{Theorem}
\newtheorem*{cor*}{Corollary}
\newtheorem*{conj*}{Conjecture}
\newtheorem*{move1}{Move 1}
\newtheorem*{move1'}{Move $\mathbf{1^*}$}
\newtheorem*{move2}{Move 2}
\theoremstyle{remark}
\newtheorem{rmk}[thm]{Remark}
\newtheorem*{Acknowledgements*}{Acknowledgements}
\theoremstyle{definition}
\newtheorem{defn}[thm]{Definition}
\newtheorem{eg}[thm]{Example}
\crefname{defn}{Definition}{Definitions}
\crefname{thm}{Theorem}{Theorems}
\crefname{prop}{Proposition}{Propositions}
\crefname{lem}{Lemma}{Lemmas}
\crefname{cor}{Corollary}{Corollaries}
\crefname{conj}{Conjecture}{Conjectures}
\crefname{section}{Section}{Sections}
\crefname{subsection}{Subsection}{Subsections}
\crefname{eg}{Example}{Examples}
\crefname{figure}{Figure}{Figures}
\crefname{rem}{Remark}{Remarks}
\crefname{rmk}{Remark}{Remarks}
\Crefname{defn}{Definition}{Definitions}
\Crefname{thm}{Theorem}{Theorems}
\Crefname{prop}{Proposition}{Propositions}
\Crefname{lem}{Lemma}{Lemmas}
\Crefname{cor}{Corollary}{Corollaries}
\Crefname{conj}{Conjecture}{Conjectures}
\Crefname{section}{Section}{Sections}
\Crefname{subsection}{Subsection}{Subsections}
\Crefname{eg}{Example}{Examples}
\Crefname{figure}{Figure}{Figures}
\Crefname{rem}{Remark}{Remarks}
\Crefname{rmk}{Remark}{Remarks}
\newcommand{\degr}{\mathrm{deg}}
\newcommand{\Add}{{\rm Add}}
\newcommand{\Remb}{{\rm Rem}}
\newcommand{\SStd}{\operatorname{SStd}}
\newcommand{\TSStd}{\operatorname{\mathcal{T}}}
\newcommand{\Shape}{\operatorname{Shape}} 
 \newcommand{\SSTS}{\mathsf{S}}  
\newcommand{\SSTT}{\mathsf{T}}  
\newcommand{\SSTU}{\mathsf{U}}  
\newcommand{\SSTV}{\mathsf{V}}  
\newcommand{\SSTQ}{\mathsf{Q}}  
\newcommand{\SSTR}{\mathsf{R}}  
\newcommand{\ZZ}{{\mathbb Z}}
\newcommand{\NN}{{\mathbb N}}
\newcommand{\g}{g}
\newcommand{\CC}{{\mathbb C}}
\newcommand{\RR}{{\mathbb R}}
\newcommand{\la}{{\lambda}}
\newcommand\mptn[2]{\mathscr{P}^{#1}_{#2}}
\newcommand\bbn{\mathbb{N}}
\newcommand\calm{\mathcal{M}}
\newcommand{\dom}{\trianglerighteqslant}
\DeclareMathOperator{\Ext}{Ext}
\newcommand\Dec[1][A]{\mathbf{D}_{#1}(t)}
\tikzset{
    ultra thin/.style= {line width=0.05pt},
    very thin/.style=  {line width=0.2pt},
    thin/.style=       {line width=0.1pt},
    semithick/.style=  {line width=0.6pt},
    thick/.style=      {line width=0.8pt},
    very thick/.style= {line width=1.2pt},
    ultra thick/.style={line width=1.6pt}
}
\newcommand\Dim[2][t]{\text{\rm Dim}_{#1}#2}
\def\Item{\item\abovedisplayskip=0pt\abovedisplayshortskip=5pt~\vspace*{-\baselineskip}}
\begin{document}

\title[Kleshchev's decomposition numbers for diagrammatic Cherednik algebras]{Kleshchev's decomposition numbers \\ for diagrammatic Cherednik algebras}

\author{C.~Bowman}
\email{Chris.Bowman.2@city.ac.uk}
\address{Department of Mathematics,
City University London,
Northampton Square,
London,
EC1V 0HB,
UK}

\author{L.~Speyer}
\email{l.speyer@ist.osaka-u.ac.jp}
\address{Department of Pure and Applied Mathematics,
Graduate School of Information Science and Technology,
Osaka University,
Suita, Osaka 565-0871,
Japan}

\begin{abstract}
We construct  a family of graded isomorphisms between certain subquotients of diagrammatic Cherednik algebras as the 
quantum characteristic, multicharge, level, degree, and weighting are allowed to vary; this provides new structural information even in the case of the classical $q$-Schur algebra. This also allows us to prove some of the first results concerning the (graded) decomposition numbers of these algebras over fields of arbitrary characteristic.
\end{abstract}

\maketitle

\section*{Introduction}

Fix $\Bbbk$ an algebraically closed field of characteristic $p\geq 0$. Given a complex reflection group of type $G(l,1,n)$, a \emph{quantum characteristic} $e\in\{2,3,\ldots\}\cup \{\infty\}$, and an \emph{$e$-multicharge} $\kappa\in (\ZZ/e\ZZ)^l$ we have an associated cyclotomic Hecke algebra, $\mathcal{H}_n(\kappa)$.
In the semisimple case, the simple modules of $\mathcal{H}_n(\kappa)$ are labelled by the set of $l$-multipartitions, $\mptn ln$.
In the non-semisimple case, Ariki's categorification theorem, \cite{ariki96}, implies that  for each possible \emph{weighting} $\theta\in\mathbb{R}^l$ we have a corresponding parameterising set, $\Theta \subset \mptn ln$,  of the simple modules of $\mathcal{H}_n(\kappa)$.

We wish to study these Hecke algebras via an analogue of classical  {Schur--Weyl duality}.
The appropriate language for this is provided by Rouquier's formalism of \emph{quasi-hereditary covers} \cite{rouq08}.
In \cite[Section 3]{Webster}, it is shown that the \emph{diagrammatic Cherednik algebra}, $A(n,\theta,\kappa)$, is a {quasi-hereditary cover} of $\mathcal{H}_n(\kappa)$ and that the simple modules of $A(n,\theta,\kappa)$ which survive under the \emph{Schur functor} are precisely those which are labelled by $\Theta\subset \mptn ln$.  In particular the decomposition matrix of $\mathcal{H}_n(\kappa)$ appears  as a submatrix of that of $A(n,\theta,\kappa)$.
 
%
Over the complex field, the graded decomposition numbers of $A(n,\theta,\kappa)$ 
 are related to Uglov's canonical bases of higher level Fock spaces \cite{losev,RSVV,Webster}.  
By using Uglov's construction \cite{Uglov}, one can in principle give  an algorithm for computing the decomposition matrix over $\mathbb{C}$. However, in practice this algorithm is extremely slow.
Moreover, the picture deteriorates  drastically when  we consider fields of prime characteristic, where almost nothing is known.

%
%
In the case that $l=1$ the above specialises to the study of  the symmetric group and the Schur algebra of the general linear group (and their quantisations).  Some of the most interesting results here have sprung from generalising  Kleshchev's description of the decomposition numbers labelled by pairs of partitions which differ only by adding and removing a single node \cite{klesh1box}.
This was graded and  generalised to the Hecke algebra of the symmetric group by Chuang, Miyachi, Tan, and Teo (see \cite{cmt08,tanteo13}) as follows.
Fix $\gamma$  a (multi)partition with no removable $i$-nodes and  let $\Gamma$ denote the set of all partitions which may be obtained  by adding a total of $m$ $i$-nodes to $\gamma$.
Given $\lambda,\mu \in \Gamma$,  
the graded decomposition number $d_{ \lambda\mu}(t)$ 
is given in terms of \emph{nested sign sequences}.
As well as being one of the few results which holds in positive characteristic, this result is of interest over $\mathbb{C}$ as it provides a closed formula for $d_{\lambda\mu}(t)$, and so is computationally more efficient than the LLT algorithm.

In the main numerical result of this paper, we generalise the above to arbitrary diagrammatic Cherednik algebras.
Over $\mathbb{C}$, we show that the graded decomposition numbers $d_{\lambda\mu}(t)$ for $\lambda,\mu \in \Gamma$ of $A(n,\theta,\kappa)$ can be calculated in terms of nested sign sequences, see \cref{tanteolironcrispy2}.  
We then show, under a mild restriction on  $\kappa$, that the corresponding  (submatrices of the)  adjustment matrices for $A(n,\theta,\kappa)$ 
 are trivial, thereby calculating  the graded decomposition numbers $d_{\lambda\mu}(t)$ of $A(n,\theta,\kappa)$, for $\lambda,\mu \in \Gamma$, over  fields  of arbitrary characteristic; see \cref{charp}.

This is done by proving a stronger, structural result  over fields of arbitrary characteristic.
 Given $\gamma$ a multipartition, the set $\Gamma$ is closed under the dominance order  and so there is a strong relationship between the diagrammatic Cherednik algebra, $A$, and a certain subquotient $A_{\Gamma}$; in particular the graded decomposition numbers and certain higher extension groups are preserved.
We define a sequence $\chi(\gamma)$ associated to a multipartition $\gamma$, and show that if  $\gamma$  and $\overline\gamma$ are arbitrary multipartitions (which need not have the same level or degree) with $\chi(\gamma)$ \emph{equivalent to} $\chi(\overline\gamma)$, then the corresponding subquotients $A_{\Gamma}$  and  $A_{\overline\Gamma}$ are isomorphic as graded $\Bbbk$-algebras.
This allows us to compare diagrammatic Cherednik algebras as the 
quantum characteristic, multicharge, level, degree, and weighting are all allowed to vary.
This provides new structural information even in the case of the classical Schur algebras of type $G(1,1,n)$ (see \cref{magicexample}) and their higher level counterparts, the cyclotomic ($q$-)Schur algebras of Dipper, James and Mathas \cite{DJM}.

In \cite{ct16} it is shown that the results of \cite{cmt08,tanteo13} actually hold in more generality.  As long as  we never add or remove nodes whose residues differ by 1, then the graded  decomposition numbers (over $\CC$) can be written as the product of the decomposition numbers for the individual residues.
In this paper, we lift this result to the structural level and prove that it holds over fields of arbitrary characteristic  (and generalise it to arbitrary diagrammatic Cherednik algebras) by showing that the algebras involved decompose as tensor products according to residue, see \cref{tensordecomp2}.  

The paper is structured as follows.  In \cref{lCasec} we recall the definition of the diagrammatic Cherednik algebra defined by Webster in  \cite{Webster} and the combinatorics underlying its representation theory.  
In \cref{nestsec} we recall the combinatorics of  nested sign sequences from \cite{tanteo13}.    
In \cref{subqsec} we define the subquotient algebras in which we are interested and construct cellular bases of these algebras.  
In \cref{isosec} we construct a family of graded isomorphisms between the subquotient algebras.  We first illustrate  how one can deduce the decomposition numbers of these algebras over $\mathbb{C}$ using only the isomorphism on the level of graded vector spaces.  We then lift this to an isomorphism of graded $\Bbbk$-algebras and hence calculate  the decomposition numbers  over an arbitrary field $\Bbbk$.  
In \cref{sec5}, we then construct the isomorphism which decomposes  the adjacency-free subquotient algebras as tensor products of the smaller algebras corresponding to the individual residues.

\begin{Acknowledgements*}
The authors would like to thank Joe Chuang, Anton Cox, Kai Meng Tan, and Ben Webster for helpful conversations during the preparation of this manuscript. We also thank the referee for their helpful comments.
%
The authors are grateful for the financial support received from 
the {Royal Commission for the Exhibition of 1851}, the London Mathematical Society, and the Japan 
 Society for the Promotion of Science. 
\end{Acknowledgements*}

\section{The diagrammatic Cherednik algebra}\label{lCasec}

In this section we define the diagrammatic Cherednik algebras and recall the combinatorics underlying their representation theory.

\subsection{Graded cellular algebras with highest weight theories.}
We shall study the diagrammatic Cherednik  algebras through the following framework.

\begin{defn}\label{defn1}
Suppose that $A$ is a $\ZZ$-graded $\Bbbk$-algebra which is of finite rank over $\Bbbk$.
We say that $A$ is a \emph{graded cellular algebra with a highest weight theory} if the following conditions hold.

The algebra is equipped with a \emph{cell datum} 
$(\Lambda,\TSStd,C,\degr)$, where $(\Lambda,\trianglerighteq )$ is the \emph{weight poset}.
For each $\lambda,\mu\in\Lambda$   such that $\lambda \trianglerighteq \mu$, we have a finite set, denoted $\TSStd(\lambda,\mu)$, and we let $\TSStd(\lambda)=\cup_\mu \TSStd(\lambda,\mu)$.
There exist maps
\[
C:{\coprod_{\lambda\in\Lambda}\TSStd(\lambda)\times \TSStd(\lambda)}\to A 
      \quad\text{and}\quad
     \degr: {\coprod_{\lambda\in\Lambda}\TSStd(\lambda)} \to \ZZ
\]
such that $C$ is injective. We denote $C(\SSTS,\SSTT) = c^\lambda_{\SSTS\SSTT}$ for $\SSTS,\SSTT\in\TSStd(\lambda)$.  
 We require that $A$ satisfies properties (1)--(6), below.  
  \begin{enumerate}
    \item Each element $c^\lambda_{\SSTS,\SSTT}$ is homogeneous
	of degree 
	\[
	\degr(c^\lambda_{\SSTS,\SSTT}) = \degr(\SSTS)+\degr(\SSTT),
	\]
	for $\lambda\in\Lambda$ and $\SSTS,\SSTT\in \TSStd(\lambda)$.
    \item The set $\{c^\lambda_{\SSTS,\SSTT}\mid\SSTS,\SSTT\in \TSStd(\lambda), \,
      \lambda\in\Lambda \}$ is a  
      $\Bbbk$-basis of $A$.
    \item  If $\SSTS,\SSTT\in \TSStd(\lambda)$, for some
      $\lambda\in\Lambda$, and $a\in A$ then 
    there exist scalars $r_{\SSTS,\SSTU}(a)$, which do not depend on
    $\SSTT$, such that 
      \[
      ac^\lambda_{\SSTS,\SSTT} = \; \sum_{\mathclap{\SSTU\in
      \TSStd(\lambda)}} \; r_{\SSTS,\SSTU}(a)c^\lambda_{\SSTU,\SSTT}\pmod 
      {A^{\vartriangleright  \lambda}},
      \]
      where $A^{\vartriangleright  \lambda}$ is the $\Bbbk$-submodule of $A$ spanned by
      \[
      \{c^\mu_{\SSTQ,\SSTR}\mid\mu \vartriangleright  \lambda\text{ and }\SSTQ,\SSTR\in \TSStd(\mu )\}.
      \]
    \item  The $\mathbb{C}$-linear map $*:A\to A$ determined by
      $(c^\lambda_{\SSTS,\SSTT})^*=c^\lambda_{\SSTT,\SSTS}$, for all $\lambda\in\Lambda$ and
      all $\SSTS,\SSTT\in\TSStd(\lambda)$, is an anti-isomorphism of $A$.
\item The identity $1_A$ of $A$ has a decomposition $1_A = \sum_{\lambda \in \Lambda} 1_\lambda$ into pairwise orthogonal idempotents $1_\lambda$.
\item For $\SSTS\in\TSStd(\lambda,\mu)$, $\SSTT \in\TSStd(\lambda,\nu)$, we have that $1_\mu c^\lambda_{\SSTS,\SSTT}1_\nu =  c^\lambda_{\SSTS,\SSTT}$.
There exists a unique element $\SSTT^\lambda \in \TSStd(\lambda,\lambda)$, and $c_{\SSTT^\lambda,\SSTT^\lambda}^\lambda = 1_\lambda$.
\end{enumerate}
\end{defn}

All results in this section follow from \cite{hm10}.
Suppose that $A$ is a graded cellular algebra with a highest weight theory. Given any $\lambda\in\Lambda$, the \emph{graded standard module} $\Delta(\lambda)$ is the graded left $A$-module
\[
\Delta(\lambda)=\bigoplus_{\begin{subarray}c \mu\in\Lambda \\ z\in\ZZ \end{subarray}}\Delta_\mu(\lambda)_z,
\]
where $\Delta_\mu(\lambda)_z$ is the $\mathbb{C}$ vector-space  with basis
$\{c^\lambda_{\SSTS } \mid \SSTS\in \TSStd(\lambda,\mu)\text{ and }\deg(\SSTS)=z\}$.
The action of $A$ on $\Delta(\lambda)$ is given by
\[
a c^\lambda_{ \SSTS  }  = \sum_{\mathclap{\SSTU \in \TSStd(\lambda)}} \; r_{\SSTS,\SSTU}(a) c^\lambda_{\SSTU},
\]
where the scalars $r_{\SSTS,\SSTU}(a)$ are the scalars appearing in condition (3) of Definition \ref{defn1}.

Suppose that $\lambda \in \Lambda$. There is a bilinear form $\langle\ ,\ \rangle_\lambda$ on $\Delta(\lambda) $ which
is determined by
\[c^\lambda_{\SSTU,\SSTS}c^\lambda_{\SSTT,\SSTV}\equiv
  \langle c^\lambda_\SSTS,c^\lambda_\SSTT\rangle_\lambda c^\lambda_{\SSTU,\SSTV}\pmod{A^{\triangleright \lambda}},\]
for any $\SSTS,\SSTT, \SSTU,\SSTV\in \TSStd(\lambda  )$.
Let $t$ be an indeterminate over $\ZZ_{\geq 0}$. If $M=\oplus_{z\in\ZZ}M_z$ is
a free graded $\mathbb{C}$-module, then its \emph{graded dimension} is the Laurent polynomial
\[
\Dim{(M)}=\sum_{k\in\ZZ}(\dim_{\mathbb{C}}M_k)t^k.
\]

If $M$ is a graded $A$-module and $k\in\ZZ$, define $M\langle k \rangle$ to be the same module with $(M\langle k \rangle)_i = M_{i-k}$ for all $i\in\ZZ$. We call this a \emph{degree shift} by $k$.
If $M$ is a graded $A$-module and $L$ is a graded simple module let $[M:L\langle k\rangle]$ be the
multiplicity of 
$L\langle k\rangle$ as a graded composition factor of $M$, for $k\in\ZZ$.

Suppose that $A$ is a graded cellular algebra with a highest weight theory.
For every $\lambda \in \Lambda$,  define  $L(\lambda)$ to be  the quotient of the corresponding standard module $\Delta(\lambda)$ by the radical of the bilinear form $\langle\ ,\ \rangle_\lambda$.  This module is simple, and every simple module is of the form $L(\lambda)\langle k \rangle$ for some $k \in \ZZ$, $\lambda \in \Lambda$.  We let $L_\mu(\lambda)$ denote the $\mu$-weight space $1_\mu L(\lambda)$.
The \emph{graded decomposition matrix} of $A$ is the matrix $\Dec=(d_{\lambda\mu}(t))$, where
\[
d_{\lambda\mu}(t)=\sum_{k\in\ZZ} [\Delta(\lambda):L(\mu)\langle k\rangle]\,t^k,
\]
for $\lambda,\mu\in\Lambda$.
\begin{prop}[\cite{hm10}, Proposition 2.18]\label{humathasprop}
If $\lambda,\mu\in\Lambda$ then $\Dim{(L_\mu(\lambda))} \in \ZZ_{\geq 0}[t+t^{-1}]$.
\end{prop}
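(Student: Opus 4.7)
The plan is to exhibit a graded self-duality on each simple module $L(\lambda)$, induced by the anti-involution $*$ together with the cellular bilinear form, and then to check that this duality respects the weight space decomposition coming from the idempotents $1_\mu$.

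First I would establish two homogeneity properties of the cellular form $\langle\ ,\ \rangle_\lambda$ on $\Delta(\lambda)$: (a) it pairs the degree $k$ and degree $-k$ components, and (b) it vanishes between different weight spaces. For (a), compare degrees on the two sides of the defining congruence
\[
c^\lambda_{\SSTU,\SSTS}\, c^\lambda_{\SSTT,\SSTV} \equiv \langle c^\lambda_\SSTS, c^\lambda_\SSTT\rangle_\lambda \, c^\lambda_{\SSTU,\SSTV} \pmod{A^{\vartriangleright \lambda}}
\]
using property (1) of the cell datum; since the scalar on the right lies in $\Bbbk$ (concentrated in degree $0$), homogeneity forces $\langle c^\lambda_\SSTS, c^\lambda_\SSTT\rangle_\lambda$ to vanish unless $\deg(\SSTS)+\deg(\SSTT)=0$. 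For (b), if $\SSTS \in \TSStd(\lambda,\mu)$ and $\SSTT \in \TSStd(\lambda,\mu')$ with $\mu \neq \mu'$, then by property (6) we may factor $c^\lambda_{\SSTU,\SSTS} = c^\lambda_{\SSTU,\SSTS} 1_\mu$ and $c^\lambda_{\SSTT,\SSTV} = 1_{\mu'} c^\lambda_{\SSTT,\SSTV}$, so the left-hand side of the congruence vanishes and hence so does $\langle c^\lambda_\SSTS, c^\lambda_\SSTT\rangle_\lambda$.

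Passing to the quotient $L(\lambda) = \Delta(\lambda)/\rad \langle\ ,\ \rangle_\lambda$, both properties are inherited, and the induced form on $L(\lambda)$ is non-degenerate by construction. Its restriction to $L_\mu(\lambda) = 1_\mu L(\lambda)$ therefore provides a perfect pairing of $L_\mu(\lambda)_k$ with $L_\mu(\lambda)_{-k}$ for each $\mu \in \Lambda$ and each $k \in \ZZ$. This yields $\dim_\Bbbk L_\mu(\lambda)_k = \dim_\Bbbk L_\mu(\lambda)_{-k}$, so $\Dim{(L_\mu(\lambda))}$ is a bar-invariant Laurent polynomial in $t$ with non-negative integer coefficients; that is, it lies in $\ZZ_{\geq 0}[t+t^{-1}]$ as claimed.

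The only delicate point is keeping track of the compatibility between the internal $\ZZ$-grading of $A$, the weight space decomposition induced by the $1_\mu$, and the anti-involution $*$ simultaneously; once the form is verified to be homogeneous in both senses and invariant in the relevant sense, the numerical conclusion is immediate. This is essentially the argument behind \cite[Proposition 2.18]{hm10}, which is cited here directly.
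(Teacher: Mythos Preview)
The paper does not supply its own proof of this proposition; it is quoted directly from \cite[Proposition~2.18]{hm10}. Your argument is correct and is exactly the one underlying that reference: homogeneity of the cellular form forces $\langle c^\lambda_\SSTS, c^\lambda_\SSTT\rangle_\lambda = 0$ unless $\deg(\SSTS)+\deg(\SSTT)=0$, orthogonality of the idempotents $1_\mu$ forces it to vanish between distinct weight spaces, and non-degeneracy on the quotient then gives the perfect pairing $L_\mu(\lambda)_k \cong L_\mu(\lambda)_{-k}^*$.
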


\subsection{Combinatorial preliminaries}
Fix integers $l,n\in\ZZ_{\geq 0} $, $\g\in \RR_{> 0}$ and $e\in\{3,4,\dots\}\cup\{\infty\}$. 
We define a \emph{weighting} $\theta = (\theta_1,\dots, \theta_l) \in \RR^l$ to be any $l$-tuple such that
$\theta_i-\theta_j$ is not an integer multiple of  $\g$ for $1\leq i< j \leq l$.
Let $\kappa$ denote an $e$-\emph{multicharge} $\kappa = (\kappa_1,\dots,\kappa_l)\in (\ZZ/e\ZZ)^l$.

\begin{rmk}
We say that a weighting $\theta\in\RR^l$ is \emph{well-separated} for $A(n,\theta,\kappa)$ if $|\theta_i-\theta_j| > ng$ for all $1\leq i < j \leq l$.
We say that a weighting $\theta\in\RR^l$ is a \emph{FLOTW} weighting for $A(n,\theta,\kappa)$ if $0<|\theta_i-\theta_j| < g$ for all $1\leq i < j \leq l$.
\end{rmk}

\begin{defn}
An \emph{$l$-multipartition} $\la=(\la^{(1)},\dots,\la^{(l)})$ of $n$ is an $l$-tuple of partitions such that $|\la^{(1)}|+\dots+ |\la^{(l)}|=n$. We will denote the set of $l$-multiparititons of $n$ by $\mptn ln$.
\end{defn}

We define the \emph{Russian array} as follows.
For each $1\leq k\leq l$, we place a point on the real line at $\theta_k$ and consider the region bounded by half-lines starting at $\theta_k$ at angles $3\pi/4 $ and $\pi/4$.
We tile the resulting quadrant with a lattice of squares, each with diagonal of length $2 \g $.

Let $\lambda=(\lambda^{(1)},\lambda^{(2)},\ldots ,\lambda^{(l)}) \in \mptn ln$.
The \emph{Young diagram} $[\lambda]$ is defined to be the set
\[
\{(r,c,k)\in\bbn\times\bbn\times\{1,\dots,l\} \mid c\leq\la^{(k)}_r\}.
\]
We refer to elements of $[\la]$ as nodes (of  $[\la]$ or $\la$). 
We define the \emph{residue} of a node $(r,c,k) \in [\lambda]$ to be $\kappa_k+c-r \pmod e$, and refer to $(r,c,k)$ as an \emph{$i$-node} if it has residue $i$.  

We  define an addable (respectively removable) node  of $\lambda$ to be any node which can be added to   (respectively removed from) the diagram $[\lambda]$ to obtain the Young diagram of a multipartition.  Given $S\subset \ZZ/e\ZZ$ we let 
$\Remb_S(\lambda)$ (respectively $\Add_S(\lambda)$) denote  the set of  removable (respectively addable) $i$-nodes of $\lambda$ for all $i\in S$.

For each node of $[\la]$ we draw a box in the plane; we shall draw our Young diagrams in a mirrored-Russian convention. We place the first node of component $m$ at $\theta_m$ on the real line, with rows going northwest from this node, and columns going northeast. The diagram is tilted ever-so-slightly in the clockwise direction so that the top vertex of the box $(r,c,k)$ (that is, the box in the $r$th row and $c$th column of the $k$th component of $ [\lambda] $) has $x$-coordinate $\theta_k + g(r-c) + (r+c)\epsilon$.  

Here the tilt $\epsilon$ is chosen so that $n\epsilon$ is absolutely small with respect to $g$ (so that $\epsilon \ll g/n$) and with respect to the weighting (so that $g$ does not divide any number in the interval  $|\theta_i-\theta_j| + (-n\epsilon,+n\epsilon)$ for $1\leq i < j\leq l$).

We define a \emph{loading}, $\mathbf{i}$, to be an element of $(\mathbb{R} \times (\ZZ/e\ZZ))^n$ such that no real number occurs with multiplicity greater than one.
Given a multipartition $\lambda \in \mptn ln$  we have an associated loading, $\mathbf{i}_\lambda^\theta$ (or simply $\mathbf{i}_\lambda$ when $\theta$ is clear from the context) given by the projection of the top vertex of each box $(r,c,k)\in[\la]$ to its $x$-coordinate $\mathbf{i}_{(r,c,k)}\in\RR$, and attaching to each point the {residue} $\kappa_k+c-r \pmod e$ of this node. Note that the above conditions on $\epsilon$ are designed to ensure that no two nodes have the same $x$-coordinate, so that $\mathbf{i}_\la$ is really a loading.

We let $D_\lambda$ denote the underlying ordered subset of $\RR$ given by the points of the loading. Given $a \in D_\lambda$, we abuse notation and let $a$ denote the corresponding node of $ \lambda $ (that is, the node whose top vertex projects onto $x$-coordinate $a \in \RR$).
The \emph{residue sequence} of $\lambda$ is given by reading the residues of the nodes of $ \lambda $ according to the ordering given by $D_\lambda$.

\begin{eg}
Let $l=2$,  $\g=1$, $\epsilon=1/100$, and $\theta=(0,0.5)$.
The bipartition $((2,1),(1^3))$ has Young diagram and corresponding loading $\mathbf{i}_\lambda$ given in \cref{ALoading}.
The residue sequence of $\lambda$ is $(\kappa_1+1,\kappa_1,\kappa_2,\kappa_1-1,\kappa_2-1,\kappa_2-2)$, and the ordered set $D_\lambda$ is $\{-0.97,0.02, 0.52, 1.03, 1.53 ,2.54 \}$.
The node $x={-0.97}$ in $  \lambda $ can be identified with the node in the first row and second column of  $\lambda^{(1)}$.

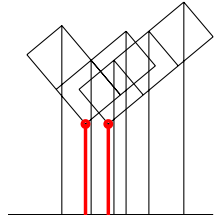
\begin{figure}[ht]\captionsetup{width=0.9\textwidth}
\[
\begin{tikzpicture}[scale=0.6]
 {\clip (-1.7,-2) rectangle ++(5,5);
\path [draw,name path=upward line] (-1.7,-2) -- (2.8,-2);
  \draw 
      (0, 0)              coordinate (a1)
            -- +(40:1) coordinate (a2)  
      (a2)   -- +(130:1) coordinate (a3)
      (a3)   -- +(220:1) coordinate (a4)       
      (a4)   -- +(310:1) coordinate (a5)     
(a3)  -- +(270:10)
      (a2)             
            -- +(40:1) coordinate (b2)  
      (b2)   -- +(130:1) coordinate (b3)
      (b3)   -- +(220:1) coordinate (b4)       
      (b4)   -- +(310:1) coordinate (b5)   
      (a4)   -- +(130:1) coordinate (c1)            
      (c1) -- +(40:1)coordinate (c2)    
            (c2) -- +(-50:1)coordinate (c3)            
      (c2)  -- +(270:10)
      (b3)  -- +(270:10)
        ; 
      \draw[wei2](a1) -- +(270:10);
      \draw[wei2](a1)  circle (2pt);
     \path   (0,0)  -- +(0:0.5) coordinate (d)   
      ;
        \draw[wei2](d) -- +(270:10);

        \draw 
      (d)              
               -- +(40:1) coordinate (d2)  
                (d2)   -- +(130:1) coordinate (d3)
                 (d3)   -- +(220:1) coordinate (d4)       
      (d4)   -- +(310:1) coordinate (d5) 
            (d3) -- +(270:10)
               (d2)              
               -- +(40:1) coordinate (e2)  
                  (e2)   -- +(130:1) coordinate (e3)
                     (e3)   -- +(220:1) coordinate (e4)       
(e3)  -- +(270:10)      
(e2)              
               -- +(40:1) coordinate (f2)  
                  (f2)   -- +(130:1) coordinate (f3)
                    (f3)   -- +(220:1) coordinate (f4)       
 (f3)  -- +(270:10)      
      ;  
        \draw[wei2](d)  circle (2pt); }
    \end{tikzpicture}
    \]
\caption{The diagram and loading of the bipartition $((2,1),(1^3))$ for $l=2$, $g=1$, $\theta=(0,0.5)$.}
\label{ALoading}
\end{figure}
   \end{eg}

\begin{defn} Let $\lambda,\mu \in \mptn l n$.  A $\lambda$-tableau of weight $\mu$ is a
  bijective map $\SSTT : [\lambda] \to D_\mu$ which respects residues.  
  In other words,  we fill a given node $(r,c,k)$ of the diagram
  $[\lambda]$ with a real number $d$ from $D_\mu$ (without
  multiplicities) so that the residue attached to the real number $d$ in the loading $\mathbf{i}_\mu$
  is equal to $\kappa_k+c-r \pmod e$.
\end{defn}

\begin{defn}\label{sdfjhkhjkfhjdsgfhjkhjkhjgkfjshkdkjhfgkjhgdfs}
A $\lambda$-tableau, $\SSTT$, of shape $\lambda$ and weight $\mu$ is said to be semistandard if
\begin{itemize}
\item $\SSTT(1,1,k)>\theta_k$,
\item $\SSTT(r,c,k)> \SSTT(r-1,c,k) +\g$,
\item $\SSTT(r,c,k)> \SSTT(r,c-1,k) -\g$.
\end{itemize}
We denote the set of all  semistandard tableaux of shape $\lambda$ and weight $\mu$ by $\SStd(\lambda,\mu)$. Given $\SSTT \in \SStd(\lambda,\mu)$, we write $\Shape(\SSTT)=\lambda$.
\end{defn}

\begin{eg} Fix $\kappa=(0)$, $\theta=(0)$ and $g=1$ and let $\epsilon \to0$.
For $e=4$, there is a unique $\SSTS\in \SStd((3,1),(2,1^2))$. 
This tableau is the leftmost depicted in   \cref{semistandard}, below.  
The diagram depicts a partition of shape $(3,1)$ whose boxes are 
filled with   integers.  These integers are obtained from the $x$-coordinates 
 of the nodes of the Young diagram 
$(2,1^2)$.  To see this, note that 
\[
\mathbf{i}_{(1,2,1)}= -1+3\epsilon\qquad
\mathbf{i}_{(1,1,1)}=  0+2\epsilon \qquad
\mathbf{i}_{(2,1,1)}=  1+3\epsilon \qquad  
\mathbf{i}_{(3,1,1)}=  2+4\epsilon 
\]
and by letting $\epsilon \to 0$ we obtain the entries of the tableau.  One can check that this is the only tableau which satisfies the conditions in \cref{sdfjhkhjkfhjdsgfhjkhjkhjgkfjshkdkjhfgkjhgdfs}.  
 Similarly, for $e=5$, there is a unique $\SSTT\in \SStd((6,1^4),(5,1^5))$ and  a unique $\SSTU \in {\SStd((6,2^2,1^2),(5,3^2,1^3))}$.
These   semistandard tableaux are depicted in Figure \ref{semistandard}, below.

In all cases we let $\epsilon \to 0$ to make the loadings easier to read.
\begin{figure}[h]
\begin{center}\scalefont{0.8}
\begin{tikzpicture}[scale=1.1]
   \path (0,0) coordinate (origin);
    \draw[wei2] (0,0)   circle (2pt);
  \draw[thick] (origin) 
   --++(130:3*0.5)
   --++(40:1*0.5)
  --++(-50:2*0.5)	
  --++(40:1*0.5)
  --++(-50:1*0.5)
     --++(220:2*0.5);
     \clip 
   (origin) 
   --++(130:3*0.5)
   --++(40:1*0.5)
  --++(-50:2*0.5)	
  --++(40:1*0.5)
  --++(-50:1*0.5)
     --++(220:2*0.5);  
   \path (40:1cm) coordinate (A1);
  \path (40:2cm) coordinate (A2);
  \path (40:3cm) coordinate (A3);
  \path (40:4cm) coordinate (A4);
  \path (130:1cm) coordinate (B1);
  \path (130:2cm) coordinate (B2);
  \path (130:3cm) coordinate (B3);
  \path (130:4cm) coordinate (B4);
  \path (A1) ++(130:3cm) coordinate (C1);
  \path (A2) ++(130:2cm) coordinate (C2);
  \path (A3) ++(130:1cm) coordinate (C3);
  \foreach \i in {1,...,19}
  {
    \path (origin)++(40:0.5*\i cm)  coordinate (a\i);
    \path (origin)++(130:0.5*\i cm)  coordinate (b\i);
    \path (a\i)++(130:4cm) coordinate (ca\i);
    \path (b\i)++(40:4cm) coordinate (cb\i);
    \draw[thin,gray] (a\i) -- (ca\i)  (b\i) -- (cb\i); 
   \draw  (origin)++(40:0.25)++(130:0.25) node {${0}$} ; 
   \draw  (origin)
   ++(130:0.5) 
   ++(40:0.25)++(130:0.25) node {${-1}$} ; 
      \draw  (origin)
   ++(130:2*0.5) 
   ++(40:0.25)++(130:0.25) node {${2}$} ; 
      \draw  (origin)
    ++(40:0.5)
   ++(40:0.25)++(130:0.25) node {${1}$} ; 
  }
   \end{tikzpicture}
\quad
\begin{tikzpicture}[scale=1.1]
   \path (0,0) coordinate (origin);
     \draw[wei2] (0,0)   circle (2pt);
  \draw[thick] (origin) 
   --++(130:6*0.5)
   --++(40:1*0.5)
  --++(-50:5*0.5)	
  --++(40:4*0.5)
  --++(-50:1*0.5)
     --++(220:5*0.5);
     \clip 
 (origin) 
   --++(130:6*0.5)
   --++(40:1*0.5)
  --++(-50:5*0.5)	
  --++(40:4*0.5)
  --++(-50:1*0.5)
     --++(220:5*0.5); 
   \path (40:1cm) coordinate (A1);
  \path (40:2cm) coordinate (A2);
  \path (40:3cm) coordinate (A3);
  \path (40:4cm) coordinate (A4);
  \path (130:1cm) coordinate (B1);
  \path (130:2cm) coordinate (B2);
  \path (130:3cm) coordinate (B3);
  \path (130:4cm) coordinate (B4);
  \path (A1) ++(130:3cm) coordinate (C1);
  \path (A2) ++(130:2cm) coordinate (C2);
  \path (A3) ++(130:1cm) coordinate (C3);
  \foreach \i in {1,...,19}
  {
    \path (origin)++(40:0.5*\i cm)  coordinate (a\i);
    \path (origin)++(130:0.5*\i cm)  coordinate (b\i);
    \path (a\i)++(130:4cm) coordinate (ca\i);
    \path (b\i)++(40:4cm) coordinate (cb\i);
    \draw[thin,gray] (a\i) -- (ca\i)  (b\i) -- (cb\i); 
   \draw  (origin)++(40:0.25)++(130:0.25) node {${0}$} ; 
   \draw  (origin)
   ++(130:0.5) 
   ++(40:0.25)++(130:0.25) node {${-1}$} ; 
            \draw  (origin)
    ++(40:0.5)
   ++(40:0.25)++(130:0.25) node {${1}$} ; 
      \draw  (origin)
   ++(130:0.5) 
   ++(40:0.25)++(130:0.25) node {${-1}$} ; 
      \draw  (origin)
   ++(130:2*0.5) 
   ++(40:0.25)++(130:0.25) node {${-2}$} ; 
      \draw  (origin)
    ++(40:0.5)
   ++(40:0.25)++(130:0.25) node {${1}$} ; 
      \draw  (origin)
   ++(130:3*0.5) 
   ++(40:0.25)++(130:0.25) node {${-3}$} ; 
      \draw  (origin)
   ++(130:4*0.5) 
   ++(40:0.25)++(130:0.25) node {${-4}$} ; 
      \draw  (origin)
   ++(130:5*0.5) 
   ++(40:0.25)++(130:0.25) node {${5}$} ; 
    \draw  (origin)
    ++(40:2*0.5)
   ++(40:0.25)++(130:0.25) node {${2}$} ; 
    \draw  (origin)
    ++(40:3*0.5)
   ++(40:0.25)++(130:0.25) node {${3}$} ; 
     \draw  (origin)
    ++(40:4*0.5)
   ++(40:0.25)++(130:0.25) node {${4}$} ;  }
   \end{tikzpicture}
\quad
\begin{tikzpicture}[scale=1.1]
   \path (0,0) coordinate (origin);
     \draw[wei2] (0,0)   circle (2pt);
  \draw[thick] (origin) 
   --++(130:6*0.5)
   --++(40:1*0.5)
  --++(-50:4*0.5)	
  --++(40:2*0.5)
    --++(-50:1*0.5)
      --++(40:2*0.5)
  --++(-50:1*0.5)
     --++(220:5*0.5);
     \clip 
 (origin) 
   --++(130:6*0.5)
   --++(40:1*0.5)
  --++(-50:4*0.5)	
  --++(40:2*0.5)
    --++(-50:1*0.5)
      --++(40:2*0.5)
  --++(-50:1*0.5)
     --++(220:5*0.5);
   \path (40:1cm) coordinate (A1);
  \path (40:2cm) coordinate (A2);
  \path (40:3cm) coordinate (A3);
  \path (40:4cm) coordinate (A4);
  \path (130:1cm) coordinate (B1);
  \path (130:2cm) coordinate (B2);
  \path (130:3cm) coordinate (B3);
  \path (130:4cm) coordinate (B4);
  \path (A1) ++(130:3cm) coordinate (C1);
  \path (A2) ++(130:2cm) coordinate (C2);
  \path (A3) ++(130:1cm) coordinate (C3);
  \foreach \i in {1,...,19}
  {
    \path (origin)++(40:0.5*\i cm)  coordinate (a\i);
    \path (origin)++(130:0.5*\i cm)  coordinate (b\i);
    \path (a\i)++(130:4cm) coordinate (ca\i);
    \path (b\i)++(40:4cm) coordinate (cb\i);
    \draw[thin,gray] (a\i) -- (ca\i)  (b\i) -- (cb\i); 
   \draw  (origin)++(40:0.25)++(130:0.25) node {${0}$} ; 
   \draw  (origin)++(40:0.75)++(130:0.75) node {${0}$} ; 
   \draw  (origin)
   ++(130:0.5) 
   ++(40:0.25)++(130:0.25) node {${-1}$} ; 
            \draw  (origin)
    ++(40:0.5)
   ++(40:0.25)++(130:0.25) node {${1}$} ; 
      \draw  (origin)
   ++(130:0.5) 
   ++(40:0.25)++(130:0.25) node {${-1}$} ; 
      \draw  (origin)
   ++(130:2*0.5) 
   ++(40:0.25)++(130:0.25) node {${-2}$} ; 
      \draw  (origin)
    ++(40:0.5)
   ++(40:0.25)++(130:0.25) node {${1}$} ; 
         \draw  (origin)
    ++(40:1)    ++(130:0.5)
   ++(40:0.25)++(130:0.25) node {${1}$} ; 
      \draw  (origin)
   ++(130:3*0.5) 
   ++(40:0.25)++(130:0.25) node {${-3}$} ; 
      \draw  (origin)
   ++(130:4*0.5) 
   ++(40:0.25)++(130:0.25) node {${-4}$} ; 
      \draw  (origin)
   ++(130:5*0.5) 
   ++(40:0.25)++(130:0.25) node {${5}$} ; 
    \draw  (origin)
    ++(40:2*0.5)
   ++(40:0.25)++(130:0.25) node {${2}$} ; 
    \draw  (origin)
    ++(40:3*0.5)
   ++(40:0.25)++(130:0.25) node {${3}$} ; 
     \draw  (origin)
    ++(40:4*0.5)
   ++(40:0.25)++(130:0.25) node {${4}$} ;  }
   \end{tikzpicture}
\end{center}
\caption{ Three semistandard tableaux 
$\SSTS\in \SStd((3,1),(2,1^2))$ and 
 $\SSTT\in \SStd((6,1^4),(5,1^5))$
 and 
 $\SSTU\in \SStd((6,2^2,1^2),(5,2^2,1^3))$.  
}
\label{semistandard}
\end{figure}

\end{eg}
\begin{rmk}
For many more examples of the  combinatorics of loadings and  tableaux, we refer the reader to \cite[Section 2]{bcs15}.
\end{rmk}
 \begin{defn} 
 Let $\mathbf{i}$ and $\mathbf{j}$ denote two loadings of size $n$ and let
  $r\in \ZZ/e\ZZ$.  
 We say that  $\mathbf{i}$ $r$-dominates $\mathbf{j}$ if 
 for every real number $a \in \RR$, we have that 
\[
|\{(x,r ) \in \mathbf{i} \mid  x<a\}| \geq  |\{(x,r ) \in \mathbf{j} \mid x<a\}|.
\]
We say that $\mathbf{i}$ dominates  $\mathbf{j}$ if $\mathbf{i}$ $r$-dominates  $\mathbf{j}$   for every $r \in \ZZ/e\ZZ$.
Given $\lambda,\mu \in \mptn ln$ and $\theta\in \RR^l$, we say that $\lambda$
$\theta$-dominates $\mu$ (and write $\mu \trianglelefteq_{\theta}
\lambda$) if $\mathbf{i}^\theta_\lambda$ dominates $\mathbf{i}^\theta_\mu$. 
\end{defn}

\begin{rmk}
We note that for $l>1$, the loading of the partitions (and therefore the resulting $\theta$-dominance order) is heavily dependent on the weighting $\theta\in \RR^l$.   
\end{rmk}

\begin{defn}
We refer to an unordered multiset $\mathcal{R}$ of $n$ elements from $ (\ZZ/e\ZZ)$ as a residue set of cardinality $n$.  We let $\mptn ln(\mathcal{R})$ denote the subset of $\mptn ln$ whose residue set is equal to $\mathcal{R}$.
\end{defn}

\begin{rmk}
We have that $\mptn ln = \cup_{\mathcal{R} } \mptn ln(\mathcal{R})$ is a disjoint decomposition of  the set $\mptn ln$; notice that all of the above combinatorics respects this decomposition.
\end{rmk}

\begin{rmk}
The above combinatorics can be generalised to multi-compositions in the obvious manner.
\end{rmk}

\subsection{The diagrammatic Cherednik algebra}
\label{relationspageofstuff} 

Recall that we have fixed   $l,n\in \ZZ_{>0}$,   $\g\in \RR_{>0}$
and $e\in\{3,4,\dots\}\cup\{\infty\}$. Given any weighting $\theta=(\theta_1,\dots, \theta_l)$ and $\kappa=(\kappa_1,\ldots ,\kappa_l)$ an $e$-multicharge, we will define what we refer to as the diagrammatic Cherednik algebra, $A(n,\theta,\kappa)$.

This is an example of one of many finite dimensional algebras (reduced steadied quotients of weighted KLR algebras in Webster's terminology) constructed in \cite{Webster}, whose module categories are equivalent, over the complex field, to    category $\mathcal{O}$ for a rational cyclotomic Cherednik algebra \cite[Theorem 2.3 and 3.9]{Webster}.
Over fields of arbitrary characteristic and $\theta$ a well-separated weighting, the algebra $A(n,\theta,\kappa)$ is Morita equivalent to the corresponding cyclotomic $q$-Schur algebra of \cite{DJM} with the same level, rank, quantum characteristic and $e$-multicharge \cite[Theorem 3.9]{Webster2}.
 
\begin{defn}
We define a  $\theta$-\emph{diagram} {of type} $G(l,1,n)$ to  be a  \emph{frame} $\mathbb{R}\times [0,1]$ with  distinguished  black points on the northern and southern boundaries given by the loadings $\mathbf{i}_\mu$ and $\mathbf{i}_\lambda$ for some $\lambda,\mu \in \mptn ln(\mathcal{R})$ and a collection of curves each of which starts at a northern point and ends at a southern point of the same residue, $i$ say (we refer to this as a \emph{black $i$-strand}).
We further require that each curve has a mapping diffeomorphically to $[0,1]$ via the projection to the $y$-axis.
Each curve is allowed to carry any number of dots. We draw
\begin{itemize}
\item  a dashed line $g$ units to the left of each strand, which we call a \emph{ghost $i$-strand} or $i$-\emph{ghost};
\item vertical red lines at $\theta_k \in \RR$ each of which carries a 
residue $\kappa_k$ for $1\leq k\leq l$ which we call a \emph{red $\kappa_k$-strand}. 
\end{itemize}
We now require that there are no triple points or tangencies involving any combination of strands, ghosts or red lines and no dots lie on crossings. We consider these diagrams equivalent if they are related by an isotopy that avoids these tangencies, double points and dots on crossings.
\end{defn}

\begin{rmk}Note  that our diagrams  do not distinguish between `over' and `under' crossings. 
\end{rmk}
\begin{defn}[\cite{Webster}]
The \emph{diagrammatic Cherednik algebra}, $A(n,\theta,\kappa)$, is the span of all $\theta$-diagrams modulo the following local relations (here a local relation means one that can be applied on a small region of the diagram).
\begin{enumerate}[label=(1.\arabic*)]
\item\label{rel1}  Any diagram may be deformed isotopically; that is, by a continuous deformation
of the diagram which at no point introduces or removes any crossings of strands (black,  ghost, or  red).
\item\label{rel2}
For $i\neq j$ we have that dots pass through crossings.
\[
\scalefont{0.8}\begin{tikzpicture}[scale=.6,baseline]
\draw[very thick](-4,0) +(-1,-1) -- +(1,1) node[below,at start]
{$i$}; \draw[very thick](-4,0) +(1,-1) -- +(-1,1) node[below,at
start] {$j$}; \fill (-4.5,.5) circle (5pt);
\node at (-2,0){=}; \draw[very thick](0,0) +(-1,-1) -- +(1,1)
node[below,at start] {$i$}; \draw[very thick](0,0) +(1,-1) --
+(-1,1) node[below,at start] {$j$}; \fill (.5,-.5) circle (5pt);
\node at (4,0){ };
\end{tikzpicture}\]
\item\label{rel3}  For two like-labelled strands we get an error term.
\[
\scalefont{0.8}\begin{tikzpicture}[scale=.5,baseline]
\draw[very thick](-4,0) +(-1,-1) -- +(1,1) node[below,at start]
{$i$}; \draw[very thick](-4,0) +(1,-1) -- +(-1,1) node[below,at
start] {$i$}; \fill (-4.5,.5) circle (5pt);
\node at (-2,0){=}; \draw[very thick](0,0) +(-1,-1) -- +(1,1)
node[below,at start] {$i$}; \draw[very thick](0,0) +(1,-1) --
+(-1,1) node[below,at start] {$i$}; \fill (.5,-.5) circle (5pt);
\node at (2,0){+}; \draw[very thick](4,0) +(-1,-1) -- +(-1,1)
node[below,at start] {$i$}; \draw[very thick](4,0) +(0,-1) --
+(0,1) node[below,at start] {$i$};
\end{tikzpicture}  \quad \quad \quad
\scalefont{0.8}\begin{tikzpicture}[scale=.5,baseline]
\draw[very thick](-4,0) +(-1,-1) -- +(1,1) node[below,at start]
{$i$}; \draw[very thick](-4,0) +(1,-1) -- +(-1,1) node[below,at
start] {$i$}; \fill (-4.5,-.5) circle (5pt);
\node at (-2,0){=}; \draw[very thick](0,0) +(-1,-1) -- +(1,1)
node[below,at start] {$i$}; \draw[very thick](0,0) +(1,-1) --
+(-1,1) node[below,at start] {$i$}; \fill (.5,.5) circle (5pt);
\node at (2,0){+}; \draw[very thick](4,0) +(-1,-1) -- +(-1,1)
node[below,at start] {$i$}; \draw[very thick](4,0) +(0,-1) --
+(0,1) node[below,at start] {$i$};
\end{tikzpicture}\]
\item\label{rel4} For double crossings of black strands, we have the following.
\[
\scalefont{0.8}\begin{tikzpicture}[very thick,scale=0.6,baseline]
\draw (-2.8,-1) .. controls (-1.2,0) ..  +(0,2)
node[below,at start]{$i$};
\draw (-1.2,-1) .. controls (-2.8,0) ..  +(0,2) node[below,at start]{$i$};
\node at (-.5,0) {=};
\node at (0.4,0) {$0$};
\end{tikzpicture}
\hspace{.7cm}
\scalefont{0.8}\begin{tikzpicture}[very thick,scale=0.6,baseline]
\draw (-2.8,-1) .. controls (-1.2,0) ..  +(0,2)
node[below,at start]{$i$};
\draw (-1.2,-1) .. controls (-2.8,0) ..  +(0,2)
node[below,at start]{$j$};
\node at (-.5,0) {=}; 

\draw (1.8,-1) -- +(0,2) node[below,at start]{$j$};
\draw (1,-1) -- +(0,2) node[below,at start]{$i$}; 
\end{tikzpicture}
\]
\end{enumerate}
\begin{enumerate}[resume, label=(1.\arabic*)]  
\item\label{rel5} If $j\neq i-1$,  then we can pass ghosts through black strands.
\[\begin{tikzpicture}[very thick,xscale=1,yscale=0.6,baseline]
 \draw (1,-1) to[in=-90,out=90]  node[below, at start]{$i$} (1.5,0) to[in=-90,out=90] (1,1)
;
\draw[dashed] (1.5,-1) to[in=-90,out=90] (1,0) to[in=-90,out=90] (1.5,1);
\draw (2.5,-1) to[in=-90,out=90]  node[below, at start]{$j$} (2,0) to[in=-90,out=90] (2.5,1);
\node at (3,0) {=};
\draw (3.7,-1) -- (3.7,1) node[below, at start]{$i$};
\draw[dashed] (4.2,-1) to (4.2,1);
\draw (5.2,-1) -- (5.2,1) node[below, at start]{$j$};
\end{tikzpicture} \quad\quad \quad \quad 
\scalefont{0.8}\begin{tikzpicture}[very thick,xscale=1,yscale=0.6,baseline]
\draw[dashed] (1,-1) to[in=-90,out=90] (1.5,0) to[in=-90,out=90] (1,1);
\draw (1.5,-1) to[in=-90,out=90] node[below, at start]{$i$} (1,0) to[in=-90,out=90] (1.5,1);
\draw (2,-1) to[in=-90,out=90]  node[below, at start]{$\tiny j$} (2.5,0) to[in=-90,out=90] (2,1);
\node at (3,0) {=};
\draw (3.7,-1) -- (3.7,1) node[below, at start]{$i$};
\draw[dashed] (4.2,-1) to (4.2,1);
\draw (5.2,-1) -- (5.2,1) node[below, at start]{$j$};
\end{tikzpicture}
\]
\end{enumerate} \begin{enumerate}[resume, label=(1.\arabic*)]  
\item\label{rel6} On the other hand, in the case where $j= i-1$, we have the following.
  \[\scalefont{0.8}\begin{tikzpicture}[very thick,xscale=1,yscale=0.6,baseline]
 \draw (1,-1) to[in=-90,out=90]  node[below, at start]{$i$} (1.5,0) to[in=-90,out=90] (1,1)
;
  \draw[dashed] (1.5,-1) to[in=-90,out=90] (1,0) to[in=-90,out=90] (1.5,1);
  \draw (2.5,-1) to[in=-90,out=90]  node[below, at start]{$\tiny i\!-\!1$} (2,0) to[in=-90,out=90] (2.5,1);
\node at (3,0) {=};
  \draw (3.7,-1) -- (3.7,1) node[below, at start]{$i$}
 ;
  \draw[dashed] (4.2,-1) to (4.2,1);
  \draw (5.2,-1) -- (5.2,1) node[below, at start]{$\tiny i\!-\!1$} node[midway,fill,inner sep=2.5pt,circle]{};
\node at (5.75,0) {$-$};

  \draw (6.2,-1) -- (6.2,1) node[below, at start]{$i$} node[midway,fill,inner sep=2.5pt,circle]{};
  \draw[dashed] (6.7,-1)-- (6.7,1);
  \draw (7.7,-1) -- (7.7,1) node[below, at start]{$\tiny i\!-\!1$};
\end{tikzpicture}\]
   
\item\label{rel7} We also have the relation below, obtained by symmetry.  \[
\scalefont{0.8}\begin{tikzpicture}[very thick,xscale=1,yscale=0.6,baseline]
 \draw[dashed] (1,-1) to[in=-90,out=90] (1.5,0) to[in=-90,out=90] (1,1)
;
  \draw (1.5,-1) to[in=-90,out=90] node[below, at start]{$i$} (1,0) to[in=-90,out=90] (1.5,1);
  \draw (2,-1) to[in=-90,out=90]  node[below, at start]{$\tiny i\!-\!1$} (2.5,0) to[in=-90,out=90] (2,1);
\node at (3,0) {=};
  \draw[dashed] (3.7,-1) -- (3.7,1);
  \draw (4.2,-1) -- (4.2,1) node[below, at start]{$i$};
  \draw (5.2,-1) -- (5.2,1) node[below, at start]{$\tiny i\!-\!1$} node[midway,fill,inner sep=2.5pt,circle]{};
\node at (5.75,0) {$-$};

  \draw[dashed] (6.2,-1) -- (6.2,1);
  \draw (6.7,-1)-- (6.7,1) node[midway,fill,inner sep=2.5pt,circle]{} node[below, at start]{$i$};
  \draw (7.7,-1) -- (7.7,1) node[below, at start]{$\tiny i\!-\!1$};
\end{tikzpicture}\]
\end{enumerate}
  \begin{enumerate}[resume, label=(1.\arabic*)] \item\label{rel8} Strands can move through crossings of black strands freely.
\[
\scalefont{0.8}\begin{tikzpicture}[very thick,scale=0.6 ,baseline]
\draw (-2,-1) -- +(-2,2) node[below,at start]{$k$};
\draw (-4,-1) -- +(2,2) node[below,at start]{$i$};
\draw (-3,-1) .. controls (-4,0) ..  +(0,2)
node[below,at start]{$j$};
\node at (-1,0) {=};
\draw (2,-1) -- +(-2,2)
node[below,at start]{$k$};
\draw (0,-1) -- +(2,2)
node[below,at start]{$i$};
\draw (1,-1) .. controls (2,0) ..  +(0,2)
node[below,at start]{$j$};
\end{tikzpicture}
\]
\end{enumerate}
\indent Similarly, this holds for triple points involving ghosts, 
except for the following relations when $j=i-1$.
\begin{enumerate}[resume, label=(1.\arabic*)]  \Item\label{rel9}
\[
\scalefont{0.8}\begin{tikzpicture}[very thick,xscale=1,yscale=0.6,baseline]
\draw[dashed] (-2.6,-1) -- +(-.8,2);
\draw[dashed] (-3.4,-1) -- +(.8,2); 
\draw (-1.1,-1) -- +(-.8,2)
node[below,at start]{$\tiny j$};
\draw (-1.9,-1) -- +(.8,2)
node[below,at start]{$\tiny j$}; 
\draw (-3,-1) .. controls (-3.5,0) ..  +(0,2)
node[below,at start]{$i$};

\node at (-.75,0) {=};

\draw[dashed] (.4,-1) -- +(-.8,2);
\draw[dashed] (-.4,-1) -- +(.8,2);
\draw (1.9,-1) -- +(-.8,2)
node[below,at start]{$\tiny j$};
\draw (1.1,-1) -- +(.8,2)
node[below,at start]{$\tiny j$};
\draw (0,-1) .. controls (.5,0) ..  +(0,2)
node[below,at start]{$i$};

\node at (2.25,0) {$-$};

\draw (4.9,-1) -- +(0,2)
node[below,at start]{$\tiny j$};
\draw (4.1,-1) -- +(0,2)
node[below,at start]{$\tiny j$};
\draw[dashed] (3.4,-1) -- +(0,2);
\draw[dashed] (2.6,-1) -- +(0,2);
\draw (3,-1) -- +(0,2) node[below,at start]{$i$};
\end{tikzpicture}
\]
\Item\label{rel10}
\[
\scalefont{0.8}\begin{tikzpicture}[very thick,xscale=1,yscale=0.6,baseline]
\draw[dashed] (-3,-1) .. controls (-3.5,0) ..  +(0,2);  
\draw (-2.6,-1) -- +(-.8,2)
node[below,at start]{$i$};
\draw (-3.4,-1) -- +(.8,2)
node[below,at start]{$i$};
\draw (-1.5,-1) .. controls (-2,0) ..  +(0,2)
node[below,at start]{$\tiny j$};

\node at (-.75,0) {=};

\draw (0.4,-1) -- +(-.8,2)
node[below,at start]{$i$};
\draw (-.4,-1) -- +(.8,2)
node[below,at start]{$i$};
\draw[dashed] (0,-1) .. controls (.5,0) ..  +(0,2);
\draw (1.5,-1) .. controls (2,0) ..  +(0,2)
node[below,at start]{$\tiny j$};

\node at (2.25,0) {$+$};

\draw (3.4,-1) -- +(0,2)
node[below,at start]{$i$};
\draw (2.6,-1) -- +(0,2)
node[below,at start]{$i$}; 
\draw[dashed] (3,-1) -- +(0,2);
\draw (4.5,-1) -- +(0,2)
node[below,at start]{$\tiny j$};
\end{tikzpicture}
\]
\end{enumerate}
In the diagrams with crossings in \ref{rel9} and \ref{rel10}, we say that the black (respectively ghost) strand bypasses the crossing of ghost strands (respectively black strands).
 The ghost strands may pass through red strands freely.
  For $i\neq j$, the black $i$-strands may pass through red $j$-strands freely. If the red and black strands have the same label, a dot is added to the black strand when straightening.
\begin{enumerate}[resume, label=(1.\arabic*)] \Item\label{rel11}
\[
\scalefont{0.8}\begin{tikzpicture}[very thick,baseline,scale=0.6]
\draw (-2.8,-1)  .. controls (-1.2,0) ..  +(0,2)
node[below,at start]{$i$};
\draw[wei] (-1.2,-1)  .. controls (-2.8,0) ..  +(0,2)
node[below,at start]{$i$};

\node at (-.3,0) {=};

\draw[wei] (2.8,-1) -- +(0,2)
node[below,at start]{$i$};
\draw (1.2,-1) -- +(0,2)
node[below,at start]{$i$};
\fill (1.2,0) circle (3pt);

\draw[wei] (6.8,-1)  .. controls (5.2,0) ..  +(0,2)
node[below,at start]{$j$};
\draw (5.2,-1)  .. controls (6.8,0) ..  +(0,2)
node[below,at start]{$i$};

\node at (7.7,0) {=};

\draw (9.2,-1) -- +(0,2)
node[below,at start]{$i$};
\draw[wei] (10.8,-1) -- +(0,2)
node[below,at start]{$j$};
\end{tikzpicture}
\]
\end{enumerate}
and their mirror images. All black crossings and dots can pass through  red strands, with a
correction term.
\begin{enumerate}[resume, label=(1.\arabic*)]
\Item\label{rel12}
\[
\scalefont{0.8}\begin{tikzpicture}[very thick,baseline,scale=0.6]
\draw (-2,-1)  -- +(-2,2)
node[at start,below]{$i$};
\draw (-4,-1) -- +(2,2)
node [at start,below]{$j$};
\draw[wei] (-3,-1) .. controls (-4,0) ..  +(0,2)
node[at start,below]{$k$};

\node at (-1,0) {=};

\draw (2,-1) -- +(-2,2)
node[at start,below]{$i$};
\draw (0,-1) -- +(2,2)
node [at start,below]{$j$};
\draw[wei] (1,-1) .. controls (2,0) .. +(0,2)
node[at start,below]{$k$};

\node at (2.8,0) {$+ $};

\draw (7.5,-1) -- +(0,2)
node[at start,below]{$i$};
\draw (5.5,-1) -- +(0,2)
node [at start,below]{$j$};
\draw[wei] (6.5,-1) -- +(0,2)
node[at start,below]{$k$};
\node at (3.8,-.2){$\delta_{i,j,k}$};
\end{tikzpicture}
\]
\Item\label{rel13}
\[
\scalefont{0.8}\begin{tikzpicture}[scale=0.6,very thick,baseline=2cm]
\draw[wei] (-2,2) -- +(-2,2);
\draw (-3,2) .. controls (-4,3) ..  +(0,2);
\draw (-4,2) -- +(2,2);

\node at (-1,3) {=};

\draw[wei] (2,2) -- +(-2,2);
\draw (1,2) .. controls (2,3) ..  +(0,2);
\draw (0,2) -- +(2,2);

\draw (6,2) -- +(-2,2);
\draw (5,2) .. controls (6,3) ..  +(0,2);
\draw[wei] (4,2) -- +(2,2);

\node at (7,3) {=};

\draw (10,2) -- +(-2,2);
\draw (9,2) .. controls (10,3) ..  +(0,2);
\draw[wei] (8,2) -- +(2,2);
\end{tikzpicture}
\]
\Item\label{rel14}
\[
\scalefont{0.8}\begin{tikzpicture}[very thick,baseline,scale=0.6]
  \draw(-3,0) +(-1,-1) -- +(1,1);
  \draw[wei](-3,0) +(1,-1) -- +(-1,1);
\fill (-3.5,-.5) circle (3pt);
\node at (-1,0) {=};
 \draw(1,0) +(-1,-1) -- +(1,1);
  \draw[wei](1,0) +(1,-1) -- +(-1,1);
\fill (1.5,.5) circle (3pt);

\draw[wei](1,0) +(3,-1) -- +(5,1);
  \draw(1,0) +(5,-1) -- +(3,1);
\fill (5.5,-.5) circle (3pt);
\node at (7,0) {=};
 \draw[wei](5,0) +(3,-1) -- +(5,1);
  \draw(5,0) +(5,-1) -- +(3,1);
\fill (8.5,.5) circle (3pt);
    \end{tikzpicture}
    \]
  \end{enumerate}
Finally, we have the following non-local idempotent relation.
\begin{enumerate}[resume, label=(1.\arabic*)]
\item\label{rel15}
Any idempotent  where the strands can be broken into two
groups separated by a blank space of size $>g$ (so no ghost from
the right-hand  group can be left of a strand in the left group and {\it vice versa}) with all red strands in the right-hand group is referred to as unsteady and set to be equal to zero.
\end{enumerate}
\end{defn}

\subsection{The grading on the diagrammatic Cherednik algebra}\label{grsubsec}
This algebra is graded as follows:
\begin{itemize}
\item
 dots have degree 2;
\item
 the crossing of two strands has degree 0, unless they have the
  same label, in which case it has degree $-2$;
\item the crossing of a black strand with label $i$ and a ghost has degree 1 if the ghost has label $i-1$ and 0 otherwise;
   \item
  the crossing of a black strand with a red strand has degree 0, unless they have the same label, in which case it has degree 1.
\end{itemize}
In other words,
\[
\deg\tikz[baseline,very thick,scale=1.5]{\draw
  (0,.3) -- (0,-.1) node[at end,below,scale=.8]{$i$}
  node[midway,circle,fill=black,inner
  sep=2pt]{};}=
  2 \qquad \deg\tikz[baseline,very thick,scale=1.5]
  {\draw (.2,.3) --
    (-.2,-.1) node[at end,below, scale=.8]{$i$}; \draw
    (.2,-.1) -- (-.2,.3) node[at start,below,scale=.8]{$j$};} =-2\delta_{i,j} \qquad  
  \deg\tikz[baseline,very thick,scale=1.5]{\draw[densely dashed] 
  (-.2,-.1)-- (.2,.3) node[at start,below, scale=.8]{$i$}; \draw
  (.2,-.1) -- (-.2,.3) node[at start,below,scale=.8]{$j$};} =\delta_{j,i+1} \qquad \deg\tikz[baseline,very thick,scale=1.5]{\draw (.2,.3) --
  (-.2,-.1) node[at end,below, scale=.8]{$i$}; \draw [densely dashed]
  (.2,-.1) -- (-.2,.3) node[at start,below,scale=.8]{$j$};} =\delta_{j,i-1}\]
\[
  \deg\tikz[baseline,very thick,scale=1.5]{ \draw[wei]
  (-.2,-.1)-- (.2,.3) node[at start,below, scale=.8]{$i$}; \draw
  (.2,-.1) -- (-.2,.3) node[at start,below,scale=.8]{$j$};} =\delta_{i,j} 
   \qquad \deg\tikz[baseline,very thick,scale=1.5] {\draw (.2,.3) --
  (-.2,-.1) node[at end,below, scale=.8]{$i$}; \draw[wei]   (.2,-.1) -- (-.2,.3) node[at start,below,scale=.8]{$j$};} =\delta_{j,i}.
\]

 \subsection{Representation theory of the diagrammatic Cherednik algebra}
Let $d$ be any $\theta$-diagram and  $y\in[0,1]$ be any fixed value such that 
there are no  crossings in $d$ at any point in $\mathbb{R}\times \{y\}$.
Then the positions of the various strands in this horizontal slice give a loading $\mathbf{i}_y$.
We say that the diagram $d\in A(n,\theta,\kappa)$ \emph{factors through} the loading $\mathbf{i}_y$.

The following lemma is a trivial consequence of the proof of \cite[Lemma 2.15]{Webster}.

\begin{lem} \label{215}
If a $\theta$-diagram, $d$,   factors through 
a loading $\mathbf{i}$ such that $\mathbf{i} \vartriangleright \mathbf{i}_\mu$ for some $\mu \in \mptn ln$, with $\mathbf{i}$ and $\mathbf{i}_\mu$ not isotopic, then $d$ factors through some $\mathbf{i}_\lambda$ such that $\mathbf{i}_\lambda \vartriangleright \mathbf{i}$, for some $\lambda \in \mptn ln$.
\end{lem}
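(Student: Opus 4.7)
The plan is to exploit the topological flexibility afforded by relation \ref{rel1}: since $d$ has finitely many strands and crossings, the set of loadings through which $d$ factors is finite up to isotopy. Let $\mathcal{S}$ denote the collection of loadings $\mathbf{j}$ such that $d$ factors through $\mathbf{j}$ and $\mathbf{j} \trianglerighteq \mathbf{i}$, and let $\mathbf{i}'$ be a maximal element of $\mathcal{S}$ in the dominance order. Such a maximum exists by finiteness.

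The core step is to establish that $\mathbf{i}' = \mathbf{i}_\lambda$ for some $\lambda \in \mptn ln$. Suppose not: then $\mathbf{i}'$ is not isotopic to any multipartition loading, so the relative positions of the black strands at the slice $\mathbf{i}'$ fail to match the $\theta_k + g(r-c)$ pattern of boxes in any Young diagram. Consequently there exists at least one black strand whose $x$-coordinate can be strictly decreased via a local isotopy of $d$ (relation \ref{rel1}), without introducing any new crossings of strands, ghosts, or red strands. Performing such a slide yields a slice $\mathbf{i}''$ with $\mathbf{i}'' \vartriangleright \mathbf{i}'$, contradicting the maximality of $\mathbf{i}'$; hence $\mathbf{i}' = \mathbf{i}_\lambda$ for some multipartition $\lambda$.

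Since $\mathbf{i} \in \mathcal{S}$ by hypothesis, maximality gives $\mathbf{i}_\lambda \trianglerighteq \mathbf{i}$. To upgrade to strict dominance, observe that the ``not isotopic to $\mathbf{i}_\mu$'' hypothesis, combined with $\mathbf{i} \vartriangleright \mathbf{i}_\mu$, forces $\mathbf{i}$ to sit strictly below some multipartition loading in $\mathcal{S}$: if $\mathbf{i}_\lambda$ and $\mathbf{i}$ were isotopic, then $\mathbf{i}$ itself would be a multipartition loading distinct from $\mathbf{i}_\mu$, and applying the leftward-slide analysis directly to $\mathbf{i}$ inside the isotopy class of $d$ would produce a strictly larger slice, again contradicting maximality. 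Therefore $\mathbf{i}_\lambda \vartriangleright \mathbf{i}$, as required.

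The main obstacle is the topological claim underpinning the leftward slide: verifying that the positional constraints defining multipartition loadings correspond exactly to the isotopy obstructions for pushing strands leftward within the isotopy class of $d$, independently of the locations of the red strands and ghosts. This combinatorial-topological correspondence is precisely the content of the argument in \cite[Lemma 2.15]{Webster}, from which our proof is adapted; verifying it case by case according to whether the neighbouring strand is red, ghost, or black of matching/differing residue is the only technical step that is not truly formal.
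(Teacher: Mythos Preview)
The paper does not prove this lemma at all; it simply states that it is a trivial consequence of the proof of \cite[Lemma~2.15]{Webster}. Your proposal is an attempt to reconstruct that argument (take a dominance-maximal slice, show via leftward slides that it must be a multipartition loading), and you yourself defer the substantive technical step back to Webster, so in spirit the approaches coincide.

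That said, your final paragraph contains a genuine gap. You suppose for contradiction that $\mathbf{i}_\lambda$ is isotopic to $\mathbf{i}$ and correctly deduce that $\mathbf{i}$ is then itself (isotopic to) a multipartition loading; but you then claim that the leftward-slide analysis applied to $\mathbf{i}$ ``would produce a strictly larger slice, again contradicting maximality.'' This is exactly what your own core step forbids: you argued that a loading admits a strictly-increasing leftward slide precisely when it is \emph{not} a multipartition loading. So no contradiction arises, and the argument for strict dominance collapses. The stated hypothesis only says that $\mathbf{i}$ is not isotopic to the particular $\mathbf{i}_\mu$; it does not exclude $\mathbf{i}$ being isotopic to some other $\mathbf{i}_\lambda$, in which case your maximal element is just $\mathbf{i}$ itself and $\mathbf{i}_\lambda \vartriangleright \mathbf{i}$ fails. (For the paper's only use of this lemma, in \cref{remove}, one in fact only needs $\mathbf{i}_\lambda \trianglerighteq \mathbf{i} \vartriangleright \mathbf{i}_{\gamma^+}$, so the defect is harmless there; but as an argument for the statement as written, your last step does not go through.)
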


Given $\SSTT \in \SStd(\lambda,\mu)$, we have a 
$\theta$-diagram $B_\SSTT$ consisting of a {\it frame} in which the $n$ black strands each connecting a northern and southern distinguished point are drawn so that they trace out the bijection determined by $\SSTT$ in such a way that we use  the minimal number of crossings without creating any bigons between pairs of strands or strands and ghosts. This diagram is not unique up to isotopy (since we have not specified how to resolve triple points), but we can choose one such diagram arbitrarily.

Given a pair of semistandard tableaux of the same shape
$(\SSTS,\SSTT)\in\SStd(\lambda,\mu)\times\SStd(\lambda,\nu)$,  we  have
a diagram  $C_{\SSTS, \SSTT}=B_\SSTS B^\ast_\SSTT$ where $B^\ast_\SSTT$
is the diagram obtained from $B_\SSTT $ by flipping it through the
horizontal axis.
 Notice that there is a unique element   $\SSTT^\lambda\in
\SStd(\lambda,\lambda)$ and the corresponding basis element
$C_{\SSTT^\lambda,\SSTT^\lambda}$ is the idempotent in which all black strands are
vertical. A degree function on tableaux is defined in \cite[Defintion 2.12]{Webster}; 
for our purposes it is enough to note that $\deg(\SSTT)=\deg(B_\SSTT)$ as we shall
always work with the $\theta$-diagrams directly.


\begin{thm}[{\cite[Section 2.6]{Webster}}]
\label{cellularitybreedscontempt}
The algebra $A(n,\theta,\kappa)$ is a graded cellular algebra with a theory of highest weights.
The cellular basis is given by
\[
 \mathcal{C}=\{C_{\SSTS, \SSTT} \mid \SSTS \in \SStd(\lambda,\mu), \SSTT\in \SStd(\lambda,\nu), 
 \lambda,\mu, \nu \in \mptn ln \}
\]
with respect to the $\theta$-dominance order on the set   $\mptn ln$
and the anti-isomorphism given by flipping a diagram through the horizontal axis.  
  \end{thm}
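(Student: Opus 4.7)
The plan is to follow the strategy of Webster \cite[Section 2.6]{Webster} and verify the axioms of \cref{defn1} directly. The argument naturally separates into three parts: showing that $\mathcal{C}$ spans $A(n,\theta,\kappa)$, showing linear independence, and verifying the remaining cellular axioms (1)--(6).

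For spanning, I would take an arbitrary $\theta$-diagram $d$ with top loading $\mathbf{i}_\mu$ and bottom loading $\mathbf{i}_\nu$, and reduce by induction on the total number of crossings (lexicographically ordered with the number of dots) modulo the ideal $A^{\vartriangleright \lambda}$. Using the local relations \ref{rel1}--\ref{rel15}, one pushes strands into a canonical position matching some pair $B_\SSTS B_\SSTT^*$ with $\SSTS \in \SStd(\lambda,\mu)$, $\SSTT \in \SStd(\lambda,\nu)$; each relation either reduces crossings or produces error terms which are absorbed into the $\theta$-dominance ideal. The key input here is \cref{215}: whenever the reduction process produces a diagram factoring through a loading $\mathbf{i}$ with $\mathbf{i} \vartriangleright \mathbf{i}_\mu$, the diagram actually factors through some $\mathbf{i}_\lambda$ with $\mathbf{i}_\lambda \vartriangleright \mathbf{i}$, and hence lies in $A^{\vartriangleright \lambda}$. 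The semistandard conditions of \cref{sdfjhkhjkfhjdsgfhjkhjkhjgkfjshkdkjhfgkjhgdfs} are precisely those characterising the surviving diagrams that cannot be pushed further up the dominance order.

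For linear independence, the cleanest route is Webster's faithful polynomial representation: the diagrams act on an equivariant cohomology ring of a suitable variety, and distinct elements of $\mathcal{C}$ have distinct leading terms under a natural filtration. Alternatively, once $\mathcal{C}$ is known to span, one can count dimensions against the cyclotomic $q$-Schur algebra in the well-separated case via the Morita equivalence \cite[Theorem 3.9]{Webster2} and then transport the count. Given spanning and independence, axiom (1) is immediate from the grading rules of \cref{grsubsec} combined with $\deg(\SSTT)=\deg(B_\SSTT)$; axiom (2) is the basis statement just established; axiom (4) follows from the geometry of the horizontal flip, which exchanges $B_\SSTS$ with $B_\SSTT$ and visibly sends $C_{\SSTS,\SSTT}$ to $C_{\SSTT,\SSTS}$; and axioms (5)--(6) follow from the observation that each $1_\mu := C_{\SSTT^\mu,\SSTT^\mu}$ is the vertical identity diagram at loading $\mathbf{i}_\mu$, these idempotents are pairwise orthogonal because composing diagrams with different boundary loadings is zero, and their sum over $\mu \in \mptn ln$ is $1_A$.

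The main obstacle is axiom (3), the triangular multiplication formula. Given $a \in A$ and $C_{\SSTS,\SSTT} = B_\SSTS B_\SSTT^*$, multiplication by $a$ acts only on the upper half $B_\SSTS$, so reducing $a B_\SSTS$ by the spanning argument above to a sum $\sum_\SSTU r_{\SSTS,\SSTU}(a) B_\SSTU \pmod{A^{\vartriangleright \lambda}}$ yields coefficients that manifestly do not depend on $\SSTT$. The delicate point is that intermediate diagrams appearing during this reduction which factor through a loading $\mathbf{i} \vartriangleright \mathbf{i}_\mu$ must be absorbed into $A^{\vartriangleright \lambda}$ at the correct level in the weight poset, which is precisely the content of \cref{215}. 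This is where the argument genuinely uses the $\theta$-dominance order rather than a crude combinatorial bound on crossings or dots, and it is also the place where the choice of weighting $\theta$ and the hypothesis on $\epsilon$ made after \cref{sdfjhkhjkfhjdsgfhjkhjkhjgkfjshkdkjhfgkjhgdfs} enter in an essential way.
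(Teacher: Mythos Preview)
The paper does not supply its own proof of this theorem; it is quoted directly from \cite[Section~2.6]{Webster}, so there is no in-paper argument against which to compare your proposal. Your sketch is a faithful outline of Webster's strategy: reduce an arbitrary diagram to $\mathcal{C}$ by pushing strands and absorbing error terms into higher cells via \cref{215}, obtain linear independence from the faithful polynomial action, and read off axioms (1)--(6) from the diagrammatic description of $C_{\SSTS,\SSTT}=B_\SSTS B_\SSTT^\ast$ and the horizontal flip.

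Two small corrections. First, the conditions on $\epsilon$ are imposed \emph{before} \cref{sdfjhkhjkfhjdsgfhjkhjkhjgkfjshkdkjhfgkjhgdfs}, in the paragraph defining the Russian array, not after. Second, your alternative linear-independence argument via dimension count against the cyclotomic $q$-Schur algebra only works for well-separated $\theta$, so it does not directly cover arbitrary weightings; Webster's polynomial representation is the argument that works uniformly, and you are right to list it first.
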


\begin{thm}[\cite{Webster}, Theorem 6.2]\label{step1}
Over $\mathbb C$, the (basic algebra of the) diagrammatic  Cherednik algebra  $A(n,\theta,\kappa)$ is Koszul.
 Over $\mathbb C$, we therefore have that the graded decomposition numbers $d_{\lambda\mu}(t)\in t\NN_0(t)$ for $\lambda\neq \mu \in \Lambda$.
 \end{thm}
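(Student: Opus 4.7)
The plan is to establish Koszulity of (the basic algebra of) $A(n,\theta,\kappa)$ over $\mathbb{C}$ first, and then obtain the positivity statement $d_{\lambda\mu}(t) \in t\NN_0[t]$ as a formal consequence of Koszulity together with the quasi-hereditary structure from \cref{cellularitybreedscontempt}.

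For Koszulity, I would use Theorems 2.3 and 3.9 of \cite{Webster} to identify $A(n,\theta,\kappa)\text{-mod}$ with a block of category $\mathcal{O}$ for the rational cyclotomic Cherednik algebra associated to $G(l,1,n)$, and then invoke the Rouquier--Shan--Varagnolo--Vasserot/Losev equivalence \cite{RSVV, losev} between this category and a truncation of parabolic category $\mathcal{O}$ for a suitable affine Kac--Moody algebra. The affine parabolic category $\mathcal{O}$ is known to be Koszul via geometric methods (perverse sheaves on affine flag varieties and purity via the decomposition theorem, in the style of Beilinson--Ginzburg--Soergel). Since Koszulity is preserved on passing to basic algebras under Morita equivalence, this transports back to $A(n,\theta,\kappa)$.

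For the decomposition number statement, the key observation is that \cref{cellularitybreedscontempt} makes $A(n,\theta,\kappa)$ into a positively graded cellular algebra with semisimple degree-zero part, and Koszulity forces the radical filtration of any graded module to coincide with the grading filtration. In particular the head of $\Delta(\lambda)$ is $L(\lambda)$ concentrated in degree $0$, so every other composition factor $L(\mu)$ with $\mu \neq \lambda$ occurs only in strictly positive degrees; hence $d_{\lambda\mu}(t) \in t\NN_0[t]$. The only subtle point is checking that the grading provided by the diagrammatic presentation of \cref{grsubsec} agrees, up to an overall shift on each block, with the Koszul grading coming from category $\mathcal{O}$; this follows from the fact that both gradings are uniquely determined (on the basic algebra) by positivity and the requirement that the standard modules be generated in degree $0$.

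The main obstacle is by far the Koszulity assertion itself. It relies on several deep geometric results---most crucially the Koszulity of affine parabolic category $\mathcal{O}$ and the carefully engineered chain of equivalences with Cherednik category $\mathcal{O}$---and some care is needed at each step to track compatibility of gradings across the equivalences. Once Koszulity is in hand, the positivity of off-diagonal graded decomposition numbers is essentially formal.
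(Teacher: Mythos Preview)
The paper provides no proof of this theorem: it is simply cited from \cite[Theorem~6.2]{Webster}. So there is nothing in the present paper to compare your argument against; what you have written is a sketch of how one might establish the cited result rather than a comparison with any reasoning the authors supply.

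That said, your outline is broadly reasonable as a strategy. The chain of equivalences you describe (diagrammatic Cherednik $\leftrightarrow$ rational Cherednik category $\mathcal{O}$ $\leftrightarrow$ truncated affine parabolic $\mathcal{O}$, with Koszulity imported from the geometric side) is one legitimate route, and your deduction of $d_{\lambda\mu}(t)\in t\NN_0[t]$ from Koszulity plus the highest-weight structure is the standard one. Two cautions: first, Webster's own argument in \cite{Webster} is somewhat more self-contained and does not proceed by transporting Koszulity through the RSVV/Losev equivalences in the way you suggest, so if you want to match the cited reference you should consult his Section~6 directly. Second, your remark about uniqueness of the Koszul grading needs care: Koszul gradings are unique only up to certain normalisations, and verifying that Webster's diagrammatic grading is the Koszul one requires checking that the standard modules are generated in degree~$0$ and that the degree-zero part is semisimple, which in turn uses the explicit description of the idempotents $1_\lambda$ and the fact that $\deg(\SSTT^\lambda)=0$. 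These facts are available from \cref{cellularitybreedscontempt}, so the argument closes, but it is not quite as automatic as you suggest.
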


\begin{rmk}Notice that  the basis of  $A(n,\theta,\kappa)$ also respects the decomposition of $\mptn ln$ 
by residue sets.  Given a residue set $\mathcal{R}$, we let  $A_\mathcal{R}(n,\theta,\kappa)$ denote the subalgebra of $A(n,\theta,\kappa)$ with basis given by all $\theta$-diagrams indexed by multipartitions $\lambda,\mu,\nu \in \mptn ln(\mathcal{R})$.  
\end{rmk}

 \section{Nested sign sequences}\label{nestsec}
In this section we recall the combinatorics of \cite{tanteo13} for calculating graded decomposition numbers.   We  include several illustrative examples.   
 We fix $i\in (\ZZ/e\ZZ)$ throughout.  
Given  $ \mu\in \mptn ln$, $\kappa \in (\ZZ/e\ZZ)^l$ and $\theta\in \RR^l$, 
   we read the loading $\mathbf{i}_{\mu }$ from left to right
   and record any addable and removable $i$-nodes in order and associate the following path  
\[
\begin{cases}
\diagup  &  \text{for each  removable $i$-node  of $[\mu]$}; \\
\diagdown  &\text{for each addable $i$-node  of $[\mu]$}.  
\end{cases}
\]
Connect these   line segments   in order, to obtain the path $\mathcal{P}(\mu)$, which we refer to as the \emph{terrain} of $\mu$.  
We place a vertex at each point where two line segments meet.  
If the  $j$th edge of $\mathcal{P}(\mu)$ is of the form $\diagup$
and the $(j+1)$th edge is of the form $\diagdown$ then 
we refer to the $j$th and $(j+1)$th edges  as a \emph{ridge in the terrain} of $\mu$.
 If the  $j$th edge of $\mathcal{P}(\mu)$ is of the form $\diagdown$
and the $(j+1)$th edge is of the form $\diagup$ then 
we refer to the $j$th and $(j+1)$th edges  as a \emph{valley in the terrain} of $\mu$.

\begin{eg}
Let $l=6$ and $n=3$ and let $\nu$ denote the $l$-multipartition 
$((1),(1),\varnothing,\varnothing,\varnothing,(1))$ for  $\kappa=(1,1,1,1,1,1)$ and some well-separated weighting $\theta\in\RR^6$.  The terrain of $\mu$ is given by the leftmost diagram in \cref{Diagram1}.
There is a ridge between the second and third edges and a valley between the fifth and sixth edges.

Let $l=10$ and $n=6$ and let $\mu$ denote the $l$-multipartition 
$((1),(1), \varnothing,\varnothing,(1),\varnothing,(1), \varnothing,(1), (1))$
 $\kappa=(1,1,1,1,1,1,1,1,1,1)$  and 
some well-separated weighting $\theta\in\RR^{10}$.    
 The terrain of $\mu$ is given by the  rightmost diagram in \cref{Diagram1}.

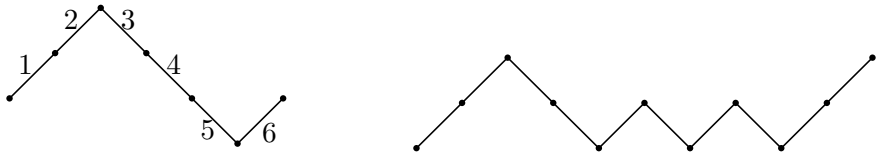
\begin{figure}[ht!]
\begin{center}
\begin{tikzpicture}[scale=0.6]
\draw [semithick] (0,0) -- (1,1) -- (2,2) -- (3,1) -- (4,0) -- (5,-1) -- (6,0);
\fill (0,0) circle (2pt);
\fill (1,1) circle (2pt);
\fill (2,2) circle (2pt);
\fill (3,1) circle (2pt);
\fill (4,0) circle (2pt);
\fill (5,-1) circle (2pt);
\fill (6,0) circle (2pt);
\draw (0.35,0.75) node {$1$};
\draw (1.35,1.75) node {$2$};
\draw (2.6,1.75) node {$3$};
\draw (3.6,0.75) node {$4$};
\draw (4.35,-0.7) node {$5$};
\draw (5.7,-0.75) node {$6$};
\end{tikzpicture}
\quad\quad\quad\quad
\begin{tikzpicture}[scale=0.6]
\draw [semithick] (0,0) -- (1,1) -- (2,2) -- (3,1) -- (4,0) -- (5,1) -- (6,0)-- (7,1)-- (8,0) -- (9,1)-- (10,2);
\fill (0,0) circle (2pt);
\fill  (1,1) circle (2pt);
\fill (2,2) circle (2pt);
\fill (3,1) circle (2pt);
\fill (4,0) circle (2pt);
\fill (5,1) circle (2pt);
\fill (6,0) circle (2pt);
\fill (7,1) circle (2pt);
\fill (8,0) circle (2pt);
\fill (9,1) circle (2pt);
\fill (10,2) circle (2pt);
 \end{tikzpicture}
\end{center}
\caption{Examples of the terrain of a multipartition.}
\label{Diagram1}
\end{figure}\end{eg}

Let  $ \mu,\lambda \in \mptn l n$ and suppose the $\mu  \lhd_\theta  \lambda$.  
We shall add a $\lambda$-decoration to the  terrain of $\mu$
  (denoted $\mathcal{P}(\mu,\lambda)$)
  as follows.  Let $A$ (respectively $R$) denote the  
set of nodes in $\lambda\setminus (\mu\cap \lambda)$ (respectively  $\mu\setminus (\mu\cap \lambda)$); in other words the set of nodes added to and removed from  $\mu$ to obtain the multipartition $\lambda$.  
 Associate to each  edge in $A$  
 an opening parenthesis and to each edge in $R$ a closing parenthesis.   This defines a natural pairing on the sets $A$ and $R$ according to the system of nested parentheses (that these parentheses form a nesting follows from the definition of the $\theta$-dominance order).  
 
 We identify a pair of parentheses with the edges   at which they open and close.  Given   $P=(j_1,j_2), Q=(k_1,k_2) \in \mathcal{P}( \mu,\lambda) $, we write $P\subset Q$ if $k_1<j_1$ and $ j_2<k_2$ and refer to this order as \emph{inclusion}.  We let $\mathcal{Q}(\mu,\lambda)$ denote the partially ordered set of 
pairs of parentheses  on $\mathcal{P}(\mu,\lambda)$ under inclusion.  

\begin{eg}
Let $l=10$ and $n=6$ and let 
$\mu=((1),(1), \varnothing,\varnothing,(1),\varnothing,(1), \varnothing,(1), (1))$
$\lambda=((1),(1),(1),(1),\varnothing,(1),(1),\varnothing,\varnothing,\varnothing)$ 
 The $\lambda$-decorated   $\mu$-terrain is given by the  diagram in \cref{Decoration}.

\begin{figure}[ht!]
\begin{center}
\scalefont{1.2}
\begin{tikzpicture}[scale=0.6]
\draw [semithick] (0,0) -- (1,1) -- (2,2) -- (3,1) -- (4,0) -- (5,1) -- (6,0)-- (7,1)-- (8,0) -- (9,1)-- (10,2);
\fill (0,0) circle (2pt);
\fill  (1,1) circle (2pt);
\fill (2,2) circle (2pt);
\fill (3,1) circle (2pt);
\fill (4,0) circle (2pt);
\fill (5,1) circle (2pt);
\fill (6,0) circle (2pt);
\fill (7,1) circle (2pt);
\fill (8,0) circle (2pt);
\fill (9,1) circle (2pt);
\fill (10,2) circle (2pt);
\draw (2.5,1.5) node {$\mathbf{(}$};
\draw (3.5,0.5) node {$\mathbf{(}$};
\draw (4.5,0.5) node {$\mathbf{)}$};
\draw (5.5,0.5) node {$\mathbf{(}$};
\draw (8.5,0.5) node {$\mathbf{)}$};
\draw (9.5,1.5) node {$\mathbf{)}$};
 \end{tikzpicture}
\end{center}
\caption{The terrain of $\mu$ decorated with multipartition $\lambda$.}
\label{Decoration}
\end{figure}
 \end{eg}
 
%

We shall now consider paths which may be obtained from $\mathcal{P}(\mu)$ by replacing up and down edges with horizontal line segments.  This requires us to slightly generalise the definition of a ridge, as follows. 
  If the  $j$th edge of $\mathcal{P}(\mu)$ is of the form $\diagup$  
 and  the $k$th edge of $\mathcal{P}(\mu)$ is of the form $\diagdown$  and 
  the  edges strictly between $j$ and $k$ are all horizontal line segments,
   then 
  we refer to this pair of edges  as a \emph{flattened ridge}.

\begin{defn}
Given $\mu,\lambda\in \mptn ln$, fix a pair of parentheses  $P \in \mathcal{Q}(\mu,\lambda)$;   
the set of {\em latticed paths of shape $\mu$  and decoration $\lambda$ with respect to the pair}    $P $ is the set of all  possible ways of replacing 
 some number of ridges formed 
 of edges \emph{strictly}
   between  the parentheses
 to obtain flattened ridges.  

We place an ordering on such paths by writing $\rho\leq \rho'$ if 
the $y$-coordinate of every  vertex in $\rho$ is less than or equal to  the 
$y$-coordinate of the corresponding vertex in $\rho'$.
 
Given $P\in\mathcal{Q}(\mu,\lambda)$  and $\rho$ a latticed path of shape $\mu$  and decoration $\lambda$, we say that   $\rho$ has norm, $\|\rho\|$,  given by the number of non-flattened steps strictly  between the fixed pair of brackets plus 1.  We refer to the unique path of maximal norm (in which no ridges are flattened) as the \emph{generic} latticed path.   
\end{defn}

\begin{eg}\label{yetanotherexample}
Suppose that $\mathcal{P}(\mu,\lambda)$ is as in \cref{Decoration}.  
There  are no  ridges strictly between the  pairs of parentheses $(4,5)$ and so the set of latticed paths consists only of the generic path.
There is a single ridge strictly between   $(6,9)$.  
Therefore  there are two distinct latticed paths $\rho$ with respect to  
$(6,9)$.  Namely,  the generic path $\mathcal{P}(\mu,\lambda)$ in Figure \ref{Decoration}
and the path in which we flatten the ridge between $(6,9)$; these are depicted in \cref{pathspostliron}.  These paths have  norms  3 and 1, respectively.

\begin{figure}[ht!]
\begin{center}
\scalefont{1.2}
\begin{tikzpicture}[scale=0.6]
\draw [semithick] (0,0) -- (1,1) -- (2,2) -- (3,1) -- (4,0) -- (5,1) -- (6,0)-- (7,1)-- (8,0) -- (9,1)-- (10,2);
\fill (0,0) circle (2pt);
\fill  (1,1) circle (2pt);
\fill (2,2) circle (2pt);
\fill (3,1) circle (2pt);
\fill (4,0) circle (2pt);
\fill (5,1) circle (2pt);
\fill (6,0) circle (2pt);
\fill (7,1) circle (2pt);
\fill (8,0) circle (2pt);
\fill (9,1) circle (2pt);
\fill (10,2) circle (2pt);
\draw (2.5,1.5) node {$\mathbf{(}$};
\draw (3.5,0.5) node {$\mathbf{(}$};
\draw (4.5,0.5) node {$\mathbf{)}$};
\draw (5.5,0.5) node {$\mathbf{(}$};
\draw (8.5,0.5) node {$\mathbf{)}$};
\draw (9.5,1.5) node {$\mathbf{)}$};
 \end{tikzpicture}
\quad
\begin{tikzpicture}[scale=0.6]
\draw [semithick] (0,0) -- (1,1) -- (2,2) -- (3,1) -- (4,0) -- (5,1) -- (6,0)-- (7,0) --(8,0) -- (9,1)-- (10,2);
\draw [dotted] (6,0) --(7,1)-- (8,0); 

\fill (0,0) circle (2pt);
\fill  (1,1) circle (2pt);
\fill (2,2) circle (2pt);
\fill (3,1) circle (2pt);
\fill (4,0) circle (2pt);
\fill (5,1) circle (2pt);
\fill (6,0) circle (2pt);
\fill (7,0) circle (2pt);
\fill (8,0) circle (2pt);
\fill (9,1) circle (2pt);
\fill (10,2) circle (2pt);
\draw (2.5,1.5) node {$\mathbf{(}$};
\draw (3.5,0.5) node {$\mathbf{(}$};
\draw (4.5,0.5) node {$\mathbf{)}$};
\draw (5.5,0.5) node {$\mathbf{(}$};
\draw (8.5,0.5) node {$\mathbf{)}$};
\draw (9.5,1.5) node {$\mathbf{)}$};
 \end{tikzpicture}
\end{center}
\caption{The latticed paths of shape $\mu$ and decoration  $\lambda$ with respect to the pair of parentheses
$(6,9)$.  These have norms 3 and 1 respectively.}
\label{pathspostliron}
\end{figure}
\end{eg}

\begin{defn} 
 A {\em well-nested latticed path for $\mathcal{P}(\mu,\lambda)$} is  a
 collection $\{\rho_P \mid P\in \mathcal{Q}(\mu,\lambda)\}$ of latticed paths  
  such that if  $P,Q\in\mathcal{Q}( \mu,\lambda)$  and $P\subset Q$, then $\rho_P \geq \rho_Q$.  We let $\Omega(\mu,\lambda)$ denote the set of all well-nested latticed paths.
 The norm of a well-nested latticed path is given by the sum of the norms of the constituent paths.
\end{defn}

\begin{eg}\label{newbyiui}Now consider $\mathcal{P}( \mu,\lambda)$  and the pair of parentheses given by $(3,10)$.  There are three  ridges between the  pair  of parentheses $(3,10)$.  
The set of all latticed paths with respect to the pair $(3,10)$
which are well-nested with respect to the rightmost 
latticed path in \cref{pathspostliron}
 is depicted in \cref{figure} below.
 The  paths  in \cref{figure} are also well-nested with respect to the leftmost 
latticed path in \cref{pathspostliron};  we also obtain a further 
 two additional paths with respect to the pair $(3,10)$, which are well-nested with respect to the leftmost diagram -- these are depicted in \cref{figurewhosurdaddy}.

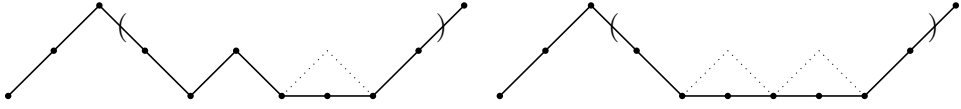
\begin{figure}[ht!]
\[
\begin{tikzpicture}[scale=0.6]
\draw [semithick] (0,0) -- (1,1) -- (2,2) -- (3,1) -- (4,0) -- (5,1) -- (6,0)-- (7,0) --(8,0) -- (9,1)-- (10,2);
\draw [dotted] (6,0) --(7,1)-- (8,0); 

\fill (0,0) circle (2pt);
\fill  (1,1) circle (2pt);
\fill (2,2) circle (2pt);
\fill (3,1) circle (2pt);
\fill (4,0) circle (2pt);
\fill (5,1) circle (2pt);
\fill (6,0) circle (2pt);
\fill (7,0) circle (2pt);
\fill (8,0) circle (2pt);
\fill (9,1) circle (2pt);
\fill (10,2) circle (2pt);
\draw (2.5,1.5) node {$\mathbf{(}$};
\draw (9.5,1.5) node {$\mathbf{)}$};
 \end{tikzpicture}
 \quad
 \begin{tikzpicture}[scale=0.6]
\draw [semithick] (0,0) -- (1,1) -- (2,2) -- (3,1) -- (4,0) -- (5,0) -- (6,0)-- (7,0) --(8,0) -- (9,1)-- (10,2);
\draw [dotted] (6,0) --(7,1)-- (8,0); 
\draw [dotted] (4,0) --(5,1)-- (6,0); 

\fill (0,0) circle (2pt);
\fill  (1,1) circle (2pt);
\fill (2,2) circle (2pt);
\fill (3,1) circle (2pt);
\fill (4,0) circle (2pt);
\fill (5,0) circle (2pt);
\fill (6,0) circle (2pt);
\fill (7,0) circle (2pt);
\fill (8,0) circle (2pt);
\fill (9,1) circle (2pt);
\fill (10,2) circle (2pt);
\draw (2.5,1.5) node {$\mathbf{(}$};
 \draw (9.5,1.5) node {$\mathbf{)}$};
 \end{tikzpicture}
\]
\caption{The set of all latticed paths with respect to the pair $(3,10)$ 
which are well-nested with respect to the rightmost  path in \cref{pathspostliron}.
These paths have norms 5 and 3 respectively.  
}
 \label{figure}
\end{figure}

\begin{figure}[ht!]
\[
\begin{tikzpicture}[scale=0.6]
\draw [semithick] (0,0) -- (1,1) -- (2,2) -- (3,1) -- (4,0) -- (5,1) -- (6,0)-- (7,1) --(8,0) -- (9,1)-- (10,2);
 
\fill (0,0) circle (2pt);
\fill  (1,1) circle (2pt);
\fill (2,2) circle (2pt);
\fill (3,1) circle (2pt);
\fill (4,0) circle (2pt);
\fill (5,1) circle (2pt);
\fill (6,0) circle (2pt);
\fill (7,1) circle (2pt);
\fill (8,0) circle (2pt);
\fill (9,1) circle (2pt);
\fill (10,2) circle (2pt);
\draw (2.5,1.5) node {$\mathbf{(}$};
\draw (9.5,1.5) node {$\mathbf{)}$};
 \end{tikzpicture}
 \quad
 \begin{tikzpicture}[scale=0.6]
\draw [semithick] (0,0) -- (1,1) -- (2,2) -- (3,1) -- (4,0) -- (5,0) -- (6,0)-- (7,1) --(8,0) -- (9,1)-- (10,2);
 \draw [dotted] (4,0) --(5,1)-- (6,0); 

\fill (0,0) circle (2pt);
\fill  (1,1) circle (2pt);
\fill (2,2) circle (2pt);
\fill (3,1) circle (2pt);
\fill (4,0) circle (2pt);
\fill (5,0) circle (2pt);
\fill (6,0) circle (2pt);
\fill (7,1) circle (2pt);
\fill (8,0) circle (2pt);
\fill (9,1) circle (2pt);
\fill (10,2) circle (2pt);
\draw (2.5,1.5) node {$\mathbf{(}$};
 \draw (9.5,1.5) node {$\mathbf{)}$};
 \end{tikzpicture}
\] 
\caption{The additional latticed paths with respect to the pair $(3,10)$ 
which are well-nested with respect to  the leftmost  path in \cref{pathspostliron}.
These paths have norms 7 and 5, respectively.
}
 \label{figurewhosurdaddy}
\end{figure}
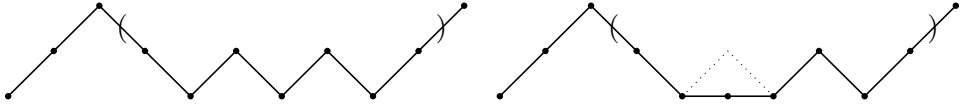
%
   We have that there are a total of  eight triples  of latticed paths (corresponding to the three distinct pairs of parentheses), six of which are well-nested.  We have that
\[
\sum_{\mathclap{\omega \in \Omega(\mu,\lambda)}} \;
t^{\|w\|} = t^{11}+2t^{9}+2t^7+ t^5.
\]
\end{eg}

\section{Cherednik algebras and their subquotients}\label{subqsec}

 In this  section,  
 we shall define the  subquotients  of the diagrammatic  Cherednik algebras in which we shall be interested for the remainder of the paper.  
\begin{defn}
A set of residues $S\subset \ZZ/e\ZZ$, is said to be \emph{adjacency-free} if $i \in S$ implies  $i\pm1\not\in S$.
\end{defn}

\begin{defn}
We say that  $\gamma\in\mptn ln (\mathcal{R})$ is \emph{$S$-admissible} if $\Remb_S(\gamma)=\emptyset$.
For an $S$-admissible $\gamma\in\mptn ln (\mathcal{R})$ and $\mathcal{M}$ a multiset of $S$-residues of  size $m$, we let $\Gamma = \Gamma(\mathcal{M})$ denote the set of all multipartitions which may be obtained from $\gamma$ by adding a set of nodes whose residue multiset is $\mathcal{M}$.
\end{defn}

\begin{eg}\label{admiseg}
Let $e=4$, $g=0.99$, $\kappa = (0,3)$, and $\theta=(0,7)$.
Let $\gamma=((3,2,1^3),(4,2^2,1))$; this bipartition has residue set $\mathcal{R}=\{0^5,1^4,2^5,3^3\}$.

Given  $S=\{1,3\}$ and $\gamma$ as above, we have that $\Remb_S(\gamma)=\emptyset $ and  therefore $\gamma$ is $S$-admissible. Given $\mathcal{M}=\{1,3^3\}$, 
 the set $\Gamma=\Gamma(\mathcal{M})$ consists of the 20 bipartitions which may be obtained by adding a single $1$-node and three $3$-nodes to the bipartition $\gamma$. For example,
we have that  $\alpha=((4,3,1^4),(5,2^2,1))$
and $\beta=((3,2,1^4),(5,2^3,1))$ both belong to $\Gamma$.  
 
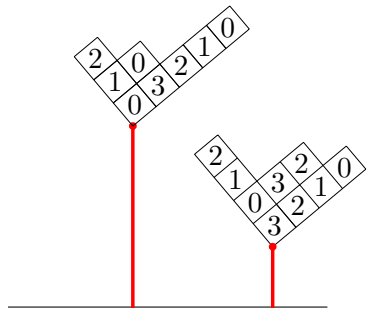
\begin{figure}[ht!]
\[
\begin{tikzpicture}[scale=0.4]
 {\clip (-2.5,-2) rectangle ++(14,11.2);
  \path [draw,name path=upward line] (-4,-2) -- (8,-2); 
         \draw[black,thin] 
      (6.2, 0)              coordinate (X1)
            -- +(40:1) coordinate (X2)  
      (X2)   -- +(130:1) coordinate (X3)
      (X3)   -- +(220:1) coordinate (X4)       
      (X4)   -- +(310:1) coordinate (X5)   ;  
      \draw  (X1)++(40:0.5)++(130:0.5) node {${3}$}   ;  
             \draw[black,thin] 
      (X2)              coordinate (Y1)
            -- +(40:1) coordinate (Y2)  
      (Y2)   -- +(130:1) coordinate (Y3)
      (Y3)   -- +(220:1) coordinate (Y4)       
      (Y4)   -- +(310:1) coordinate (Y5)   ;    
            \draw  (X2)++(40:0.5)++(130:0.5) node {${2}$}   ;  
   \draw[black,thin] 
      (Y2)              
            -- +(40:1) coordinate (YY2)  
      (YY2)   -- +(130:1) coordinate (YY3)
       (YY3)   -- +(220:1) coordinate (YY4)       
 ;         \draw  (Y2)++(40:0.5)++(130:0.5) node {${1}$}   ;  
   \draw[black,thin] 
      (YY2)              
            -- +(40:1) coordinate (YYY2)  
      (YYY2)   -- +(130:1) coordinate (YYY3)
       (YYY3)   -- +(220:1) coordinate (YYY4)       
 ;         \draw  (YY2)++(40:0.5)++(130:0.5) node {${0}$}   ;  
    \draw[black,thin] 
      (X3)              
            -- +(130:1) coordinate (XX3)  
      (XX3)   -- +(220:1) coordinate (XX4)
       (XX4)   -- +(310:1) coordinate (XX5)       
 ;  
       \draw  (X3)++(40:0.5)++(130:0.5)++(220:1) node {${0}$}   ;  
    \draw[black,thin] 
      (XX3)              
            -- +(40:1) coordinate (XY3)  
      (XY3)   -- +(310:1) coordinate (XY4)
 ;  
     \draw  (X3)++(40:0.5)++(130:0.5) node {${3}$}   ;  
      \draw[black,thin] 
      (XY3)              
            -- +(40:1) coordinate (XYY3)  
      (XYY3)   -- +(310:1) coordinate (XYY4)
 ;  
     \draw  (YY4)++(40:0.5)++(130:0.5) node {${2}$}   ;  
      \draw[black,thin] 
      (XX3)              
            -- +(130:1) coordinate (XXX3)  
      (XXX3)   -- +(220:1) coordinate (XXX4)
            (XXX4)   -- +(310:1) coordinate (XXX5)
 ;  
 
     \draw  (XXX5)++(40:0.5)++(130:0.5) node {${1}$}   ;  
     \draw[black,thin] 
      (XXX3)              
            -- +(130:1) coordinate (XXXX3)  
      (XXXX3)   -- +(220:1) coordinate (XXXX4)
            (XXXX4)   -- +(310:1) coordinate (XXXX5)
 ;  
 
     \draw  (XXXX5)++(40:0.5)++(130:0.5) node {${2}$}   ;  
         \draw[wei2](X1) -- +(270:10);
        \draw[wei2](X1)  circle (2pt);
               \draw[wei2](1.6,4) -- +(270:10);       \draw[wei2](1.6,4)  circle (2pt);

     \draw[black,thin] 
      (1.6,4)              coordinate (X1)
            -- +(40:1) coordinate (X2)  
      (X2)   -- +(130:1) coordinate (X3)
      (X3)   -- +(220:1) coordinate (X4)       
      (X4)   -- +(310:1) coordinate (X5)   ;    
 \draw  (X1)++(40:0.5)++(130:0.5) node {${0}$}   ;  
     \draw[black,thin] 
       (X3)
            -- +(40:1) coordinate (VVX2)  
      (VVX2)   -- +(130:1) coordinate (VVX3)
      (VVX3)   -- +(220:1) coordinate (VVX4)       
      (VVX4)   -- +(310:1) coordinate (X5)   ;    
 \draw  (X3)++(40:0.5)++(130:0.5) node {${0}$}   ;  
     \draw[black,thin] 
      (X4) -- +(40:1) coordinate (XX2)  
      (XX2)   -- +(130:1) coordinate (XX3)
      (XX3)   -- +(220:1) coordinate (XX4)       
      (XX4)   -- +(310:1) coordinate (XX5)   ;    
 \draw  (X4)++(40:0.5)++(130:0.5) node {${1}$}   ;  
 \draw[black,thin] 
      (XX4) -- +(40:1) coordinate (XXX2)  
      (XXX2)   -- +(130:1) coordinate (XXX3)
      (XXX3)   -- +(220:1) coordinate (XXX4)       
      (XXX4)   -- +(310:1) coordinate (XXX5)   ;    
 \draw  (XX4)++(40:0.5)++(130:0.5) node {${2}$}   ;  

               \draw[black,thin] 
      (X2)              coordinate (Y1)
            -- +(40:1) coordinate (Y2)  
      (Y2)   -- +(130:1) coordinate (Y3)
      (Y3)   -- +(220:1) coordinate (Y4)       
      (Y4)   -- +(310:1) coordinate (Y5)   ;
       \draw  (Y1)++(40:0.5)++(130:0.5) node {${3}$} ; 
                \draw[black,thin] 
      (Y2)              coordinate (Y1)
            -- +(40:1) coordinate (YY2)  
      (YY2)   -- +(130:1) coordinate (YY3)
      (YY3)   -- +(220:1) coordinate (YY4)       
      (YY4)   -- +(310:1) coordinate (YY5)   ;
       \draw  (Y1)++(40:0.5)++(130:0.5) node {${2}$} ; 
               \draw[black,thin] 
      (YY2)              coordinate (YY1)
            -- +(40:1) coordinate (YYY2)  
      (YYY2)   -- +(130:1) coordinate (YYY3)
      (YYY3)   -- +(220:1) coordinate (YYY4)       
      (YYY4)   -- +(310:1) coordinate (YYY5)   ;
       \draw  (YY1)++(40:0.5)++(130:0.5) node {${1}$} ; 
   \draw[black,thin] 
      (YYY2)              coordinate (YYY1)
            -- +(40:1) coordinate (YYYY2)  
      (YYYY2)   -- +(130:1) coordinate (YYYY3)
      (YYYY3)   -- +(220:1) coordinate (YYYY4)       
      (YYYY4)   -- +(310:1) coordinate (YYYY5)   ;
       \draw  (YYY1)++(40:0.5)++(130:0.5) node {${0}$} ;

         }
    \end{tikzpicture}
\]
\caption{The bipartition $\gamma=((3,2,1^3),(4,2^2,1))$ for 
 $e=4$, $\kappa=(0,3)$, $g=0.99$, and   $\theta=(0,7)$.}
\label{admissible}
\end{figure}
\end{eg} 

We wish to consider the subalgebra $A_{\mathcal{R}\cup\mathcal{M}}(n+m,\theta,\kappa)$.
In particular, we wish to consider the subquotient of $A_{\mathcal{R}\cup\mathcal{M}}(n+m,\theta,\kappa)$ whose representations are indexed by the subset of multipartitions $\Gamma \subset \mptn l{n+m}(\mathcal{R}\cup\mathcal{M})$.

The set $\Gamma$ has unique maximal and minimal elements under the $\theta$-dominance order which we shall now describe.
We let $\gamma^+$ denote the $\dom_\theta$-maximal multipartition in $\Gamma$; that is, the multipartition obtained from $\gamma$ by adding all nodes as far left as possible. Similarly, denote by $\gamma^-$ the $\dom_\theta$-minimal multipartition in $\Gamma$, obtained from $\gamma$ by adding nodes as far to the right as possible.
It is clear that we can characterise $\Gamma$ as follows:
\[
\Gamma=\{\lambda \in  \mptn l{n+m}(\mathcal{R}\cup\mathcal{M}) \mid \gamma^+ \dom_\theta \lambda \dom_\theta \gamma^-\}
\]

\begin{eg}
In the example above, $\gamma^+=((4,3,2,1^2), (5,2^2,1))$
and $\gamma^-=((3,2,1^4), (5,2^3,1))$.  
\end{eg}

\begin{defn}
Given  $\gamma\in \mptn ln$, we define   idempotents
\[
e = \sum_{\mu   \dom \gamma^-} 1_\mu \quad \text{and}\quad   
f = \sum_{\mu   \ntrianglelefteqslant \gamma^+} 1_\mu   
\]
in $A_{\mathcal{R}\cup\mathcal{M}}(n+m,\theta,\kappa)$.  
We let $ A_{\Gamma}= A_\Gamma(\calm, \theta  )$ denote the subquotient of $A_{\mathcal{R}\cup\calm} = A_{\mathcal{R}\cup\calm}(n+m,\theta,\kappa)$ given by
\[
f(A_{\mathcal{R}\cup\calm}/(A_{\mathcal{R}\cup\calm} f A_{\mathcal{R}\cup\calm}))e.
\]
\end{defn}


\begin{prop}\label{3.6rmk}
The algebra $A_{\Gamma}$ is a   graded cellular algebra with a theory of highest weights.  
The cellular basis is given by
 \[
 \{C_{\SSTS,\SSTT}\mid \SSTS \in \SStd(\lambda,\mu), \, \SSTT\in \SStd(\lambda,\nu), \, \lambda,\mu, \nu \in \Gamma\},
 \]
 with respect to the  $\theta$-dominance order on  $\Gamma$. In particular, for $\la, \mu \in \Gamma$, we have that the graded decomposition number $d_{\la\mu}(t)$ for the algebras $A(n+m,\theta,\kappa)$ and $A_{\Gamma}$ are identical, and moreover, if $\la\neq \mu$, then $d_{\la\mu}(t)\in t \mathbb{N}_0[t]$.
\end{prop}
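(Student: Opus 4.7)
The plan is to realise $A_\Gamma$ as an idempotent-truncated quotient of the graded cellular algebra $A_{\mathcal{R}\cup\calm}(n+m,\theta,\kappa)$ from \cref{cellularitybreedscontempt}, and to transfer the cellular structure and highest weight theory in two stages. Since $\Gamma$ is the $\theta$-dominance interval $[\gamma^-,\gamma^+]$, two operations are required: we first quotient out the cells indexed by weights $\lambda \ntrianglelefteqslant_\theta \gamma^+$, and then pass to the corner algebra cutting down to weights $\lambda \trianglerighteq_\theta \gamma^-$.

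The first step is to identify the two-sided ideal $J := A_{\mathcal{R}\cup\calm}\, f\, A_{\mathcal{R}\cup\calm}$ with the graded $\Bbbk$-span of
\[
\{C_{\SSTS,\SSTT} \mid \SSTS \in \SStd(\lambda,\mu),\ \SSTT \in \SStd(\lambda,\nu),\ \lambda \ntrianglelefteqslant_\theta \gamma^+\}.
\]
For the forward inclusion, I proceed by downward induction in the $\theta$-dominance order. The product $C_{\SSTS,\SSTT^\lambda}\, 1_\lambda\, C_{\SSTT^\lambda,\SSTT}$ lies in $J$ because $1_\lambda$ is an idempotent summand of $f$; combining properties (3) and (6) of \cref{defn1}, this product equals $C_{\SSTS,\SSTT}$ modulo $A^{\vartriangleright \lambda}$. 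The error term is a combination of elements $C_{\SSTU,\SSTV}^\mu$ with $\mu \vartriangleright_\theta \lambda$, and any such $\mu$ must also satisfy $\mu \ntrianglelefteqslant_\theta \gamma^+$ (otherwise $\lambda \vartriangleleft_\theta \mu \trianglelefteqslant_\theta \gamma^+$ would force $\lambda \trianglelefteqslant_\theta \gamma^+$), so the error term already lies in $J$ by the induction hypothesis; the base case consists of the maximal such $\lambda$, for which no such $\mu$ exists in the full poset. For the reverse inclusion, property (6) of \cref{defn1} together with the standard fact that $\SStd(\lambda,\mu) \neq \emptyset$ forces $\mu \trianglelefteqslant_\theta \lambda$ shows that every cellular basis element appearing in the expansion of any $afb$ has shape $\lambda$ dominating some $\mu$ in the support of $f$, whence $\lambda \ntrianglelefteqslant_\theta \gamma^+$ by transitivity.

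Consequently, the quotient $\overline{A} := A_{\mathcal{R}\cup\calm}/J$ is a graded cellular algebra with highest weight theory on the weight poset $\{\lambda : \lambda \trianglelefteqslant_\theta \gamma^+\}$, and the corner algebra $\overline{e}\,\overline A\,\overline{e} \cong A_\Gamma$ (where $\overline{e}$ is the image of $e$) inherits a graded cellular structure with highest weight theory by the standard theory of idempotent truncation: the weight poset restricts to $\{\lambda : \overline{e}\, 1_\lambda \neq 0\} = \Gamma$, and the surviving cellular basis elements are exactly those $C_{\SSTS,\SSTT}$ with $\SSTS \in \SStd(\lambda,\mu)$, $\SSTT \in \SStd(\lambda,\nu)$ and $\lambda,\mu,\nu \in \Gamma$. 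For the decomposition numbers, each $L(\mu)$ with $\mu \in \Gamma$ satisfies $1_\mu L(\mu) \neq 0$ and has composition factors supported at weights $\trianglelefteqslant_\theta \mu \trianglelefteqslant_\theta \gamma^+$, so it descends to $\overline A$ and survives the corner truncation as the simple $A_\Gamma$-module $L_{A_\Gamma}(\mu)$. The same holds for the standard modules, yielding $\Delta_{A_\Gamma}(\lambda) = \overline{e}\,\Delta(\lambda)$ and preserving every composition multiplicity $[\Delta(\lambda):L(\mu)\langle k\rangle]$ for $\lambda,\mu \in \Gamma$. The positivity statement $d_{\lambda\mu}(t) \in t\mathbb{N}_0[t]$ for $\lambda \neq \mu$ is then inherited directly from \cref{step1}.

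The main obstacle lies in the first step---the precise identification of $J$ as a cellular span---which demands careful coordination of the cellular axioms with the $\theta$-dominance behaviour of the semistandard tableau sets. Once this identification is established, both the corner truncation and the decomposition-number argument are routine applications of standard cellular algebra techniques.
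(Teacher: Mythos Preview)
Your proof is correct and follows essentially the same two-step strategy as the paper: identify the ideal $A_{\mathcal{R}\cup\calm}\, f\, A_{\mathcal{R}\cup\calm}$ with the $\Bbbk$-span of cellular basis elements whose shape lies in $F=\{\lambda\mid \lambda\ntrianglelefteqslant_\theta \gamma^+\}$, pass to the quotient, and then apply the idempotent truncation by $e$; the preservation of decomposition numbers and the positivity claim are inherited exactly as you say from \cite[Appendix]{Donkin} and \cref{step1}.

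The one tactical difference worth noting is in the inclusion $\{C_{\SSTS,\SSTT}:\Shape(\SSTS)=\Shape(\SSTT)\in F\}\subseteq J$. You argue abstractly, by downward induction and the bilinear form identity $C_{\SSTS,\SSTT^\lambda}\,1_\lambda\,C_{\SSTT^\lambda,\SSTT}\equiv C_{\SSTS,\SSTT}\pmod{A^{\vartriangleright\lambda}}$; this works in any graded cellular algebra with a highest weight theory. The paper instead exploits the specific diagrammatic realisation $C_{\SSTS,\SSTT}=B_\SSTS\,1_\lambda\,B_\SSTT^\ast$ of the basis elements, from which the inclusion is immediate without induction. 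Your argument is slightly longer but more portable; the paper's is shorter but relies on the concrete model. Either suffices here.
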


\begin{proof}
By definition, the sets   $E = \{\mu \mid \mu \dom \gamma^-\}$ and $F = \{\mu \mid \mu   \ntrianglelefteqslant \gamma^+\}$ are both cosaturated (in the sense of \cite[Appendix]{Donkin}) in the $\theta$-dominance ordering.
We claim that 
\[
A_{\mathcal{R}\cup\calm} f A_{\mathcal{R}\cup\calm} =
\langle C_{\SSTS\SSTT} \mid \SSTS \in \SStd(\lambda,\mu), \, \SSTT\in \SStd(\lambda,\nu), \, \lambda \in F, \mu,\nu\in\mptn ln \rangle_{\CC}.
\]
To see that the right-hand side is contained in the left-hand side, we note that if $\SSTS$ and $\SSTT$ are semistandard tableaux of shape $\lambda\in F$, then $C_{\SSTS\SSTT}=B_\SSTS  1_\lambda B_\SSTT^\ast$ by definition.
The reverse containment follows from axiom (3) of \cref{defn1} because each element $1_\lambda= C_{\SSTT^\lambda \SSTT^\lambda}$ is itself an element of the cellular basis.

The resulting quotient algebra has   basis indexed by $\SSTS \in \SStd(\lambda,\mu), \, \SSTT\in \SStd(\lambda,\nu), \, \lambda,\mu, \nu \not\in F$ (by conditions (2) and (3) of Definition \ref{defn1} and Theorem  \ref{cellularitybreedscontempt}).
Applying the idempotent truncation to this basis (and using (6) of Definition \ref{defn1}) we obtain the required basis of $A_{\Gamma}$.
The graded decomposition numbers (as well as dimensions of higher extension groups) are preserved under both the quotient and truncation maps, see for example \cite[Appendix]{Donkin} for the ungraded case. Applying Theorem \ref{step1} will thus prove the claim about graded decomposition numbers.
\end{proof}

\section{An isomorphism theorem}\label{isosec}
Let $m,e,\overline e \in \ZZ_{> 0}$ and let $i  \in \ZZ/e\ZZ$, $\overline i  \in \ZZ/\bar e\ZZ$. Suppose $\gamma$ and $\overline\gamma$ are $i$-admissible and $\overline  i$-admissible multipartitions, respectively.
We let $\mathcal M$ (respectively $\overline{\mathcal M})$ be a set of $m$ $i$-nodes (respectively $\overline i$-nodes), and let $\Gamma=\Gamma(\mathcal M)$ and $\overline\Gamma=\Gamma(\overline{\mathcal M})$, i.e.~$\Gamma$ is the set of multipartitions obtained from $\gamma$ by adding $m$ $i$-nodes, and similarly for $\overline\Gamma$.
 We shall associate a  sequence $\chi(\gamma)$  to $\gamma$
  (respectively $\chi(\overline\gamma)$   to $\overline\gamma$)
   which records the 
 series of $i$-\emph{diagonals} in $\gamma$ (respectively $\overline i$-\emph{diagonals} in $\gamma$).

  If  
  $\chi(\gamma)=\chi(\overline\gamma)$, then we shall show that  $A_{\Gamma} $ and 
  $A_{\overline\Gamma }$ are isomorphic as graded $\Bbbk$-algebras.  
 These isomorphisms 
   are independent of the quantum characteristic,  $e$-multicharge, weighting,
 level, and     degree  of the  corresponding diagrammatic  Cherednik algebras.  This provides new structural information 
 even for the classical Schur algebras in level 1.

  \subsection{Building $i$-diagonals from bricks}  
The combinatorics needed to state and prove our isomorphism theorem is that 
of  diagonals in the Young diagram of $\gamma$, which we now describe.  
\begin{defn}
Let $1\leq  k \leq l$  and let $(r,c) \in [\gamma^{(k)} ]\cup  
\Add(\gamma^{(k)})$ 
be an $i$-node.  
We refer to the set of nodes 
\[
\mathbf{D}= \{(a,b)
 \in [\gamma^{(k)}]  \mid a-b \in \{r-c-1,r-c,r-c+1\} \}
 \]
  as the associated \emph{$i$-diagonal}.  
  If $a-b$ is greater than, less than, or equal to zero,
 we say that the $i$-diagonal is to  the left of, right of, or centred on $\theta_k$, respectively.

We say that an $i$-diagonal in $[\gamma]$ 
 is \emph{visible} (respectively \emph{invisible})
 if the diagonal has (respectively does not have) 
 an addable $i$-node.
\end{defn}

\begin{eg}\label{notchiexample}
Let $e=5$, $\gamma=(10,9^2,6,4^2,3,2,1^2)$, and suppose $\kappa=(0)$, $\theta=(0)$ and $g=1$.  
This partition contains five $0$-diagonals 
(see Figure \ref{level1}).   
\end{eg}

\begin{figure}[h]
\begin{center}\scalefont{0.6}
\begin{tikzpicture}[scale=1]
   \path (0,0) coordinate (origin);     \draw[wei2] (0,0)   circle (2pt);
  \draw[thick] (origin) 
   --++(130:10*0.4)
   --++(40:1*0.4)
  --++(-50:1*0.4)	
  --++(40:2*0.4)
  --++(-50:3*0.4)
  --++(40:1*0.4)
  --++(-50:1*0.4)
  --++(-50:1*0.4)--++(40:1*0.4)
    --++(40:1*0.4)
  --++(-50:1*0.4)
  --++(40:1*0.4)
  --++(-50:1*0.4)
  --++(40:1*0.4)
  --++(-50:1*0.4)
  --++(40:2*0.4)
  --++(-50:1*0.4)
    --++(220:10*0.4);
     \clip (origin) 
     --++(130:10*0.4)
   --++(40:1*0.4)
  --++(-50:1*0.4)	
  --++(40:2*0.4)
  --++(-50:3*0.4)
  --++(40:1*0.4)
  --++(-50:1*0.4)
  --++(-50:1*0.4)--++(40:1*0.4)
    --++(40:1*0.4)
  --++(-50:1*0.4)
  --++(40:1*0.4)
  --++(-50:1*0.4)
  --++(40:1*0.4)
  --++(-50:1*0.4)
  --++(40:2*0.4)
  --++(-50:1*0.4)
    --++(220:10*0.4);
   \path (40:1cm) coordinate (A1);
  \path (40:2cm) coordinate (A2);
  \path (40:3cm) coordinate (A3);
  \path (40:4cm) coordinate (A4);
  \path (130:1cm) coordinate (B1);
  \path (130:2cm) coordinate (B2);
  \path (130:3cm) coordinate (B3);
  \path (130:4cm) coordinate (B4);
  \path (A1) ++(130:3cm) coordinate (C1);
  \path (A2) ++(130:2cm) coordinate (C2);
  \path (A3) ++(130:1cm) coordinate (C3);
  \foreach \i in {1,...,19}
  {
    \path (origin)++(40:0.4*\i cm)  coordinate (a\i);
    \path (origin)++(130:0.4*\i cm)  coordinate (b\i);
    \path (a\i)++(130:4cm) coordinate (ca\i);
    \path (b\i)++(40:4cm) coordinate (cb\i);
    \draw[thin,gray] (a\i) -- (ca\i)  (b\i) -- (cb\i); 
   \draw  (origin)++(40:0.2)++(130:0.2) node {${0}$} ; 
   \draw  (origin)
   ++(40:0.4)   ++(130:0.4)
   ++(40:0.2)++(130:0.2) node {${0}$} ; 
    \draw  (origin)
   ++(40:0.4)   ++(130:0.4)   ++(40:0.4)   ++(130:0.4)
   ++(40:0.2)++(130:0.2) node {${0}$} ; 
    \draw  (origin)
   ++(40:0.4)   ++(130:0.4)    ++(40:0.4)   ++(130:0.4)   ++(40:0.4)   ++(130:0.4)
   ++(40:0.2)++(130:0.2) node {${0}$} ; 
     \draw  (origin)
   ++(40:0.4)   ++(130:0.4)      ++(40:0.4)   ++(130:0.4)    ++(40:0.4)   ++(130:0.4)   ++(40:0.4)   ++(130:0.4)
   ++(40:0.2)++(130:0.2) node {${0}$} ; 
      \draw  (origin)
   ++(40:0.4) 
   ++(40:0.2)++(130:0.2) node {${4}$} ; 
      \draw  (origin)
   ++(40:0.4)++(40:0.4)   ++(130:0.4)
   ++(40:0.2)++(130:0.2) node {${4}$} ; 
    \draw  (origin)
  ++(40:0.4) ++(40:0.4)   ++(130:0.4)   ++(40:0.4)   ++(130:0.4)
   ++(40:0.2)++(130:0.2) node {${4}$} ; 
    \draw  (origin)
  ++(40:0.4) ++(40:0.4)   ++(130:0.4)    ++(40:0.4)   ++(130:0.4)   ++(40:0.4)   ++(130:0.4)
   ++(40:0.2)++(130:0.2) node {${4}$} ; 
     \draw  (origin)
 ++(40:0.4)  ++(40:0.4)   ++(130:0.4)      ++(40:0.4)   ++(130:0.4)    ++(40:0.4)   ++(130:0.4)   ++(40:0.4)   ++(130:0.4)
   ++(40:0.2)++(130:0.2) node {${4}$} ; 
      \draw  (origin)
   ++(130:0.4) 
   ++(40:0.2)++(130:0.2) node {${1}$} ; 
      \draw  (origin)
   ++(130:0.4) ++(40:0.4)   ++(130:0.4)
   ++(40:0.2)++(130:0.2) node {${1}$} ; 
    \draw  (origin)
   ++(130:0.4)  ++(40:0.4)   ++(130:0.4)   ++(40:0.4)   ++(130:0.4)
   ++(40:0.2)++(130:0.2) node {${1}$} ; 
    \draw  (origin)
   ++(130:0.4)  ++(40:0.4)   ++(130:0.4)    ++(40:0.4)   ++(130:0.4)   ++(40:0.4)   ++(130:0.4)
   ++(40:0.2)++(130:0.2) node {${1}$} ; 
     \draw  (origin)
   ++(130:0.4)   ++(40:0.4)   ++(130:0.4)      ++(40:0.4)   ++(130:0.4)    ++(40:0.4)   ++(130:0.4)   ++(40:0.4)   ++(130:0.4)
   ++(40:0.2)++(130:0.2) node {${1}$} ; 

    \path (origin) ++(130:2cm) coordinate (XXXXXX);

   \draw  (XXXXXX)++(40:0.2)++(130:0.2) node {${0}$} ; 
   \draw  (XXXXXX)
   ++(40:0.4)   ++(130:0.4)
   ++(40:0.2)++(130:0.2) node {${0}$} ; 
    \draw  (XXXXXX)
   ++(40:0.4)   ++(130:0.4)   ++(40:0.4)   ++(130:0.4)
   ++(40:0.2)++(130:0.2) node {${0}$} ; 
    \draw  (XXXXXX)
   ++(40:0.4)   ++(130:0.4)    ++(40:0.4)   ++(130:0.4)   ++(40:0.4)   ++(130:0.4)
   ++(40:0.2)++(130:0.2) node {${0}$} ; 
     \draw  (XXXXXX)
   ++(40:0.4)   ++(130:0.4)      ++(40:0.4)   ++(130:0.4)    ++(40:0.4)   ++(130:0.4)   ++(40:0.4)   ++(130:0.4)
   ++(40:0.2)++(130:0.2) node {${0}$} ; 
      \draw  (XXXXXX)
   ++(40:0.4) 
   ++(40:0.2)++(130:0.2) node {${4}$} ; 
      \draw  (XXXXXX)
   ++(40:0.4)++(40:0.4)   ++(130:0.4)
   ++(40:0.2)++(130:0.2) node {${4}$} ; 
    \draw  (XXXXXX)
  ++(40:0.4) ++(40:0.4)   ++(130:0.4)   ++(40:0.4)   ++(130:0.4)
   ++(40:0.2)++(130:0.2) node {${4}$} ; 
    \draw  (XXXXXX)
  ++(40:0.4) ++(40:0.4)   ++(130:0.4)    ++(40:0.4)   ++(130:0.4)   ++(40:0.4)   ++(130:0.4)
   ++(40:0.2)++(130:0.2) node {${4}$} ; 
     \draw  (XXXXXX)
 ++(40:0.4)  ++(40:0.4)   ++(130:0.4)      ++(40:0.4)   ++(130:0.4)    ++(40:0.4)   ++(130:0.4)   ++(40:0.4)   ++(130:0.4)
   ++(40:0.2)++(130:0.2) node {${4}$} ; 
      \draw  (XXXXXX)
   ++(130:0.4) 
   ++(40:0.2)++(130:0.2) node {${1}$} ; 
      \draw  (XXXXXX)
   ++(130:0.4) ++(40:0.4)   ++(130:0.4)
   ++(40:0.2)++(130:0.2) node {${1}$} ; 
    \draw  (XXXXXX)
   ++(130:0.4)  ++(40:0.4)   ++(130:0.4)   ++(40:0.4)   ++(130:0.4)
   ++(40:0.2)++(130:0.2) node {${1}$} ; 
    \draw  (XXXXXX)
   ++(130:0.4)  ++(40:0.4)   ++(130:0.4)    ++(40:0.4)   ++(130:0.4)   ++(40:0.4)   ++(130:0.4)
   ++(40:0.2)++(130:0.2) node {${1}$} ; 
     \draw  (XXXXXX)
   ++(130:0.4)   ++(40:0.4)   ++(130:0.4)      ++(40:0.4)   ++(130:0.4)    ++(40:0.4)   ++(130:0.4)   ++(40:0.4)   ++(130:0.4)
   ++(40:0.2)++(130:0.2) node {${1}$} ; 
  
     \draw  (XXXXXX)++(40:0.2)++(130:0.2)++(-50:0.4) node {${4}$} ; 
     \draw  (XXXXXX)++(40:0.2)++(130:0.2)++(130:0.4*4) node {${4}$} ; 
  
    \path (origin) ++(40:2cm) coordinate (XXXXXX);

   \draw  (XXXXXX)++(40:0.2)++(130:0.2)++(220:0.4) node {${1}$} ; 
   \draw  (XXXXXX)++(40:0.2)++(130:0.2) node {${0}$} ; 
   \draw  (XXXXXX)
   ++(40:0.4)   ++(130:0.4)
   ++(40:0.2)++(130:0.2) node {${0}$} ; 
    \draw  (XXXXXX)
   ++(40:0.4)   ++(130:0.4)   ++(40:0.4)   ++(130:0.4)
   ++(40:0.2)++(130:0.2) node {${0}$} ; 
    \draw  (XXXXXX)
   ++(40:0.4)   ++(130:0.4)    ++(40:0.4)   ++(130:0.4)   ++(40:0.4)   ++(130:0.4)
   ++(40:0.2)++(130:0.2) node {${0}$} ; 
     \draw  (XXXXXX)
   ++(40:0.4)   ++(130:0.4)      ++(40:0.4)   ++(130:0.4)    ++(40:0.4)   ++(130:0.4)   ++(40:0.4)   ++(130:0.4)
   ++(40:0.2)++(130:0.2) node {${0}$} ; 
      \draw  (XXXXXX)
   ++(40:0.4) 
   ++(40:0.2)++(130:0.2) node {${4}$} ; 
      \draw  (XXXXXX)
   ++(40:0.4)++(40:0.4)   ++(130:0.4)
   ++(40:0.2)++(130:0.2) node {${4}$} ; 
    \draw  (XXXXXX)
  ++(40:0.4) ++(40:0.4)   ++(130:0.4)   ++(40:0.4)   ++(130:0.4)
   ++(40:0.2)++(130:0.2) node {${4}$} ; 
    \draw  (XXXXXX)
  ++(40:0.4) ++(40:0.4)   ++(130:0.4)    ++(40:0.4)   ++(130:0.4)   ++(40:0.4)   ++(130:0.4)
   ++(40:0.2)++(130:0.2) node {${4}$} ; 
     \draw  (XXXXXX)
 ++(40:0.4)  ++(40:0.4)   ++(130:0.4)      ++(40:0.4)   ++(130:0.4)    ++(40:0.4)   ++(130:0.4)   ++(40:0.4)   ++(130:0.4)
   ++(40:0.2)++(130:0.2) node {${4}$} ; 
      \draw  (XXXXXX)
   ++(130:0.4) 
   ++(40:0.2)++(130:0.2) node {${1}$} ; 
      \draw  (XXXXXX)
   ++(130:0.4) ++(40:0.4)   ++(130:0.4)
   ++(40:0.2)++(130:0.2) node {${1}$} ; 
    \draw  (XXXXXX)
   ++(130:0.4)  ++(40:0.4)   ++(130:0.4)   ++(40:0.4)   ++(130:0.4)
   ++(40:0.2)++(130:0.2) node {${1}$} ; 
    \draw  (XXXXXX)
   ++(130:0.4)  ++(40:0.4)   ++(130:0.4)    ++(40:0.4)   ++(130:0.4)   ++(40:0.4)   ++(130:0.4)
   ++(40:0.2)++(130:0.2) node {${1}$} ; 
     \draw  (XXXXXX)
   ++(130:0.4)   ++(40:0.4)   ++(130:0.4)      ++(40:0.4)   ++(130:0.4)    ++(40:0.4)   ++(130:0.4)   ++(40:0.4)   ++(130:0.4)
   ++(40:0.2)++(130:0.2) node {${1}$} ; 
        \draw  (origin)
    ++(40:9*0.4)  
   ++(40:0.2)++(130:0.2) node {${1}$} ; 
  }
\end{tikzpicture}
\end{center}
\caption{The partition $\gamma=(10,9^2,6,4^2,3,2,1^2)$ with $\kappa=(0)$ and $e=5$ with the diagonals of interest for $i=0$.   The diagram features two $i$-diagonals to the left of $\theta_1$, one   centred on $\theta_1$ and two  to the right of $\theta_1$.  }
\label{level1}
\end{figure}
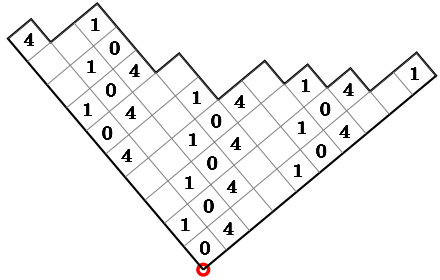


 Since the multipartitions 
 $\lambda,\mu \in \Gamma$ differ only by moving a set of $i$-nodes, we are only interested in neighbourhoods  (of a diagram)  
 in which an $i$-strand, $A$, crosses a strand  labelled by an $i$-, $(i+1)$-, or $(i-1)$- node in $\gamma$.  
 We shall now describe all ways in which this can happen.  
We shall build these $i$-diagonals from the set of bricks $\mathbf{B}_k$ for $k=1,\ldots ,5$ depicted in \cref{bricksleft} and the empty brick, $\mathbf{B}_6$.

\begin{figure}[h]
\begin{center}\scalefont{0.8}
\begin{tikzpicture}[scale=1]
   \path (0,0) coordinate (origin);
  \draw[thick] (origin) 
   --++(130:2*0.8)
   --++(40:1*0.8)
  --++(-50:1*0.8)	
   --++(40:1*0.8)
  --++(-50:1*0.8)	     --++(220:2*0.8)
;     \clip (origin) 
  (origin) 
    --++(130:2*0.8)
   --++(40:1*0.8)
  --++(-50:1*0.8)	
   --++(40:1*0.8)
  --++(-50:1*0.8)	     --++(220:2*0.8);     
   \path (40:1cm) coordinate (A1);
  \path (40:2cm) coordinate (A2);
  \path (40:3cm) coordinate (A3);
  \path (40:4cm) coordinate (A4);
  \path (130:1cm) coordinate (B1);
  \path (130:2cm) coordinate (B2);
  \path (130:3cm) coordinate (B3);
  \path (130:4cm) coordinate (B4);
  \path (A1) ++(130:3cm) coordinate (C1);
  \path (A2) ++(130:2cm) coordinate (C2);
  \path (A3) ++(130:1cm) coordinate (C3);
   \foreach \i in {1,...,19}
  {
    \path (origin)++(40:0.8*\i cm)  coordinate (a\i);
    \path (origin)++(130:0.8*\i cm)  coordinate (b\i);
    \path (a\i)++(130:4cm) coordinate (ca\i);
    \path (b\i)++(40:4cm) coordinate (cb\i);
    \draw[thin,gray] (a\i) -- (ca\i)  (b\i) -- (cb\i); 
 \draw  (origin)
    ++(40:0.4)++(130:0.4) node {${i}$} ;
  \draw  (origin)
    ++(40:0.4)++(130:1.2) node {${i+1}$} ;
      \draw  (origin)
      ++(40:1.2)++(130:0.4)  node {${i-1}$} ;
  }  
\end{tikzpicture}
 \quad
 \begin{tikzpicture}[scale=1]
   \path (0,0) coordinate (origin);
  \draw[thick] (origin) 
   --++(130:1*0.8)
   --++(40:2*0.8)
  --++(-50:1*0.8)	
      --++(220:2*0.8)
;     \clip (origin) 
   --++(130:1*0.8)
   --++(40:2*0.8)
  --++(-50:1*0.8)	
      --++(220:2*0.8)
;     
   \path (40:1cm) coordinate (A1);
  \path (40:2cm) coordinate (A2);
  \path (40:3cm) coordinate (A3);
  \path (40:4cm) coordinate (A4);
  \path (130:1cm) coordinate (B1);
  \path (130:2cm) coordinate (B2);
  \path (130:3cm) coordinate (B3);
  \path (130:4cm) coordinate (B4);
  \path (A1) ++(130:3cm) coordinate (C1);
  \path (A2) ++(130:2cm) coordinate (C2);
  \path (A3) ++(130:1cm) coordinate (C3);
   \foreach \i in {1,...,19}
  {
    \path (origin)++(40:0.8*\i cm)  coordinate (a\i);
    \path (origin)++(130:0.8*\i cm)  coordinate (b\i);
    \path (a\i)++(130:4cm) coordinate (ca\i);
    \path (b\i)++(40:4cm) coordinate (cb\i);
    \draw[thin,gray] (a\i) -- (ca\i)  (b\i) -- (cb\i); 
 \draw  (origin)
    ++(40:0.4)++(130:0.4) node {${i}$} ;
  \draw  (origin)
    ++(40:0.4)++(130:1.2) node {${i+1}$} ;
      \draw  (origin)
      ++(40:1.2)++(130:0.4)  node {${i-1}$} ;
  }  
\end{tikzpicture}
\quad
 \begin{tikzpicture}[scale=1]
   \path (0,0) coordinate (origin);
  \draw[thick] (origin) 
   --++(130:2*0.8)
   --++(40:1*0.8)
  --++(-50:2*0.8)	
      --++(220:1*0.8)
;     \clip (origin) 
  --++(130:2*0.8)
   --++(40:1*0.8)
  --++(-50:2*0.8)	
      --++(220:1*0.8)
;     
   \path (40:1cm) coordinate (A1);
  \path (40:2cm) coordinate (A2);
  \path (40:3cm) coordinate (A3);
  \path (40:4cm) coordinate (A4);
  \path (130:1cm) coordinate (B1);
  \path (130:2cm) coordinate (B2);
  \path (130:3cm) coordinate (B3);
  \path (130:4cm) coordinate (B4);
  \path (A1) ++(130:3cm) coordinate (C1);
  \path (A2) ++(130:2cm) coordinate (C2);
  \path (A3) ++(130:1cm) coordinate (C3);
   \foreach \i in {1,...,19}
  {
    \path (origin)++(40:0.8*\i cm)  coordinate (a\i);
    \path (origin)++(130:0.8*\i cm)  coordinate (b\i);
    \path (a\i)++(130:4cm) coordinate (ca\i);
    \path (b\i)++(40:4cm) coordinate (cb\i);
    \draw[thin,gray] (a\i) -- (ca\i)  (b\i) -- (cb\i); 
 \draw  (origin)
    ++(40:0.4)++(130:0.4) node {${i}$} ;
  \draw  (origin)
    ++(40:0.4)++(130:1.2) node {${i+1}$} ;
      \draw  (origin)
      ++(40:1.2)++(130:0.4)  node {${i-1}$} ;
  }  
\end{tikzpicture} 
\quad
 \begin{tikzpicture}[scale=1]
   \path (0,0) coordinate (origin);
  \draw[thick] (origin) 
   --++(130:1*0.8)
   --++(40:1*0.8)
  --++(-50:1*0.8)	
   --++(220:1*0.8)
;     \clip (origin) 
  (origin) 
   --++(130:1*0.8)
   --++(40:1*0.8)
  --++(-50:1*0.8)	
   --++(220:1*0.8)
;     
   \path (40:1cm) coordinate (A1);
  \path (40:2cm) coordinate (A2);
  \path (40:3cm) coordinate (A3);
  \path (40:4cm) coordinate (A4);
  \path (130:1cm) coordinate (B1);
  \path (130:2cm) coordinate (B2);
  \path (130:3cm) coordinate (B3);
  \path (130:4cm) coordinate (B4);
  \path (A1) ++(130:3cm) coordinate (C1);
  \path (A2) ++(130:2cm) coordinate (C2);
  \path (A3) ++(130:1cm) coordinate (C3);
  \foreach \i in {1,...,19}
  {
    \path (origin)++(40:0.8*\i cm)  coordinate (a\i);
    \path (origin)++(130:0.8*\i cm)  coordinate (b\i);
    \path (a\i)++(130:4cm) coordinate (ca\i);
    \path (b\i)++(40:4cm) coordinate (cb\i);
    \draw[thin,gray] (a\i) -- (ca\i)  (b\i) -- (cb\i); 
\draw  (origin)
    ++(40:0.4)++(130:0.4) node {${i-1}$} ;
 \draw  (origin)
    ++(40:0.4)++(130:1.2) node {${i}$} ; 
     \draw  (origin)
    ++(40:1.2)++(130:1.2) node {${i-1}$} ; 
  }  
\end{tikzpicture}
\quad
\begin{tikzpicture}[scale=1]
   \path (0,0) coordinate (origin);
  \draw[thick] (origin) 
   --++(130:1*0.8)
   --++(40:1*0.8)
  --++(-50:1*0.8)	
   --++(220:1*0.8)
;     \clip (origin) 
  (origin) 
   --++(130:1*0.8)
   --++(40:1*0.8)
  --++(-50:1*0.8)	
   --++(220:1*0.8)
;     
   \path (40:1cm) coordinate (A1);
  \path (40:2cm) coordinate (A2);
  \path (40:3cm) coordinate (A3);
  \path (40:4cm) coordinate (A4);
  \path (130:1cm) coordinate (B1);
  \path (130:2cm) coordinate (B2);
  \path (130:3cm) coordinate (B3);
  \path (130:4cm) coordinate (B4);
  \path (A1) ++(130:3cm) coordinate (C1);
  \path (A2) ++(130:2cm) coordinate (C2);
  \path (A3) ++(130:1cm) coordinate (C3);
   \foreach \i in {1,...,19}
  {
    \path (origin)++(40:0.8*\i cm)  coordinate (a\i);
    \path (origin)++(130:0.8*\i cm)  coordinate (b\i);
    \path (a\i)++(130:4cm) coordinate (ca\i);
    \path (b\i)++(40:4cm) coordinate (cb\i);
    \draw[thin,gray] (a\i) -- (ca\i)  (b\i) -- (cb\i); 
\draw  (origin)
    ++(40:0.4)++(130:0.4) node {${i+1}$} ;
   }

\end{tikzpicture}\quad
\end{center}
\caption{The bricks $\mathbf{B}_1$, $\mathbf{B}_2$, $\mathbf{B}_3$, $\mathbf{B}_4$, $\mathbf{B}_5$ respectively. 
The $\mathbf{B}_6$ brick is a single red $i$-strand (in other words it corresponds to an empty partition with charge  $i$).}
\label{bricksleft}
\end{figure}
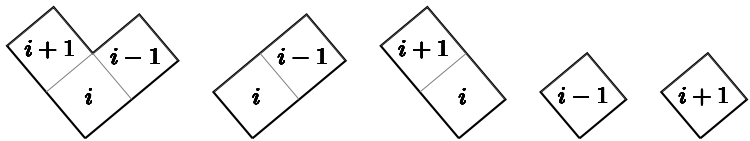

\noindent\textbf{Case 1: Visible $i$-diagonals.}
 Fix some component $1\leq k \leq l$.  
 There are three types of $i$-diagonal which can occur (in this component)
  which have an addable $i$-node at the top.  Namely, 
  those which occur to the left or right of the node $(1,1,k)$ and those which
  occur on the node $(1,1,k)$. 
  It's not difficult to see that all three of these cases can be built out of the bricks $\mathbf{B}_1 $
 and a single    $\mathbf{B}_4$, $\mathbf{B}_5$, 
 or $\mathbf{B}_6$ brick respectively.    Namely, we place a $\mathbf{B}_4$, $\mathbf{B}_5$, 
 or $\mathbf{B}_6$ at the base (for $i$-diagonals to the left, right, or centred on $\theta_k$, respectively) and then 
 put some number (possibly zero) of   $\mathbf{B}_1$ bricks on top.  
 Examples  of how to construct such an $i$-diagonal are 
  depicted in   \cref{case1sum}.

\begin{figure}[h]
\begin{center}\scalefont{0.6}
\begin{tikzpicture}[scale=1]
   \path (0,0) coordinate (origin);     \draw[wei2] (0,0)   circle (2pt);
  \draw[very thick] (origin) 
   --++(130:2*0.4)
   --++(40:1*0.4)   --++(130:1*0.4)
   ;
  \draw[very thick] (origin) 
   --++(130:3*0.4)
   --++(40:2*0.4)
  --++(130:1*0.4)	
   --++(220:1*0.4) coordinate (VVVVVVV)
    --++(130:1*0.4)
--++(220:1*0.4)--++(-50:2*0.4) ;
      \draw[very thick] (VVVVVVV)
   --++(40:2*0.4)
  --++(130:1*0.4)	
   --++(220:1*0.4) coordinate (VVVVVVV1)
    --++(130:1*0.4)
--++(220:1*0.4)--++(-50:2*0.4) ;
     \draw[very thick] (VVVVVVV1)
   --++(40:2*0.4)
  --++(130:1*0.4)	
   --++(220:1*0.4) coordinate (VVVVVVV2)
    --++(130:1*0.4)
--++(220:1*0.4)--++(-50:2*0.4) ;
     \draw[very thick] (VVVVVVV2)
   --++(40:2*0.4)
  --++(130:1*0.4)	
   --++(220:1*0.4) coordinate (VVVVVVV3)
    --++(130:1*0.4)
--++(220:1*0.4)--++(-50:2*0.4) ;

  \draw[thick] (origin) 
   --++(130:10*0.4)
   --++(40:1*0.4)
  --++(-50:1*0.4)	
  --++(40:2*0.4)
  --++(-50:1*0.4)
  --++(40:1*0.4)
    --++(-50:1*0.4)
  --++(40:2*0.4)
  --++(-50:1*0.4)
  --++(-50:2*0.4)--++(40:1*0.4)
  --++(-50:1*0.4)
  --++(40:1*0.4)
  --++(-50:1*0.4)
  --++(-50:2*0.4)
    --++(220:8*0.4);
     \clip (origin) 
   --++(130:10*0.4)
   --++(40:1*0.4)
  --++(-50:1*0.4)	
  --++(40:2*0.4)
  --++(-50:1*0.4)
  --++(40:1*0.4)
    --++(-50:1*0.4)
  --++(40:2*0.4)
  --++(-50:1*0.4)
  --++(-50:2*0.4)--++(40:1*0.4)
  --++(-50:1*0.4)
  --++(40:1*0.4)
  --++(-50:1*0.4)
  --++(-50:2*0.4)
    --++(220:8*0.4);
   \path (40:1cm) coordinate (A1);
  \path (40:2cm) coordinate (A2);
  \path (40:3cm) coordinate (A3);
  \path (40:4cm) coordinate (A4);
  \path (130:1cm) coordinate (B1);
  \path (130:2cm) coordinate (B2);
  \path (130:3cm) coordinate (B3);
  \path (130:4cm) coordinate (B4);
  \path (A1) ++(130:3cm) coordinate (C1);
  \path (A2) ++(130:2cm) coordinate (C2);
  \path (A3) ++(130:1cm) coordinate (C3);
\end{tikzpicture}
\quad
\begin{tikzpicture}[scale=1]
   \path (0,0) coordinate (origin);     \draw[wei2] (0,0)   circle (2pt);
  \draw[very thick] (origin) 
   --++(40:1*0.4)
   --++(130:1*0.4)   --++(40:1*0.4)
   ;
  \draw[very thick] (origin) 
   --++(40:2*0.4)
   --++(40:2*0.4)
  --++(130:1*0.4)	
   --++(220:1*0.4) coordinate (VVVVVVV)
    --++(130:1*0.4)
--++(220:1*0.4)--++(-50:2*0.4) ;
      \draw[very thick] (VVVVVVV)
   --++(40:2*0.4)
  --++(130:1*0.4)	
   --++(220:1*0.4) coordinate (VVVVVVV1)
    --++(130:1*0.4)
--++(220:1*0.4)--++(-50:2*0.4) ;
     \draw[very thick] (VVVVVVV1)
   --++(40:2*0.4)
  --++(130:1*0.4)	
   --++(220:1*0.4) coordinate (VVVVVVV2)
    --++(130:1*0.4)
--++(220:1*0.4)--++(-50:2*0.4) ;
     \draw[very thick] (VVVVVVV2)
   --++(40:2*0.4)
  --++(130:1*0.4)	
   --++(220:1*0.4) coordinate (VVVVVVV3)
    --++(130:1*0.4)
--++(220:1*0.4)--++(-50:2*0.4) ;

  \draw[thick] (origin) 
   --++(130:8*0.4)
   --++(40:1*0.4)
     --++(-50:1*0.4)
  --++(40:1*0.4)
   --++(40:1*0.4)
  --++(-50:1*0.4)
  --++(40:1*0.4)
    --++(-50:1*0.4)
  --++(40:2*0.4)
  --++(-50:1*0.4)--++(40:1*0.4)
  --++(-50:1*0.4)
  --++(40:1*0.4)
  --++(-50:1*0.4)
  --++(40:1*0.4)
  --++(-50:1*0.4)
  --++(40:1*0.4)
  --++(-50:1*0.4)
    --++(220:10*0.4);
     \clip (origin) 
     --++(130:8*0.4)
   --++(40:1*0.4)
     --++(-50:1*0.4)
  --++(40:1*0.4)
   --++(40:1*0.4)
  --++(-50:1*0.4)
  --++(40:1*0.4)
    --++(-50:1*0.4)
  --++(40:2*0.4)
  --++(-50:1*0.4)--++(40:1*0.4)
  --++(-50:1*0.4)
  --++(40:1*0.4)
  --++(-50:1*0.4)
  --++(40:1*0.4)
  --++(-50:1*0.4)
  --++(40:1*0.4)
  --++(-50:1*0.4)
    --++(220:10*0.4);
   \path (40:1cm) coordinate (A1);
  \path (40:2cm) coordinate (A2);
  \path (40:3cm) coordinate (A3);
  \path (40:4cm) coordinate (A4);
  \path (130:1cm) coordinate (B1);
  \path (130:2cm) coordinate (B2);
  \path (130:3cm) coordinate (B3);
  \path (130:4cm) coordinate (B4);
  \path (A1) ++(130:3cm) coordinate (C1);
  \path (A2) ++(130:2cm) coordinate (C2);
  \path (A3) ++(130:1cm) coordinate (C3);
 define a clip region to crop the resulting figure
\end{tikzpicture}
\quad
\begin{tikzpicture}[scale=1]
   \path (0,0) coordinate (origin);     \draw[wei2] (0,0)   circle (2pt);
  \draw[very thick] (origin) 
   --++(130:0*0.4)
   --++(40:2*0.4)
  --++(130:1*0.4)	
   --++(220:1*0.4) coordinate (VVVVVVV)
    --++(130:1*0.4)
--++(220:1*0.4)--++(-50:2*0.4) ;
      \draw[very thick] (VVVVVVV)
   --++(40:2*0.4)
  --++(130:1*0.4)	
   --++(220:1*0.4) coordinate (VVVVVVV1)
    --++(130:1*0.4)
--++(220:1*0.4)--++(-50:2*0.4) ;
     \draw[very thick] (VVVVVVV1)
   --++(40:2*0.4)
  --++(130:1*0.4)	
   --++(220:1*0.4) coordinate (VVVVVVV2)
    --++(130:1*0.4)
--++(220:1*0.4)--++(-50:2*0.4) ;
     \draw[very thick] (VVVVVVV2)
   --++(40:2*0.4)
  --++(130:1*0.4)	
   --++(220:1*0.4) coordinate (VVVVVVV3)
    --++(130:1*0.4)
--++(220:1*0.4)--++(-50:2*0.4) ;

  \draw[thick] (origin) 
   --++(130:9*0.4)
   --++(40:1*0.4)
  --++(-50:1*0.4)
     --++(40:1*0.4)
  --++(-50:1*0.4)	
  --++(40:1*0.4)
  --++(-50:1*0.4)	
  --++(40:1*0.4)
  --++(-50:2*0.4)
  --++(40:1*0.4)
    --++(-50:1*0.4)
  --++(40:1*0.4)
  --++(-50:1*0.4)
--++(40:1*0.4)
  --++(-50:1*0.4)
  --++(40:1*0.4)
  --++(-50:1*0.4)
    --++(220:8*0.4);
     \clip (origin) 
   --++(130:9*0.4)
   --++(40:1*0.4)
  --++(-50:1*0.4)
     --++(40:1*0.4)
  --++(-50:1*0.4)	
  --++(40:1*0.4)
  --++(-50:1*0.4)	
  --++(40:1*0.4)
  --++(-50:2*0.4)
  --++(40:1*0.4)
    --++(-50:1*0.4)
  --++(40:1*0.4)
  --++(-50:1*0.4)
--++(40:1*0.4)
  --++(-50:1*0.4)
  --++(40:1*0.4)
  --++(-50:1*0.4)
    --++(220:8*0.4);
   \path (40:1cm) coordinate (A1);
  \path (40:2cm) coordinate (A2);
  \path (40:3cm) coordinate (A3);
  \path (40:4cm) coordinate (A4);
  \path (130:1cm) coordinate (B1);
  \path (130:2cm) coordinate (B2);
  \path (130:3cm) coordinate (B3);
  \path (130:4cm) coordinate (B4);
  \path (A1) ++(130:3cm) coordinate (C1);
  \path (A2) ++(130:2cm) coordinate (C2);
  \path (A3) ++(130:1cm) coordinate (C3);
\end{tikzpicture}
\end{center}
\caption{Examples of visible $i$-diagonals to the left, right, and centred on $(1,1,k)$.   }
\label{case1sum}
\end{figure}
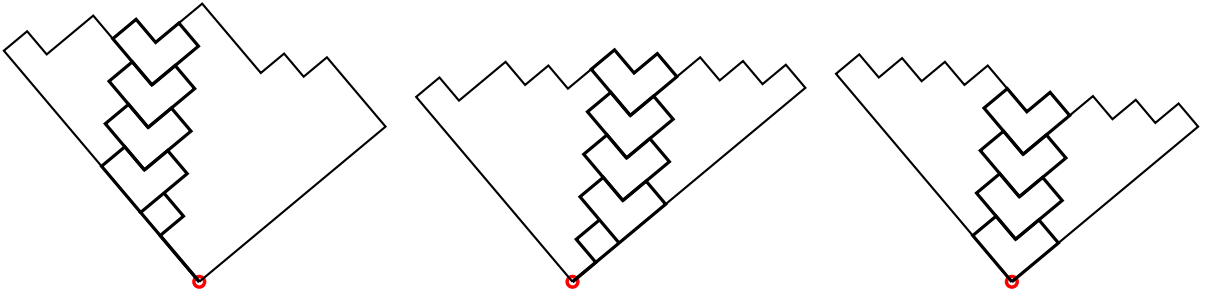

\noindent\textbf{Case 2: Invisible $i$-diagonals.}    
Recall that we say an $i$-diagonal is invisible if it does not have an addable $i$-node at the top. Since $\gamma$ is $i$-admissible, it has no removable $i$-nodes, and there are thus six possible invisible $i$-diagonals; these are obtained by adding either a $\mathbf{B}_2$ or $\mathbf{B}_3$ brick to the top of one of the three types of visible $i$-diagonal.
Examples of how to construct such $i$-diagonals are depicted in \cref{case2sum}.

\begin{figure}[ht]
\begin{center}\scalefont{0.6}
\begin{tikzpicture}[scale=1]
   \path (0,0) coordinate (origin);     \draw[wei2] (0,0)   circle (2pt);
  \draw[very thick] (origin) 
   --++(130:2*0.4)
   --++(40:1*0.4)   --++(130:1*0.4)
   ;
  \draw[very thick] (origin) 
   --++(130:3*0.4)
   --++(40:2*0.4)
  --++(130:1*0.4)	
   --++(220:1*0.4) coordinate (VVVVVVV)
    --++(130:1*0.4)
--++(220:1*0.4)--++(-50:2*0.4) ;
      \draw[very thick] (VVVVVVV)
   --++(40:2*0.4)
  --++(130:1*0.4)	
   --++(220:1*0.4) coordinate (VVVVVVV1)
    --++(130:1*0.4)
--++(220:1*0.4)--++(-50:2*0.4) ;
     \draw[very thick] (VVVVVVV1)
   --++(40:2*0.4)
  --++(130:1*0.4)	
   --++(220:1*0.4) coordinate (VVVVVVV2)
    --++(130:1*0.4)
--++(220:1*0.4)--++(-50:2*0.4) ;
     \draw[very thick] (VVVVVVV2)
   --++(40:2*0.4)
  --++(130:1*0.4)	
   --++(220:1*0.4) coordinate (VVVVVVV3)
    --++(130:1*0.4)
--++(220:1*0.4)--++(-50:2*0.4) ;
  \draw[very thick] (VVVVVVV3)
   --++(40:2*0.4)
  --++(130:1*0.4)	
   --++(220:2*0.4)  
    --++(-50:1*0.4)
;
  \draw[thick] (origin) 
   --++(130:10*0.4)
   --++(40:1*0.4)
  --++(-50:1*0.4)	
  --++(40:2*0.4)
  --++(-50:1*0.4)
  --++(40:1*0.4)
    --++(-50:1*0.4)
  --++(40:2*0.4)
  --++(-50:1*0.4)
  --++(-50:2*0.4)--++(40:1*0.4)
  --++(-50:1*0.4)
  --++(40:1*0.4)
  --++(-50:1*0.4)
  --++(-50:2*0.4)
    --++(220:8*0.4);
     \clip (origin) 
   --++(130:10*0.4)
   --++(40:1*0.4)
  --++(-50:1*0.4)	
  --++(40:2*0.4)
  --++(-50:1*0.4)
  --++(40:1*0.4)
    --++(-50:1*0.4)
  --++(40:2*0.4)
  --++(-50:1*0.4)
  --++(-50:2*0.4)--++(40:1*0.4)
  --++(-50:1*0.4)
  --++(40:1*0.4)
  --++(-50:1*0.4)
  --++(-50:2*0.4)
    --++(220:8*0.4);
   \path (40:1cm) coordinate (A1);
  \path (40:2cm) coordinate (A2);
  \path (40:3cm) coordinate (A3);
  \path (40:4cm) coordinate (A4);
  \path (130:1cm) coordinate (B1);
  \path (130:2cm) coordinate (B2);
  \path (130:3cm) coordinate (B3);
  \path (130:4cm) coordinate (B4);
  \path (A1) ++(130:3cm) coordinate (C1);
  \path (A2) ++(130:2cm) coordinate (C2);
  \path (A3) ++(130:1cm) coordinate (C3);
\end{tikzpicture}
 \quad
\begin{tikzpicture}[scale=1]
   \path (0,0) coordinate (origin);     \draw[wei2] (0,0)   circle (2pt);
  \draw[very thick] (origin) 
   --++(130:0*0.4)
   --++(40:2*0.4)
  --++(130:1*0.4)	
   --++(220:1*0.4) coordinate (VVVVVVV)
    --++(130:1*0.4)
--++(220:1*0.4)--++(-50:2*0.4) ;
      \draw[very thick] (VVVVVVV)
   --++(40:2*0.4)
  --++(130:1*0.4)	
   --++(220:1*0.4) coordinate (VVVVVVV1)
    --++(130:1*0.4)
--++(220:1*0.4)--++(-50:2*0.4) ;
     \draw[very thick] (VVVVVVV1)
   --++(40:2*0.4)
  --++(130:1*0.4)	
   --++(220:1*0.4) coordinate (VVVVVVV2)
    --++(130:1*0.4)
--++(220:1*0.4)--++(-50:2*0.4) ;
     \draw[very thick] (VVVVVVV2)
   --++(40:2*0.4)
  --++(130:1*0.4)	
   --++(220:1*0.4) coordinate (VVVVVVV3)
    --++(130:1*0.4)
--++(220:1*0.4)--++(-50:2*0.4) ;
  \draw[very thick] (VVVVVVV3)
   --++(40:1*0.4)
  --++(130:2*0.4)	
   --++(220:1*0.4)  
    --++(-50:1*0.4)
;
  \draw[thick] (origin) 
   --++(130:9*0.4)
   --++(40:1*0.4)
  --++(-50:1*0.4)
     --++(40:1*0.4)
  --++(-50:1*0.4)	
  --++(40:1*0.4)
  --++(-50:1*0.4)	
  --++(40:1*0.4)
  --++(-50:2*0.4)
  --++(40:1*0.4)
    --++(-50:1*0.4)
  --++(40:1*0.4)
  --++(-50:1*0.4)
--++(40:1*0.4)
  --++(-50:1*0.4)
  --++(40:1*0.4)
  --++(-50:1*0.4)
    --++(220:8*0.4);
     \clip (origin) 
   --++(130:9*0.4)
   --++(40:1*0.4)
  --++(-50:1*0.4)
     --++(40:1*0.4)
  --++(-50:1*0.4)	
  --++(40:1*0.4)
  --++(-50:1*0.4)	
  --++(40:1*0.4)
  --++(-50:2*0.4)
  --++(40:1*0.4)
    --++(-50:1*0.4)
  --++(40:1*0.4)
  --++(-50:1*0.4)
--++(40:1*0.4)
  --++(-50:1*0.4)
  --++(40:1*0.4)
  --++(-50:1*0.4)
    --++(220:8*0.4);
   \path (40:1cm) coordinate (A1);
  \path (40:2cm) coordinate (A2);
  \path (40:3cm) coordinate (A3);
  \path (40:4cm) coordinate (A4);
  \path (130:1cm) coordinate (B1);
  \path (130:2cm) coordinate (B2);
  \path (130:3cm) coordinate (B3);
  \path (130:4cm) coordinate (B4);
  \path (A1) ++(130:3cm) coordinate (C1);
  \path (A2) ++(130:2cm) coordinate (C2);
  \path (A3) ++(130:1cm) coordinate (C3);
\end{tikzpicture}
\end{center} 
\caption{Examples of invisible $i$-diagonals.  The former (respectively latter) is obtained by adding a $\mathbf{B}_2$ (respectively $\mathbf{B}_3$) brick to the leftmost (respectively rightmost) diagram in \cref{case1sum}.
}
\label{case2sum}
\end{figure}
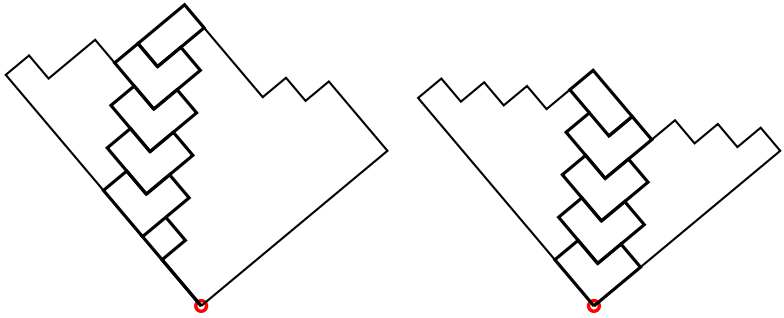

Let   $\mathbf{D}$ be any $i$-diagonal.  
We define $x(\mathbf{D})$ to be the $x$-coordinate of the top vertex of the top $i$-node in $\mathbf{D}$ or the left vertex of the top $(i-1)$-node in $\mathbf{D}$ or the right vertex of the top $(i+1)$-node in $\mathbf{D}$ if such a node exists  (if they all exist, then the definitions clearly agree).

\begin{eg} Continuing from  \cref{notchiexample}; the ordered set of $x$-coordinates of the diagonals  is equal to $\{-10+10\epsilon, -5+11\epsilon, 8\epsilon, 5+9\epsilon, 10+11\epsilon\}$.
\end{eg}

\subsection{Strands passing through $i$-diagonals}
We shall  let $A$ denote an $i$-node of $\SSTT \in \SStd(\lambda,\mu)$ and identify the 
 node with the strand it labels in the diagram  $B_\SSTT$.    

\begin{defn}\label{passdef}
We say that an $i$-strand, $A$, \emph{passes through} an $i$-diagonal, $\mathbf{D}$, if there is a neighbourhood of the diagram in which  $A$  is at least $2\epsilon$ to the left of all
ghost $(i-1)$-strands in $\mathbf{D}$ and a neighbourhood of the diagram in which 
the ghost of $A$  is at least $2\epsilon$ to the right of all the black  $(i+1)$-strands in $\mathbf{D}$.
\end{defn}
 
\begin{rmk}\label{remarkonmovingfromrighttoleft}
If  $A$  satisfies \cref{passdef} then it also has the property that the ghost of $A$ is to the  left of all black $(i+1)$-strands in some neighbourhood (the first in the definition) and that $A$ is strictly to the right of all black $i$-strands and ghost $(i-1)$-strands in some neighbourhood (the second in the definition). In this way, $A$ passing through an $i$-diagonal means that $A$ and its ghost cross all strands corresponding to the $i$-diagonal which may contribute to the degree or give rise to relations.

In fact,  suppose that   $A$ passes through an $i$-diagonal $\mathbf{D}$ and that $\mathbf{B}$ and $\mathbf{B}'$ are two bricks in $\mathbf{D}$ such that $\mathbf{B}$ is above $\mathbf{B}'$ in the $[\gamma]$.  
 Then the  
  $(i-1)$-ghost,    $i$-strand, and   $(i+1)$-strand 
in $\mathbf{B}$ each occur strictly to the right of the corresponding strand in $\mathbf{B}'$.
Therefore all non-trivial 
interactions between $A$ and $\mathbf{B}$ happen 
before those between $A$ and $\mathbf{B}'$ (reading from right to left).  
\end{rmk}

 \begin{eg}
Consider the diagrams   $B_\SSTT$  and  $B_\SSTU$ for 
$\SSTT$  and $\SSTU$ as in \cref{semistandard}.  These are   depicted in Figures  \ref{passing} and \ref{passing2}.  


Consider the diagram $B_\SSTT$.  This diagram has a total of three 
0-diagonals;   $A$  passes through the two rightmost $0$-diagonals.  
 The first of these two crossings is with a  $0$-diagonal consisting of a 1-strand, a 0-strand, and a 4-strand, centred at $\theta_1$.  
The other 0-diagonal consists only of a single $4$-strand (a $\mathbf{B}_5$ brick)
 and is at the far right of the diagram.  
   The total degree of the diagram is 2; the crossing of  $A$  with the centred diagonal has degree 1 (as before);
   the crossing of  $A$  with the  rightmost diagonal also has degree 1.

Now consider the diagram $B_\SSTU$.  This diagram has a total of three 
0-diagonals;   $A$  passes through the two rightmost $0$-diagonals.  
 The first of these two crossings is with a  $0$-diagonal  built from
  a $\mathbf{B}_7$ brick a $\mathbf{B}_1$ brick and a $\mathbf{B}_2$ brick.
 The second diagonal consists of a $\mathbf{B}_5$ brick, as before.   
   
    \begin{figure}[ht!]
  \[  
    \scalefont{0.7}  
  \begin{tikzpicture}[scale=0.8] 
     \clip(-5.6,-0.5)rectangle (6,2.2);
  \draw (-5.5,0) rectangle (6,2);
    \node [white,above] at (-0.8,2)  {\tiny $1$};
   \foreach \x in {-3.5,-2.6,-1.7,-0.8,0.1,1.2,2.3,3.4,4.5}  
     {\draw  (\x,0)--(\x,2); \draw[dashed]  (\x-1,0)--(\x-1,2); }
 \draw[wei2] (-0.2,0)  to [out=90,in=-90] (-0.2,2) ;      
 \draw[dashed]  (-4.4-1,0)  to [out=35,in=-150]
 (5.6-0.9,2) ;      
 \draw  (-4.4,0)   to [out=30,in=-150]
  (5.6,2) ;      
           \node [below] at (-4.4,0)  {\tiny $0$}; 

           \node [below] at (-3.5,0)  {\tiny $4$}; 
           \node [below] at (-2.6,0)  {\tiny $3$};           
           \node [below] at (-1.7,0)  {\tiny $2$};
           \node [below] at (-0.8,0)  {\tiny $1$};
                      \node [wei2,below] at (-0.2,0)  {\tiny $0$};
           \node [below] at (0.1,0)  {\tiny $0$};
                      \node [below] at (1.2,0)  {\tiny $4$}; 
\node [below] at (2.3,0)  {\tiny $3$}; 
\node [below] at (3.4,0)  {\tiny $2$}; 
\node [below] at (4.5,0)  {\tiny $1$};  
  \end{tikzpicture}
\]
\caption{
  The  diagram  $B_\SSTT$  for $\SSTT$ as in Figure \ref{semistandard}.    
 }
\label{passing}
\end{figure}

 \begin{figure}[ht!]
  \[    \scalefont{0.7}  
     \begin{tikzpicture}[scale=0.8] 
     \clip(-5.6,-0.5)rectangle (6,2.2);
  \draw (-5.5,0) rectangle (6,2);
    \node [white,above] at (-0.8,2)  {\tiny $1$};
   \foreach \x in {-3.5,-2.6,-1.7,-0.8,0.1,0.3,1.2,1.4,2.3,3.4,4.5}  
     {\draw  (\x,0)--(\x,2); \draw[dashed]  (\x-1,0)--(\x-1,2); }
 \draw[wei2] (-0.2,0)  to [out=90,in=-90] (-0.2,2) ;      
 \draw[dashed]  (-4.4-1,0)  to [out=35,in=-150]
  (5.6-0.9,2) ;      
 \draw  (-4.4,0)   to [out=30,in=-150]
  (5.6,2) ;      
           \node [below] at (-4.4,0)  {\tiny $0$}; 

           \node [below] at (-3.5,0)  {\tiny $4$}; 
           \node [below] at (-2.6,0)  {\tiny $3$};           
           \node [below] at (-1.7,0)  {\tiny $2$};
           \node [below] at (-0.8,0)  {\tiny $1$};
                      \node [wei2,below] at (-0.2,0)  {\tiny $0$};
           \node [below] at (0.3,0)  {\tiny $0$};
                      \node [below] at (1.4,0)  {\tiny $4$}; 

           \node [below] at (0.1,0)  {\tiny $0$};
                      \node [below] at (1.2,0)  {\tiny $4$}; 
\node [below] at (2.3,0)  {\tiny $3$}; 
\node [below] at (3.4,0)  {\tiny $2$}; 
\node [below] at (4.5,0)  {\tiny $1$};  
  \end{tikzpicture}
\]
\caption{
   The  diagram  $B_\SSTU$  for $\SSTU$ as in Figure \ref{semistandard}.   
 }
\label{passing2}
\end{figure}
\end{eg}

\subsection{A vector space isomorphism over $\Bbbk$ and decomposition numbers over $\mathbb{C}$}

The purpose of this section is to establish the graded vector space isomorphisms between our subquotient algebras. We proceed in two steps. First, we show that for an adjacency-free residue set, we can construct a graded vector space isomorphism which allows us to address this question one-residue-at-a-time. We then construct the graded vector space isomorphisms between subquotients corresponding to a single residue.
This allows us to immediately deduce the decomposition numbers of these algebras over the complex field.

Given $\gamma$ an $i$-admissible multipartition,
 we  denote the addable $i$-nodes of $\gamma$ 
 by 
 $A_1,  A_2,  \dots, A_a$ 
 so that $\mathbf{i}_{A_j} \leq \mathbf{i}_{A_k}$ 
 if and only if $j<k$.   
 %
%
 Given $\lambda  \in \Gamma$ and $1\leq k \leq m$ we let 
 $\sigma_k(\lambda)$ denote the minimal number  such that
\[
 |\{ 	A_1,\ldots, A_{\sigma_k(\lambda)}	\} \cap [\lambda]|=k.
\]
%
%
 We define a length function on $\Gamma$ as follows. 
  Given
 $\lambda,\mu \in \Gamma$ such that  $\lambda\trianglerighteq \mu$, 
we define 
\[
\ell(\lambda,\mu)
= \sum_{\mathclap{1\leq k \leq m}} \; \sigma_k(\mu)-\sigma_k(\lambda).
\]

\begin{eg}
 Let $e=4$ and  $\gamma$,  $\kappa$, and $\theta$ be as in \cref{admiseg}.
 Let $i=3$,    $\lambda=((4,2^2,1^2), (5,2^2,1))$ and 
 $\mu=((4,2,1^4), (4,2^2,1^2))$.
We have  $\sigma_1(\mu)=1$, $\sigma_2(\mu)=4$, $\sigma_3(\mu)=5$ 
and $\sigma_1(\lambda)=1$, $\sigma_2(\lambda)=2$, $\sigma_3(\lambda)=3$
and therefore 
$\ell(\lambda,\mu)=4$.  
\end{eg}

\begin{defn}
Let $\SSTT\in\SStd(\lambda,\mu)$ for $\lambda,\mu\in\Gamma$.
We define the \emph{component word} $R(\SSTT)$ of 
$\SSTT$  by
\[
R(\SSTT)=(\SSTT(A_{\sigma_1(\lambda)}),\SSTT(A_{\sigma_2(\lambda)}),	 \ldots , \SSTT(A_{\sigma(\lambda)}) 	).
\]
\end{defn}
\begin{prop}
Given $\lambda,\mu\in\Gamma$, a tableau $\SSTT \in \SStd(\lambda,\mu)$ 
 is uniquely determined by its   component word $R(\SSTT)$.  
\end{prop}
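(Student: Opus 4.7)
The plan is to reduce the statement to the uniqueness of the ``identity'' tableau in axiom~(6) of \cref{defn1}. Concretely, I will argue that for any $\SSTT\in\SStd(\lambda,\mu)$ the restriction $\SSTT|_{[\gamma]}$ coincides with the unique element $\SSTT^\gamma\in\SStd(\gamma,\gamma)$ (which sends each $(r,c,k)\in[\gamma]$ to its own $x$-coordinate); the only remaining freedom in $\SSTT$ is then the placement of the added $i$-nodes $A_{\sigma_1(\lambda)},\dots,A_{\sigma_m(\lambda)}$, and this is exactly the data recorded by the component word $R(\SSTT)$.

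First I would note that, because $\lambda$ and $\mu$ are both obtained from $\gamma$ by adding $m$ nodes all of residue $i$, the non-$i$-parts of the loadings $\mathbf{i}_\lambda$ and $\mathbf{i}_\mu$ agree and both equal the non-$i$-part of $\mathbf{i}_\gamma$. Since $\SSTT$ is residue-preserving, it automatically sends the non-$i$-nodes of $\gamma=[\lambda]\setminus\{A_{\sigma_1(\lambda)},\dots,A_{\sigma_m(\lambda)}\}$ into the non-$i$-part of $D_\mu=D_\gamma$. The substance of the proof therefore concerns only the $i$-nodes.

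The main step is the claim that $\SSTT$ maps the set of $m$ added $i$-nodes of $\lambda$ bijectively onto the set of $x$-coordinates of the $m$ added $i$-nodes of $\mu$, and consequently sends the original $i$-nodes of $\gamma\subseteq[\lambda]$ to the $x$-coordinates of the original $i$-nodes of $\gamma\subseteq D_\mu$. The $i$-admissibility of $\gamma$ is the key input here: every original $i$-node of $\gamma$ is shielded below or to the right by another node of $\gamma$ (otherwise it would be removable), while every added $i$-node of $\lambda$ sits at an addable position with its northern $(i+1)$-neighbour and its western $(i-1)$-neighbour inside $\gamma$ or on the boundary. Feeding this structural data into the semistandardness inequalities $\SSTT(r,c,k)>\SSTT(r-1,c,k)+g$ and $\SSTT(r,c,k)>\SSTT(r,c-1,k)-g$, and comparing with the explicit positions $x_{(r,c,k)}=\theta_k+g(r-c)+(r+c)\epsilon$, rules out any swap between an added and an original $i$-node.

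Granted this no-swap property, $\SSTT|_{[\gamma]}$ is a residue-preserving semistandard bijection $[\gamma]\to D_\gamma$, hence lies in $\SStd(\gamma,\gamma)$; by axiom~(6) of \cref{defn1} it must equal $\SSTT^\gamma$. Therefore $\SSTT$ is completely recovered from $(\SSTT(A_{\sigma_1(\lambda)}),\dots,\SSTT(A_{\sigma_m(\lambda)}))=R(\SSTT)$, as claimed. The only substantive obstacle is the no-swap claim; I expect the cleanest route is to analyse $\gamma$ one $i$-diagonal at a time, using the brick decomposition set up earlier in \cref{isosec} to reduce to a short finite list of local configurations.
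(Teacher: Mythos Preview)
Your approach is essentially the same as the paper's: both reduce to the claim that $\SSTT$ fixes $[\gamma]$ pointwise, after which the component word manifestly determines the remaining data. The paper simply asserts this in one line (``any node belonging to $\gamma$ is simply mapped to itself under $\SSTT$''), whereas you isolate it as a ``no-swap'' claim and route through axiom~(6); this is correct but the brick-decomposition analysis you propose is heavier machinery than the paper deems necessary for what it treats as an evident observation.
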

 
 \begin{proof}
  Any node belonging to $\gamma$ is   simply mapped to itself under 
  $\SSTT$.  
 Therefore
 a tableau
   $\SSTT\in  \SStd(\lambda,\mu)$    is 
 an identification of  $m$ pairs of  nodes 
  $( {(r,c,k)}, {(r',c',k')})$ 
 for $(r,c,k)\in [\lambda\setminus \gamma]$ and 
 $(r',c',k')\in [ \mu\setminus \gamma]$
  such that    $\mathbf{i}_{(r',c',k')} \geq \mathbf{i}_{(r,c,k)}$.  
   It is clear that $R(\SSTT)$ uniquely determines this identification of nodes 
   in $[\lambda\setminus \gamma]$ and $ [ \mu\setminus \gamma]$ (and vice versa) and so the result follows.  
 \end{proof}

 Let $i  \in \ZZ/e\ZZ$ and $\overline i  \in \ZZ/\bar e\ZZ$. Suppose 
  $\gamma$ and     $\overline\gamma$ are $i$-admissible and $\overline  i$-admissible 
multipartitions, respectively,   
 such that  $|\Add_i(\gamma)|=|\Add_{\overline  i}( \overline\gamma)|$.
(Note that we do not assume that $\gamma$ and $\overline{\gamma}$ have the same level or degree.)
 Given $\lambda \in \Gamma$,  
 we let $\overline\lambda \in \overline\Gamma$ 
 denote the multipartition such that
  $A_{\sigma_k(\lambda)}=A_{\sigma_k(\overline\lambda)}$  
  for all $1\leq k \leq m$.  
  We define  
   a bijection $\phi: \SStd (\lambda,\mu) \to 
   \SStd (\overline\lambda,\overline\mu)$  
 which takes $\SSTT$ to the unique  $\overline\SSTT$ such that     $R(\SSTT)=R(\overline\SSTT)$.  
 We  let
  $\Phi: A_{ {\Gamma}} 
 \mapsto A_{ \overline{\Gamma}} 
$  denote the lift of $\phi$ to the cellular bases of these algebras.

\begin{eg}\label{anexampleofanisomorpshism}
Let  $e=5$, $\kappa=(0)$, $g=0.99$, $\theta=(0)$, and $i=0$.  
 The partition $\gamma=(5,1^4)$ is $0$-admissible and $\Gamma
 =\{(6,1^4), (5,2,1^3), (5,1^5)\}$.  
 Recall that there is a unique $\SSTT\in \SStd((6,1^4),(5,1^5))$ of degree  $2$, as depicted in \cref{semistandard}.  

Let  $\overline e=11$, $\overline\kappa=(1,1,1)$,   $\overline\theta=(-5,    0,  4)$, and $\overline i =1$.  
 The multipartition $\overline\gamma=(\varnothing, (2,1), \varnothing)$ is $1$-admissible and $\overline\Gamma=\{ (6,1^4), (5,2,1^3),(5,1^5)\}$.  
There is a  unique $\SSTT\in \SStd(((1), (2,1), \varnothing),$ $(\varnothing, (2,1),(1)))$ of degree  $2$.  

The image under the map $\Phi:A_{\Gamma}\to  A_{\overline\Gamma}$ of the element $B_\SSTT$ in \cref{passing} is given in \cref{isomopassing}, below.

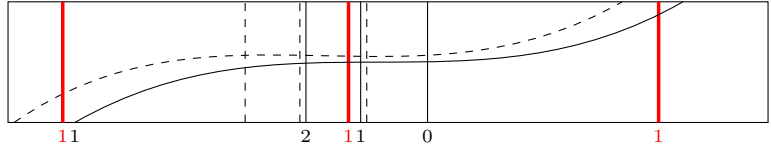
\begin{figure}[ht!]
  \[    \scalefont{0.7}  
   \begin{tikzpicture}[scale=0.8] 
     \clip(-5.6,-0.5)rectangle (7,2.2);
  \draw (-5.5,0) rectangle (7,2);
   \foreach \x in {-0.6,0.3}  
     {\draw  (\x,0)--(\x,2); \draw[dashed]  (\x-1,0)--(\x-1,2); }
       \draw[wei2] (0.1,0)  -- (0.1,2) ;  
           \node [ below] at (0.3,0)  {\tiny $1$};           
           \node [ below] at (-0.6,0)  {\tiny $2$};           
           \node [ below] at (-4.4,0)  {\tiny $1$}; 
           \node [wei2,below] at (-4.6,0)  {\tiny $1$}; 
                      \node [wei2,below] at (0.1,0)  {\tiny $1$}; 
  \draw[dashed] (0.4,0)  --(0.4,2); 
    \draw  (1.4,0)  --(1.4,2); 
               \node [ below] at (1.4,0)  {\tiny $0$};           
           \node [wei2,below] at (5.2,0)  {\tiny $1$};

   \draw[dashed]  (-4.4-1,0) to [out=35,in=-150]  (5.6-1,2) ;      
 \draw[wei2]  (-4.6,0) -- (-4.6,2) ;      
 \draw[wei2]  (5.2,0) -- (5.2,2) ;  
 \draw  (-4.4,0)  to [out=30,in=-150]  (5.6,2) ;      
         
  \end{tikzpicture}
\]
\caption{Image of the diagram  in Figure \ref{passing} under $\Phi$, 
as in \cref{anexampleofanisomorpshism}. }
\label{isomopassing}
\end{figure}
\end{eg}
   

\begin{prop}\label{vsisom}
 We have that 
$ 
A_{\Gamma}  $ and $ A_{\overline\Gamma} $ 
are isomorphic as graded vector spaces over  $\Bbbk$; the isomorphism is given by 
$\Phi(C_{\SSTS,\SSTT})= C_{\overline{\SSTS},\overline{\SSTT}}$.
  This isomorphism preserves both the length function   and  the graded 
  characters of standard   modules.  In other words
  \[
  \ell(\lambda,\mu) =   \ell(\overline\lambda,\overline\mu) 
  \quad\quad
  \Dim{(  \Delta_\mu(\lambda))} = \Dim{ (\Delta_{\overline\mu}(\overline\lambda))}
  \] for all $\lambda,\mu\in\Gamma$.  
\end{prop}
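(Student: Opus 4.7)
The plan is to establish three claims in sequence: that $\phi$ is a well-defined bijection between the relevant sets of semistandard tableaux, that it preserves the length function, and that it preserves the degree of each tableau. The graded character identity will then follow immediately, and the extension of $\phi$ to $\Phi$ by the cellular bases of \cref{cellularitybreedscontempt} and \cref{3.6rmk} will give the claimed graded vector-space isomorphism.

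First, for the bijection, I will apply the preceding proposition, which asserts that a semistandard tableau $\SSTT\in\SStd(\lambda,\mu)$ is uniquely determined by its component word $R(\SSTT)=(\SSTT(A_{\sigma_1(\lambda)}),\ldots,\SSTT(A_{\sigma_m(\lambda)}))$. Since $\overline\lambda$ is defined so that $\sigma_k(\overline\lambda)=\sigma_k(\lambda)$ for all $k$, and analogously for $\overline\mu$, the component word can be transplanted verbatim to produce a unique element $\overline\SSTT\in\SStd(\overline\lambda,\overline\mu)$; one only needs to check that the resulting filling meets the semistandard conditions of \cref{sdfjhkhjkfhjdsgfhjkhjkhjgkfjshkdkjhfgkjhgdfs}, which follows from the fact that these conditions are combinatorial constraints on the relative order of the $\sigma_k$-values. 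Length preservation is then immediate from the formula $\ell(\lambda,\mu)=\sum_k(\sigma_k(\mu)-\sigma_k(\lambda))$, since both $\sigma_k(\lambda)=\sigma_k(\overline\lambda)$ and $\sigma_k(\mu)=\sigma_k(\overline\mu)$.

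The heart of the argument is degree preservation, $\deg(\SSTT)=\deg(\overline\SSTT)$, which I will establish by analyzing the diagram $B_\SSTT$. Since every strand of $B_\SSTT$ corresponding to a node of $\gamma$ is vertical, the total degree decomposes as (a) the contribution from crossings among the $m$ moving $i$-strands and their ghosts, and (b) the contribution from each moving $i$-strand passing through bricks of $\gamma$. Part (a) depends only on the relative arrangement of the source and target position-indices, i.e.\ on the component word $R(\SSTT)$, and so matches between $B_\SSTT$ and $B_{\overline\SSTT}$ by construction of $\phi$. For (b), the grading rules of \cref{grsubsec} imply that only $i$-diagonals of $\gamma$ can contribute nontrivially, and a direct tally of the relevant crossings shows that passage of a moving strand through a $\mathbf{B}_1$ brick contributes degree $-2+1+1=0$ (the $-2$ from the like-residue black crossing being cancelled by the $(i{-}1)$-ghost and $(i{+}1)$-strand crossings), while passage through the base bricks $\mathbf{B}_4,\mathbf{B}_5,\mathbf{B}_6$ and the caps $\mathbf{B}_2,\mathbf{B}_3$ yields degrees depending only on brick type. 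Hence the total contribution from each moving strand traversing an $i$-diagonal depends only on the diagonal's type (left, centered, or right; visible or invisible), data which is recorded in the position-index of the relevant addable $i$-node and is therefore preserved by $\phi$.

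Having established degree preservation, the graded-character identity $\Dim{\Delta_\mu(\lambda)}=\sum_{\SSTT}t^{\deg(\SSTT)}=\sum_{\overline\SSTT}t^{\deg(\overline\SSTT)}=\Dim{\Delta_{\overline\mu}(\overline\lambda)}$ follows immediately from the basis description of standard modules recalled in \cref{lCasec}. The principal technical obstacle is the brick-by-brick computation in step (b): one must be careful to account separately for crossings of the moving strand itself and crossings involving its ghost, and to verify that the cancellations indeed render the contribution of each $\mathbf{B}_1$ brick equal to zero so that the overall contribution depends only on brick-type rather than on the number of bricks stacked in a given diagonal.
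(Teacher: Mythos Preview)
Your overall plan is the same as the paper's: reduce to degree preservation, split the degree of $B_\SSTT$ into (a) crossings among the $m$ moving $i$-strands and (b) crossings of each moving strand with the bricks of $\gamma$, and handle (b) one $i$-diagonal at a time via a brick-by-brick tally. The $\mathbf{B}_1$ contribution equal to $0$ is correct and matches the paper.

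The gap is in the last step of (b). You stop at ``the total contribution from each moving strand traversing an $i$-diagonal depends only on the diagonal's type (left, centered, or right; visible or invisible), data which is recorded in the position-index of the relevant addable $i$-node and is therefore preserved by $\phi$.'' That second clause is not right. The map $\phi$ is defined only by matching the \emph{indices} $\sigma_k$ of addable $i$-nodes; it does not preserve whether the $k$th visible diagonal sits to the left, on, or to the right of a red strand, nor the number or placement of invisible diagonals between two consecutive visible ones (the hypothesis $|\Add_i(\gamma)|=|\Add_{\overline i}(\overline\gamma)|$ says nothing about invisible diagonals). So if the contributions of $\mathbf{B}_4,\mathbf{B}_5,\mathbf{B}_6$ were genuinely different from one another, or if invisible diagonals contributed nonzero degree, your argument would fail.

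What is missing is exactly the computation the paper performs: passage through each of $\mathbf{B}_4,\mathbf{B}_5,\mathbf{B}_6$ contributes $+1$ (a black $i$--$(i{-}1)$-ghost crossing, an $i$-ghost--black $(i{+}1)$ crossing, or a black--red same-label crossing, respectively), and passage through $\mathbf{B}_2$ or $\mathbf{B}_3$ contributes $-1$. Combined with your $\mathbf{B}_1=0$, this gives a uniform $+1$ for every visible diagonal and $0$ for every invisible diagonal, independent of left/centered/right and independent of how many $\mathbf{B}_1$ bricks are stacked. Only then does the conclusion follow: the degree picked up by a strand running from the $j$th to the $j'$th addable $i$-node position is a function of $(j,j')$ alone, hence identical for $\gamma$ and $\overline\gamma$. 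Once you add that one explicit tally, your proof is complete and coincides with the paper's.
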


\begin{proof}
We begin by explicitly describing the effect of $\Phi$ on basis elements.  
Given $(\SSTS,\SSTT)\in\SStd(\lambda,\mu)\times \SStd(\lambda,\nu)$, the diagram 
of $C_{\overline{\SSTS},\overline{\SSTT} }$ may be obtained from that of 
$C_{\SSTS, \SSTT}$  as follows.
\begin{enumerate}
\item Take the diagram corresponding to $\SSTT\in\SStd(\lambda,\mu)$ and simply ``forget" all black strands (and their ghosts) corresponding to nodes of $[\gamma]$, as well as all red strands (which are at $x$-coordinates given by $\theta$). What remains is a diagram  involving $m$ $i$-strands   whose northern and southern points   belong to the set $\{x\mid \mathcal{D}_x \text{ is visible}\}$.
\item Isotopically deform the $i$-strands (along with their ghosts) 
and their northern and southern end points (which are initially   given by the loadings $\mathbf{i}_\lambda$ and $\mathbf{i}_\mu$ respectively)  until the 
northern and southern end points are given by the corresponding loadings
 $\mathbf{i}_{\overline\lambda}$ and $\mathbf{i}_{\overline\mu}$ respectively. Now change the  label of all of these  strands from $i$ to $\overline i$.
\item 
 Finally, add the  
black vertical strands (and their ghosts) corresponding to nodes of $[\overline\gamma]$, as well as all red strands (which are at $x$-coordinates given by $\overline\theta$).
\end{enumerate}
That the map $\Phi$ is an isomorphism of vectors spaces is clear from the fact that the corresponding semistandard tableaux are in bijection.

 We now wish to show that $\Phi$ is  degree preserving.   
Let   $(\SSTS, \SSTT) \in \SStd(\lambda,\mu)\times \SStd(\lambda,\nu)$ and 
let $A$ denote any strand in the diagram $C_{\SSTS, \SSTT}$ which corresponds to a removable $i$-node of $\lambda$ for $\lambda\in\Gamma$.  
By assumption (and the definition of $\Phi$), the strand $A$ is common to the diagrams of both $C_{\SSTS, \SSTT}$ and $  C_{\overline{\SSTS}, \overline{\SSTT}}$;  
we wish to count  the  degree contribution of $A$ in each case.  
 Degree contributions are made whenever the strand $A$ passes through an
 $i$-diagonal in $\gamma$ (respectively  $\overline i$-diagonal in $\overline{\gamma}$) or an $i$-strand (respectively $\overline i$-strand) corresponding to a node
 in $\lambda\setminus \gamma$ (respectively $\overline{\lambda}\setminus \overline{\gamma}$).
   The strands labelled by  nodes
 in $\lambda\setminus \gamma$ and $\overline{\lambda}\setminus \overline{\gamma}$
  are common to both  
  $C_{\SSTS, \SSTT}$ and $  C_{\overline{\SSTS}, \overline{\SSTT}}$ (with appropriate relabelling of residues), so we need 
 only consider the degree contributions arising from $A$ and its ghost crossing the $i$-diagonals in $\gamma$ or the $\overline i$-diagonals in $\overline{\gamma}$.

 If  $A$  passes through a $\mathbf{B}_1$ brick 
  then the degree contribution of this crossing is 0.  
If  $A$  passes through a brick $\mathbf{B}_k$ for $k=4,5,6$  then the degree contribution of this crossing is $+1$.  
If  $A$  passes through a brick $\mathbf{B}_k$ for $k=2,3$  then the degree contribution of this crossing is $-1$.

Let  $\mathbf{D}$ be a  diagonal in the diagram $B_{\SSTT}$  and suppose that 
 $A$  passes through $\mathbf{D}$.   
A visible   diagonal is built out of 
  a single  $\mathbf{B}_k$ brick  for $k\in\{4,5,6\}$ and
  some number (possibly zero) of $\mathbf{B}_1$ bricks.
 An invisible diagonal  has an extra  
  single   $\mathbf{B}_k$ brick    for $k\in\{2,3\}$.  
  Summing over the degrees, we conclude that the 
  crossing of  $A$  with a visible (respectively invisible) 
  $i$-diagonal has degree $+1$ (respectively 0).  Similarly for the crossing of $A$ with a $\overline i$-diagonal in $ C_{\overline{\SSTS}, \overline{\SSTT}}$. 

That $A_{\Gamma}$ and $A_{\overline\Gamma}$ are isomorphic as graded vector spaces now  follows as $\Phi$ maps visible (respectively invisible) $i$-diagonals to 
visible (respectively invisible) $\overline i$-diagonals.  
 That the length function is preserved is clear.  
That the 
 graded dimensions of standard modules are preserved is clear from
 the definition of $\Phi$ on the level of semistandard tableaux of a given shape and weight.  
  \end{proof}

We now momentarily focus our attention on the representation theory of these algebras over 
$\mathbb{C}$.  In this case the diagrammatic Cherednik algebas are Koszul, and so 
 graded decomposition numbers
 are particularly easy to calculate.  In particular, we have the following theorem.  

\begin{thm}\label{tanteolironcrispy2}
  Let $\gamma$  
 be an   $i$-admissible multipartition.  
The graded decomposition numbers   of $A(n,\theta,\kappa)$  
over $\mathbb{C}$ can be given in terms of nested sign sequences 
  as follows
\[
d_{\lambda\mu}(t)=\sum_{\mathclap{\omega \in \Omega(\lambda,\mu)}} \; t^{\|\omega\|}
\]
for $\lambda,\mu\in\Gamma$ such that $\mu\trianglelefteq_\theta \lambda$.
\end{thm}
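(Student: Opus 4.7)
The strategy is to apply the graded vector space isomorphism of \cref{vsisom} to reduce to a reference case where the nested sign sequence formula is already known, and then invoke Koszulity of the diagrammatic Cherednik algebra over $\mathbb{C}$ to transfer decomposition numbers across the isomorphism. The point is that, although \cref{vsisom} gives only a vector space isomorphism, it does preserve the graded characters of all standard modules, and over $\mathbb{C}$ these characters will determine the decomposition matrix.

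First I would exhibit a reference multipartition $\overline{\gamma}$ (for instance, one of level one obtained by laying out the $i$-diagonal data of $\gamma$ from left to right) such that $\chi(\gamma) = \chi(\overline{\gamma})$ in the sense required by \cref{vsisom}. Applying \cref{vsisom} yields a bijection $\lambda \leftrightarrow \overline{\lambda}$ between $\Gamma$ and $\overline{\Gamma}$ preserving $\Dim \Delta_\mu(\lambda)$ for every weight $\mu$. Because $\chi$ records precisely the configuration of addable $i$-nodes of $\gamma$, this bijection carries the terrain $\mathcal{P}(\mu, \lambda)$ to $\mathcal{P}(\overline{\mu}, \overline{\lambda})$ edge-for-edge (including the parenthesisation by $\lambda$), so it induces a norm-preserving bijection $\Omega(\mu, \lambda) \cong \Omega(\overline{\mu}, \overline{\lambda})$.

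Second, for a level-one reference the algebra $A_{\overline{\Gamma}}$ is a subquotient of a classical cyclotomic $q$-Schur algebra, whose decomposition numbers in this range restrict to those of the Hecke algebra subquotient treated in \cite{cmt08, tanteo13}. The Chuang--Miyachi--Tan--Teo / Tan--Teo result then gives
\[
d_{\overline{\lambda}\,\overline{\mu}}(t) = \sum_{\omega \in \Omega(\overline{\mu}, \overline{\lambda})} t^{\|\omega\|}.
\]

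Third, to transport this back to $A_{\Gamma}$ I would argue that, over $\mathbb{C}$, the graded decomposition polynomials are uniquely determined by the graded characters of the standard modules together with certain positivity and symmetry constraints. Koszulity (\cref{step1}) gives $d_{\lambda\nu}(t) \in t \mathbb{N}_0[t]$ for $\lambda \neq \nu$, and \cref{humathasprop} gives $\Dim L_\mu(\nu) \in \mathbb{Z}_{\geq 0}[t + t^{-1}]$. These constraints, applied to the triangular system
\[
\Dim \Delta_\mu(\lambda) = \sum_{\nu \trianglelefteq_\theta \lambda} d_{\lambda\nu}(t)\, \Dim L_\mu(\nu),
\]
together with unitriangularity $d_{\lambda\lambda}(t) = 1$, admit a unique solution by an LLT-style recursion proceeding upward in the $\theta$-dominance order. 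Since $\Phi$ preserves every $\Dim \Delta_\mu(\lambda)$, the system is the same on both sides of the bijection, forcing $d_{\lambda\nu}(t) = d_{\overline{\lambda}\,\overline{\nu}}(t)$, which gives the claimed formula.

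The main obstacle is the last step: a vector space isomorphism does not a priori transfer decomposition numbers, and the essential input is the combination of Koszul positivity with the bar-symmetry of graded composition multiplicities, which together pin down the decomposition matrix from the graded standard module characters alone. The combinatorial verification that the bijection $\lambda \leftrightarrow \overline{\lambda}$ identifies $\Omega(\mu, \lambda)$ with $\Omega(\overline{\mu}, \overline{\lambda})$ and preserves the norm $\|\cdot\|$ is then routine from the definition of $\chi$, since both the terrain and its $\lambda$-decoration depend only on the sequence of addable and removable $i$-nodes of $\mu$.
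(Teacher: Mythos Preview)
Your proposal is correct and follows essentially the same route as the paper's proof: use \cref{vsisom} to match graded characters of standards with a level-one reference, invoke Koszulity (\cref{step1}) together with the bar-symmetry of simple characters (\cref{humathasprop}) so that an inductive/LLT-style recursion determines the decomposition numbers uniquely from those characters, and then appeal to Tan--Teo for the level-one case. The only cosmetic difference is that the paper packages the uniqueness-by-recursion step as a citation to \cite[Theorem 3.8]{KN10} and inducts on the length $\ell(\lambda,\mu)$, and it leaves the (trivial) identification $\Omega(\mu,\lambda)\cong\Omega(\overline\mu,\overline\lambda)$ implicit; note also that the hypothesis actually needed from \cref{vsisom} is just $|\Add_i(\gamma)|=|\Add_{\overline i}(\overline\gamma)|$, not the full equality $\chi(\gamma)=\chi(\overline\gamma)$, which is defined only later.
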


\begin{proof}
For   $\lambda,\mu \in\Gamma$, recall that  
 $ \Dim{(\Delta_\mu(\lambda))}  \in \ZZ_{\geq 0}[t,t^{-1}]$ (by the definition of the grading on basis elements),
  $ \Dim{(L_\mu(\lambda))} \in \ZZ_{\geq 0}[t+t^{-1}]$ (see \cref{humathasprop}),
 and for $\lambda \neq \mu$ we have that
  $d_{\lambda\mu} (t) \in t\ZZ_{\geq 0}[t]$ (see \cref{step1}).    
  It is clear that a necessary condition for the multiplicity of 
 $L(\mu)$ in $\Delta(\lambda)$ to be non-zero is that $\SStd(\lambda,\mu)\neq \emptyset$.  It is also clear that   $\Dim{(\Delta_\lambda(\lambda))}=1=\Dim{(L_\lambda(\lambda))}$.  Therefore the first five conditions of \cite[Theorem 3.8]{KN10} are satisfied, and so
 \[
\Dim{(\Delta_\mu(\lambda)})= \;
\sum_
{  \mathclap{\begin{subarray}c \nu \neq \mu \\ 
   \SStd(\nu,\mu)\neq \emptyset\\
  \SStd(\lambda,\nu)\neq \emptyset \end{subarray} }} \; d_{\lambda\nu}(t)\Dim{(L_\mu(\nu))}+d_{ \lambda\mu}(t).
  \]
Therefore, one can calculate the graded characters of simple modules and the decomposition numbers of $A_{\Gamma}$   by induction on the 
distance, $\ell(\lambda,\mu)$, for $\lambda,\mu\in \Gamma$  
 exactly as in \cite[Main Algorithm]{KN10} and \cite[Theorem 1.18]{bcs15}.  
 By \cref{vsisom}, if we do this for $\lambda,\mu\in\Gamma$ or 
 $\overline\lambda,\overline\mu\in \overline\Gamma$ we get exactly the same answer!  Therefore the decomposition numbers of these algebras and the graded characters of simple modules are the same regardless of the weighting, $e$-multicharge, rank and level. 
In particular,  we can run this algorithm for $\overline\gamma$ a level 1 partition   with $m$ addable   $i$-nodes.  
The result now follows by \cite[Theorem 4.4]{tanteo13} and \cref{3.6rmk}.
\end{proof}

\subsection{Some useful results on moving $i$-strands through diagonals} 
 
There are several sequences of relations which we will often apply in 
particular order 
during the course of the proof.  For brevity, we shall now define these
 as Moves 1, $1^*$, and 2.  
 
\begin{move1}
Suppose we have a diagram in which two $j$-ghosts  are not separated by a black $(j+1)$-strand, and the corresponding two black $j$-strands are separated by a $(j-1)$-ghost. We apply relation \ref{rel10} to write this diagram as the difference of two diagrams in which the $j$-strands cross and the $(j-1)$strand bypasses the crossing to the left or right; see Figure \ref{e1} for an example.
\end{move1}

\begin{move1'}  
Suppose we have a diagram in which two black $j$-strands  are not separated by a $(j-1)$-ghost, and the corresponding two $j$-ghosts are separated by a black $(j+1)$-strand. One can repeat the above using relation \ref{rel9} in place of \ref{rel10}.
\end{move1'}

\begin{move2}
Suppose we have a diagram with a pair of  crossing $j$-strands. 
We may use relations \ref{rel3} and \ref{rel4} to rewrite the single crossing as a double crossing with a dot on the leftmost strand located between the two crossings; see the first equality in each of \cref{e2,e3} for an example.
\end{move2}

%

  \begin{eg}
 Let $e=3$, $\kappa=(2,0)$, $g=0.99$ and $\theta=(0,1)$.
 The leftmost diagram in Figure \ref{e1} is the idempotent corresponding to the loading of  the bicomposition $(\varnothing, (1,2))$. 
 Applying Move 1 to the two adjacent  $0$-ghosts, we obtain the difference of two diagrams depicted in  \cref{e1}.

\begin{figure}[ht!]
\[\scalefont{0.6}
    \begin{minipage}{3.5cm}  \begin{tikzpicture}[scale=0.8] 
       \clip(-1.1,-0.5)rectangle (3.4,2.5);
   \draw (-1,0) rectangle (3.2,2);   
   \draw[wei2](0.7,0) -- (0.7,2);
 
       \draw[wei2](1.5,0) -- (1.5,2);
       \draw(1.9,0)--(1.9,2);       \draw(2.3,0)--(2.3,2);       \draw(3.1,0)--(3.1,2);
      \draw[dashed](-1+1.9,0)--(-1+1.9,2);       \draw[dashed](-1+2.3,0)--(-1+2.3,2);       \draw[dashed](-1+3.1,0)--(-1+3.1,2);
         \node [below] at (2.3,0)  {  $0$};     
                  \node [below] at (1.9,0)  {  $0$};     
                           \node [below] at (3.1,0)  {  $2$};     
              \node [wei2,below] at (0.7,0)  {  $2$};           
                                       \node [wei2,below] at (1.5,0)  {  $0$};      
             \end{tikzpicture}\end{minipage}
             =
                \begin{minipage}{3.5cm}  \begin{tikzpicture}[scale=0.8] 
       \clip(-1.1,-0.5)rectangle (3.4,2.5);
   \draw (-1,0) rectangle (3.2,2);   
   \draw[wei2](0.7,0) -- (0.7,2);
 
       \draw[wei2](1.5,0) -- (1.5,2);
                   \draw[dashed]  (-1+1.9,0)  to [out=80,in=-110] (-1+2.3,1) ;   
                  \draw[dashed]  (-1+2.3,1)  to [out=80,in=-80] (-1+2.3,2) ;   
         \draw[dashed]  (-1+1.9,2)  to [out=-80,in=110] (-1+2.3,1) ;   
                  \draw[dashed]  (-1+2.3,1)  to [out=-80,in=80] (-1+2.3,0) ;
         \draw  (1.9,0)  to [out=80,in=-110] (2.3,1) ;   
                  \draw  (2.3,1)  to [out=80,in=-80] (2.3,2) ;   
         \draw  (1.9,2)  to [out=-80,in=110] (2.3,1) ;   
                  \draw  (2.3,1)  to [out=-80,in=80] (2.3,0) ;  
     \draw[dashed](2.1,0)--(2.1,2);   
          \draw (3.1,0)--(3.1,2);   
         \node [below] at (2.3,0)  {  $0$};     
                  \node [below] at (1.9,0)  {  $0$};     
                           \node [below] at (3.1,0)  {  $2$};     
              \node [wei2,below] at (0.7,0)  {  $2$};           
                                       \node [wei2,below] at (1.5,0)  {  $0$};      
              \end{tikzpicture}\end{minipage}
-
             \begin{minipage}{3.5cm}  \begin{tikzpicture}[scale=0.8] 
       \clip(-1.1,-0.5)rectangle (3.4,2.5);
   \draw (-1,0) rectangle (3.2,2);   
   \draw[wei2](0.7,0) -- (0.7,2);
 
       \draw[wei2](1.5,0) -- (1.5,2);
          \draw  (1.9,0)  to [out=110,in=-110] (1.9,1) ;   
                  \draw  (1.9,1)  to [out=70,in=-110] (2.3,2) ;   
                           \draw  (1.9,2)  to [out=-110,in=110] (1.9,1) ;   
                  \draw  (1.9,1)  to [out=-70,in=110] (2.3,0) ;   
                           \draw[dashed]  (-1+1.9,0)  to [out=110,in=-110] (-1+1.9,1) ;   
                  \draw[dashed]  (-1+1.9,1)  to [out=70,in=-110] (-1+2.3,2) ;   
                           \draw[dashed]  (-1+1.9,2)  to [out=-110,in=110] (-1+1.9,1) ;   
                  \draw[dashed]  (-1+1.9,1)  to [out=-70,in=110] (-1+2.3,0) ;   
     \draw[dashed](2.1,0)--(2.1,2);   
       \draw (3.1,0)--(3.1,2);   
         \node [below] at (2.3,0)  {  $0$};     
                  \node [below] at (1.9,0)  {  $0$};     
                           \node [below] at (3.1,0)  {  $2$};     
              \node [wei2,below] at (0.7,0)  {  $2$};           
                                       \node [wei2,below] at (1.5,0)  {  $0$};      
             \end{tikzpicture}\end{minipage}
\]
\caption{Rewriting the  idempotent corresponding to the loading of $(\varnothing,(1,2))$
using relation \ref{rel10}.}
\label{e1}
\end{figure}

  In \cref{e2,e3} we first rewrite the  right-hand side of the equality in
   \cref{e1} using Move 2.  
 We then use relation \ref{rel6} (whose error term is zero by relation \ref{rel4})
in each case to  obtain an element  which factors through the idempotent 
  $((1^2), (1))$ or an unsteady idempotent, respectively.

 \begin{figure}[ht!]
\[\scalefont{0.6}
                         \begin{minipage}{3.5cm}  \begin{tikzpicture}[scale=0.8] 
       \clip(-1.1,-0.5)rectangle (3.4,2.5);
   \draw (-1,0) rectangle (3.2,2);   
   \draw[wei2](0.7,0) -- (0.7,2);
 
       \draw[wei2](1.5,0) -- (1.5,2);
                   \draw[dashed]  (-1+1.9,0)  to [out=80,in=-110] (-1+2.3,1) ;   
                  \draw[dashed]  (-1+2.3,1)  to [out=80,in=-80] (-1+2.3,2) ;   
         \draw[dashed]  (-1+1.9,2)  to [out=-80,in=110] (-1+2.3,1) ;   
                  \draw[dashed]  (-1+2.3,1)  to [out=-80,in=80] (-1+2.3,0) ;
         \draw  (1.9,0)  to [out=80,in=-110] (2.3,1) ;   
                  \draw  (2.3,1)  to [out=80,in=-80] (2.3,2) ;   
         \draw  (1.9,2)  to [out=-80,in=110] (2.3,1) ;   
                  \draw  (2.3,1)  to [out=-80,in=80] (2.3,0) ;  
     \draw[dashed](2.1,0)--(2.1,2);   
          \draw (3.1,0)--(3.1,2);   
         \node [below] at (2.3,0)  {  $0$};     
                  \node [below] at (1.9,0)  {  $0$};     
                           \node [below] at (3.1,0)  {  $2$};     
              \node [wei2,below] at (0.7,0)  {  $2$};           
                                       \node [wei2,below] at (1.5,0)  {  $0$};      
              \end{tikzpicture}\end{minipage}=        \begin{minipage}{3.5cm}  \begin{tikzpicture}[scale=0.8] 
       \clip(-1.1,-0.5)rectangle (3.4,2.5);
   \draw (-1,0) rectangle (3.2,2);   
   \draw[wei2](0.7,0) -- (0.7,2);
 
       \draw[wei2](1.5,0) -- (1.5,2);
                   \draw  (2.3,0)  -- (2.3,2) ;   
                    \node [below] at (2.3,0)  {};  
                            \draw[dashed]  (-1+1.9,2)  to [out=-80,in=90] (-1.1+2.5,1) ;   
                  \draw[dashed]  (-1+1.9,0)  to [out=80,in=-90] (-1.1+2.5,1) ;   
                  \draw[dashed]  (-1+2.3,0)  to [out=90,in=-90] (-1+2.3,2) ;   
         \draw  (1.9,2)  to [out=-80,in=90] (2.5,1) ;   
                  \draw  (1.9,0)  to [out=80,in=-90] (2.5,1) ;   
                                \fill (2.3,1)  circle (3pt); 
      \draw[dashed](2.1,0)--(2.1,2);   
          \draw (3.1,0)--(3.1,2);   
         \node [below] at (2.3,0)  {  $0$};     
                  \node [below] at (1.9,0)  {  $0$};     
                           \node [below] at (3.1,0)  {  $2$};     
              \node [wei2,below] at (0.7,0)  {  $2$};           
                                       \node [wei2,below] at (1.5,0)  {  $0$};      
              \end{tikzpicture}\end{minipage}
=        \begin{minipage}{3.5cm}  \begin{tikzpicture}[scale=0.8] 
       \clip(-1.1,-0.5)rectangle (3.4,2.5);
   \draw (-1,0) rectangle (3.2,2);   
   \draw[wei2](0.7,0) -- (0.7,2);
       \draw[wei2](1.5,0) -- (1.5,2);
                   \draw  (2.3,2)  to [out=-120,in=90]  (0,1) ;   
                   \draw  (2.3,0)  to [out=120,in=-90]  (0,1) ;   
                    \node [below] at (2.3,0)  {};  
         \draw  (1.9,2)  -- (1.9,0) ;   
                  \draw[dashed]  (1.9-1,2)  -- (1.9-1,0) ;   
      \draw[dashed](2.1,0)to [out=160,in=-90] (-0.1,1);
     \draw[dashed](2.1,2)to [out=-160,in=90] (-0.1,1);
          \draw[dashed](.5,2)to [out=-150,in=90] (0.2-1,1);
                    \draw[dashed](.5,0)to [out=150,in=-90] (0.2-1,1);
 \draw  (0.8,1)  to [out=80,in=-90] (3.1,2);
 \draw  (0.8,1)  to [out=-80,in=90] (3.1,0);
         \node [below] at (2.3,0)  {  $0$};     
                  \node [below] at (1.9,0)  {  $0$};     
                           \node [below] at (3.1,0)  {  $2$};     
              \node [wei2,below] at (0.7,0)  {  $2$};           
                                       \node [wei2,below] at (1.5,0)  {  $0$};      
              \end{tikzpicture}\end{minipage}
              \]
              \caption{Rewriting one of the diagrams in Figure \ref{e1} 
              as an element which factors through the idempotent labelled by
              $((1^2),(1))$.}
              \label{e2}
\end{figure}
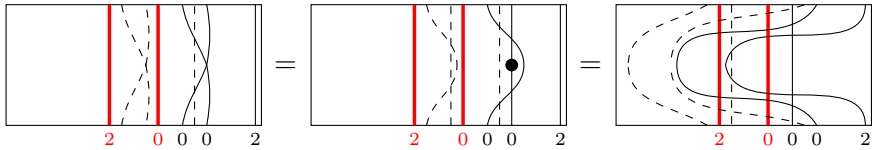

%

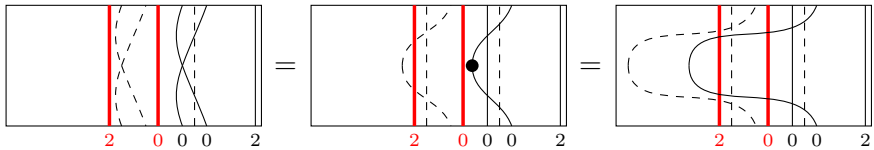
\begin{figure}[ht!]
\[\scalefont{0.6}
     \begin{minipage}{3.5cm}  \begin{tikzpicture}[scale=0.8] 
       \clip(-1.1,-0.5)rectangle (3.4,2.5);
   \draw (-1,0) rectangle (3.2,2);   
   \draw[wei2](0.7,0) -- (0.7,2);
 
       \draw[wei2](1.5,0) -- (1.5,2);
          \draw  (1.9,0)  to [out=110,in=-110] (1.9,1) ;   
                  \draw  (1.9,1)  to [out=70,in=-110] (2.3,2) ;   
                           \draw  (1.9,2)  to [out=-110,in=110] (1.9,1) ;   
                  \draw  (1.9,1)  to [out=-70,in=110] (2.3,0) ;   
 \draw[dashed]  (-1+1.9,0)  to [out=110,in=-110] (-1+1.9,1) ;   
                  \draw[dashed]  (-1+1.9,1)  to [out=70,in=-110] (-1+2.3,2) ;   
                           \draw[dashed]  (-1+1.9,2)  to [out=-110,in=110] (-1+1.9,1) ;   
                  \draw[dashed]  (-1+1.9,1)  to [out=-70,in=110] (-1+2.3,0) ;   
     \draw[dashed](2.1,0)--(2.1,2);   
       \draw (3.1,0)--(3.1,2);   
         \node [below] at (2.3,0)  {  $0$};     
                  \node [below] at (1.9,0)  {  $0$};     
                           \node [below] at (3.1,0)  {  $2$};     
              \node [wei2,below] at (0.7,0)  {  $2$};           
                                       \node [wei2,below] at (1.5,0)  {  $0$};      
             \end{tikzpicture}\end{minipage}
=
  \begin{minipage}{3.5cm}  \begin{tikzpicture}[scale=0.8] 
       \clip(-1.1,-0.5)rectangle (3.4,2.5);
   \draw (-1,0) rectangle (3.2,2);   
   \draw[wei2](0.7,0) -- (0.7,2);
 
       \draw[wei2](1.5,0) -- (1.5,2);
           \draw  (1.9,0)  -- (1.9,2) ; 
                                            \fill (1.65,1)  circle (3pt); 
     \draw  (1.65,1)  to [out=-90,in=110] (2.3,0) ;   
     \draw  (1.65,1)  to [out=90,in=-110] (2.3,2) ;  
         \draw[dashed]  (-1+1.9,0)  -- (-1+1.9,2) ; 
      \draw[dashed]  (-1+1.5,1)  to [out=-90,in=110] (-1+2.3,0) ;   
     \draw[dashed]  (-1+1.5,1)  to [out=90,in=-110] (-1+2.3,2) ;   
      \draw[dashed](2.1,0)--(2.1,2);   
       \draw (3.1,0)--(3.1,2);   
          \node [below] at (2.3,0)  {  $0$};     
                  \node [below] at (1.9,0)  {  $0$};     
                           \node [below] at (3.1,0)  {  $2$};     
              \node [wei2,below] at (0.7,0)  {  $2$};           
                                       \node [wei2,below] at (1.5,0)  {  $0$};      
             \end{tikzpicture}\end{minipage}
             =
  \begin{minipage}{3.5cm}  \begin{tikzpicture}[scale=0.8] 
       \clip(-1.1,-0.5)rectangle (3.4,2.5);
   \draw (-1,0) rectangle (3.2,2);   
   \draw[wei2](0.7,0) -- (0.7,2);
 
       \draw[wei2](1.5,0) -- (1.5,2);
           \draw  (1.9,0)  -- (1.9,2) ; 
      \draw  (0.2,1)  to [out=-90,in=110] (2.3,0) ;   
     \draw  (0.2,1)  to [out=90,in=-110] (2.3,2) ;  
         \draw[dashed]  (-1+1.9,0)  -- (-1+1.9,2) ; 
      \draw[dashed]  (-1+0.2,1)  to [out=-90,in=110] (-1+2.3,0) ;   
     \draw[dashed]  (-1+0.2,1)  to [out=90,in=-110] (-1+2.3,2) ;   
      \draw[dashed](2.1,0)--(2.1,2);   
       \draw (3.1,0)--(3.1,2);   
          \node [below] at (2.3,0)  {  $0$};     
                  \node [below] at (1.9,0)  {  $0$};     
                           \node [below] at (3.1,0)  {  $2$};     
              \node [wei2,below] at (0.7,0)  {  $2$};           
                                       \node [wei2,below] at (1.5,0)  {  $0$};      
             \end{tikzpicture}\end{minipage}
             \]
              \caption{Rewriting the other   diagram  in Figure \ref{e1} 
              as an element which factors through an unsteady idempotent.}
              \label{e3}
\end{figure}

\end{eg}

The following lemmas shall also be useful in what follows.  

\begin{lem}
\label{remove}       
If   $d\in A(n,\theta,\kappa)$ factors through some loading $\mathbf{i}$ such that 
 $ \mathbf{i} \vartriangleright \mathbf{i}_{\gamma^+}$,  then   $d=0$ in $A_{\Gamma}$. 
 In particular, 
 if $d$ factors through $1_\lambda$ with 
   $\lambda \vartriangleright_{(\theta,j)} \gamma^+$  
 for some $j\neq i$, then $d=0$ in $A_{\Gamma}$.  
 \end{lem}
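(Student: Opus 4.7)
The plan is to observe that the two-sided ideal $A_{\mathcal{R}\cup\mathcal{M}}\, f\, A_{\mathcal{R}\cup\mathcal{M}}$ quotiented out in the definition of $A_\Gamma$ is generated by the weight idempotents $1_\mu$ for $\mu \ntrianglelefteqslant \gamma^+$. Consequently, it suffices to produce a multipartition $\lambda$ with $\lambda \ntrianglelefteqslant \gamma^+$ through whose idempotent $d$ factors: writing such a factorisation as $d = a \cdot 1_\lambda \cdot b$ and using the idempotent identity $1_\lambda = 1_\lambda\, f\, 1_\lambda$ (valid because $1_\lambda$ appears as a summand of $f$), one immediately obtains $d \in A_{\mathcal{R}\cup\mathcal{M}}\, f\, A_{\mathcal{R}\cup\mathcal{M}}$, hence $d = 0$ in $A_\Gamma$.

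For the main statement, I apply \cref{215} to the hypothesised factorisation of $d$ through $\mathbf{i}$. Since $\mathbf{i} \vartriangleright \mathbf{i}_{\gamma^+}$ is strict, $\mathbf{i}$ and $\mathbf{i}_{\gamma^+}$ are certainly not isotopic, so the lemma produces a multipartition $\lambda$ such that $d$ factors through $\mathbf{i}_\lambda$ and $\mathbf{i}_\lambda \vartriangleright \mathbf{i}$. Transitivity of the dominance order then gives $\lambda \vartriangleright_\theta \gamma^+$, and in particular $\lambda \ntrianglelefteqslant \gamma^+$, which is precisely the condition needed for the template above to produce the conclusion $d = 0$ in $A_\Gamma$.

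For the ``in particular'' statement, no invocation of \cref{215} is required: the factorisation of $d$ through $1_\lambda$ is directly provided, and it remains only to verify $\lambda \ntrianglelefteqslant \gamma^+$. If on the contrary $\mathbf{i}_\lambda \trianglelefteqslant \mathbf{i}_{\gamma^+}$, then in particular $\mathbf{i}_{\gamma^+}$ would $j$-dominate $\mathbf{i}_\lambda$, in direct contradiction with the hypothesis that $\mathbf{i}_\lambda$ strictly $j$-dominates $\mathbf{i}_{\gamma^+}$. Hence $\lambda \ntrianglelefteqslant \gamma^+$ and the template applies.

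There is no serious obstacle in either part: the argument is essentially bookkeeping with the ideal structure defining the subquotient $A_\Gamma$, together with a single application of \cref{215} in the first case. The only implicit point to note is that residues travel along strands, so any loading through which $d \in A_{\mathcal{R}\cup\mathcal{M}}$ factors inherits the residue multiset $\mathcal{R}\cup\mathcal{M}$; thus the intermediate $\mathbf{i}_\lambda$ furnished by \cref{215} automatically has $\lambda \in \mptn{l}{n+m}(\mathcal{R}\cup\mathcal{M})$, so that $1_\lambda$ really does live in $A_{\mathcal{R}\cup\mathcal{M}}$.
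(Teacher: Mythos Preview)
Your proof is correct and follows exactly the approach the paper has in mind: the paper's own proof is the single sentence ``The result is immediate by the definition of $A_{\Gamma}$, the definition of the dominance order, and \cref{215}'', and you have simply unpacked what that sentence means. Your additional remarks on why the intermediate $\lambda$ has the correct residue multiset, and why strict $j$-dominance precludes $\lambda \trianglelefteqslant \gamma^+$, are exactly the small bookkeeping checks that justify the paper's one-liner.
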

\begin{proof}
%
 The result  is immediate by the definition of $A_{\Gamma}$, the definition of the dominance order,  and \cref{215}.    
 \end{proof}

\begin{lem}\label{move3}
Let  
 $A\in [\gamma]$ be at point $x\in \RR$ and suppose $A \not \in \Remb
 (\gamma)$. 
 We   let $d\in A(n,\theta, \kappa)$ 
  and suppose that there is a  neighbourhood
   $(x-n\epsilon,x+n\epsilon)\times [0,1]$
   in which 
   \[d \cap ( (x-n\epsilon,x+n\epsilon) \times [0,1])=(1_{\lambda \setminus A}) \cap  ( (x-n\epsilon,x+n\epsilon) \times [0,1]),\]   
for some $\lambda \in \Gamma$.   Then $d =0$ in $A_{\Gamma}$.  
\end{lem}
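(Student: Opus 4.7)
The plan is to show that the hypothesis forces $d$ to factor through an idempotent $1_\mu$ for some multi-composition $\mu \notin \Gamma$, and then to argue that $1_\mu$ is annihilated in $A_\Gamma$: either because $\mathbf{i}_\mu \triangleright \mathbf{i}_{\gamma^+}$ (so \cref{remove} applies), or because $\mu \not\dom \gamma^-$ (so $1_\mu$ is killed by the idempotent truncation $e = \sum_{\mu' \dom \gamma^-} 1_{\mu'}$ used to define $A_\Gamma$).

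First I would read off the boundaries of $d$ from the hypothesis. Since the strip $(x-n\epsilon, x+n\epsilon) \times [0,1]$ meets both the top and bottom of $d$, the boundary loadings $\mathbf{i}_\mu$ and $\mathbf{i}_\nu$ can have no point at $x$, and inside $(x-n\epsilon, x+n\epsilon)$ they agree with $\mathbf{i}_{\lambda \setminus A}$. Because $d$ has exactly $n+m$ strands, the one remaining strand---of residue $i_A$, corresponding to $A$---must lie outside the strip at every height; by a continuity argument it sits at a single fixed position $x'$ with $|x'-x| \geq n\epsilon$ on one fixed side of the strip. Hence $\mu$ (and similarly $\nu$) arises from $\lambda$ by translating the $A$-strand from $x$ to $x'$, with all other strand positions unchanged.

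Next I would use the non-removability of $A=(r,c,k)$: at least one of $(r+1,c,k)$ or $(r,c+1,k)$ lies in $[\gamma]$. Taking $(r+1,c,k)\in[\gamma]$ without loss of generality places an $(i_A-1)$-ghost at $x+\epsilon$, strictly inside the strip, and this is the structural obstruction that prevents the relocated loading from remaining inside $\Gamma$. A case analysis on the sign of $x'-x$ finishes the proof. In the leftward case $x' < x - n\epsilon$, the loading $\mathbf{i}_\mu$ strictly $i_A$-dominates $\mathbf{i}_\lambda$ and agrees with $\mathbf{i}_\lambda$ on every other residue; since every element of $\Gamma$ has the same $j$-loading as $\gamma$ for all $j\neq i$, we obtain $\mathbf{i}_\mu \triangleright_{(\theta, i_A)} \mathbf{i}_{\gamma^+}$ (using \cref{215} to pass, if necessary, to a multipartition loading that is still more dominant), and \cref{remove} gives $d=0$. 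In the rightward case $x' > x + n\epsilon$, the loading $\mathbf{i}_\mu$ is strictly less $i_A$-dominant than $\mathbf{i}_{\gamma^-}$, so $\mu \ndom \gamma^-$, hence $1_\mu \cdot e = 0$ and so $d = 1_\mu\, d\, 1_\nu$ vanishes in $A_\Gamma$.

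The main obstacle will be the leftward subcase when $i_A = i$, because $\mathbf{i}_{\gamma^+}$ already has the $m$ added $i$-nodes placed as far left as possible, so merely shifting $A$ leftward of the strip need not give dominance over $\gamma^+$ in an entirely transparent way. Here the non-removability of $A$ is essential: the adjacent $\gamma$-strand to the right of $A$ rules out any recovery of the loading by an addable $i$-slot of $\gamma \setminus A$, and iteration of \cref{215} produces a multipartition loading strictly dominating $\mathbf{i}_{\gamma^+}$ so that \cref{remove} can be applied.
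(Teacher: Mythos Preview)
Your argument rests on a misreading of the hypothesis. The condition
\[
d \cap \big((x-n\epsilon,x+n\epsilon)\times[0,1]\big)=1_{\lambda\setminus A}\cap \big((x-n\epsilon,x+n\epsilon)\times[0,1]\big)
\]
constrains $d$ only inside a strip of width $2n\epsilon$; it says nothing whatsoever about $d$ outside that strip. The loading $\mathbf{i}_{\lambda\setminus A}$ has many points outside $(x-n\epsilon,x+n\epsilon)$, and there is no reason the northern or southern boundary of $d$ should agree with $\mathbf{i}_{\lambda\setminus A}$ (or with $\mathbf{i}_\lambda$) at those points. So your conclusion that ``$\mu$ arises from $\lambda$ by translating the $A$-strand from $x$ to some $x'$, with all other strand positions unchanged'' is unjustified: outside the strip the boundary can be an arbitrary loading with the correct residue multiset. (Indeed, in the paper's applications the lemma is invoked for diagrams produced mid-computation whose exterior configuration is quite complicated.) Consequently neither branch of your case split is available: you cannot compare $\mathbf{i}_\mu$ to $\mathbf{i}_{\gamma^\pm}$ globally, because you do not know $\mathbf{i}_\mu$ outside the strip. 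The ``continuity argument'' pinning the missing strand to a single fixed $x'$ is likewise unfounded --- a strand outside the strip need not be vertical.

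The paper's proof is of a completely different character: it works \emph{locally}, entirely within (and just beyond) the strip, applying diagrammatic relations to the strands actually present there. Since $A=(r,c,k)$ is not removable there is a node $(r+1,c,k)$ or $(r,c+1,k)$ in $[\gamma]$, and the proof runs an explicit recursive procedure (Moves~1 and~2 together with relations \ref{rel6}--\ref{rel10}) on the strands of the $A$-diagonal, at each step producing either a term that vanishes by \cref{move3} at a node closer to the first row/column, or a term in which the procedure continues with $(\bar r,\bar c)$ decreased. At termination one reaches the edge of the component, where the resulting loading at the central $y$-slice visibly $(\theta,h)$-dominates $\mathbf{i}_{\gamma^+}$ for some residue $h\neq i$, and \cref{remove} finishes. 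The point is that this manipulation never touches anything outside the strip, so the unconstrained exterior of $d$ is irrelevant --- which is exactly what your boundary-reading approach cannot accommodate.
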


\begin{proof}
Suppose $A=(r,c,k)$ is a $j$-node.  
All the relations we shall apply only involve moving strands a distance less than $n\epsilon$ to the left or right.
 Such relations applied to one component do not affect any other components, due to the fact that $\epsilon$ is very small compared to the $\theta_a-\theta_b$ for $1\leq a,b \leq l$.  We therefore need only focus on the interaction within the $k$th component.

 First note that as $(r,c,k)$ is not a removable node, there exists a node   
      $(r+1,c,k)$ or $(r,c+1,k)$  in $[\gamma]$.   
       We shall argue for the former case, but the latter is similar.

  If $c=1$   and there is no node $(r,2,k)$
  then the result follows as we need only move the nodes 
$(r+a,1,k)$ for $a\geq 1$  
to the left to obtain a loading   that $(\theta,h)$-dominates $\mathbf{i}_{\gamma^+}$ for some $h\neq i$; the result then follows by \cref{215}. 
 For $c>1$, we now provide an algorithm  for showing that
 $d=0$ in $A_{\Gamma}$.  
This  
involves   procedures on strands which we describe by the corresponding nodes in the Young diagram.  If at any point in the algorithm the node to which we refer does not exist, then we have reached the first row or column of our partition;  in which case terminate the algorithm and proceed to the  end of the proof.

\begin{itemize}
\item[Step 1] 
 The   $(j-1)$-ghosts corresponding to $(j-1)$-nodes $(r+1,c,k)$  and $(r,c-1,k)$ are not separated by a black $j$-strand;  
 we can apply Move 1 to 
 (the  strands corresponding to)
 this pair of  nodes  and the $(j-2)$-node $(r+1,c-1,k)$.  
 The result is the difference of two distinct diagrams, in which
  the $(j-2)$-strand (labelled by node $(r+1,c-1,k)$) 
  bypasses the $(j-1)$-crossing to the left and right.  Now proceed to Step 2.
%
%
%
%
%
    \item[Step 2] 
\begin{itemize}
\item[$(a)$] Consider the diagram in which   the $(j-2)$-strand bypasses to the left.
 Observe that the    $(j-2)$-ghosts 
 labelled by nodes $(r+1,c-1,k)$ and $(r,c-2,k)$  have no black strand separating them.  The black     $(j-2)$-strands 
 labelled by nodes $(r+1,c-1,k)$ and $(r,c-2,k)$ are separated by the 
 $(j-3)$-ghost strand labelled by the node $(r+1,c-2,k)$.  
 We 
 now set $\bar{j}:=j-1$ and $(\bar{r},\bar{c},k):=(r,c-1,k)$ and
 (using  the barred residues and node labels as the input) proceed to Step 1.  
 \item[$(b)$]
 Consider the diagram in which   the $(j-2)$-strand bypasses to the right.
  Apply Move 2 to the crossing $(j-1)$-strands.  
Transpose the labels of the $(j-1)$-strands  corresponding to nodes
 $(r,c-1,k)$  and  $(r+1,c,k)$ 
 (as their order when read from left to right has switched); 
 this results in the dotted strand being labelled by $(r,c-1,k)$.  
    Push the ghost of the 
 dotted   $(j-1)$-strand through the black $j$-strand immediately to its left by relation \ref{rel6} (observe that the error term in \ref{rel6} is zero by relation \ref{rel4}).  
  Observe that the    $(j-1)$-ghosts labelled by nodes $(r,c-1,k)$ and $(r-1,c-2,k)$ 
 have no black strand separating them.
   The black strands 
  labelled by nodes $(r,c-1,k)$ and $(r-1,c-2,k)$ are separated by a   $(j-2)$-ghost 
  labelled by the node $(r,c-2,k)$.
 Therefore we relabel $(\bar r,\bar c,\bar k):=(r-1,c-1,k)$ and $\bar j:=j$ and proceed to Step 1.
  \end{itemize}
 %
%
%
%
%

     \end{itemize}
     The algorithm terminates  if at the end of   Step 2 
     case $(a)$ we set  $\bar c=0$
 and in case $(b)$ we set  $\bar r=0$ or  $\bar c=0$.  
If we terminate in  case $(a)$, then our diagram has a crossing pair of 
black $(\bar j-1)$-nodes labelled by $(\bar r,1)$ and $(\bar r+1,2)$ bypassed by a ghost 
$(\bar j-2)$-strand to the left.  
 We can pull the $(\bar j-2)$-strand  at least $n\epsilon$ units to the left; 
  we then  apply Move 2 to the crossing $(\bar j-1)$-strands and pull the ghost 
  dotted $(\bar j-1)$-strand through the black $\bar j$-strand immediately to its left.
 The loading at $y=1/2$ in the resulting diagram  $(\theta,h)$-dominates $\mathbf{i}_{\gamma^+}$ for $h=\bar j-1,\bar j-2$.  
 The result follows from \cref{remove}.  
  Case $(b)$ is similar.  
\end{proof}

\begin{lem}\label{move2.5}
Given $\lambda \in \Gamma$,  
 if we add a dot to any of the strands in $1_\lambda$ corresponding to a node  in $\gamma$, then
 the resulting diagram is zero in $A_{\Gamma}$.    \end{lem}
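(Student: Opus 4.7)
The plan is to reduce the claim, by an induction over the nodes of $[\gamma]$, to two applications: Lemma~\ref{remove} for the base case, and Lemma~\ref{move3} together with Lemma~\ref{remove} for the inductive step. Order the nodes of $[\gamma]$ so that $(r,c,k) \prec (r',c',k')$ when $r+c < r'+c'$, i.e.\ innermost first; the base case is $A = (1,1,k)$ for $1 \leq k \leq l$, and the inductive step handles all other $A \in [\gamma]$ by transferring the dot to a more interior neighbour.

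For the base case, strand $A$ has residue $\kappa_k$ and sits at $\theta_k+2\epsilon$, immediately to the right of the red $\kappa_k$-strand at $\theta_k$. By relation~\ref{rel11}, the dot on $A$ is equal to the diagram in which strand $A$ performs a double crossing with the red $\kappa_k$-strand. This factors through the loading $\mathbf{i}'$ obtained from $\mathbf{i}_\lambda$ by moving strand $A$ just past $\theta_k$ to the left. A direct comparison of residue counts shows $\mathbf{i}' \vartriangleright \mathbf{i}_{\gamma^+}$ (strict in the $\kappa_k$-direction, equal in every other residue direction), whence Lemma~\ref{remove} gives vanishing in $A_\Gamma$. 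For the inductive step with $A = (r,c,k)$ and $(r,c) \neq (1,1)$, at least one of $B = (r-1,c,k)$ or $B = (r,c-1,k)$ lies in $[\gamma]$; the local configuration of $A$, $B$ and the relevant ghost-strand between them matches the hypothesis of relation~\ref{rel6} or~\ref{rel7}. This rewrites the dotted element as the dot on strand $B$ (handled by the induction hypothesis, since $B \prec A$) plus or minus a triple-point correction term. The correction term is then attacked by the same algorithmic unwinding used in the proof of Lemma~\ref{move3}: iterated applications of Moves~1, $1^*$ and~2, together with isotopy, produce a sum in which each summand either factors through a loading that strictly dominates $\mathbf{i}_{\gamma^+}$ (killed by Lemma~\ref{remove}) or locally looks like $1_{\lambda'\setminus A'}$ around some non-removable node $A' \in [\gamma]$ (killed by Lemma~\ref{move3}).

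The hard part will be controlling the triple-point correction. A single application of~\ref{rel6} or~\ref{rel7} shifts both strands $A$ and $B$ in the intermediate loading, strictly increasing the count of one residue on the left while leaving the $i$-residue count unchanged. Since $\mathbf{i}_\lambda$ is in general strictly $i$-dominated by $\mathbf{i}_{\gamma^+}$ (whose $i$-nodes are packed as far left as possible), the correction does not immediately fall into the dominance cone of $\gamma^+$ and cannot be eliminated by one appeal to Lemma~\ref{remove}. Overcoming this requires iterating the move-based manipulations until one of the two vanishing mechanisms kicks in at every surviving summand; tracking the intermediate loadings through this iteration is, as in the proof of Lemma~\ref{move3}, the bulk of the work.
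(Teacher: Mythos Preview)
Your inductive scheme is the same as the paper's, but the base case contains a real error and the inductive step is not carried far enough. In the base case, your claim that $\mathbf{i}' \vartriangleright \mathbf{i}_{\gamma^+}$ is false: the loadings $\mathbf{i}'$ and $\mathbf{i}_{\gamma^+}$ are \emph{not} equal in the $i$-direction, since $\lambda$ and $\gamma^+$ generally place their added $i$-nodes in different positions and $\gamma^+$ has them as far left as possible; hence $\mathbf{i}'$ need not $i$-dominate $\mathbf{i}_{\gamma^+}$. What Lemma~\ref{remove} actually provides (second clause) is that strict $j$-dominance for some $j\neq i$ already suffices, so the case $\kappa_k\neq i$ goes through once the justification is corrected. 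The case $\kappa_k = i$, however, does not: pushing an $i$-strand past the red gains nothing in any non-$i$ residue, so Lemma~\ref{remove} cannot be invoked directly. The paper handles this via $i$-admissibility: since $(1,1,k)\in[\gamma]$ has residue $i$ and is not removable, one of $(2,1,k)$, $(1,2,k)$ lies in $[\gamma]$ with residue $i\mp1$, and it is through that neighbour that the dominance argument is completed. You do not address this case.

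For the inductive step, you correctly reduce to the double-crossing term (your ``correction'') and correctly observe that Lemma~\ref{remove} does not kill it immediately. But you stop there, deferring to an unspecified ``algorithmic unwinding as in Lemma~\ref{move3}''. The paper's argument is concrete and different in detail: after $A$ has been pushed past the $(j+1)$-ghost of $(r-1,c,k)$ (the dotted error term being zero by induction), the $j$-ghosts of $(r,c,k)$ and $(r-1,c-1,k)$ are adjacent, so Move~1 applies and produces two terms. One is killed by Lemma~\ref{move3} used as a black box; the other is killed by Lemma~\ref{remove} when $j\neq i$, while for $j=i$ one must iterate (apply Move~2 and continue pulling $A$ leftwards through the remaining $i$-strands) until Lemma~\ref{move3} finally applies. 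This $j=i$ versus $j\neq i$ dichotomy is the substance of the inductive step and is absent from your sketch.
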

   
   \begin{proof}
 Let $A=(r,c,k)$ denote a 
 $j$-node in $\gamma$  
 with a dot on the corresponding strand.  
  We proceed by induction on $r+c$; in the case $(r,c,k)=(1,1,k)$, $A$ can pass through the red $j$-strand immediately to its left using relation  \ref{rel11}.  
     If $j\neq i$ then the diagram is zero by \cref{remove}.  
  If $j=i$, then our assumption that $(r,c,k)\in \gamma$ for $\gamma$ 
   $i$-admissible implies that there is either an $(i-1)$-node $(2,1,k)$
   or an $(i+1)$-node $(1,2,k)$.  In either case,  the diagram is zero by  \cref{remove}.  
    
We now assume that   $r+c\geq 2$. 
We can pull $A$ through the $(j+1)$-ghost to its left, labelled by $(r-1,c,k)$, at the expense of losing the dot (we also obtain an error term $1_\lambda$ with a dot on the $(j+1)$-strand labelled by $(r-1,c,k)$, which is zero by induction).  
We now apply Move 1 to the $j$-ghosts labelled by nodes $(r,c,k)$ and $(r-1,c-1,k)$  and the $(j+1)$-strand labelled by $(r-1,c,k)$, to obtain two terms. The term in which the $(j+1)$-strand bypasses the crossing of $j$-ghosts to the left is zero by \cref{move3}.

Now consider the remaining term in which
  the $(j+1)$-strand bypasses the crossing of $j$-ghosts to the right. If $j\neq i$, the result follows by \cref{remove}. If $j=i$, we continue by applying Move 2 and pulling the dotted strand to the left. Repeating this argument we can pull $A$ through all the $i$-strands and onwards outside of the region in \cref{move3} and the result follows.  
\end{proof}

We denote the $i$-diagonals in $\gamma$ by $\mathbf{D}_{x_1}, \mathbf{D}_{x_2}, \dots$ so that $x_a = x(\mathbf{D}_{x_a})$ and $x_a < x_b$ whenever $a<b$.
We let $b_a:=b_a(\mathbf{D})$ denote the total number of $\mathbf{B}_a$ bricks in the $i$-diagonal $\mathbf{D}$ for $a=1,\ldots , 6$.

%

\begin{prop}\label{lem:crossingthroughdiagonal}
  We can pull an $i$-crossing through an $i$-diagonal $\mathbf{D}$ at the expense of 
 an  error term, as illustrated in \cref{fig:crossingthroughdiagonal}.  
  \end{prop}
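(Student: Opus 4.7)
The plan is to induct on the number of bricks comprising $\mathbf{D}$, pulling the $i$-crossing downward through one brick at a time starting from the top. Since every $i$-diagonal is a stack of a single base brick ($\mathbf{B}_4$, $\mathbf{B}_5$, or $\mathbf{B}_6$), some (possibly zero) number of $\mathbf{B}_1$ bricks, and optionally a cap brick ($\mathbf{B}_2$ or $\mathbf{B}_3$), it suffices to analyse the effect of pulling the crossing through each brick type, working throughout in the subquotient $A_{\Gamma}$ so that the reductions from \cref{remove}, \cref{move3}, and \cref{move2.5} are available to kill surviving error terms.

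First I would analyse the passage through a generic $\mathbf{B}_1$ brick. Such a brick contributes, from left to right in the diagram, an $(i-1)$-ghost, a black $i$-strand, and a black $(i+1)$-strand (with the $i$-ghost and $(i+1)$-ghost interleaved accordingly). The two $i$-strands forming the crossing (together with their $(i-1)$-ghosts) must cross all of these. Using relations \ref{rel5}, \ref{rel6}, \ref{rel7}, \ref{rel8} these passages are handled with no error (the degrees accumulated by the $(i-1)$-ghost crossings cancel those of the $(i+1)$-strand crossings), and the two unavoidable encounters with the central black $i$-strand are resolved by \ref{rel3}. The error terms produced by \ref{rel3} each place a dot on a strand belonging to $\gamma$; by \cref{move2.5} these terms are zero in $A_{\Gamma}$. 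Thus pulling through any number of $\mathbf{B}_1$ bricks is transparent.

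Next I would treat the base brick. For $\mathbf{B}_6$ the crossing passes through a red $i$-strand, and relation \ref{rel11} introduces a dot on one of the $i$-strands; combining this with \ref{rel3} at the $i$-crossing produces the single surviving error term depicted in \cref{fig:crossingthroughdiagonal}. For $\mathbf{B}_4$ and $\mathbf{B}_5$, the base brick consists of a single $(i-1)$-strand or $(i+1)$-strand, and the error arises by applying Move 1 (or Move $1^*$) to resolve the unavoidable configuration of two $i$-ghosts not separated by an $(i+1)$-strand (respectively two $i$-strands not separated by an $(i-1)$-ghost) across the adjacent boundary strand of the brick, followed by Move 2 to isolate the dot. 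In each case the two terms produced by the Move are distinguished by whether the boundary strand bypasses the crossing to the left or right; one of them factors through a loading that strictly $\theta$-dominates $\mathbf{i}_{\gamma^+}$ in a non-$i$ residue and so is killed by \cref{remove}, while the other is the asserted error term.

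Finally, for diagonals with a $\mathbf{B}_2$ or $\mathbf{B}_3$ cap, I would check that the topmost passage produces no additional surviving term: the correction from \ref{rel3} at the encounter with the cap either places a dot on a strand of $\gamma$ (vanishing by \cref{move2.5}) or creates a local configuration matching the hypothesis of \cref{move3} (hence vanishing in $A_\Gamma$), so the cap is transparent as well. The main obstacle will be the bookkeeping at this last step: each application of Moves 1, $1^*$, 2 produces a signed sum of intermediate diagrams, and one must verify case by case (for every combination of a cap brick on top of each base brick) that exactly one error term survives the reductions of \cref{remove,move3,move2.5} and the unsteadiness relation \ref{rel15}. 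Once this is in place, the induction collapses and the identity in \cref{fig:crossingthroughdiagonal} follows.
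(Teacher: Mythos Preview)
Your treatment of the $\mathbf{B}_1$ bricks is the gap. You claim they are ``transparent'': the error terms from relation \ref{rel3} place a dot on a strand of $\gamma$ and vanish by \cref{move2.5}. But this is not the mechanism in play, and the conclusion is false --- if $\mathbf{B}_1$ bricks were transparent the error coefficient in \cref{fig:crossingthroughdiagonal} would be independent of $b_1$, whereas it is $(-1)^{b_1+b_5}$.

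The actual obstruction in a $\mathbf{B}_1$ brick is the passage of the $i$-crossing through the $(i-1)$-ghost belonging to the brick; this is governed by relation \ref{rel10}, not \ref{rel3}, and its error term \emph{undoes the $i$-crossing} rather than placing a dot on a $\gamma$-strand. That error term does not vanish: it survives as a diagram with the crossing replaced by two vertical $i$-strands sitting on either side of the brick's own $i$-strand. The paper then rewrites this (via relations \ref{rel7}, \ref{rel3}, \ref{rel4}) as an $i$-crossing ``attached'' to the $i$-node of the brick, with a sign; pulling this attached crossing through each further $\mathbf{B}_1$ brick produces a leading term that dies by \cref{move3} and another attached-crossing term with an extra factor of $(-1)$. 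After all $b_1$ bricks the accumulated sign is $(-1)^{b_1}$, and the final passage through the base brick contributes $(-1)^{b_5}$. Your proposal omits this entire cascade, and the single invocation of \cref{move2.5} you give in its place does not apply because no dot ever lands on a $\gamma$-strand at that stage.
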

 \begin{figure}[ht!]
 
\[\scalefont{0.6} \begin{minipage}{2.8cm}  \begin{tikzpicture}[scale=0.8] 
     \clip(-1.9,-0.5)rectangle (1.9,2.2);
   \draw (-1.75,0) rectangle (1.5,2);
   \node[cylinder,draw=black,thick,aspect=0.4,
        minimum height=1.65cm,minimum width=0.3cm,
        shape border rotate=90,
        cylinder uses custom fill,
        cylinder body fill=blue!30,
        cylinder end  fill=blue!10]
  at (0,0.93) (A) { };
 \draw [smooth] (-1.25,2) to[out=-90,in=115] (1,1) to[out=-65,in=90] (1.25,0);
 \draw [smooth] (-1.25,0) to[out=90,in=-115] (1,1) to[out=65,in=-90] (1.25,2);
 \draw [dashed,smooth] (-0.4++-1.25,2) to[out=-90,in=130] (-0.4++1,1) to[out=-60,in=90] (-0.4++1.25,0);
 \draw [dashed,smooth] (-0.4++-1.25,0) to[out=90,in=-130] (-0.4++1,1) to[out=60,in=-90] (-0.4++1.25,2);
             \node [below] at (-1.25,0)  {  $i$};  \node [below] at (1.25,0)  {  $i$}; 
             \node [below] at (0,0)  {$\tiny\mathbf{D}$};  
 \end{tikzpicture}\end{minipage}
 =\begin{minipage}{2.85cm}
  \begin{tikzpicture}[scale=0.8] 
     \clip(-1.9,-0.5)rectangle (1.9,2.2);
   \draw (-1.75,0) rectangle (1.5,2);
   \node[cylinder,draw=black,thick,aspect=0.4,
        minimum height=1.65cm,minimum width=0.3cm,
        shape border rotate=90,
        cylinder uses custom fill,
        cylinder body fill=blue!30,
        cylinder end  fill=blue!10]
  at (0,0.93) (A) { };
 \draw [smooth] (-1.25,2) to[out=-90,in=160] (-0.7,1) to[out=-30,in=70] (1.25,0);
 \draw [smooth] (-1.25,0) to[out=90,in=-160] (-0.7,1) to[out=30,in=-70] (1.25,2);
 \draw [dashed,smooth] (-0.4+-1.25,2) to[out=-90,in=150] (-0.4+-0.7,1) to[out=-40,in=70] (-0.4+1.25,0);
 \draw [dashed,smooth] (-0.4+-1.25,0) to[out=90,in=-150] (-0.4+-0.7,1) to[out=40,in=-70] (-0.4+1.25,2);
             \node [below] at (-1.25,0)  {  $i$};  \node [below] at (1.25,0)  {  $i$}; 
             \node [below] at (0,0)  {$\tiny\mathbf{D}$};  
 \end{tikzpicture}\end{minipage}
 + (-1)^{b_1+b_5}
 \begin{minipage}{2.5cm}
  \begin{tikzpicture}[scale=0.8] 
     \clip(-1.9,-0.5)rectangle (1.9,2.2);
   \draw (-1.85,0) rectangle (1.5,2);
   \node[cylinder,draw=black,thick,aspect=0.4,
        minimum height=1.65cm,minimum width=0.3cm,
        shape border rotate=90,
        cylinder uses custom fill,
        cylinder body fill=blue!30,
        cylinder end  fill=blue!10]
  at (0,0.93) (A) { };
 \draw [smooth] (-1.25,2) to[out=-90,in=90] (-1.25,0);
      \draw [smooth] (1.25,0) to[out=90,in=-90]  (1.25,2); 
 \draw [dashed,smooth] (-0.4+-1.25,2) to[out=-90,in=90] (-0.4+-1.25,0);
      \draw [dashed,smooth] (-0.4+1.25,0) to[out=90,in=-90]  (-0.4+1.25,2); 
             \node [below] at (-1.25,0)  {  $i$};  \node [below] at (1.25,0)  {  $i$}; 
             \node [below] at (0,0)  {$\tiny\mathbf{D}$};  
 \end{tikzpicture}\end{minipage}
\]
\caption{Pulling an $i$-crossing through an $i$-diagonal $\mathbf{D}$.
Recall, $b_k:=b_k(\mathbf{D})$ is the total number of $\mathbf{B}_k$ bricks in  $\mathbf{D}$.
}
\label{fig:crossingthroughdiagonal}
\end{figure}
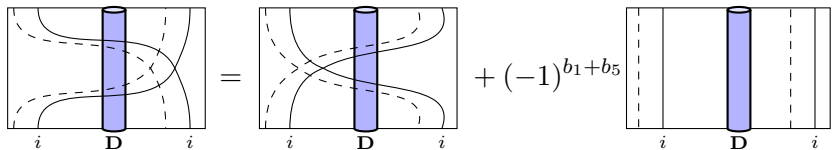

We shall prove the proposition via a series of small lemmas representing easy cases.  
Recalling \cref{remarkonmovingfromrighttoleft}, we proceed from right-to-left through the possible bricks that form 
an $i$-diagonal, and check what happens as the 
$i$-crossing passes each successive brick.

If the $i$-diagonal is invisible, we first must pass the $i$-crossing through either a $\mathbf{B}_3$ or $\mathbf{B}_2$ brick. We shall show that the $i$-crossing passes through this brick without cost.

\begin{lem}\label{316}
We can pull an $i$-crossing through a $\mathbf{B}_2$ or $\mathbf{B}_3$ brick without cost.  
\end{lem}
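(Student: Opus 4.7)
The plan is to pass the $i$-crossing through the brick one strand at a time, using the relations \ref{rel6}, \ref{rel7}, \ref{rel9}, \ref{rel10} to handle any ghost--strand double crossings that arise, and then to show that each error term produced vanishes in $A_{\Gamma}$ via the earlier killing lemmas.

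For the $\mathbf{B}_2$ brick (an $i$-strand together with an $(i-1)$-strand and their ghosts), the delicate step is passing the $i$-crossing past the $(i-1)$-strand. Applying relation \ref{rel10} to the ghost-crossing of the two $i$-strands moving past the $(i-1)$-ghost produces a main term in which the crossing has slid past, together with an error term in which the two $i$-strands of the crossing are straightened into a pair of parallel vertical $i$-strands and a dot appears on the $(i-1)$-strand of the brick. Since this $(i-1)$-strand belongs to $\gamma$, the error term vanishes by \cref{move2.5}. The remaining pass, through the brick's $i$-strand, is handled by the black triple-point relation \ref{rel8} together with its ghost analog (free, since the only relevant exceptional relations would require a residue $i-2$, which is not present in a single brick).

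For the $\mathbf{B}_3$ brick (an $i$-strand together with an $(i+1)$-strand and their ghosts), a symmetric argument applies: relabelling $i \mapsto i+1$ and $j \mapsto i$ in the exceptional relations, the error term produced when the $i$-crossing passes past the $(i+1)$-strand and its ghost carries a dot on the $(i+1)$-strand of the brick. Since this strand is again in $\gamma$, the error term is killed by \cref{move2.5}.

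The main obstacle is the careful bookkeeping needed to track where dots end up after each application of the exceptional relations: whenever an intermediate step yields a dot on a strand of $\lambda \setminus \gamma$ rather than on $\gamma$, one applies the straightening Moves 1, $1^*$, and 2 (built from \ref{rel3}, \ref{rel4}, \ref{rel6}, \ref{rel9}, \ref{rel10}) to transport the dot onto a $\gamma$-strand before invoking \cref{move2.5}, or alternatively to produce a diagram factoring through an idempotent $1_{\nu}$ with $\nu \vartriangleright_{(\theta, j)} \gamma^+$ for some $j \neq i$, which is killed by \cref{move3}. The illustrative computations in Figures \ref{e1}, \ref{e2}, and \ref{e3} demonstrate exactly this type of reduction in a simpler analogous setting, and each configuration encountered in the present proof reduces to one of these cases.
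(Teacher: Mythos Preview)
Your argument contains a concrete error in the description of the error term produced by relation \ref{rel10}. You claim that applying \ref{rel10} to pass the $i$-crossing past the $(i-1)$-ghost yields an error term ``in which the two $i$-strands of the crossing are straightened into a pair of parallel vertical $i$-strands and a dot appears on the $(i-1)$-strand of the brick.'' This is not what \ref{rel10} says: the error term in \ref{rel10} has all strands straight and carries \emph{no dot} whatsoever. You have conflated \ref{rel10} (a triple-point relation for a crossing of two $i$-strands and an $(i-1)$-ghost) with \ref{rel6} (a double-crossing relation for a single $i$-strand and an $(i-1)$-ghost, which does produce dots). The same confusion recurs in your treatment of $\mathbf{B}_3$ via \ref{rel9}. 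Consequently your appeal to \cref{move2.5} is baseless, since there is no dotted $\gamma$-strand to kill.

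The paper's argument is both simpler and different in kind. After applying \ref{rel10} (for $\mathbf{B}_2$) or \ref{rel9} (for $\mathbf{B}_3$), the error term has the two $i$-strands of the former crossing uncrossed. Because the brick contains its own vertical $i$-strand, one of the uncrossed $i$-strands now makes a bigon with that brick $i$-strand: it crosses it once going in and once coming back. This is a double crossing of like-labelled black strands, hence zero directly by relation \ref{rel4}. No appeal to \cref{move2.5} or \cref{move3} is needed; the vanishing is purely local. See Figure~\ref{fig:crossingB2B3}. Your final paragraph about transporting dots and invoking \cref{move3} is thus addressing a non-existent difficulty.
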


\begin{proof}
    In the former (respectively latter) case, 
  we first apply relation \ref{rel9} (respectively \ref{rel10}) 
  to the 
     ghost $i$-crossing and the black $(i+1)$-strand
(respectively 
       $i$-crossing and the  $(i-1)$-ghost)
 to push the $i$-crossing through to the left  at the expense of an error term.
 In both cases, the error term is zero by relation \ref{rel4}. 
 The 
  $\mathbf{B}_2$ case  is illustrated in  Figure \ref{fig:crossingB2B3}.
  We may now pull the   $i$-crossing through the black $i$-strand without cost (by relation \ref{rel8}).  
 We therefore obtain the required diagram.  \qedhere
  
\begin{figure}[ht!]

\[\scalefont{0.6}
         \begin{minipage}{4.1cm}  \begin{tikzpicture}[scale=0.8] 
       \clip(-2.6,-0.5)rectangle (2.6,2.5);
   \draw (-2.5,0) rectangle (2.5,2);   
   \draw  (-1.35,0)  to [out=30,in=-110] (2,1) ;      
   \draw  (-1.35,2)  to [out=-30,in=110] (2,1) ;      
\draw  (2,1)  to [out=80,in=-80] (2.1,2) ;      
\draw  (2,1)  to [out=-80,in=80] (2.1,0) ; 
  \draw[dashed]  (-1+-1.35,0)  to [out=90,in=-110] (-1+2,1) ;      
   \draw[dashed]  (-1+-1.35,2)  to [out=-90,in=110] (-1+2,1) ;      
\draw[dashed]  (-1+2,1)  to [out=80,in=-80] (-1+2.1,2) ;      
\draw[dashed]  (-1+2,1)  to [out=-80,in=80] (-1+2.1,0) ;      
      \node [below] at (-1.35,0)  {  $i$}; 
              \node [below] at (1.3,0)  {$\tiny i-1$};  
  \draw[dashed]  (0.4+0,0)  to [out=90,in=-90] (0.4,2) ;      
    \draw[dashed]  (0.2+-0.9,0)  to [out=90,in=-90] (0.2+-0.9,2) ;      
     \draw   (1.3,0)  to [out=90,in=-90] (1.3,2) ;      
        \draw   (0.2,0)  to [out=90,in=-90] (0.2,2) ;   \node [below] at (2.3,0)  {$\tiny i$};      
              \node [below] at (0.2,0)  {  $i$};     
              \end{tikzpicture}\end{minipage}
=
       \begin{minipage}{4.1cm}  \begin{tikzpicture}[scale=0.8] 
       \clip(-2.6,-0.5)rectangle (2.6,2.5);
   \draw (-2.5,0) rectangle (2.5,2);   
   \draw  (-1.35,0)  to [out=40,in=-143] (2.1,2) ;      
   \draw  (-1.35,2)  to [out=-40,in=142] (2.1,0) ;      
   \draw[dashed]  (-2.35,0)  to [out=40,in=-143] (-0.8+2.1,2) ;      
   \draw[dashed]  (-2.35,2)  to [out=-40,in=142] (2.1-0.8,0) ;      
  
     
      \node [below] at (-1.35,0)  {  $i$}; 
              \node [below] at (1.3,0)  {$\tiny i-1$};  
  \draw[dashed]  (0.4+0,0)  to [out=90,in=-90] (0.4,2) ;      
    \draw[dashed]  (0.2+-0.9,0)  to [out=90,in=-90] (0.2+-0.9,2) ;      
     \draw   (1.3,0)  to [out=90,in=-90] (1.3,2) ;      
        \draw   (0.2,0)  to [out=90,in=-90] (0.2,2) ;      
              \node [below] at (0.2,0)  {  $i$};    \node [below] at (2.3,0)  {$\tiny i$};    
              \end{tikzpicture}\end{minipage}       
 -
\begin{minipage}{4.1cm}  \begin{tikzpicture}[scale=0.8] 
       \clip(-2.6,-0.5)rectangle (2.6,2.5);
   \draw (-2.5,0) rectangle (2.5,2);   
   \draw  (-1.35,0)  to [out=10,in=-90] (0.25,1) ;      
   \draw  (-1.35,2)  to [out=-10,in=90] (0.25,1) ;      
    \node [below] at (2.3,0)  {$\tiny i$};  
\draw  (0.5,1)  to [out=90,in=-170] (2.1,2) ;      
\draw  (0.5,1)  to [out=-90,in=170] (2.1,0) ; 
    \draw[dashed]  (-0.8+0.25,1)  to [out=-90,in=10] (-1+-1.35,0)  ;      
   \draw[dashed]  (-0.8+0.25,1)  to [out=90,in=-10] (-1+-1.35,2)  ;      
\draw[dashed]  (-0.8+0.5,1)  to [out=90,in=-170] (-1+2.1,2) ;      
\draw[dashed]  (-0.8+0.5,1)  to [out=-90,in=170] (-1+2.1,0) ; 
     
      \node [below] at (-1.35,0)  {  $i$}; 
              \node [below] at (1.3,0)  {$\tiny i-1$};  
  \draw[dashed]  (0.4+0,0)  to [out=90,in=-90] (0.4,2) ;      
    \draw[dashed]  (0.2+-0.9,0)  to [out=100,in=-100] (0.2+-0.9,2) ;      
     \draw   (1.3,0)  to [out=90,in=-90] (1.3,2) ;      
        \draw   (0.2,0)  to [out=100,in=-100] (0.2,2) ;      
              \node [below] at (0.2,0)  {  $i$};     
              \end{tikzpicture}\end{minipage}                                        \]
\caption{Pulling an $i$-crossing  through a $\mathbf{B}_2$ brick.
On the right-hand side of the equality the first diagram can now be pulled to the left through the $i$-strand at no cost.  The second diagram is zero by relation \ref{rel4}.}
\label{fig:crossingB2B3}
\end{figure}
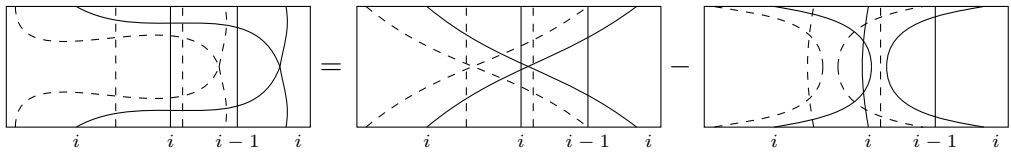
\end{proof}

We have seen that we  can pull an $i$-crossing pair through a $\mathbf{B}_2$ or
  $\mathbf{B}_3$ brick without cost.  Therefore, we now consider what happens when we pull an $i$-crossing through some number (possibly zero) of $\mathbf{B}_1$ bricks.  
 We first deal with the case that $b_1=0$.

 \begin{lem}\label{cross0b1}
Let $\mathbf{D}$ be an  $i$-diagonal with $b_1=0$.
  We can pull an $i$-crossing through  $\mathbf{D}$ at the expense of 
 an  error term, as illustrated in \cref{fig:crossingthroughdiagonal}.  
 \end{lem}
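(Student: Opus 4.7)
My plan is to reduce to the single-base-brick case via \cref{316}, then apply one of the relations \ref{rel9}, \ref{rel10}, or \ref{rel12}. By \cref{316}, the $i$-crossing passes through any $\mathbf{B}_2$ or $\mathbf{B}_3$ brick at no cost, so since $b_1 = 0$, this reduces the problem to the case where $\mathbf{D}$ is a single base brick $\mathbf{B}_k$ for some $k \in \{4, 5, 6\}$, and the target error-term coefficient $(-1)^{b_1 + b_5}$ simplifies to $(-1)^{b_5}$.

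For $k = 6$, I would isotope (by \ref{rel1}) to bring the $i$-crossing adjacent to the red $i$-strand and then apply \ref{rel12} with $i = j = k$; this yields the equality of \cref{fig:crossingthroughdiagonal} with error-term coefficient $\delta_{i,i,i} = +1 = (-1)^{b_5}$. For $k = 5$, $\mathbf{D}$ is a single black $(i+1)$-strand (with its $(i+1)$-ghost at distance $g$ to the left); I would apply \ref{rel9} with its parameter ``$j$'' equal to our $i$, so that the hypothesis $j = i-1$ reads $(\text{our } i) = (\text{our } i+1) - 1$ and the bypassing ``$i$-strand'' of the relation (of label $j+1$) is exactly our $(i+1)$-strand. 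The $-$ sign in \ref{rel9} produces error-term coefficient $-1 = (-1)^{b_5}$. For $k = 4$, $\mathbf{D}$ is a single black $(i-1)$-strand and \ref{rel10} applies with its ``$i$'' matching our $i$ and its ``$j$'' matching our $i-1$; the bypassing black $j$-strand and its ghost are exactly the strands of $\mathbf{B}_4$, and the $+$ sign in \ref{rel10} produces error-term coefficient $+1 = (-1)^{b_5}$.

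The main obstacle in carrying this out is to correctly match the ``bypass left'' and ``bypass right'' configurations in each applied relation with the two sides of $\mathbf{D}$ on which the $i$-crossing sits in \cref{fig:crossingthroughdiagonal}. In the single-base-brick setting this identification follows immediately from \ref{rel1}, since only one isotopy class of bypass is available on each side: geometrically, in each of the three cases, ``bypass of the auxiliary strand to the left'' corresponds to the $i$-crossing sitting on the right of $\mathbf{D}$ (and vice versa), so the $\pm$ sign appearing in each relation translates directly into the coefficient $(-1)^{b_5}$ that we need. Once this geometric dictionary is set up, the remaining work is the direct sign inspection carried out in the previous paragraph.
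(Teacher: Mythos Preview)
Your proposal is correct and follows exactly the paper's approach: reduce via \cref{316} to the single base brick $\mathbf{B}_4$, $\mathbf{B}_5$, or $\mathbf{B}_6$, and then apply relation \ref{rel10}, \ref{rel9}, or \ref{rel12} respectively. The paper's own proof is a single sentence to this effect and does not spell out the sign check; your explicit verification that each relation produces the coefficient $(-1)^{b_5}=(-1)^{b_1+b_5}$ is a welcome addition but not a different argument.
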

 
 \begin{proof}
By \cref{316}  we need only consider pulling an $i$-crossing 
 through a $\mathbf{B}_4$, $\mathbf{B}_5$, or
 $\mathbf{B}_6$ brick.
 This can be done at the expense of an error term as in relations \ref{rel9}, \ref{rel10} and \ref{rel12}, giving the required form.  
\end{proof}

 \begin{lem}\label{cross1b1}
 Let $\mathbf{D}$ be an  $i$-diagonal with $b_1=1$.
  We can pull an $i$-crossing through  $\mathbf{D}$ at the expense of 
 an  error term, as illustrated in \cref{fig:crossingthroughdiagonal}.
 \end{lem}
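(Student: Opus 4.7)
The strategy is to reduce to \cref{cross0b1} by first pushing the $i$-crossing through the single $\mathbf{B}_1$ brick sitting at the top of $\mathbf{D}$. By \cref{316} we may assume no $\mathbf{B}_2$ or $\mathbf{B}_3$ brick is present, so that $\mathbf{D}$ consists of one $\mathbf{B}_1$ brick stacked on a base brick of type $\mathbf{B}_4$, $\mathbf{B}_5$, or $\mathbf{B}_6$. Since (by \cref{remarkonmovingfromrighttoleft}) the $i$-crossing pulled from right to left interacts with the higher bricks first, after handling $\mathbf{B}_1$ we have the crossing in the gap between $\mathbf{B}_1$ and the base, and can invoke \cref{cross0b1} to finish.

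In pushing through the $\mathbf{B}_1$ brick, the $i$-crossing and its ghost interact with three distinct strands of $\mathbf{B}_1$: the $i$-ghost crossing passes the black $(i+1)$-strand via Move $\mathbf{1^*}$ (relation \ref{rel9}); the black $i$-crossing passes the black $i$-strand of $\mathbf{B}_1$ via relation \ref{rel3}; and the black $i$-crossing passes the $(i-1)$-ghost via Move 1 (relation \ref{rel10}). Each of these moves replaces the local picture by a main term in which the crossing has slid through, plus residual error terms. Crucially, relation \ref{rel3} introduces a single sign flip, which accounts for the factor $(-1)^{b_1}=-1$ in the statement; the other two moves are sign-neutral but generate extra residual diagrams that must be shown to vanish.

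The main obstacle is controlling these residual diagrams. Each one has the original $i$-crossing replaced by a pair of uncrossed $i$-strands sitting amongst the strands of $\mathbf{B}_1$, thereby displacing one of the strands that lies over a node of $\gamma$. We claim that every such residual diagram vanishes in $A_{\Gamma}$: either the resulting local configuration is isotopic to a diagram factoring through a loading which strictly $\theta$-dominates $\mathbf{i}_{\gamma^+}$ (and therefore vanishes by \cref{remove}), or it places an $i$-strand in a neighbourhood that violates the hypothesis of \cref{move3}, or it creates a dot on a strand of a node of $\gamma$ and vanishes by \cref{move2.5}. The prototype of this cancellation is worked out explicitly in \cref{e1,e2,e3} for the smallest nontrivial case; the general argument is a direct case-by-case extension, with the three sub-cases (base brick $\mathbf{B}_4$, $\mathbf{B}_5$, $\mathbf{B}_6$) handled uniformly except for where one invokes the vanishing mechanism.

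Once every residual diagram from the $\mathbf{B}_1$-passage has been discarded, only one term survives: the diagram with the crossing pulled to the left of $\mathbf{B}_1$, carrying the single sign $-1$ inherited from relation \ref{rel3}. Applying \cref{cross0b1} to this intermediate diagram then pulls the crossing through the base brick at the expense of an error term with coefficient $(-1)^{b_5}$. Multiplying the two signs gives the coefficient $(-1)^{1+b_5}=(-1)^{b_1+b_5}$ attached to the straightened error diagram on the right-hand side of \cref{fig:crossingthroughdiagonal}, completing the argument.
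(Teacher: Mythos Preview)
There is a genuine error in your accounting. First, passing the black $i$-crossing through the black $i$-strand of $\mathbf{B}_1$ is governed by relation \ref{rel8}, not \ref{rel3}, and costs nothing; relation \ref{rel3} concerns dots moving through like-labelled crossings, not a crossing sliding past a strand. So there is no sign flip at this step, and the leading term---the diagram with the crossing pulled fully to the left of $\mathbf{B}_1$---carries coefficient $+1$, not $-1$. This already means your final leading term (the crossing fully to the left of $\mathbf{D}$) would have the wrong sign.

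Second, and more seriously, it is not true that all residual diagrams from the $\mathbf{B}_1$-passage vanish. When you apply \ref{rel10} to pull the $i$-crossing through the $(i-1)$-ghost of $\mathbf{B}_1$, the error term does \emph{not} die: after further manipulation (pulling the loose $i$-ghost through the $(i+1)$-strand via \ref{rel7}, then simplifying the resulting dotted double $i$-crossing via \ref{rel3} and \ref{rel4}) it becomes $(-1)$ times a diagram in which the left $i$-strand crosses the $i$-strand of $\mathbf{B}_1$ while the right $i$-strand sits vertically at the far right---what the paper calls an $i$-crossing ``attached'' to the $i$-node of $\mathbf{B}_1$ (see \cref{fig:2crossingb1}). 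This surviving term is essential: when it is subsequently pushed through the base brick, its leading part vanishes by \cref{move3}, but its error part is precisely the error term on the right-hand side of \cref{fig:crossingthroughdiagonal}, with the correct coefficient $(-1)^{1+b_5}$. By contrast, applying \cref{cross0b1} to your sole surviving main term yields an error term that is zero by \ref{rel4} (the right $i$-strand now double-crosses the $i$-strand of $\mathbf{B}_1$), so it contributes nothing. In short, you have discarded the term that actually produces the error contribution and mis-signed the one that produces the leading contribution.
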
  
\begin{proof}
  
As in \cref{cross0b1}, we need only consider pulling an $i$-crossing 
 through a $\mathbf B_1$ brick followed by a $\mathbf{B}_4$, $\mathbf{B}_5$, or
 $\mathbf{B}_6$ brick. We shall prove this via a series of steps.  
  
\noindent\textbf{Step 1.}  We first  pull the $i$-crossing  through 
 the   $(i-1)$-ghost at the expense of an error term (in which we undo the crossing) using relation \ref{rel10}.  The error term is the leftmost diagram depicted in   \cref{errorstep1}.

\noindent\textbf{Step 2.} We can now apply relation \ref{rel9} to pass the ghost $i$-crossing through the $(i+1)$-strand and obtain a further error term. This error term
  is easily seen to be zero by applying relation \ref{rel4}. 
   This gives us the  first term after the equality in \cref{fig:2crossingb1}.

\noindent\textbf{Step 3.} We now turn our attention to the error term from Step 1.  
 We pull the non-vertical   $i$-ghost through the vertical black $(i+1)$-strand immediately to its left.
   The result is a diagram with a double crossing of black $i$-strands with a dot on the rightmost 
 of the two, this is depicted in \cref{errorstep1}.  
 We also obtain an error term with a dot on the $(i+1)$-strand; however this error term is zero by relation \ref{rel4} and so is not depicted in \cref{errorstep1}.

 \noindent\textbf{Step 4.} Continuing from Step 3, we can apply relations \ref{rel3} and \ref{rel4} to rewrite the dotted double $i$-crossing   as a single crossing without decoration at the expense of multiplication by the scalar $-1$.  
 This diagram can then be deformed isotopically to obtain the rightmost diagram in Figure \ref{fig:2crossingb1}.  
 
 \begin{figure}[ht!]
 \[
        \scalefont{0.6}  \begin{minipage}{4.15cm}  \begin{tikzpicture}[scale=0.8] 
       \clip(-2.7,-0.5)rectangle (2.6,2.5);
   \draw (-2.65,0) rectangle (2.5,2);   
   \draw  (0.3,2)   to [out=-90,in=90]  (0.3,0);
  \draw  (-1.75,2)  to [out=-110,in=90] (0.375,1);
    \draw  (-1.75,0)  to [out=110,in=-90] (0.375,1);

       \draw[dashed]  (-1+0.3,2)   to [out=-90,in=90]  (-1+0.3,0);
  \draw[dashed]  (-0.8+-1.75,2)  to [out=-90,in=90] (-0.9+0.38,1);
    \draw[dashed]  (-0.8+-1.75,0)  to [out=90,in=-90] (-0.9+0.38,1);

  \draw  (0.55,1)  to [out=90,in=-90] (2.3,2) ;      
  \draw  (0.55,1)  to [out=-90,in=90] (2.3,0) ;      
  \draw[dashed]  (-0.9+0.5,1)  to [out=90,in=-90] (-0.9++2.3,2) ;      
  \draw[dashed]  (-0.9+0.5,1)  to [out=-90,in=90] (-0.9++2.3,0) ;      
                      \node [below] at (-1.7,0)  {  $i$}; 
              \node [below] at (1.3,0)  {$\tiny i-1$};  
  \draw[dashed]  (0.4+0,0)  to [out=85,in=-85] (0.4+0,2) ;      
    \draw[dashed]  (0.2+-1.7,0)  to [out=90,in=-90] (0.2+-1.7,2) ;      
    \draw   (1.3,0)  to [out=85,in=-85] (1.3,2) ;      
            \draw   (0.2+-0.8,0)  to [out=90,in=-90] (0.2+-0.8,2) ;  
             \node [below] at (0.2+0.1,0)  {  $i$};     
                  \node [below] at (0.2+-0.9,0)  {  $i+1$}; 
             \end{tikzpicture}\end{minipage}
=
    \begin{minipage}{4.15cm}  \begin{tikzpicture}[scale=0.8] 
       \clip(-2.7,-0.5)rectangle (2.6,2.5);
   \draw (-2.65,0) rectangle (2.5,2);   
   \draw  (0.3,2)   to [out=-95,in=95]  (0.3,0);
  \draw  (-1.75,2)  to [out=-110,in=90] (0.375,1);
    \draw  (-1.75,0)  to [out=110,in=-90] (0.375,1);

       \draw[dashed]  (-1+0.3,2)   to [out=-90,in=90]  (-1+0.3,0);
  \draw[dashed]  (-0.8+-1.75,2)  to [out=-90,in=90] (-0.9+0.3,1);
    \draw[dashed]  (-0.8+-1.75,0)  to [out=90,in=-90] (-0.9+0.3,1);

  \draw  (2.3,0)  to [out=85,in=-85] (2.3,2) ;      
       \node [below] at (2.3,0)  {$\tiny i$};  
  \draw[dashed]  (-0.9++2.3,0)  to [out=85,in=-85] (-0.9++2.3,2) ;      
    \fill (0.355,1) circle (2.3pt);   
                      \node [below] at (-1.7,0)  {  $i$}; 
              \node [below] at (1.3,0)  {$\tiny i-1$};  
  \draw[dashed]  (0.4+0,0)  to [out=83,in=-83] (0.4+0,2) ;      
    \draw[dashed]  (0.2+-1.7,0)  to [out=80,in=-80] (0.2+-1.7,2) ;      
    \draw   (1.3,0)  to [out=85,in=-85] (1.3,2) ;      
            \draw   (0.2+-0.8,0)  to [out=80,in=-80] (0.2+-0.8,2) ;  
             \node [below] at (0.2+0.1,0)  {  $i$};     
                  \node [below] at (0.2+-0.9,0)  {  $i+1$}; 
             \end{tikzpicture}\end{minipage}
\]
\caption{The diagram on the left-hand side of the equality is the error term from Step 1.
The diagram on the right-hand side is obtained by  
applying relation \ref{rel7}; the resulting error term is then   zero by relation \ref{rel4}.
}
\label{errorstep1}
\end{figure}
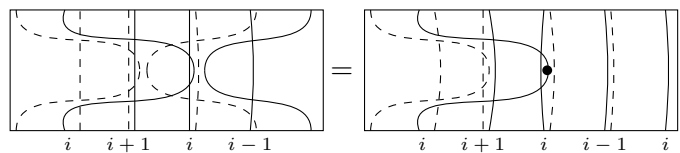
 
Applying steps 1--4 pulls the $i$-crossing through the $\mathbf B_1$ brick as depicted in \cref{fig:2crossingb1}. Finally, we may pull the $i$-crossing through the $\mathbf B_4$, $\mathbf B_5$ or $\mathbf B_6$ brick. Doing so for the first term after the equality in \cref{fig:2crossingb1} yields the first term in \cref{fig:crossingthroughdiagonal} and an error term which is zero by relation \ref{rel4}. Doing so for the second term after the equality in \cref{fig:2crossingb1} yields a term which is zero by \cref{move3} and an error term which is the second term in \cref{fig:crossingthroughdiagonal}. \qedhere

  \begin{figure}[ht!]

\[\scalefont{0.6}
         \begin{minipage}{4.15cm}  \begin{tikzpicture}[scale=0.8] 
       \clip(-2.7,-0.5)rectangle (2.6,2.5);
   \draw (-2.65,0) rectangle (2.5,2);   
   \draw[dashed]  (-2.45,0)  to [out=40,in=-90] (1.5,2) ;      
  \draw  (-1.75,0)  to [out=30,in=-90] (2.3,2) ;      
   \draw[dashed]  (-2.45,2)  to [out=-40,in=90] (1.5,0) ;      
  \draw  (-1.75,2)  to [out=-30,in=90] (2.3,0) ;      
 \node [below] at (2.3,0)  {$\tiny i$};  
                      \node [below] at (-1.7,0)  {  $i$}; 
              \node [below] at (1.3,0)  {$\tiny i-1$};  
  \draw[dashed]  (0.4+0,0)  to [out=90,in=-90] (0.4+0,2) ;      
    \draw[dashed]  (0.2+-0.9,0)  to [out=90,in=-90] (0.2+-0.9,2) ;      
    \draw[dashed]  (0.2+-1.7,0)  to [out=90,in=-90] (0.2+-1.7,2) ;      
    \draw   (1.3,0)  to [out=90,in=-90] (1.3,2) ;      
        \draw   (0.2+0.1,0)  to [out=90,in=-90] (0.2+0.1,2) ;      
            \draw   (0.2+-0.8,0)  to [out=90,in=-90] (0.2+-0.8,2) ;  
             \node [below] at (0.2+0.1,0)  {  $i$};      \node [below] at (2.3,0)  {$\tiny i$};  
                  \node [below] at (0.2+-0.9,0)  {  $i+1$}; 
             \end{tikzpicture}\end{minipage}
=
 \begin{minipage}{4.15cm}  \begin{tikzpicture}[scale=0.8] 
       \clip(-2.7,-0.5)rectangle (2.6,2.5);
   \draw (-2.65,0) rectangle (2.5,2);   
   \draw[dashed]  (-2.45,0)  to [out=110,in=-90-80] (1.5,2) ;      
   \draw[dashed]  (-2.45,2)  to [out=-110,in=90+80] (1.5,0) ;      
  \draw  (-1.75,2)  to [out=-110,in=90+80] (2.3,0) ;      
  \draw  (-1.75,0)  to [out=110,in=-90-80] (2.3,2) ;      

                      \node [below] at (-1.7,0)  {  $i$}; 
              \node [below] at (1.3,0)  {$\tiny i-1$};  
  \draw[dashed]  (0.4+0,0)  to [out=90,in=-90] (0.4+0,2) ;      
    \draw[dashed]  (0.2+-0.9,0)  to [out=90,in=-90] (0.2+-0.9,2) ;      
    \draw[dashed]  (0.2+-1.7,0)  to [out=90,in=-90] (0.2+-1.7,2) ;      
    \draw   (1.3,0)  to [out=90,in=-90] (1.3,2) ;      
        \draw   (0.2+0.1,0)  to [out=90,in=-90] (0.2+0.1,2) ;      
            \draw   (0.2+-0.8,0)  to [out=90,in=-90] (0.2+-0.8,2) ;  
             \node [below] at (0.2+0.1,0)  {  $i$};     
                  \node [below] at (0.2+-0.9,0)  {  $i+1$}; 
             \end{tikzpicture}\end{minipage}
             -
                       \begin{minipage}{4.15cm}  \begin{tikzpicture}[scale=0.8] 
       \clip(-2.7,-0.5)rectangle (2.6,2.5);
   \draw (-2.65,0) rectangle (2.5,2);   
  \draw  (0.3,2)   to [out=-170,in=110]  (-1.75,0);
  \draw  (-1.75,2)  to [out=-110,in=90+80] (0.3,0);
  \draw  (2.3,1)  to [out=90,in=-90] (2.3,2) ;      
  \draw  (2.3,1)  to [out=-90,in=90] (2.3,0) ;     
   \node [below] at (2.3,0)  {$\tiny i$};   
  \draw[dashed]  (-0.9+ 0.3,2)   to [out=-170,in=110]  (-0.7++-1.75,0);
  \draw[dashed]  (-0.7+ -1.75,2)  to [out=-110,in=90+80] (-0.9++0.3,0);
  \draw[dashed]  (-0.9++2.3,1)  to [out=90,in=-90] (-0.9++2.3,2) ;      
  \draw[dashed]  (-0.9++2.3,1)  to [out=-90,in=90] (-0.9++2.3,0) ;      
                      \node [below] at (-1.7,0)  {  $i$}; 
              \node [below] at (1.3,0)  {$\tiny i-1$};  
  \draw[dashed]  (0.4+0,0)  to [out=90,in=-90] (0.4+0,2) ;      
    \draw[dashed]  (0.2+-1.7,0)  to [out=90,in=-90] (0.2+-1.7,2) ;      
    \draw   (1.3,0)  to [out=90,in=-90] (1.3,2) ;      
            \draw   (0.2+-0.8,0)  to [out=90,in=-90] (0.2+-0.8,2) ;  
             \node [below] at (0.2+0.1,0)  {  $i$};     
                  \node [below] at (0.2+-0.9,0)  {  $i+1$}; 
             \end{tikzpicture}\end{minipage}\]
\caption{Pulling an $i$-crossing through a  single $\mathbf{B}_1$ brick.  We have pulled the $i$-crossing as far to the left as possible in order to illustrate that it
 has passed through the brick; however,
  in practice it will have to pass through other bricks.  
}
\label{fig:2crossingb1}
\end{figure}
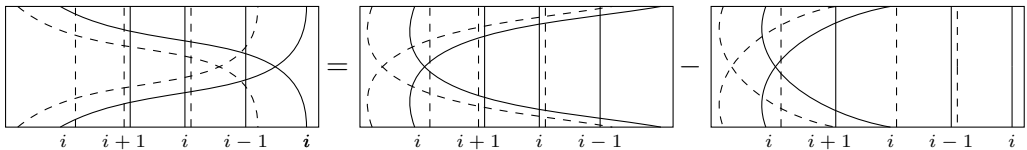
 \end{proof}

We now turn our attention to proving \cref{lem:crossingthroughdiagonal} in full generality. We refer to the rightmost diagram in \cref{fig:2crossingb1} as having \emph{an $i$-crossing attached to the $i$-node} in the $\mathbf{B}_1$ brick.  

\begin{proof}[Proof of \cref{lem:crossingthroughdiagonal}]
By \cref{316,cross0b1,cross1b1} we may assume that we start by passing the $i$-crossing through  $b_1$  $\mathbf B_1$ bricks for $b_1\geq 2$.

Repeating the first step in the argument in \cref{cross1b1} yields a leading term and an error term. By repeatedly applying relations \ref{rel9} and \ref{rel10}, we can push the $i$-crossing in the leading term through all $b_1$ $\mathbf B_1$ bricks; each error term along the way is zero by relation \ref{rel4}. We can then proceed to push the $i$-crossing through the $\mathbf B_4$, $\mathbf B_5$ or $\mathbf B_6$ brick, yielding the first term in \cref{fig:crossingthroughdiagonal} and an error term which is again zero by relation \ref{rel4}.


We now deal with the error term from our first step. As in Steps 3 and 4 of the proof of 
\cref{cross1b1}, we can 
  rewrite this as $-1$ multiplied by the diagram with a crossing attached to the $i$-node (say $(r,c,k)$) in the top $\mathbf B_1$ brick.

We now diverge from the proof of  \cref{cross1b1}, as we need to 
consider what happens when we 
pull the $i$-crossing attached to $(r,c,k)$ to the left.  
 Firstly, we must  pull this $i$-crossing through the next $\mathbf B_1$ brick.  
 We pull the $i$-crossing through the $(i-1)$-ghost labelled by the node $(r,c-1,k)$ 
 yielding two terms: the leading term $d$ and an error term $d'$.
 The term $d$  is zero, as we can now push this $i$-crossing through all remaining $\mathbf B_1$ bricks and the $\mathbf B_4$, $\mathbf B_5$, or $\mathbf B_6$ brick and apply \cref{move3}, with all error terms along the way being zero by relation \ref{rel4}.
 Now, observe that  the diagram $d'$ has a double crossing of $i$-strands. 
We can apply relation \ref{rel6} to $d'$, followed by relations \ref{rel3} and \ref{rel4} to rewrite the double crossing as an $i$-crossing attached to the node $(r-1,c-1,k)$, at the expense of scalar multiplication by $-1$ again.

We repeat the above procedure until we end up with a diagram with an $i$-crossing attached to node $(r',c',k)$ with $r'=1$ or $c'=1$ (that is, attached to the $i$-node of the bottom $\mathbf B_1$ brick) and coefficient $(-1)^{b_1}$. Finally, we pull this $i$-crossing through the $\mathbf B_4$, $\mathbf B_5$ or $\mathbf B_6$ brick to yield a leading term which is zero by \cref{move3} and an error term which is the second term in \cref{fig:crossingthroughdiagonal}. \qedhere
\end{proof}

 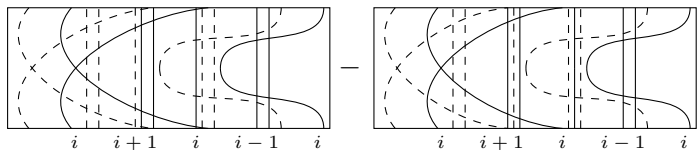
\begin{figure}[ht!]            

\[  \scalefont{0.6}
 \begin{minipage}{4.35cm}  \begin{tikzpicture}[scale=0.8] 
       \clip(-2.9,-0.35)rectangle (2.6,2.35);
   \draw (-2.8,0) rectangle (2.5,2);   
   \draw  (0.3,2)  to [out=-90,in=90] (0.3,0) ;

 \draw  (-1.75,2)  to [out=-130,in=90+85] (0.5,0) ;      
  \draw  (-1.75,0)  to [out=130,in=-90-85] (0.5,2) ;

 \draw[dashed]  (-0.7-1.75,2)  to [out=-130,in=90+85] (-0.9+0.5,0) ;      
  \draw[dashed]  (-0.7-1.75,0)  to [out=130,in=-90-85] (-0.9+0.5,2) ;

  \draw  (0.7,1)  to [out=-90,in=90] (2.4,0) ;      
  \draw  (0.7,1)  to [out=90,in=-90] (2.4,2) ;      
   \draw[dashed]  (-1+0.3,2)  to [out=-90,in=90] (-1+0.3,0) ;      

  \draw[dashed]  (-1+0.7,1)  to [out=-90,in=90] (-0.7+2.4,0) ;      
  \draw[dashed]  (-1+0.7,1)  to [out=90,in=-90] (-0.7+2.4,2) ;      

              \node [below] at (1.3,0)  {$\tiny i-1$};  
   \draw[dashed]  (0.4+0,0)  to [out=90,in=-90] (0.4+0,2) ;      
     \draw[dashed]  (0.2+-1.7,0)  to [out=90,in=-90] (0.2+-1.7,2) ;      
    \draw   (1.3,0)  to [out=90,in=-90] (1.3,2) ;      
             \draw   (0.2+-0.8,0)  to [out=90,in=-90] (0.2+-0.8,2) ;  
             \node [below] at (0.2+0.1,0)  {  $i$};     
                  \node [below] at (0.2+-0.9,0)  {  $i+1$}; 
    \draw[dashed]  (0.1+0.1+0.4+0,0)  to [out=90,in=-90] (0.1+0.1+0.4+0,2) ;      
    \draw[dashed]  (0.1+0.1+0.2+-1.7,0)  to [out=90,in=-90] (0.1+0.1+0.2+-1.7,2) ;      
    \draw   (0.1+0.1+1.3,0)  to [out=90,in=-90] (0.1+0.1+1.3,2) ;      
            \draw   (0.1+0.1+0.2+-0.8,0)  to [out=90,in=-90] (0.1+0.1+0.2+-0.8,2) ;  
                       \node [below] at (-1.7,0)  {  $i$}; 
 \node [below] at (2.3,0)  {$\tiny i$};  
             \end{tikzpicture}\end{minipage}
-
 \begin{minipage}{4.35cm}  \begin{tikzpicture}[scale=0.8] 
       \clip(-2.9,-0.35)rectangle (2.6,2.35);
   \draw (-2.8,0) rectangle (2.5,2);   
    \node [below] at (2.3,0)  {$\tiny i$};  
     \draw  (0.5,2)  to [out=-90,in=90] (0.5,1) ;      
     \draw  (0.5,0)  to [out=90,in=-90] (0.5,1) ;      

   \draw[dashed]  (-1+0.5,2)  to [out=-90,in=90] (-1+0.5,1) ;      
     \draw[dashed]  (-1+0.5,0)  to [out=90,in=-90] (-1+0.5,1) ;      

 \draw  (-1.75,2)  to [out=-130,in=90+85] (0.3,0) ;      
  \draw  (-1.75,0)  to [out=130,in=-90-85] (0.3,2) ;

 \draw[dashed]  (-0.7+-1.75,2)  to [out=-130,in=90+85] (-0.9+0.3,0) ;      
  \draw[dashed]  (-0.7+-1.75,0)  to [out=130,in=-90-85] (-0.9+0.3,2) ;

                        \node [below] at (-1.7,0)  {  $i$}; 

  \draw  (0.7,1)  to [out=-90,in=90] (2.4,0) ;      
  \draw  (0.7,1)  to [out=90,in=-90] (2.4,2) ;      
 
  \draw[dashed]  (-1+0.7,1)  to [out=-90,in=90] (-0.7+2.4,0) ;      
  \draw[dashed]  (-1+0.7,1)  to [out=90,in=-90] (-0.7+2.4,2) ;      

              \node [below] at (1.3,0)  {$\tiny i-1$};  
   \draw[dashed]  (0.4+0,0)  to [out=90,in=-90] (0.4+0,2) ;      
     \draw[dashed]  (0.2+-1.7,0)  to [out=90,in=-90] (0.2+-1.7,2) ;      
    \draw   (1.3,0)  to [out=90,in=-90] (1.3,2) ;      
             \draw   (0.2+-0.8,0)  to [out=90,in=-90] (0.2+-0.8,2) ;  
             \node [below] at (0.2+0.1,0)  {  $i$};     
                  \node [below] at (0.2+-0.9,0)  {  $i+1$}; 
    \draw[dashed]  (0.1+0.1+0.4+0,0)  to [out=90,in=-90] (0.1+0.1+0.4+0,2) ;      
    \draw[dashed]  (0.1+0.1+0.2+-1.7,0)  to [out=90,in=-90] (0.1+0.1+0.2+-1.7,2) ;      
    \draw   (0.1+0.1+1.3,0)  to [out=90,in=-90] (0.1+0.1+1.3,2) ;      
            \draw   (0.1+0.1+0.2+-0.8,0)  to [out=90,in=-90] (0.1+0.1+0.2+-0.8,2) ;  
 
             \end{tikzpicture}\end{minipage}\]
\caption{Pulling the $i$-crossing in the rightmost diagram in \cref{fig:2crossingb1}
   through a second $\mathbf{B}_1$ brick.  
   The leftmost diagram becomes zero once we push the $i$-crossing through all bricks, by \cref{move3}.}
\label{2brickB1}
\end{figure}

\begin{prop}\label{lem:dotthroughdiagonal}
  We can pull a dot through an  $i$-diagonal, $\mathbf{D}$, without cost, as illustrated in \cref{fig:dotthroughdiagonal}.  
  \end{prop}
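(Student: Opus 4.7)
The plan is to mirror the strategy of \cref{lem:crossingthroughdiagonal}, pulling the dot through $\mathbf{D}$ brick by brick from right to left, and showing that in this case \emph{no} error term survives in $A_\Gamma$ (so that the final equality has no correction). First I would dispose of the bricks carrying no $i$-labelled black strand: the dot passes $\mathbf{B}_5$ freely by \ref{rel2}, and passes $\mathbf{B}_6$ (the red $i$-strand) via \ref{rel14}. For a brick $\mathbf{B}_k$ with $k\in\{1,2,3,4\}$, the dotted $i$-strand meets a black $i$-strand of the brick; applying relation \ref{rel3} produces a leading term (with the dot now on the other side of the crossing) and an error term in which the two $i$-strands have been uncrossed into vertical parallels. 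The remaining non-like-labelled crossings inside the brick are navigated freely using \ref{rel2}, \ref{rel5}--\ref{rel7}, \ref{rel8} and \ref{rel14}.

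The key point is to show that every error term produced in this process vanishes in $A_\Gamma$. After uncrossing the two $i$-strands, the local configuration around the $i$-node of $\gamma$ inside the brick coincides with $1_{\lambda\setminus A}$ in a neighbourhood of the form $(x-n\epsilon,x+n\epsilon)\times[0,1]$. Since $\gamma$ is $i$-admissible, this $i$-node of $\gamma$ is not removable (and indeed, as it sits inside a brick of type $\mathbf{B}_1,\mathbf{B}_2,\mathbf{B}_3$ or $\mathbf{B}_4$, one of its two potential removals is blocked by a neighbouring $(i\pm 1)$-node of $\gamma$ that forms part of the same brick). Thus the hypothesis of \cref{move3} is satisfied, and each such error term is zero in $A_\Gamma$. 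Iterating up a stack of $\mathbf{B}_1$ bricks is now routine: at each successive brick the error term is dispatched by the same application of \cref{move3}, while the leading term continues leftward until it exits the base brick and emerges on the far left of $\mathbf{D}$.

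The main obstacle will be the bookkeeping of auxiliary correction terms that appear when the dotted strand must pass through $(i\pm 1)$-ghosts using \ref{rel6}, \ref{rel7}, \ref{rel9} or \ref{rel10}: each such move spawns an extra term alongside the expected leading one. I anticipate dispatching these in three ways: (i) many will be zero immediately by relation \ref{rel4} (double crossings with equal residue); (ii) others will, after applying Move~2 to redistribute the dot onto a $\gamma$-strand, vanish by \cref{move2.5}; (iii) the remainder will again reduce, via the same local-configuration argument, to a case covered by \cref{move3}. Once this bookkeeping is verified brick by brick, combining with the $\mathbf{B}_5, \mathbf{B}_6$ cases above yields the claimed identity in \cref{fig:dotthroughdiagonal}, with no surviving correction on the right-hand side.
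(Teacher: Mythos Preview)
Your core strategy matches the paper's: slide the dot along the $i$-strand brick by brick, applying relation \ref{rel3} at each black $i$-strand crossing and killing the resulting error term via \cref{move3}. However, two points need correcting.

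First, your brick classification is wrong for $\mathbf{B}_4$: this brick contains only a single $(i-1)$-node, not an $i$-node, so there is no black $i$-strand in it for \ref{rel3} to apply to. The paper groups $\mathbf{B}_4$ with $\mathbf{B}_5$ and $\mathbf{B}_6$ as the bricks through which the dot passes freely (via \ref{rel2} for $\mathbf{B}_4,\mathbf{B}_5$ and \ref{rel14} for $\mathbf{B}_6$). Only $\mathbf{B}_1,\mathbf{B}_2,\mathbf{B}_3$ carry a black $i$-strand, and only for these does \ref{rel3} produce an error term to be disposed of.

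Second, your third paragraph is superfluous and rests on a misunderstanding. Relations \ref{rel6}, \ref{rel7}, \ref{rel9}, \ref{rel10} govern moving \emph{strands} through ghost configurations; they play no role when sliding a \emph{dot} along a fixed strand. In \cref{fig:dotthroughdiagonal} the $i$-strand already crosses the diagonal; the proposition only relocates the dot along it. The only relations in play are \ref{rel2} (dots commute past unlike-labelled crossings), \ref{rel3} (like-labelled crossings, with an error term), and \ref{rel14} (dots commute past red crossings); crossings with ghosts are handled by isotopy \ref{rel1} since no dot relation constrains them. Once you drop this paragraph and fix the $\mathbf{B}_4$ case, your argument collapses to the paper's short proof.
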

\begin{figure}[ht!]
\[\scalefont{0.6}
 \begin{minipage}{32mm}\begin{tikzpicture}[scale=0.8] 
     \clip(-2.4,-0.5)rectangle (1.9,2.2);
   \draw (-2,0) rectangle (1.5,2);
   \node[cylinder,draw=black,thick,aspect=0.4,
        minimum height=1.65cm,minimum width=0.3cm,
        shape border rotate=90,
        cylinder uses custom fill,
        cylinder body fill=blue!30,
        cylinder end  fill=blue!10]
  at (0,0.93) (A) { };
  \draw[dashed]  (-1.25-0.5,0)  to [out=90,in=-90] (1.25-0.5,2) ;      
  \draw  (-1.25,0)  to [out=90,in=-90] (1.25,2) ;      
 \fill (1,1.45) circle (3pt);
              \node [below] at (-1.25,0)  {  $i$}; 
             \node [below] at (0,0)  {$\tiny\mathbf{D}$};  
\end{tikzpicture}\end{minipage}
=\begin{minipage}{40mm}
\begin{tikzpicture}[scale=0.8] 
     \clip(-2.4,-0.5)rectangle (1.9,2.2);
   \draw (-2,0) rectangle (1.5,2);
   \node[cylinder,draw=black,thick,aspect=0.4,
        minimum height=1.65cm,minimum width=0.3cm,
        shape border rotate=90,
        cylinder uses custom fill,
        cylinder body fill=blue!30,
        cylinder end  fill=blue!10]
  at (0,0.93) (A) { };
  \draw[dashed]  (-1.25-0.5,0)  to [out=90,in=-90] (1.25-0.5,2) ;      
  \draw  (-1.25,0)  to [out=90,in=-90] (1.25,2) ;      
 \fill (-1.15,0.35) circle (3pt);
              \node [below] at (-1.25,0)  {  $i$}; 
             \node [below] at (0,0)  {$ \tiny\mathbf{D}$};  
\end{tikzpicture}\end{minipage}
\]
\caption{Pulling a dot through an $i$-diagonal.}
\label{fig:dotthroughdiagonal}
\end{figure}
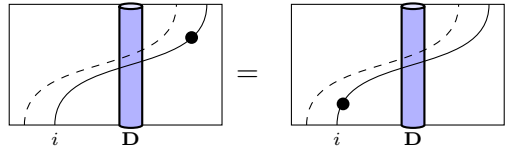
\begin{proof}
The result is clear for bricks of the form $\mathbf{B}_4, \mathbf{B}_5$ or $\mathbf{B}_6$ by applying relations \ref{rel2} and \ref{rel14}. Now assume that $\mathbf{D}$ has more than one brick.
Each  brick  of the form $\mathbf{B}_1, \mathbf{B}_2, \mathbf{B}_3$ contains a single $i$-strand.
This strand is intersected by the $i$-strand in \cref{fig:dotthroughdiagonal}.
We can pull the dot through the brick (using relation \ref{rel3}) at the expense of an error term in which 
we undo the aforementioned    $i$-crossing.  
 The resulting diagram factors through an idempotent which is zero by \cref{move3}.  
 See \cref{b1brickdot} for the  case of a single $\mathbf{B}_1$ brick.  
 \end{proof}

\begin{figure}[ht!]
  
\[\scalefont{0.6}
         \begin{minipage}{4.1cm}  \begin{tikzpicture}[scale=0.8] 
       \clip(-2.6,-0.5)rectangle (2.6,2.5);
   \draw (-2.5,0) rectangle (2.5,2);   
   \draw[dashed]  (-2.35,0)  to [out=90,in=-90] (1,2) ;      
  \draw  (-1.35,0)  to [out=90,in=-90] (2,2) ;      
  \fill (1.75,1.45) circle (3pt);                       \node [below] at (-1.7,0)  {  $i$}; 
              \node [below] at (0.2+0.9,0)  {$\tiny i-1$};  
  \draw[dashed]  (0.2+0,0)  to [out=90,in=-90] (0.2+0,2) ;      
    \draw[dashed]  (0.2+-0.9,0)  to [out=90,in=-90] (0.2+-0.9,2) ;      
    \draw[dashed]  (0.2+-1.7,0)  to [out=90,in=-90] (0.2+-1.7,2) ;      
    \draw   (0.2+0.9,0)  to [out=90,in=-90] (0.2+0.9,2) ;      
        \draw   (0.2+0.1,0)  to [out=90,in=-90] (0.2+0.1,2) ;      
            \draw   (0.2+-0.8,0)  to [out=90,in=-90] (0.2+-0.8,2) ;  
             \node [below] at (0.2+0.1,0)  {  $i$};     
                  \node [below] at (0.2+-0.9,0)  {  $i+1$}; 
             \end{tikzpicture}\end{minipage}
=
       \begin{minipage}{4.1cm}  \begin{tikzpicture}[scale=0.8] 
       \clip(-2.6,-0.5)rectangle (2.6,2.5);
   \draw (-2.5,0) rectangle (2.5,2);   
  \draw[dashed]  (-2.35,0)  to [out=90,in=-90] (1.2,2) ;      
  \draw  (-1.35,0)  to [out=90,in=-90] (2.1,2) ;      
 \fill (-1.2,.45) circle (3pt);                       \node [below] at (-1.7,0)  {  $i$}; 
              \node [below] at (0.2+0.9,0)  {$\tiny i-1$};  
  \draw[dashed]  (0.2+0,0)  to [out=90,in=-90] (0.2+0,2) ;      
    \draw[dashed]  (0.2+-0.9,0)  to [out=90,in=-90] (0.2+-0.9,2) ;      
    \draw[dashed]  (0.2+-1.7,0)  to [out=90,in=-90] (0.2+-1.7,2) ;      
    \draw   (0.2+0.9,0)  to [out=90,in=-90] (0.2+0.9,2) ;      
        \draw   (0.2+0.1,0)  to [out=90,in=-90] (0.2+0.1,2) ;      
            \draw   (0.2+-0.8,0)  to [out=90,in=-90] (0.2+-0.8,2) ;  
             \node [below] at (0.2+0.1,0)  {  $i$};     
                  \node [below] at (0.2+-0.9,0)  {  $i+1$}; 
             \end{tikzpicture}\end{minipage}
-
       \begin{minipage}{4.1cm}  \begin{tikzpicture}[scale=0.8] 
       \clip(-2.6,-0.5)rectangle (2.6,2.5);
   \draw (-2.5,0) rectangle (2.5,2);   
     \draw  (-1.35,0)  
      to [out=90,in=-90]   (0.3,2);
   \draw  (0.2+0.1,0)  
    to [out=90,in=-90]   (2,2);
     \draw[dashed]  (-1+-1.35,0)  
      to [out=90,in=-90]   (-1+0.3,2);
                     \node [below] at (-1.7,0)  {  $i$}; 
              \node [below] at (0.2+0.9,0)  {$\tiny i-1$};  
  \draw[dashed]  (0.2+0,0) 
   to [out=90,in=-90]  (0.2+0,2) ;      
  \draw[dashed]  (0.2+0.1-1,0)  
   to [out=90,in=-90]   (2.1-0.9,2);
    \draw[dashed]  (0.2+-1.7,0)  to [out=90,in=-90] (0.2+-1.7,2) ;      
    \draw   (0.2+0.9,0)  to [out=90,in=-90] (0.2+0.9,2) ;      
             \draw   (0.2+-0.8,0)  to [out=90,in=-90] (0.2+-0.8,2) ;  
             \node [below] at (0.2+0.1,0)  {  $i$};     
                  \node [below] at (0.2+-0.9,0)  {  $i+1$}; 
             \end{tikzpicture}\end{minipage}
\]
  \caption{Pulling a dot through a $\mathbf{B}_1$ brick.  The rightmost diagram
  is zero modulo in $A_{\Gamma}$ by \cref{move3}.
    }
\label{b1brickdot}
\end{figure}
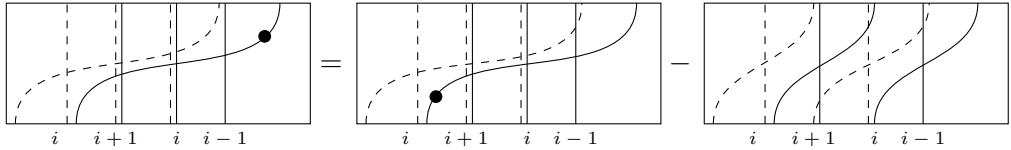

\begin{prop}\label{2crossB1247}
Suppose we have a
 double crossing of an  $i$-strand, $A$, with   an invisible  $i$-diagonal, $\mathbf{D}$.  
We   can pull $A$ through $\mathbf{D}$ 
 at the expense of multiplication by the scalar $(-1)^{b_1+b_3+b_5}$.  This is depicted in \cref{dotbtick}.
\end{prop}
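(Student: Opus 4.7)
The proof strategy is a direct adaptation of that of \cref{lem:crossingthroughdiagonal} to the double-crossing configuration, proceeding brick-by-brick through $\mathbf{D}$ from top to bottom and tracking the sign contribution as the double crossing of $A$ is resolved using the local relations of the diagrammatic Cherednik algebra. The key distinction from the single-crossing case is that, because $A$ is one strand making a double crossing (rather than a pair of parallel strands forming an $i$-crossing), the ``leading term'' from \cref{lem:crossingthroughdiagonal} at each brick collapses onto a trivial configuration after a further application of \ref{rel4}, while the subdominant term becomes the genuine pulled-through diagram.

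First I would handle the top brick of $\mathbf{D}$, which (since $\mathbf{D}$ is invisible) is either $\mathbf{B}_2$ or $\mathbf{B}_3$. The passage of a double crossing of $A$ through a $\mathbf{B}_2$ brick can be resolved purely by isotopies together with relation \ref{rel4}, giving no sign contribution. The analogous manoeuvre for a $\mathbf{B}_3$ brick (whose defining feature is an $(i-1)$-ghost in the critical position rather than an $(i+1)$-strand) instead requires relation \ref{rel10} and contributes a factor of $(-1)$, accounting for the $b_3$ in the exponent; note that there is at most one $\mathbf{B}_3$ brick in $\mathbf{D}$, namely the top one if present, so $b_3\in\{0,1\}$.

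Next I would handle the $b_1$ intermediate $\mathbf{B}_1$ bricks and the base brick. For each $\mathbf{B}_1$ brick, \cref{lem:crossingthroughdiagonal} applied locally to the relevant pair of strands of the double crossing yields a leading term (the desired pulled-through diagram, with a sign flip) together with an error term. These error terms factor through diagrams in which the pulled-through strand reconnects trivially on one side of $\mathbf{D}$ and so vanish modulo $A_{\Gamma}$ by appeal to \cref{move3}; hence pulling through all $\mathbf{B}_1$ bricks contributes a factor of $(-1)^{b_1}$. The base brick contributes an additional $(-1)$ if it is $\mathbf{B}_5$ (via relation \ref{rel9}), while $\mathbf{B}_4$ (via \ref{rel5}) and $\mathbf{B}_6$ (via \ref{rel11}) contribute trivially. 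Assembling these sign contributions yields the total scalar $(-1)^{b_1+b_3+b_5}$.

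The main obstacle will be the careful bookkeeping of error terms. As in the proof of \cref{lem:crossingthroughdiagonal}, each application of \ref{rel9} or \ref{rel10} produces an error term, and one must check that each such term either vanishes immediately by relation \ref{rel4} (whenever a double crossing of identically-labelled strands is exposed in the process), or else factors through an idempotent whose underlying loading strictly $\theta$-dominates $\mathbf{i}_{\gamma^+}$ and hence vanishes in $A_{\Gamma}$ by \cref{remove,move3}. Tracking this cancellation uniformly across the different brick types, and verifying that the two competing leading terms collapse to the single pulled-through diagram with the advertised sign, is where the real work will lie.
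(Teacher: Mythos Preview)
Your overall sign bookkeeping lands on the right exponent, but the mechanism you describe for the top brick is not correct, and this is where the actual content of the proof lives. A $\mathbf{B}_2$ brick contains an $i$-node and an $(i-1)$-node (you have the descriptions of $\mathbf{B}_2$ and $\mathbf{B}_3$ swapped), so the double crossing of $A$ through it involves a bigon of $A$ with the $(i-1)$-ghost interleaved with a double crossing of $A$ with the brick's $i$-strand. You cannot resolve this ``purely by isotopies together with relation \ref{rel4}'': the bigon with the $(i-1)$-ghost is not removable by isotopy, and relation \ref{rel4} does not apply until that bigon has been dealt with. Similarly, your citations of \ref{rel5}, \ref{rel9}, \ref{rel10} for the base bricks are the wrong relations (those govern crossings of two distinct $i$-strands or non-adjacent residues, not a single-strand bigon), and invoking \cref{lem:crossingthroughdiagonal} ``locally to the relevant pair of strands of the double crossing'' conflates a two-strand $i$-crossing with a one-strand double crossing.

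The paper's proof supplies the missing idea. At the top $\mathbf{B}_2$ (respectively $\mathbf{B}_3$) brick, one applies relation \ref{rel6} (respectively \ref{rel7}) to the bigon of $A$ with the $(i-1)$-ghost (respectively of the ghost of $A$ with the $(i+1)$-strand); one of the two resulting dotted terms is killed by \ref{rel4} because $A$ still double-crosses the brick's $i$-strand, and the surviving term has a dot on $A$ together with that same double $i$-crossing. Relations \ref{rel3} and \ref{rel4} then convert this dotted double crossing into a \emph{single} $i$-crossing in which $A$ is attached to the $i$-node of the brick (this is \cref{B6}). From here the situation is exactly the ``error term'' configuration already analysed in the proof of \cref{lem:crossingthroughdiagonal}: an attached $i$-crossing is pushed down through the $\mathbf{B}_1$ bricks (each contributing $-1$) and finally through the base brick, and the signs assemble to $(-1)^{b_1+b_3+b_5}$. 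The reduction to a single attached crossing is the step your proposal is missing.
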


 \begin{figure}[ht]
 
 \[\scalefont{0.6}
  \begin{minipage}{2.8cm}  \begin{tikzpicture}[scale=0.8] 
     \clip(-1.8,-0.5)rectangle (1.8,2.5);
   \draw (-1.75,0) rectangle (1.5,2);
   \node[cylinder,draw=black,thick,aspect=0.4,
        minimum height=1.65cm,minimum width=0.3cm,
        shape border rotate=90,
        cylinder uses custom fill,
        cylinder body fill=blue!30,
        cylinder end  fill=blue!10]
  at (0,0.93) (A) { };
  \draw  (-1.25,0)  to [out=50,in=-90] (0.95,1) ;      
 \draw  (-1.25,2)  to [out=-50,in=90] (0.95,1) ;      
 
 \draw[dashed]  (-0.4+-1.25,0)  to [out=90,in=-90] (-0.4+0.95,1) ;      
 \draw[dashed]  (-0.4+-1.25,2)  to [out=-90,in=90] (-0.4+0.95,1) ;      
 
             \node [below] at (-1.25,0)  {  $i$}; 
             \node [below] at (0,0)  {$\tiny\mathbf{D}$};  
             \end{tikzpicture}\end{minipage}=(-1)^{b_1+b_3+b_5}
               \begin{minipage}{2.8cm}  \begin{tikzpicture}[scale=0.8] 
     \clip(-1.8,-0.5)rectangle (1.8,2.5);
   \draw (-1.75,0) rectangle (1.5,2);
   \node[cylinder,draw=black,thick,aspect=0.4,
        minimum height=1.65cm,minimum width=0.3cm,
        shape border rotate=90,
        cylinder uses custom fill,
        cylinder body fill=blue!30,
        cylinder end  fill=blue!10]
  at (0,0.93) (A) { };
  \draw  (-1.25,0)  to [out=90,in=-90] (-0.5,1) ;      
 \draw  (-1.25,2)  to [out=-90,in=90] (-0.5,1) ;   
 
 \draw[dashed]  (-0.4+-1.25,0)  to [out=90,in=-90] (-0.4+-0.5,1) ;      
 \draw[dashed]  (-0.4+-1.25,2)  to [out=-90,in=90] (-0.4+-0.5,1) ;      
    
             \node [below] at (-1.25,0)  {  $i$}; 
             \node [below] at (0,0)  {$\tiny\mathbf{D}$};  
             \end{tikzpicture}\end{minipage}
 \]
             \caption{Resolving a diagram  as in   \cref{2crossB1247}  for  
              $\mathbf{D}$ an invisible   $i$-diagonal. }
             \label{dotbtick}
\end{figure}
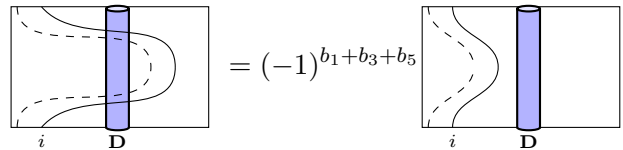

  \begin{proof}
 We first consider the double crossing with  a 
$\mathbf{B}_2$ brick (the $\mathbf{B}_3$ brick case is similar and left an exercise for the reader).
  First apply  relation \ref{rel6} to pull $A$ through the   $(i-1)$-ghost 
  to obtain two terms; one with a dot on the $i$-strand and one with a dot on the
   $(i-1)$-strand.  The diagram with a dot on the $(i-1)$-strand is zero by relation \ref{rel4}.  We apply relations \ref{rel3} and \ref{rel4} to the other diagram and obtain a diagram with a single crossing as depicted in \cref{B6}.

 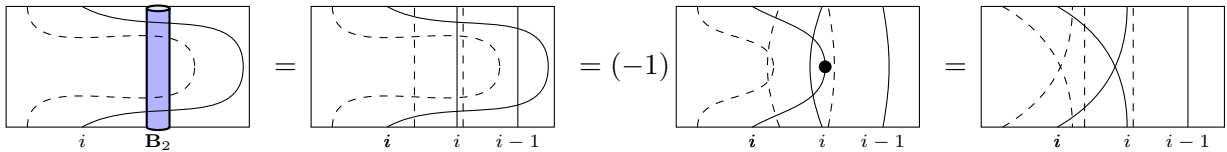
\begin{figure}[ht!]
    \[ \scalefont{0.6}
  \begin{minipage}{3.5cm}  \begin{tikzpicture}[scale=0.8] 
     \clip(-2.6,-0.5)rectangle (1.6,2.5);
   \draw (-2.5,0) rectangle (1.5,2);
   \node[cylinder,draw=black,thick,aspect=0.4,
        minimum height=1.65cm,minimum width=0.3cm,
        shape border rotate=90,
        cylinder uses custom fill,
        cylinder body fill=blue!30,
        cylinder end  fill=blue!10]
  at (0,0.93) (A) { };
   \draw[dashed]  (-0.9+-1.25,0)  to [out=90,in=-90] (-0.9+1.5,1) ;      
 \draw[dashed]  (-0.9+-1.25,2)  to [out=-90,in=90] (-0.9+1.5,1) ;      

  \draw  (-1.25,0)  to [out=30,in=-90] (1.4,1) ;      
 \draw  (-1.25,2)  to [out=-30,in=90] (1.4,1) ;      
             \node [below] at (-1.25,0)  {  $i$}; 
             \node [below] at (0,0)  {$\tiny\mathbf{B}_2$};  
             \end{tikzpicture}\end{minipage}
              =
        \begin{minipage}{3.5cm}  \begin{tikzpicture}[scale=0.8] 
     \clip(-2.6,-0.5)rectangle (1.6,2.5);
   \draw (-2.5,0) rectangle (1.5,2);
      \draw[dashed]  (-0.9+-1.25,0)  to [out=90,in=-90] (-0.9+1.5,1) ;      
 \draw[dashed]  (-0.9+-1.25,2)  to [out=-90,in=90] (-0.9+1.5,1) ;      

  \draw  (-1.25,0)  to [out=30,in=-90] (1.4,1) ;      
 \draw  (-1.25,2)  to [out=-30,in=90] (1.4,1) ;      
             \node [below] at (-1.25,0)  {  $i$}; 
              \node [below] at (-1.25,0)  {  $i$}; 
             \node [below] at (0.9,0)  {$\tiny i-1$};  
                          \node [below] at (-0.1,0)  {$\tiny i$};  
    \draw[dashed]  (0,0)  to [out=90,in=-90] (0,2) ;        \draw   (-0.1,0)  to [out=90,in=-90] (-0.1,2) ;            
    \draw   (0.9,0)  to [out=90,in=-90] (0.9,2) ;   
     \draw[dashed]   (-0.8,0)  to [out=90,in=-90] (-0.8,2) ;                \end{tikzpicture}\end{minipage}
           =
(      -1 )      \begin{minipage}{3.5cm}  \begin{tikzpicture}[scale=0.8] 
     \clip(-2.6,-0.5)rectangle (1.6,2.5);
   \draw (-2.5,0) rectangle (1.5,2);
      \draw[dashed]  (-0.9+-1.25,0)  to [out=90,in=-90] (-0.9,1) ;      
 \draw[dashed]  (-0.9+-1.25,2)  to [out=-90,in=90] (-0.9,1) ;      

  \draw  (-1.25,0)  to [out=30,in=-90] (-0.05,1) ;      
 \draw  (-1.25,2)  to [out=-30,in=90] (-0.05,1) ;      
             \node [below] at (-1.25,0)  {  $i$}; 
              \node [below] at (-1.25,0)  {  $i$}; 
             \node [below] at (0.9,0)  {$\tiny i-1$};  
                          \node [below] at (-0.1,0)  {$\tiny i$};  
    \draw[dashed]  (0,0)  to [out=80,in=-80] (0,2) ;      
      \draw   (-0.1,0)  to [out=110,in=-110] (-0.1,2) ;            
    \draw   (0.9,0)  to [out=80,in=-80] (0.9,2) ;     \fill (-0.05,1)  circle (3pt);
     \draw[dashed]   (-0.8,0)  to [out=110,in=-110] (-0.8,2) ;                \end{tikzpicture}
     \end{minipage}
                         =
                        \begin{minipage}{3.5cm}  \begin{tikzpicture}[scale=0.8] 
     \clip(-2.6,-0.5)rectangle (1.6,2.5);
   \draw (-2.5,0) rectangle (1.5,2);
     \draw  (-1.25,0)  to [out=30,in=-90] (-0.1,2) ;      
 \draw  (-1.25,2)  to [out=-30,in=90] (-0.1,0) ;   
     \draw[dashed]  (-0.9+-1.25,0)  to [out=30,in=-90] (-0.9+-0.1,2) ;      
 \draw[dashed]  (-0.9+-1.25,2)  to [out=-30,in=90] (-0.9+-0.1,0) ;    
             \node [below] at (-1.25,0)  {  $i$}; 
              \node [below] at (-1.25,0)  {  $i$}; 
             \node [below] at (0.9,0)  {$\tiny i-1$};  
                          \node [below] at (-0.1,0)  {$\tiny i$};  
    \draw[dashed]  (0,0)  to [out=90,in=-90] (0,2) ;     
     \draw   (0.9,0)  to [out=90,in=-90] (0.9,2) ;   
     \draw[dashed]   (-0.8,0)  to [out=90,in=-90] (-0.8,2) ;                \end{tikzpicture}\end{minipage} 
\]
\caption{The first equality follows by definition. The second equality follows from relation \ref{rel6} where the error term is zero by relation \ref{rel4}.  The third equality follows from relations \ref{rel3} and \ref{rel4}.  
 }
  \label{B6}
\end{figure}

Observe that the $i$-crossing is attached to the $i$-node in the $\mathbf{B}_2$ (respectively $\mathbf{B}_3$) brick  in $\gamma$.  Repeating arguments from the proof of \cref{lem:crossingthroughdiagonal} yields the result.  
%
     \end{proof}

 \begin{prop}\label{2crossB1247b}
Suppose we have a double crossing of an  $i$-strand, $A$, with   a visible  $i$-diagonal, $\mathbf{D}$.  
We can pull $A$ through $\mathbf{D}$ at the expense of scalar multiplication by $(-1)^{b_1+b_4}$ and  acquiring a dot.  This is depicted in
\cref{dotbtick11111111}.  \end{prop}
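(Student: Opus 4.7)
The proof will parallel that of \cref{2crossB1247}, with the visible-diagonal base bricks $\mathbf{B}_4$, $\mathbf{B}_5$, $\mathbf{B}_6$ replacing the invisible ones $\mathbf{B}_2$, $\mathbf{B}_3$, and with the additional feature that a single dot on $A$ is acquired. The strategy is to decompose $\mathbf{D}$ as $b_1$ copies of $\mathbf{B}_1$ stacked above a single base brick, verify the identity for each single-brick base case, and then induct on $b_1$ using essentially the same sequence of moves as in the invisible case, invoking \cref{lem:dotthroughdiagonal} at the end to commute the acquired dot past the overlying $\mathbf{B}_1$'s.

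For the base brick alone, three cases arise. When the base is $\mathbf{B}_6$ (a single red $i$-strand), relation \ref{rel11} directly rewrites the double $i$-crossing bubble as a straight $i$-strand carrying a single dot, with sign $+1 = (-1)^{b_4}$. When the base is $\mathbf{B}_5$ (a single $(i+1)$-strand), the $A$-ghost double-crosses the $(i+1)$-strand, and applying relation \ref{rel6} in the manner of Move $1^*$ collapses this to a dotted straight $i$-strand, with residual error terms vanishing by \ref{rel4}; the sign is again $+1$. When the base is $\mathbf{B}_4$ (an $(i-1),i,(i-1)$ column), relation \ref{rel3} reduces the inner double $i$-crossing to a dotted single crossing plus a straight term, the latter killed by \cref{move3}, while the surrounding $(i-1)$-strand and $(i-1)$-ghost interactions are resolved via \ref{rel10}/\ref{rel6}, yielding the expected sign $-1 = (-1)^{b_4}$.

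For the inductive step, pulling $A$ through an additional $\mathbf{B}_1$ brick lying above the already-reduced base proceeds exactly as in the second half of the proof of \cref{lem:crossingthroughdiagonal}: apply \ref{rel10} to the $(i-1)$-ghost at the top of the brick to obtain a leading term (in which $A$ is pushed further left, vanishing in $A_\Gamma$ by \cref{move3} once pulled through all the remaining bricks) and an error term that, after Move 2 and \ref{rel6}, becomes an $i$-crossing attached to the $i$-node of the next $\mathbf{B}_1$ down, carrying a coefficient of $-1$. Iterating over all $b_1$ bricks gives a net sign $(-1)^{b_1}$; combined with the base-brick sign and the dot produced there (which commutes outward freely by \cref{lem:dotthroughdiagonal}), the total sign is $(-1)^{b_1+b_4}$ and a single dot on $A$ survives, matching the claim.

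The main obstacle is the $\mathbf{B}_4$ base case: since this base brick contains an internal $i$-strand, the inner double $i$-crossing cannot simply be resolved by \ref{rel4}, and one must iterate \ref{rel3} in tandem with \ref{rel6}/\ref{rel10} applied to the surrounding $(i-1)$-strands and $(i-1)$-ghosts, invoking \cref{move2.5} at each step to discard any spurious dots landing on $\gamma$-strands, so that exactly one dot ultimately survives on $A$. A secondary subtlety is ensuring that commuting the acquired dot through the $\mathbf{B}_1$ stack at the end does not generate new sign contributions---this is exactly the content of \cref{lem:dotthroughdiagonal}, but its application must be carried out after, not interleaved with, the inductive reduction in order to keep the sign bookkeeping clean.
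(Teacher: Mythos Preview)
Your inductive step has a genuine gap. You invoke relation \ref{rel10} at the topmost $\mathbf{B}_1$, but \ref{rel10} applies to an $(i-1)$-ghost passing through an actual $i$-\emph{crossing} (two $i$-strands forming an X). In the double-crossing configuration the two arms of $A$ never cross one another near the $(i-1)$-ghost; they meet at the turning point far to the right. There is no pre-existing $i$-crossing to which \ref{rel10} applies, and you do not explain how one is produced. The paper handles each $\mathbf{B}_1$ by a different mechanism: apply \ref{rel6} to the double-crossing of $A$ with the $(i-1)$-ghost and \ref{rel7} to the double-crossing of the $A$-ghost with the $(i+1)$-strand, each producing a dot on $A$ (error terms killed by \cref{move2.5}); the resulting doubly-dotted double-crossing of $A$ with the brick's $i$-strand is then reduced via \ref{rel3} and \ref{rel4} to $(-1)$ times the straightened diagram, again with error terms vanishing by \cref{move2.5}. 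This is where the factor $(-1)^{b_1}$ comes from, and it is not the same as the attached-crossing machinery of \cref{lem:crossingthroughdiagonal}.

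Your base cases are also off because you have misread the bricks. The brick $\mathbf{B}_4$ is a single $(i-1)$-node, not an $(i-1),i,(i-1)$ column, so there is no ``inner double $i$-crossing'' to resolve with \ref{rel3}; the correct move is \ref{rel6} applied to $A$ double-crossing the $(i-1)$-ghost, yielding $-1$ times $A$ with a dot (the other term carries a dot on the $(i-1)$-strand in $\gamma$ and dies by \cref{move2.5}). Likewise $\mathbf{B}_5$ is a single $(i+1)$-node, handled by \ref{rel7} (not \ref{rel6}, and not Move~$1^*$, which is \ref{rel9}). Finally, the paper's order is the reverse of yours: it pulls through all the $\mathbf{B}_1$'s first, picking up $(-1)^{b_1}$, and only acquires the dot at the very end when reaching the base brick---so there is no need to invoke \cref{lem:dotthroughdiagonal} at all.
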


 \begin{figure}[ht]
 
 \[\scalefont{0.6}
               \begin{minipage}{2.6cm}  \begin{tikzpicture}[scale=0.8] 
     \clip(-1.8,-0.5)rectangle (1.6,2.5);
   \draw (-1.75,0) rectangle (1.4,2);
   \node[cylinder,draw=black,thick,aspect=0.4,
        minimum height=1.65cm,minimum width=0.3cm,
        shape border rotate=90,
        cylinder uses custom fill,
        cylinder body fill=blue!30,
        cylinder end  fill=blue!10]
  at (0,0.93) (A) { };
  \draw  (-1.25,0)  to [out=50,in=-90] (0.95,1) ;      
 \draw  (-1.25,2)  to [out=-50,in=90] (0.95,1) ;      
 
 \draw[dashed]  (-0.4+-1.25,0)  to [out=90,in=-90] (-0.4+0.95,1) ;      
 \draw[dashed]  (-0.4+-1.25,2)  to [out=-90,in=90] (-0.4+0.95,1) ;      
 
             \node [below] at (-1.25,0)  {  $i$}; 
             \node [below] at (0,0)  {$\tiny\mathbf{D}$};  
             \end{tikzpicture}\end{minipage}=  (-1)^{b_1+b_4}
              \begin{minipage}{2.6cm}  \begin{tikzpicture}[scale=0.8] 
     \clip(-1.8,-0.5)rectangle (1.6,2.5);
   \draw (-1.75,0) rectangle (1.4,2);
   \node[cylinder,draw=black,thick,aspect=0.4,
        minimum height=1.65cm,minimum width=0.3cm,
        shape border rotate=90,
        cylinder uses custom fill,
        cylinder body fill=blue!30,
        cylinder end  fill=blue!10]
  at (0,0.93) (A) { };
  \draw  (-1.25,0)  to [out=90,in=-90] (-0.5,1) ;      
 \draw  (-1.25,2)  to [out=-90,in=90] (-0.5,1) ;   
     \draw[fill]  (-0.5,1)   circle (2pt);       
 \draw[dashed]  (-0.4+-1.25,0)  to [out=90,in=-90] (-0.4+-0.5,1) ;      
 \draw[dashed]  (-0.4+-1.25,2)  to [out=-90,in=90] (-0.4+-0.5,1) ;      
    
             \node [below] at (-1.25,0)  {  $i$}; 
             \node [below] at (0,0)  {$\tiny\mathbf{D}$};  
             \end{tikzpicture}\end{minipage}
  \]
             \caption{Resolving a diagram  as in   \cref{2crossB1247b}  for  
 	    $\mathbf{D}$ a visible  $i$-diagonal. }
             \label{dotbtick11111111}
\end{figure}
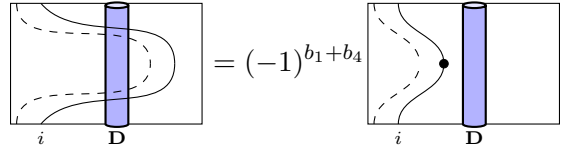

\begin{proof}
We claim that we can pull such a strand through a $\mathbf{B}_1$  at the cost of scalar multiplication by $(-1)$.  
 To see this, note that we can pull the $i$-strand through the
  $(i-1)$-ghost at the expense of acquiring a dot; we can pull the  
$i$-ghost  through the black $(i+1)$-strand at the expense of acquiring another dot (both error terms are zero by \cref{move2.5}).  We thus obtain the diagram on the left-hand side of the equality in \cref{onaprayer}.  Applying relations \ref{rel3} and \ref{rel4} several times, we obtain the diagram in which $A$ passed through $\mathbf{B}_1$ at the expense of scalar multiplication by $(-1)$ (the leftmost diagram after the equality in \cref{onaprayer})  along with  two error terms, which are both zero by \cref{move2.5};  see  \cref{onaprayer}.

\begin{figure}[ht!]
\[\scalefont{0.6}
         \begin{minipage}{3.5cm}  \begin{tikzpicture}[scale=0.8] 
       \clip(-2.8,-0.5)rectangle (1.6,2.5);
   \draw (-2.8,0) rectangle (1.5,2);   
   \draw  (-1.75,0)  to [out=60,in=-90] (0.3,1) ;    
    \draw  (-1.75,2)  to [out=-60,in=90] (0.3,1) ;      
  \draw[dashed]   (-1+-1.75,2) to [out=-90,in=90] (-1+0.3,1) ;      
    \draw[dashed]   (-1+-1.75,0)  to [out=90,in=-90] (-1+0.3,1) ;    


   \fill (0.3,1.2) circle (2.5pt);   \fill (0.3,0.8) circle (2.5pt);
                      \node [below] at (-1.7,0)  {  $i$}; 
              \node [below] at (1.3,0)  {$\tiny i-1$};  
  \draw[dashed]  (0.4+0,0)  to [out=80,in=-80] (0.4+0,2) ;      
    \draw[dashed]  (0.2+-0.9,0)  to [out=110,in=-110] (0.2+-0.9,2) ;      
    \draw[dashed]  (0.2+-1.7,0)  to [out=90,in=-90] (0.2+-1.7,2) ;      
    \draw   (1.3,0)  to [out=80,in=-80] (1.3,2) ;      
        \draw   (0.2+0.1,0)  to [out=110,in=-110] (0.2+0.1,2) ;      
            \draw   (0.2+-0.8,0)  to [out=90,in=-90] (0.2+-0.8,2) ;  
             \node [below] at (0.2+0.1,0)  {  $i$};     
                  \node [below] at (0.2+-0.9,0)  {  $i+1$}; 
             \end{tikzpicture}\end{minipage}
             =
           (-1)        \begin{minipage}{3.5cm}  \begin{tikzpicture}[scale=0.8] 
       \clip(-2.8,-0.5)rectangle (1.6,2.5);
   \draw (-2.8,0) rectangle (1.5,2);   
   \draw  (-1.75,0)  to [out=60,in=-90] (-0.3,1) ;    
    \draw  (-1.75,2)  to [out=-60,in=90] (-0.3,1) ;      
  \draw[dashed]   (-1+-1.75,2) to [out=-90,in=90] (-1-0.3,1) ;      
    \draw[dashed]   (-1+-1.75,0)  to [out=90,in=-90] (-1-0.3,1) ;    


                      \node [below] at (-1.7,0)  {  $i$}; 
              \node [below] at (1.3,0)  {$\tiny i-1$};  
  \draw[dashed]  (0.4+0,0)  to [out=90,in=-90] (0.4+0,2) ;      
    \draw[dashed]  (0.2+-0.9,0)  to [out=90,in=-90] (0.2+-0.9,2) ;      
    \draw[dashed]  (0.2+-1.7,0)  to [out=90,in=-90] (0.2+-1.7,2) ;      
    \draw   (1.3,0)  to [out=90,in=-90] (1.3,2) ;      
        \draw   (0.2+0.1,0)  to [out=90,in=-90] (0.2+0.1,2) ;      
            \draw   (0.2+-0.8,0)  to [out=90,in=-90] (0.2+-0.8,2) ;  
             \node [below] at (0.2+0.1,0)  {  $i$};     
                  \node [below] at (0.2+-0.9,0)  {  $i+1$}; 
             \end{tikzpicture}\end{minipage}
 -
                \begin{minipage}{3.5cm}  \begin{tikzpicture}[scale=0.8] 
       \clip(-2.8,-0.5)rectangle (1.6,2.5);
   \draw (-2.8,0) rectangle (1.5,2);   
 \draw  (-1.75,0)  to [out=30,in=-90] (0.3,2) ;      
 \draw  (-1.75,2)  to [out=-30,in=90] (0.3,0) ;      
\draw[dashed]  (-0.9+-1.75,0)  to [out=30,in=-90] (-1+0.3,2) ;      
\draw[dashed] (-0.9+-1.75,2)  to [out=-30,in=90] (-1+0.3,0) ;      

    \fill (0.155,0.58) circle (3pt);   
                      \node [below] at (-1.7,0)  {  $i$}; 
              \node [below] at (1.3,0)  {$\tiny i-1$};  

  \draw[dashed]  (0.4+0,0)  to [out=90,in=-90] (0.4+0,2) ;      
    \draw[dashed]  (0.2+-1.7,0)  to [out=90,in=-90] (0.2+-1.7,2) ;      
    \draw   (1.3,0)  to [out=90,in=-90] (1.3,2) ;      
            \draw   (0.2+-0.8,0)  to [out=90,in=-90] (0.2+-0.8,2) ;  
             \node [below] at (0.2+0.1,0)  {  $i$};     
                  \node [below] at (0.2+-0.9,0)  {  $i+1$}; 
             \end{tikzpicture}\end{minipage}
             -
                \begin{minipage}{3.5cm}  \begin{tikzpicture}[scale=0.8] 
       \clip(-2.8,-0.5)rectangle (1.6,2.5);
   \draw (-2.8,0) rectangle (1.5,2);   
 \draw  (-1.75,0)  to [out=30,in=-90] (0.3,2) ;      
 \draw  (-1.75,2)  to [out=-30,in=90] (0.3,0) ;      
\draw[dashed]  (-0.9+-1.75,0)  to [out=30,in=-90] (-1+0.3,2) ;      
\draw[dashed] (-0.9+-1.75,2)  to [out=-30,in=90] (-1+0.3,0) ;      

 \fill (0.155,1.42) circle (3pt);                         \node [below] at (-1.7,0)  {  $i$}; 
              \node [below] at (1.3,0)  {$\tiny i-1$};  

  \draw[dashed]  (0.4+0,0)  to [out=90,in=-90] (0.4+0,2) ;      
    \draw[dashed]  (0.2+-1.7,0)  to [out=90,in=-90] (0.2+-1.7,2) ;      
    \draw   (1.3,0)  to [out=90,in=-90] (1.3,2) ;      
            \draw   (0.2+-0.8,0)  to [out=90,in=-90] (0.2+-0.8,2) ;  
             \node [below] at (0.2+0.1,0)  {  $i$};     
                  \node [below] at (0.2+-0.9,0)  {  $i+1$}; 
             \end{tikzpicture}\end{minipage}
             \]
\caption{The effect of pulling a double crossing $i$-strand through a $\mathbf{B}_1$ brick.}
\label{onaprayer}
\end{figure}
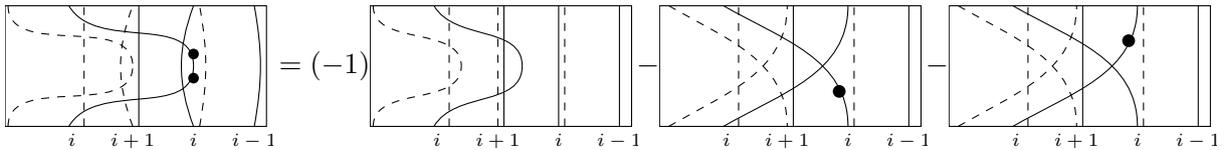

 Thus we can pull $A$ through all the  $\mathbf{B}_1$ bricks at the expense of multiplication by $(-1)^{b_1}$.  
We now wish to consider what happens when we pull $A$ through a 
$\mathbf{B}_4$, $\mathbf{B}_5$, or $\mathbf{B}_6$ brick.

We  can pull $A$ through a $\mathbf{B}_6$ brick  at the expense of acquiring a single dot on $A$  (by relation \ref{rel11}), as required.  
  We can pull $A$ through a 
 $\mathbf{B}_5$  or $\mathbf{B}_4$ 
 brick 
 at the expense of scalar multiplication by $(-1)^{b_4}$
 and an error term in which there is a dot on  the $(i+1)$-strand (respectively $(i-1)$-strand)   in the $\mathbf{B}_5$  (respectively  $\mathbf{B}_4$)  brick.
 This error term is   zero by \cref{move2.5}.  We thus obtain the required result.  
 \end{proof}

\begin{prop}\label{2crossB1247backwards}
Suppose we have an $i$-strand $A$ (respectively a dotted $i$-strand $A'$) next to an invisible (respectively visible) $i$-diagonal $\mathbf{D}$ (respectively $\mathbf{D}'$). We can pull $A$ (respectively $A')$ through $\mathbf{D}$ (respectively $\mathbf{D}'$) at the expense of scalar multiplication by $(-1)^{b_1+b_4}$.
This is depicted in \cref{backwards}.
\end{prop}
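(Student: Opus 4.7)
The proof will mirror Propositions \ref{2crossB1247} and \ref{2crossB1247b}, but run in the reverse direction: instead of collapsing a double crossing configuration down to a single (possibly dotted) strand, I will start from a single (possibly dotted) $i$-strand adjacent to the diagonal and push it across. As in those proofs, I will process $\mathbf{D}$ (or $\mathbf{D}'$) one brick at a time, working from top to bottom in the Young diagram, which is the same as right-to-left in the $\theta$-diagram by Remark \ref{remarkonmovingfromrighttoleft}.

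For the invisible case I expect the top $\mathbf{B}_2$ or $\mathbf{B}_3$ brick to contribute no sign, by a variant of Lemma \ref{316}: one applies relation \ref{rel9} (resp.\ \ref{rel10}) to carry $A$ across the three strands of the brick, and the resulting error term is killed by relation \ref{rel4}. Each subsequent $\mathbf{B}_1$ brick will contribute a factor of $-1$, by the same mechanism as in the proof of Proposition \ref{2crossB1247b} (cf.\ Figure \ref{onaprayer}): one uses relations \ref{rel6}, \ref{rel3}, and \ref{rel4} to pull $A$ across, picking up a sign, with the dotted error terms annihilated by Lemma \ref{move2.5}. The base brick will contribute $(-1)^{b_4}$: the $\mathbf{B}_4$ case produces the non-trivial sign via relation \ref{rel9}, whereas $\mathbf{B}_5$ and $\mathbf{B}_6$ are traversed without cost via \ref{rel11} and \ref{rel12}, with their error terms vanishing by Lemma \ref{move3}. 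Compounding these local contributions yields the total sign $(-1)^{b_1+b_4}$.

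For the visible case, starting from a dotted $i$-strand $A'$, the cleanest route is to first apply Lemma \ref{lem:dotthroughdiagonal} to slide the dot across $\mathbf{D}'$ at no cost. This reduces the problem to pulling a plain $i$-strand through a visible diagonal, which is handled exactly as in the invisible case above but without the cost-free top-brick step, giving the same sign $(-1)^{b_1+b_4}$. Equivalently, one can insert a pair of crossings into $A'$ using relation \ref{rel3}, convert it into a double-crossing configuration (modulo a straight-strand error killed by Lemma \ref{move3}), and invoke Proposition \ref{2crossB1247b} directly; I would expect both approaches to cohere.

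The principal technical obstacle will be the sign bookkeeping and verifying that each cascade of error terms vanishes in $A_\Gamma$. Every such error term will either contain a repeated crossing or a dot on a $\gamma$-strand, so it dies by relation \ref{rel4} or Lemma \ref{move2.5}, or else its underlying idempotent will factor through a loading that $(\theta,j)$-dominates $\mathbf{i}_{\gamma^+}$ for some $j\neq i$, at which point Lemma \ref{remove} (via Lemma \ref{move3}) applies. No new conceptual input should be required beyond what has already been set up in Propositions \ref{lem:crossingthroughdiagonal}, \ref{2crossB1247} and \ref{2crossB1247b}; the proof is essentially a careful re-use of the same toolkit.
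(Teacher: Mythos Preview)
Your proposal has a recurring gap: you repeatedly invoke the triple-point relations \ref{rel9} and \ref{rel10} (and Lemma~\ref{316}) to carry the single $i$-strand $A$ through a brick, but those relations and that lemma concern an $i$-\emph{crossing} passing through, not a lone strand; with only one $i$-strand present there is no triple point to resolve. In particular, your claim that the $\mathbf{B}_2/\mathbf{B}_3$ step is cost-free is incorrect (note also that these bricks contain two nodes, not three). The paper instead isotopes $A$ past the $(i-1)$-ghost in $\mathbf{B}_2$ and then applies Move~1 to the pair consisting of $A$ and the $i$-strand \emph{inside} the brick; one of the two resulting terms dies by Lemma~\ref{move3}, and Move~2 applied to the surviving term leaves $A$ carrying a dot together with a factor $(-1)^{b_2}$. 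This acquired dot is the engine for everything that follows: at each $\mathbf{B}_1$ brick one spends the dot via relation \ref{rel7} (gaining a factor $-1$), then Move~1 and Move~2 recreate it, and at the base brick the dot is finally absorbed via \ref{rel6}, \ref{rel7} or \ref{rel11}. Your account omits the dot entirely, so the $\mathbf{B}_1$ mechanism you borrow from Proposition~\ref{2crossB1247b} --- which runs in the mirror direction and starts from a genuine double crossing, not a bare strand --- does not transfer as written.

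Neither route you propose for the visible case works as stated. Lemma~\ref{lem:dotthroughdiagonal} slides a dot along a strand that already passes \emph{through} the diagonal, whereas here $A'$ merely sits beside $\mathbf{D}'$, so that lemma is inapplicable. And Proposition~\ref{2crossB1247b} treats the mirror configuration (strand on the left of the diagonal, bulging right); the left/right asymmetry built into relations \ref{rel6}/\ref{rel7} and into the brick ordering means the argument must genuinely be rerun from the right-hand side, which is what the paper does.
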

 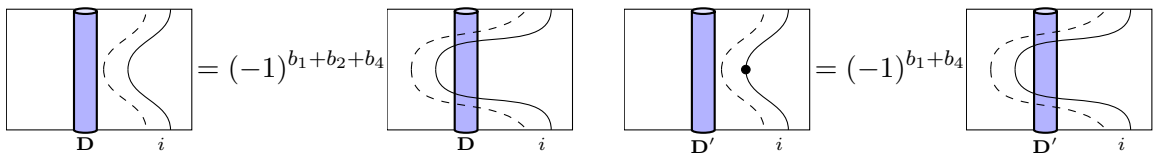
\begin{figure}[ht!]
 \[  
       \scalefont{0.6}       
                 \begin{minipage}{2.4cm}  \begin{tikzpicture}[scale=0.8] 
     \clip(-1.3,-0.5)rectangle (1.8,2.5);
   \draw (-1.3,0) rectangle (1.75,2);
   \node[cylinder,draw=black,thick,aspect=0.4,
        minimum height=1.65cm,minimum width=0.3cm,
        shape border rotate=90,
        cylinder uses custom fill,
        cylinder body fill=blue!30,
        cylinder end  fill=blue!10]
  at (0,0.93) (A) { };
  \draw  (+0.7,1)  to [out=90,in=-90] (1.4,2) ;      
 \draw  (+0.7,1)  to [out=-90,in=90] (1.4,0) ;      
 
 \draw[dashed]  (-0.4+0.7,1)  to [out=90,in=-90] (-0.4+1.4,2) ;      
 \draw[dashed]  (-0.4+0.7,1)  to [out=-90,in=90] (-0.4+1.4,0) ;      
              \node [below] at (1.25,0)  {  $i$}; 
             \node [below] at (0,0)  {$\tiny\mathbf{D}$};  
             \end{tikzpicture}\end{minipage}
             =
              (-1)^{b_1+b_2+b_4}
  \begin{minipage}{2.0cm}  \begin{tikzpicture}[scale=0.8] 
     \clip(-1.3,-0.5)rectangle (1.8,2.5);
   \draw (-1.3,0) rectangle (1.75,2);
   \node[cylinder,draw=black,thick,aspect=0.4,
        minimum height=1.65cm,minimum width=0.3cm,
        shape border rotate=90,
        cylinder uses custom fill,
        cylinder body fill=blue!30,
        cylinder end  fill=blue!10]
  at (0,0.93) (A) { };
  \draw  (-0.5,1)  to [out=90,in=-90] (1.4,2) ;      
 \draw  (-0.5,1)  to [out=-90,in=90] (1.4,0) ;      
 
 \draw[dashed]  (-0.4+-0.5,1)  to [out=90,in=-130] (-0.4+1.4,2) ;      
 \draw[dashed]  (-0.4+-0.5,1)  to [out=-90,in=130] (-0.4+1.4,0) ;      
 
             \node [below] at (1.25,0)  {  $i$}; 
             \node [below] at (0,0)  {$\tiny\mathbf{D}$};  
             \end{tikzpicture}\end{minipage}
             \quad
     \quad                               \quad             \quad
                 \begin{minipage}{2.4cm}  \begin{tikzpicture}[scale=0.8] 
     \clip(-1.3,-0.5)rectangle (1.8,2.5);
   \draw (-1.3,0) rectangle (1.75,2);
   \node[cylinder,draw=black,thick,aspect=0.4,
        minimum height=1.65cm,minimum width=0.3cm,
        shape border rotate=90,
        cylinder uses custom fill,
        cylinder body fill=blue!30,
        cylinder end  fill=blue!10]
  at (0,0.93) (A) { };
  \draw  (+0.7,1)  to [out=90,in=-90] (1.4,2) ;      
 \draw  (+0.7,1)  to [out=-90,in=90] (1.4,0) ;      
 
 \draw[dashed]  (-0.4+0.7,1)  to [out=90,in=-90] (-0.4+1.4,2) ;      
 \draw[dashed]  (-0.4+0.7,1)  to [out=-90,in=90] (-0.4+1.4,0) ;      
   \draw[fill]  (+0.7,1)   circle (2pt);       
             \node [below] at (1.25,0)  {  $i$}; 
             \node [below] at (0,0)  {$\tiny\mathbf{D}'$};  
             \end{tikzpicture}\end{minipage}
             =
              (-1)^{b_1+b_4}
  \begin{minipage}{2.2cm}  \begin{tikzpicture}[scale=0.8] 
     \clip(-1.3,-0.5)rectangle (1.8,2.5);
   \draw (-1.3,0) rectangle (1.75,2);
   \node[cylinder,draw=black,thick,aspect=0.4,
        minimum height=1.65cm,minimum width=0.3cm,
        shape border rotate=90,
        cylinder uses custom fill,
        cylinder body fill=blue!30,
        cylinder end  fill=blue!10]
  at (0,0.93) (A) { };
  \draw  (-0.5,1)  to [out=90,in=-90] (1.4,2) ;      
 \draw  (-0.5,1)  to [out=-90,in=90] (1.4,0) ;      
 
 \draw[dashed]  (-0.4+-0.5,1)  to [out=90,in=-130] (-0.4+1.4,2) ;      
 \draw[dashed]  (-0.4+-0.5,1)  to [out=-90,in=130] (-0.4+1.4,0) ;      
 
             \node [below] at (1.25,0)  {$i$}; 
             \node [below] at (0,0)  {$\tiny\mathbf{D}'$};  
             \end{tikzpicture}\end{minipage}
\]
\caption{Pulling a dotted $i$-strand through an invisible 
$i$-diagonal $\mathbf{D}$ (respectively visible $i$-diagonal $\mathbf{D}'$).}
\label{backwards}
\end{figure}

\begin{proof}
We shall first show that $A$ can pass through a $\mathbf{B}_2$ brick at the expense of multiplication by $-1$ (respectively $+1$) and acquiring a dot.  The $\mathbf B_3$ case is similar.  

We can pull  $A$ through  the $(i-1)$-ghost in $\mathbf{B}_2$ 
 and then apply Move $1$ 
  to yield two diagrams each with an $i$-crossing. 
   In the case that the $i$-crossing is bypassed to the left by the $(i-1)$-ghost, 
   we can push the crossing to the right and apply \cref{move3} to see that this diagram is zero.  
 It remains to consider the diagram with an $i$-crossing attached to the $i$-node in $\mathbf{B}_2$ 
    bypassed to the right by the $(i-1)$-ghost 
      multiplied by the scalar $-1$. 
       By Move 2 we obtain the required result.

We have seen that pulling $A$ through a $\mathbf{B}_2$  or $\mathbf{B}_3$ adds a dot to the strand. Therefore it will suffice to show that the result holds for
  $A'$  and $\mathbf{D}'$ as in the rightmost diagram of \cref{backwards}. 
 First, we show that we can pull $A'$ through a $\mathbf{B}_1$ brick at the expense of multiplication by the scalar $(-1)$.

By relation \ref{rel7} we can pull  $A'$ through  the $(i-1)$-ghost  at the expense of multiplication by $-1$ and losing the dot (the error term is zero by \cref{move2.5}).  
We now apply Move 1, followed by \cref{move3}, followed by Move 2  as above, to obtain the result.


Finally, we can pull the dotted $A'$ through a $\mathbf{B}_4$,
$\mathbf{B}_5$, or $\mathbf{B}_6$ brick using relation \ref{rel7}, \ref{rel6} or \ref{rel11} at the expense of multiplication by $(-1)^{b_4}$ (note that the error term in \ref{rel6} and \ref{rel7} is zero by \cref{move2.5}). The result follows.
\end{proof}

%

\subsection{The algebra isomorphism   for two subquotients each  with a   single residue}
We now associate a sequence $\chi(\gamma)$ to the multipartition $\gamma$. 
This sequence records the occurrences of visible and invisible  $i$-diagonals, and the
bricks from which the diagonals are built. We define  
\[\chi(\mathbf{D}_x)=
(-1)^{b_1}\mathbf{d}^{j}_{k}
\] 
where $k=4,5,$ or $6$ if the bottom brick of $\mathbf{D}_x$ is $\mathbf{B}_4,\mathbf{B}_5$ or $\mathbf{B}_6$, respectively and where
$j=2$ or $3$ if $\mathbf{D}_x$ is invisible with  top brick $\mathbf{B}_2$ or $\mathbf{B}_3$, respectively, and $j=0$ if $\mathbf{D}_x$ is visible.  
%
We then define $\chi(\gamma)$ to be the sequence $(\chi(\mathbf D_{x_1}),\chi(\mathbf D_{x_2}), \dots)$.

%
%
%
%
%

\begin{eg}
Continuing with Example \ref{notchiexample}, we have that $
\chi(\gamma)=(\mathbf{d}_4^0,\mathbf{d}_4^3,
\mathbf{d}_6^0,\mathbf{d}_5^0,\mathbf{d}_5^0)$.
\end{eg}

Let $-\emptyset$ and $\emptyset$ denote two formal symbols in what follows.

\begin{defn}\label{equivalence}We say that two sequences $\chi$ and $\overline{\chi}$ are \emph{equivalent}, and write $\chi \sim \overline\chi$,  if one can be obtained from the other by applying the following local identifications within the sequence (i.e.~to individual elements or adjacent pairs of elements in the sequence):
 \begin{itemize}
\item[$(i)$]  $(+\mathbf{d}_4^j ) =( -\mathbf{d}_5^j)$ and $(-\mathbf{d}_4^j ) =( +\mathbf{d}_5^j)$ for $j=0,2,3$;
 \item[$(ii)$] $(-\emptyset) = (+\mathbf{d}_k^2,+\mathbf{d}_k^3) = (-\mathbf{d}_k^2,-\mathbf{d}_k^3)= (+\mathbf{d}_k^3,+\mathbf{d}_k^2) = (-\mathbf{d}_k^3,-\mathbf{d}_k^2)$ for $k=4,5$;
\item[$(iii)$]  $(+\mathbf{d}_k^j,- \mathbf{d}_k^j)= (-\mathbf{d}_k^j,+\mathbf{d}_k^j)$ for $j=2,3$ and  $k=4,5$;
\item[$(iv)$]  $(\emptyset) = (+\mathbf{d}_6^j,+\mathbf{d}_6^j) = (-\mathbf{d}_6^j,-\mathbf{d}_6^j)$ for $j=2,3$;
\item[$(v)$]  $(-\emptyset,-\emptyset)=(\emptyset)$ and we may delete any  $\emptyset$ from our sequence $\chi$ without cost.
\end{itemize}
\end{defn}

\begin{eg}\label{schurisomorhishi}
Let $e=\overline e=5$, $i=\overline{i}=0$, $\kappa=\overline\kappa=(0)$ and $\theta=\overline\theta=(0)$.  
The partition
\[
\gamma = (30^6,28,20,19^2,15,11,9,7,3^6) \in \mptn 1{326}
\]
has  four addable $0$-nodes and 
\[
\chi(\gamma)=
 (+\mathbf{d}_4^0, \underbrace{+\mathbf{d}_4^2,+ \mathbf{d}_4^3},\underbrace{+\mathbf{d}_4^3,+\mathbf{d}_4^2}, +\mathbf{d}_4^0,-\mathbf{d}_6^0,-\mathbf{d}_5^3, -\mathbf{d}_5^3,+\mathbf{d}_5^2,+\mathbf{d}_5^0),
\]
where we have indicated the pairs of adjacent elements to which we can apply the local identification $(ii)$.  
The partition 
\[
\overline{\gamma} = (10^4,9,5^4,3^3,1^8) \in \mptn 1{86}
\]
 also has four addable $0$-nodes and 
\[
\chi(\overline\gamma)=(+\mathbf{d}_4^0,   +\mathbf{d}_4^0,-\mathbf{d}_6^0,-\mathbf{d}_5^3, -\mathbf{d}_5^3,+\mathbf{d}_5^2,+\mathbf{d}_5^0).
\]
These two sequences are easily seen to be equivalent using $(ii)$ and $(v)$ above.  
\end{eg}

\begin{thm}\label{main}\label{perfgh}
If $\gamma$ and $\overline\gamma$ are two multipartitions which are $i$- and $\overline i$-admissible, respectively, with $\chi(\gamma) \sim \chi(\overline\gamma)$, then $A_{\Gamma} \cong A_{\overline{\Gamma}}$ as graded $\Bbbk$-algebras; the isomorphism is given by $\Phi$ from \cref{vsisom}.
\end{thm}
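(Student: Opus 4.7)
By \cref{vsisom}, $\Phi$ is already established as a bijective graded $\Bbbk$-linear map sending cellular basis elements to cellular basis elements, and preserving both the length function and the graded dimensions of standard modules. What remains is to verify multiplicativity, i.e.~that $\Phi(C_{\SSTS,\SSTT} \cdot C_{\SSTU,\SSTV}) = \Phi(C_{\SSTS,\SSTT}) \cdot \Phi(C_{\SSTU,\SSTV})$ for all quadruples of semistandard tableaux. The plan is first to reduce to the case in which $\chi(\overline\gamma)$ is obtained from $\chi(\gamma)$ by a single application of one of the elementary identifications $(i)$--$(v)$ in \cref{equivalence}: since $\sim$ is by definition generated by these local moves and the composition of graded algebra isomorphisms is again such, it suffices to treat each elementary move in turn.

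The key structural observation is that for any product $C_{\SSTS,\SSTT} \cdot C_{\SSTU,\SSTV}$ one may resolve the diagrammatic concatenation $B_\SSTS B_\SSTT^\ast B_\SSTU B_\SSTV^\ast$ using relations \ref{rel1}--\ref{rel15}. Every nontrivial interaction involving a strand of $\gamma$ arises when a moving $i$-strand, its ghost, a dotted $i$-strand, or an $i$-crossing, passes through some $i$-diagonal $\mathbf{D}_x$ of $\gamma$. By \cref{lem:crossingthroughdiagonal,lem:dotthroughdiagonal,2crossB1247,2crossB1247b,2crossB1247backwards}, the scalar and any error term produced by such a passage depend only on the sign $(-1)^{b_1(\mathbf{D}_x)}$, on whether $\mathbf{D}_x$ is visible or invisible, and on which of $\mathbf{B}_4,\mathbf{B}_5,\mathbf{B}_6$ occupies the bottom brick and which of $\mathbf{B}_2,\mathbf{B}_3$ (if any) the top --- precisely the data recorded by $\chi(\mathbf{D}_x)$. \Cref{move3,move2.5,remove} then ensure that all error terms factoring through loadings strictly $\theta$-dominating $\gamma^+$, or carrying dots on strands belonging to $\gamma$, vanish in the subquotient $A_\Gamma$. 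Consequently, each structure constant of $A_\Gamma$ expressed in the cellular basis is determined entirely by $\chi(\gamma)$ together with the tableaux $(\SSTS,\SSTT,\SSTU,\SSTV)$, independently of the specific shape, level, multicharge or quantum characteristic.

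Granting this, each of the five equivalence moves in \cref{equivalence} must be matched with an explicit algebraic identity witnessing that the structure constants agree on both sides of the move. Move $(i)$ encodes the sign difference coming from \cref{2crossB1247b} when swapping a bottom brick $\mathbf{B}_4$ for $\mathbf{B}_5$; moves $(ii)$ and $(iv)$ reflect the cancellation of a pair of adjacent invisible (respectively, $\mathbf{B}_6$-bottomed) diagonals as a scalar $\pm 1$ via the double-crossing analyses of \cref{2crossB1247,2crossB1247b}; move $(iii)$ is a commutation of two adjacent invisible diagonals with opposite signs; and move $(v)$ records that the formal symbol $\emptyset$ contributes trivially to the resolution. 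I expect the main technical obstacle to lie in moves $(ii)$ and $(iii)$, where one must simultaneously account for the sign $(-1)^{b_1+b_3+b_5}$ of \cref{2crossB1247}, the sign $(-1)^{b_1+b_4}$ and dot generation of \cref{2crossB1247b,2crossB1247backwards}, and the interactions \emph{between} the two adjacent diagonals when a strand threads through both in sequence. Working left-to-right through $\chi(\gamma)$ as in \cref{remarkonmovingfromrighttoleft}, and reducing via \cref{move3} any intermediate diagram whose loading strictly $\theta$-dominates $\gamma^+$, the verification should reduce to a finite case check handled diagram-by-diagram.
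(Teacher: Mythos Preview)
Your approach is essentially the same as the paper's: reduce to a single elementary identification from \cref{equivalence}, then verify that resolving the five local configurations (dot through $\mathbf{D}$, $i$-crossing through $\mathbf{D}$ from either side, double crossing from either side) gives matching scalars on both sides, using \cref{lem:crossingthroughdiagonal,lem:dotthroughdiagonal,2crossB1247,2crossB1247b,2crossB1247backwards}. The paper carries out exactly the finite case check you defer, computing the explicit coefficients (labelled $(\dagger)$ and $(\dagger\dagger)$) for moves $(ii)$--$(iv)$ and confirming they agree across the identification; your outline is correct but stops just short of this computation.
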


\begin{proof}
By Proposition \ref{vsisom} we need only check that  
$\Phi( C_{\SSTS, \SSTT}C_{\SSTU, \SSTV}) = \Phi( C_{\SSTS, \SSTT})\Phi(C_{\SSTU, \SSTV})$.  
We suppose that our  product diagram 
$D= C_{\SSTS, \SSTT}C_{\SSTU, \SSTV}$ contains a neighbourhood in which some $i$-strand, $A$,  passes through an $i$-diagonal, $\mathbf D$. 
 We shall show that   
 resolving the crossing  and then applying $\Phi$,
  we obtain the same result 
 as if we resolve the crossing in $\Phi(D)$.
 There are five possible cases that we need to check for each diagonal.
The five cases are: $(a)$ moving a dot through a crossing of  $A$  with $\mathbf{D}$   
  $(b\&c)$  an $i$-crossing passes through $\mathbf{D}$ 
  $(d \& e)$ a  double crossing of   $A$ with  $\mathbf{D}$.
These are the same cases as those considered in 
 \cref{possibles}.

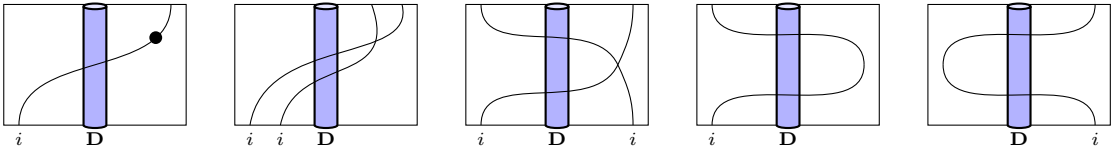
\begin{figure}[ht!]
      \[ \scalefont{0.6}
        \begin{tikzpicture}[scale=0.8] 
     \clip(-1.9,-0.5)rectangle (1.9,2.2);
   \draw (-1.5,0) rectangle (1.5,2);
   \node[cylinder,draw=black,thick,aspect=0.4,
        minimum height=1.65cm,minimum width=0.3cm,
        shape border rotate=90,
        cylinder uses custom fill,
        cylinder body fill=blue!30,
        cylinder end  fill=blue!10]
  at (0,0.93) (A) { };
  \draw  (-1.25,0)  to [out=90,in=-90] (1.25,2) ;      
 \fill (1,1.45) circle (3pt);
              \node [below] at (-1.25,0)  {  $i$}; 
             \node [below] at (0,0)  {$\tiny\mathbf{D}$};  
             \end{tikzpicture}
  \begin{tikzpicture}[scale=0.8] 
  \clip(-1.9,-0.5)rectangle (1.9,2.2);
    \draw (-1.5,0) rectangle (1.5,2);
   \node[cylinder,draw=black,thick,aspect=0.4,
        minimum height=1.65cm,minimum width=0.3cm,
        shape border rotate=90,
        cylinder uses custom fill,
        cylinder body fill=blue!30,
        cylinder end  fill=blue!10]
  at (0,0.93) (A) { };
   \draw (-0.75,0)  .. controls (-0.5,1.2)  and (1.25,0.6) ..  (0.75,2);
   \draw (-1.25,0)  .. controls (-1,1.5)  and (1.5,1) ..  (1.25,2);
             \node [below] at (-1.25,0)  {  $i$};  
             \node [below] at (-0.75,0)  {  $i$}; 
             \node [below] at (0,0)  {$\tiny\mathbf{D}$};  
\end{tikzpicture}
 \begin{tikzpicture}[scale=0.8] 
     \clip(-1.9,-0.5)rectangle (1.9,2.2);
   \draw (-1.5,0) rectangle (1.5,2);
   \node[cylinder,draw=black,thick,aspect=0.4,
        minimum height=1.65cm,minimum width=0.3cm,
        shape border rotate=90,
        cylinder uses custom fill,
        cylinder body fill=blue!30,
        cylinder end  fill=blue!10]
  at (0,0.93) (A) { };
 \draw [smooth] (-1.25,2) to[out=-90,in=115] (1,1) to[out=-180+115,in=90] (1.25,0);
 \draw [smooth] (-1.25,0) to[out=90,in=-115] (1,1) to[out=180-115,in=-90] (1.25,2);
             \node [below] at (-1.25,0)  {  $i$};  \node [below] at (1.25,0)  {  $i$}; 
             \node [below] at (0,0)  {$\tiny\mathbf{D}$};  
 \end{tikzpicture}
\begin{tikzpicture}[scale=0.8] 
     \clip(-1.9,-0.5)rectangle (1.9,2.2);
   \draw (-1.5,0) rectangle (1.5,2);
   \node[cylinder,draw=black,thick,aspect=0.4,
        minimum height=1.65cm,minimum width=0.3cm,
        shape border rotate=90,
        cylinder uses custom fill,
        cylinder body fill=blue!30,
        cylinder end  fill=blue!10]
  at (0,0.93) (A) { };
  \draw  (-1.25,0)  to [out=90,in=-90] (1.25,1) ;      
 \draw  (-1.25,2)  to [out=-90,in=90] (1.25,1) ;      
             \node [below] at (-1.25,0)  {  $i$}; 
             \node [below] at (0,0)  {$ \mathbf{D}$};  
\end{tikzpicture}
\begin{tikzpicture}[scale=0.8] 
     \clip(-1.9,-0.5)rectangle (1.8,2.2);
   \draw (-1.5,0) rectangle (1.5,2);
   \node[cylinder,draw=black,thick,aspect=0.4,
        minimum height=1.65cm,minimum width=0.3cm,
        shape border rotate=90,
        cylinder uses custom fill,
        cylinder body fill=blue!30,
        cylinder end  fill=blue!10]
  at (0,0.93) (A) { };
  \draw  (1.25,0)  to [out=90,in=-90] (-1.25,1) ;      
 \draw  (1.25,2)  to [out=-90,in=90] (-1.25,1) ;     
             \node [below] at (1.25,0)  {  $i$}; 
             \node [below] at (0,0)  {$ \mathbf{D}$};  
\end{tikzpicture}
\]
 \caption{The five possible diagrams we need to consider, for $\mathbf{D}$ an  arbitrary $i$-diagonal.}
\label{possibles}
\end{figure}

First note that cases $(a)$ and $(b)$ are trivial for all identifications $(i)$--$(v)$ in \cref{equivalence},  from \cref{lem:dotthroughdiagonal} and the defining relations of $A(n,\theta,\kappa)$, respectively.

For the identification $(i)$, we now look at cases $(c)$, $(d)$ and $(e)$. 
In case $(c)$, the result follows by \cref{lem:crossingthroughdiagonal} and the fact that $(i)$ preserves the parity of $b_1+b_5$; in case $(d)$, the result follows by \cref{2crossB1247,2crossB1247backwards} and the fact that $(i)$ preserves the parity of $b_1+b_4$ and $b_1+b_3+b_5$; in case $(e)$, the result follows by \cref{2crossB1247b} and the fact that $(i)$ preserves the parity of $b_1+b_4$ and $b_1+b_2+b_4$ -- these all follow as our equivalence is defined so that these parities are all preserved by $\Phi$.

 
We now address the identifications in $(ii)$.   
We aim to show that we can resolve a diagram with a pair of (consecutive) $i$-diagonals $\mathbf{D}_{x_1}$ and $\mathbf{D}_{x_2}$, with $\chi(\mathbf{D}_{x_1}) = \pm \mathbf{d}_k^2$ and $\chi(\mathbf{D}_{x_2}) = \pm \mathbf{d}_k^3$ (or vice versa), for $k=4$ or $5$, and an $i$-crossing as in case $(c)$ without cost 
or a double crossing as in 
cases $(d\&e)$ at the expense of multiplication by scalar $-1$.

First, consider what happens when we resolve an $i$-crossing as in case $(c)$.
We first pull the $i$-crossing through $\mathbf{D}_{x_2}$, to obtain an error term in which the crossing is undone, and then through $\mathbf{D}_{x_1}$ to obtain another  error term in which the crossing is undone, applying \cref{lem:crossingthroughdiagonal} twice.
The resulting sum of three diagrams is depicted in \cref{dottymoveystrandypandy} in the case that $(\chi(\mathbf{D}_{x_1}), \chi(\mathbf{D}_{x_2})) = (\mathbf{d}_4^2, \mathbf{d}_4^3)$.

Resolving the two error terms using \cref{2crossB1247,2crossB1247backwards}, we get the diagram with both $i$-strands vertical, with   coefficient
\begin{equation}\tag{$\dagger$}
(-1)^{b_1(x_1) + b_5(x_1) + b_1(x_2) + b_3(x_1) + b_5(x_2)} + (-1)^{b_1(x_1) + b_5(x_1) + b_1(x_2) + b_2(x_2) + b_4(x_2)}
\end{equation}
where $b_k(x_j)$ denotes the number of bricks $\mathbf B_k$ in the $i$-diagonal $\mathbf D_{x_j}$.
It is simple to check that this coefficient is zero in all cases covered in $(ii)$, and therefore we can  pass an $i$-crossing through the pair $(\mathbf{D}_{x_1},\mathbf{D}_{x_2})$ without cost.

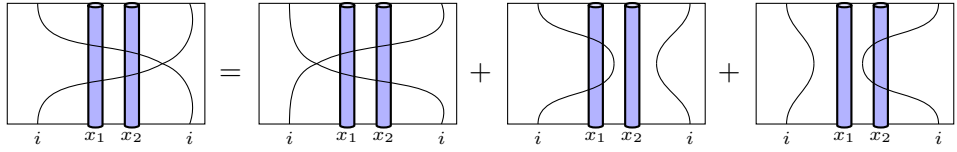
\begin{figure}[ht!]
\[
\scalefont{0.6} 
 \begin{minipage}{2.8cm}  \begin{tikzpicture}[scale=0.8] 
     \clip(-1.9,-0.5)rectangle (1.9,2.2);
   \draw (-1.75,0) rectangle (1.5,2);

   \node[cylinder,draw=black,thick,aspect=0.3,
        minimum height=1.65cm,minimum width=0.1cm,
        shape border rotate=90,
        cylinder uses custom fill,
        cylinder body fill=blue!30,
        cylinder end  fill=blue!10]
  at (-0.3,0.93) (A) { };
               \node [below] at (-0.3,-0)  {$x_1$};  

   \node[cylinder,draw=black,thick,aspect=0.3,
        minimum height=1.65cm,minimum width=0.1cm,
        shape border rotate=90,
        cylinder uses custom fill,
        cylinder body fill=blue!30,
        cylinder end  fill=blue!10]
  at (0.3,0.93) (A) { };
               \node [below] at (0.3,0)  {$x_2$};

 \draw [smooth] (-1.25,2) to[out=-90,in=150] (0.8,1) to[out=-20,in=70] (1.25,0);
 \draw [smooth] (-1.25,0) to[out=90,in=-150] (0.8,1) to[out=20,in=-70] (1.25,2);
             \node [below] at (-1.25,0)  {  $i$};  \node [below] at (1.25,0)  {  $i$}; 
  \end{tikzpicture}\end{minipage}
=
 \begin{minipage}{2.8cm}  \begin{tikzpicture}[scale=0.8] 
     \clip(-1.9,-0.5)rectangle (1.9,2.2);
   \draw (-1.75,0) rectangle (1.5,2);

   \node[cylinder,draw=black,thick,aspect=0.3,
        minimum height=1.65cm,minimum width=0.1cm,
        shape border rotate=90,
        cylinder uses custom fill,
        cylinder body fill=blue!30,
        cylinder end  fill=blue!10]
  at (-0.3,0.93) (A) { };
               \node [below] at (-0.3,-0)  {$x_1$};

   \node[cylinder,draw=black,thick,aspect=0.3,
        minimum height=1.65cm,minimum width=0.1cm,
        shape border rotate=90,
        cylinder uses custom fill,
        cylinder body fill=blue!30,
        cylinder end  fill=blue!10]
  at (0.3,0.93) (A) { };
               \node [below] at (0.3,0)  {$x_2$};

 \draw [smooth] (-1.25,2) to[out=-90,in=160] (-0.8,1) to[out=-30,in=70] (1.25,0);
 \draw [smooth] (-1.25,0) to[out=90,in=-160] (-0.8,1) to[out=30,in=-70] (1.25,2);    
             \node [below] at (-1.25,0)  {  $i$};  \node [below] at (1.25,0)  {  $i$}; 
  \end{tikzpicture}\end{minipage}
+
  \begin{minipage}{2.8cm}  \begin{tikzpicture}[scale=0.8] 
     \clip(-1.9,-0.5)rectangle (1.9,2.2);
   \draw (-1.75,0) rectangle (1.5,2);
 
   \node[cylinder,draw=black,thick,aspect=0.3,
        minimum height=1.65cm,minimum width=0.1cm,
        shape border rotate=90,
        cylinder uses custom fill,
        cylinder body fill=blue!30,
        cylinder end  fill=blue!10]
  at (-0.3,0.93) (A) { };
               \node [below] at (-0.3,-0)  {$x_1$};  
   \node[cylinder,draw=black,thick,aspect=0.3,
        minimum height=1.65cm,minimum width=0.1cm,
        shape border rotate=90,
        cylinder uses custom fill,
        cylinder body fill=blue!30,
        cylinder end  fill=blue!10]
  at (0.3,0.93) (A) { };
               \node [below] at (0.3,0)  {$x_2$};  

\draw [smooth] (-1.25,2) to[out=-90,in=90] (0,1) ; 
\draw [smooth] (-1.25,0) to[out=90,in=-90] (0,1) ; 
\draw [smooth] (1.25,2) to[out=-90,in=90] (0.7,1) ; 
\draw [smooth] (1.25,0) to[out=90,in=-90] (0.7,1) ; 
             \node [below] at (-1.25,0)  {  $i$};  \node [below] at (1.25,0)  {  $i$}; 
  \end{tikzpicture}\end{minipage}
  +
  \begin{minipage}{2.8cm}  \begin{tikzpicture}[scale=0.8] 
     \clip(-1.9,-0.5)rectangle (1.9,2.2);
   \draw (-1.75,0) rectangle (1.5,2);
 
   \node[cylinder,draw=black,thick,aspect=0.3,
        minimum height=1.65cm,minimum width=0.1cm,
        shape border rotate=90,
        cylinder uses custom fill,
        cylinder body fill=blue!30,
        cylinder end  fill=blue!10]
  at (-0.3,0.93) (A) { };
               \node [below] at (-0.3,-0)  {$x_1$};  

   \node[cylinder,draw=black,thick,aspect=0.3,
        minimum height=1.65cm,minimum width=0.1cm,
        shape border rotate=90,
        cylinder uses custom fill,
        cylinder body fill=blue!30,
        cylinder end  fill=blue!10]
  at (0.3,0.93) (A) { };
               \node [below] at (0.3,0)  {$x_2$};

\draw [smooth] (-1.25,2) to[out=-90,in=90] (-0.8,1) ; 
\draw [smooth] (-1.25,0) to[out=90,in=-90] (-0.8,1) ; 
\draw [smooth] (1.25,2) to[out=-90,in=90] (0,1) ; 
\draw [smooth] (1.25,0) to[out=90,in=-90] (0,1) ; 
             \node [below] at (-1.25,0)  {  $i$};  \node [below] at (1.25,0)  {  $i$}; 
  \end{tikzpicture}\end{minipage}
\]
\caption{The result of pulling an $i$-crossing through a pair of $i$-diagonals 
$\mathbf{D}_{x_1}$ and $\mathbf{D}_{x_2}$ with $\chi(\mathbf{D}_{x_1})  = \mathbf{d}_4^2$, $\chi(\mathbf{D}_{x_2})  = \mathbf{d}_4^3$.}
\label{dottymoveystrandypandy}
\end{figure}

Next, consider what happens when we pull a double $i$-crossing as in case $(d)$ (respectively $(e)$) through through the pair of (consecutive) $i$-diagonals $\mathbf{D}_{x_1}$, $\mathbf{D}_{x_2}$ with $\chi(\mathbf{D}_{x_1}) = \pm \mathbf{d}_k^2$ and $\chi(\mathbf{D}_{x_2}) = \pm \mathbf{d}_k^3$ (or vice versa), for $k=4$ or $5$.

We can pull the $i$-strand through both diagonals, applying \cref{2crossB1247} (respectively \cref{2crossB1247backwards}) twice, at the expense of multiplication by
\begin{equation}\tag{$\dagger\dagger$}
(-1)^{b_1(x_1+x_2) + b_3(x_1+x_2) + b_5(x_1+x_2)}
\quad \text{or} \quad
(-1)^{b_1(x_1 + x_2) + b_2(x_1+x_2) + b_4(x_1+x_2)}\end{equation}
respectively, where $b_k(x_1 + x_2) := b_k(x_1) + b_k(x_2)$. 
In all cases covered in $(ii)$, we have that $b_1(x_1) = b_1(x_2)$,  $b_3(x_1) = b_3(x_2) \pm 1$ (respectively $b_2(x_1) = b_2(x_2) \pm 1$) and $b_5(x_1) = b_5(x_2)$ (respectively $b_4(x_1) = b_4(x_2)$), and so the scalar is $-1$ for  both $(d)$ and $(e)$, as claimed.

We have shown that we can pull an $i$-crossing through any pair of consecutive 
invisible $i$-diagonals as in case  $(ii)$  without cost.  We have also shown
that we can pull a single 
$i$-strand through at the cost of multiplication by $-1$; and therefore $\Phi$ respects the identifications in $(ii)$.
Moreover, we can clearly   pull an $i$-crossing or a single $i$-strand through two such pairs without cost (as $(-1)^2=1$), and therefore $\Phi$ also respects the identifications in $(v)$. 

 In case $(iii)$ one can argue as above independently of $k$,
 and obtain coefficient $+2$  or $-2$  in $(\dagger)$ for $j=3$ or $j=2$, respectively.  
 The coefficients in  $(\dagger\dagger)$ are easily seen to  both be $-1$ for $j=2$ or $3$.
The identifications in $(iv)$ follow similarly to those in $(ii)$, but with scalars in $(\dagger\dagger)$ both being equal to $+1$.
The result follows as $\Phi$ respects these coefficients.
\end{proof}
 
\begin{cor}\label{hatty}
Let $A(n,\theta,\kappa)$ and $A(\overline n,\overline \theta,\overline \kappa)$ be two  
 diagrammatic Cherednik algebras. 
  Let $\gamma$ be an $i$-admissible multipartition  and $\overline{\gamma}$   an $\overline i$-admissible multipartition, with $\chi(\gamma) \sim \chi(\overline\gamma)$.  
We have that
\[
d_{\lambda\mu}(t)=d_{\overline\lambda\overline\mu}(t)
\]
for  all $\lambda,\mu\in\Gamma$ and $\overline{\lambda},\overline{\mu}\in\overline\Gamma$.
 We also have that
 \[
 \Ext^j_{A(n,\theta,\kappa)}(\Delta(\lambda),\Delta(\mu))\cong
  \Ext^j_{A(\overline n,\overline \theta,\overline \kappa)}(\Delta(\overline{\lambda}),\Delta(\overline{\mu}))
 \]
 for all $\lambda,\mu\in\Gamma$, $\overline{\lambda},\overline{\mu}\in\overline\Gamma$ and $j\geq 0$.  
\end{cor}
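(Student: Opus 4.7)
The plan is to deduce this corollary directly from \cref{main} together with \cref{3.6rmk}. The first point to note is that everything we need is already essentially present: \cref{main} gives an isomorphism $\Phi \colon A_\Gamma \to A_{\overline\Gamma}$ of graded $\Bbbk$-algebras, and \cref{3.6rmk} tells us that the passage from $A(n+m,\theta,\kappa)$ to the subquotient $A_\Gamma$ preserves the data we care about for $\lambda,\mu\in\Gamma$. So the corollary is really a matter of assembling these two ingredients and checking that $\Phi$ transports standard modules to standard modules.

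For the decomposition numbers, I would argue as follows. By \cref{3.6rmk}, for $\lambda,\mu\in\Gamma$ we have $d_{\lambda\mu}(t)$ computed in $A(n+m,\theta,\kappa)$ equal to the corresponding decomposition number computed in $A_\Gamma$, and similarly on the barred side. The isomorphism $\Phi$ is defined on the cellular basis by $\Phi(c^\lambda_{\SSTS,\SSTT}) = c^{\overline\lambda}_{\overline\SSTS,\overline\SSTT}$; in particular it sends the idempotents $1_\lambda = c^\lambda_{\SSTT^\lambda,\SSTT^\lambda}$ to $1_{\overline\lambda}$, so it respects the cell datum and takes $\Delta(\lambda)$ to $\Delta(\overline\lambda)$ as graded modules, and hence $L(\lambda)$ to $L(\overline\lambda)$. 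The equality $d_{\lambda\mu}(t)=d_{\overline\lambda\overline\mu}(t)$ is immediate.

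For the Ext groups, the same strategy works, using the fact that Ext groups between standard modules (indexed by weights in a cosaturated subset) are preserved both by the quotient by the ideal generated by a cosaturated idempotent and by idempotent truncation to a cosaturated subset. This is precisely the ungraded statement from \cite[Appendix]{Donkin} invoked in the proof of \cref{3.6rmk}; in the graded setting the same proof applies verbatim, using that the relevant spectral sequence / long exact sequence computations only involve modules whose composition factors are indexed by weights in $\Gamma$ (respectively $\overline\Gamma$). Combining this with the graded algebra isomorphism $\Phi$ of \cref{main}, we obtain
\[
\Ext^j_{A(n,\theta,\kappa)}(\Delta(\lambda),\Delta(\mu))
\;\cong\; \Ext^j_{A_\Gamma}(\Delta(\lambda),\Delta(\mu))
\;\cong\; \Ext^j_{A_{\overline\Gamma}}(\Delta(\overline\lambda),\Delta(\overline\mu))
\;\cong\; \Ext^j_{A(\overline n,\overline\theta,\overline\kappa)}(\Delta(\overline\lambda),\Delta(\overline\mu))
\]
as required.

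The proof is essentially bookkeeping once \cref{main} is in hand; I do not expect any real obstacle. The only subtlety worth remarking on is the need to know that higher Ext groups (and not just Hom) are preserved by both the quotient and the idempotent truncation in the graded setting. I would handle this either by citing the graded analogue of \cite[Appendix]{Donkin}, or by noting that the standard argument (standard modules have projective resolutions by modules whose composition factors lie in the relevant poset ideal, so truncation and quotient are exact on such resolutions) carries over to graded cellular algebras with a highest weight theory without change.
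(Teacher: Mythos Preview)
Your proposal is correct and follows essentially the same approach as the paper: invoke the graded $\Bbbk$-algebra isomorphism $\Phi$ of \cref{main} (which respects the cellular structure and hence sends standard modules to standard modules), and then use the preservation of decomposition numbers and higher Ext groups under cosaturated idempotent sub- and quotient constructions as in \cite[Appendix]{Donkin}. The paper's own proof is a one-sentence version of exactly this argument.
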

  
\begin{proof}
The graded decomposition numbers and higher extension groups are preserved under the  isomorphisms in the proof of \cref{main}  and by (graded analogues of) the  results for (co-)saturated idempotent sub- and quotient algebras in  \cite[Appendix]{Donkin}.  
\end{proof}

 \begin{thm}\label{charp}
   Let  $\gamma$  
 be an     $i$-admissible multipartition and suppose that 
 the  $e$-multicharge $\kappa\in(\ZZ/e\ZZ)^l$ contains $i\in\ZZ/e\ZZ$  as a constituent with multiplicity 0 or 1. 
The graded decomposition numbers   of $A(n,\theta,\kappa)$ over $\Bbbk$ can be given in terms of nested sign sequences as follows:
\[
d_{  \lambda\mu}(t)=\sum_{\mathclap{\omega \in \Omega(\lambda,\mu)}} \; t^{\|\omega\|} 
\]
for $\lambda,\mu\in\Gamma$ such that $\mu\trianglelefteq_\theta \lambda$.
\end{thm}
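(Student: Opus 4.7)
The plan is to use Theorem~\ref{perfgh} (via Corollary~\ref{hatty}) to reduce the statement to the level-1 case, then invoke the Tan--Teo formula for the Hecke algebra of the symmetric group, which is already known to hold in arbitrary characteristic.

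First I would translate the hypothesis on $\kappa$ into a condition on $\chi(\gamma)$: since each red $i$-strand forms the base of an $i$-diagonal as a $\mathbf{B}_6$ brick, the condition that $i$ appears in $\kappa$ with multiplicity at most $1$ is equivalent to the sequence $\chi(\gamma)$ containing at most one entry of the form $\pm\mathbf{d}_6^\bullet$.

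Next I would construct an $\overline i$-admissible partition $\overline\gamma$ (that is, $\overline l = 1$) with multicharge $\overline\kappa$ satisfying the same multiplicity condition and with $\chi(\overline\gamma)\sim\chi(\gamma)$. Concretely, take $\overline\kappa=(\overline i)$ if $\chi(\gamma)$ contains a $\mathbf{d}_6^\bullet$ entry, and $\overline\kappa=(j)$ for some $j\neq\overline i$ otherwise; then assemble $\overline\gamma$ as a single partition realising the required sequence of $i$-diagonals, which is always possible in level $1$. By Corollary~\ref{hatty}, the graded decomposition numbers over $\Bbbk$ agree, $d_{\lambda\mu}(t)=d_{\overline\lambda\overline\mu}(t)$, and there is a norm-preserving bijection $\Omega(\lambda,\mu)\leftrightarrow\Omega(\overline\lambda,\overline\mu)$ (both depending only on the common decorated terrain), so it suffices to prove the formula in the level-1 setting for $\overline\gamma$.

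For $\overline\gamma$ a partition, the algebra $A(\overline n,\overline\theta,\overline\kappa)$ is Morita equivalent by \cite[Theorem~3.9]{Webster2} to a (cyclotomic) $q$-Schur algebra, and the hypothesis on $\overline\kappa$ ensures that every multipartition in $\overline\Gamma$ is Kleshchev. Consequently the Schur functor identifies the decomposition numbers indexed by $\overline\Gamma$ with those of $\mathcal{H}_{\overline n}(\overline\kappa)$, so that the corresponding submatrix of the adjustment matrix is trivial. The desired formula then follows by applying \cite[Theorem~4.4]{tanteo13}, which gives precisely the nested sign sequence formula for the Hecke algebra over an arbitrary field, together with Proposition~\ref{3.6rmk}.

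The main obstacle I expect is establishing the triviality of the adjustment matrix on the submatrix indexed by $\overline\Gamma$, i.e., confirming that every $\overline\mu\in\overline\Gamma$ is Kleshchev. The delicate case is $\overline\kappa=(\overline i)$, where one must verify that the $\overline i$-admissibility of $\overline\gamma$ combined with the placement of the unique red $\overline i$-strand prevents any non-Kleshchev partition from appearing in $\overline\Gamma$; without the multiplicity restriction on $\overline\kappa$, non-Kleshchev partitions can arise and the adjustment matrix can become non-trivial, which is precisely why the hypothesis is necessary.
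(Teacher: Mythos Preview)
Your overall strategy --- reduce to level~1 via Theorem~\ref{perfgh}/Corollary~\ref{hatty}, then invoke \cite[Theorem~4.4]{tanteo13} --- is the same as the paper's. However, the step where you ``assemble $\overline\gamma$ as a single partition realising the required sequence of $i$-diagonals, which is always possible in level~1'' is precisely the non-trivial point, and you have not justified it. In level~1, the $\overline i$-diagonals of any partition read from left to right as: those to the left of $\overline\theta_1$, each with base brick $\mathbf{B}_4$; then at most one centred diagonal with base brick $\mathbf{B}_6$; then those to the right of $\overline\theta_1$, each with base brick $\mathbf{B}_5$. Hence the only sequences directly realisable in level~1 have the shape
\[
(\pm\mathbf{d}_4^{\bullet},\ldots,\pm\mathbf{d}_4^{\bullet},\ \pm\mathbf{d}_6^{\bullet},\ \pm\mathbf{d}_5^{\bullet},\ldots,\pm\mathbf{d}_5^{\bullet}),
\]
with the middle term possibly absent. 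An arbitrary higher-level $\chi(\gamma)$ with at most one $\mathbf{d}_6$ entry need not have this shape: the $\mathbf{d}_4$'s and $\mathbf{d}_5$'s can be interleaved, and the $\mathbf{d}_6$ can sit anywhere in the sequence. The paper's key step, which your argument omits, is to apply identification~$(i)$ of Definition~\ref{equivalence}, namely $\pm\mathbf{d}_4^j = \mp\mathbf{d}_5^j$, entry by entry to flip subscripts so that every entry to the left of the unique $\mathbf{d}_6$ (or of some chosen cut point if there is none) becomes a $\mathbf{d}_4$ and every entry to the right becomes a $\mathbf{d}_5$. Only after this rearrangement does a level-1 $\overline\gamma$ with $\chi(\overline\gamma)\sim\chi(\gamma)$ evidently exist, and this is exactly where the hypothesis that $i$ occurs in $\kappa$ with multiplicity at most one (hence at most one $\mathbf{d}_6$) is genuinely used.

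Your final paragraph, routing through the Schur functor, Kleshchev partitions, and adjustment matrices, is a detour the paper does not take: once in level~1 it simply invokes \cite[Theorem~4.4]{tanteo13} directly, with no verification of $e$-regularity on $\overline\Gamma$ required.
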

 
 \begin{proof}
  Suppose that $\chi(\gamma)$ can be broken into 3 parts
 $(i)$ a sequence of $\pm \mathbf{d}_4^j$ for $j=0,2,3$ 
 $(ii)$ either one or zero $\pm \mathbf{d}_6^j$ for $j=0,2,3$ 
 $(iii)$ a sequence of $\pm \mathbf{d}_5^j$ for $j=0,2,3$ 
 in order.  Then there clearly exists a level 1 partition $\overline{\gamma}$
 such that $\chi(\gamma) \sim \chi(\overline\gamma)$ and the result follows from \cref{main} and \cite[Theorem 4.4]{tanteo13}.  

By assumption,  $\chi(\gamma)$ is a sequence
 which 
   has a maximum of one 
 entry  $\pm \mathbf{d}_6^{j}$ for $j=0,2,3$.
 Therefore  one can apply the identification $(i)$ of \cref{equivalence} to swap
 all the subscripts  of entries   $\mathbf{d}_k^{j}$ for $j=0, 2,3$, $k=4,5$ to obtain some 
 $\overline{\chi} \sim\chi(\gamma)$ which has the above form.  Thus, the result follows.
 \end{proof}

%

\begin{eg}
Let $e=3$ and $\kappa=(0)$ and $i=2$. We want to calculate $d_{\lambda\mu}(t)$ for $\mu = (10^2,9^2,8,7^2,6^3,5,4^3,3,2^3,1^2)$ and $\lambda = (10^2,9^2,8^2,7^2,6,5^3,4^2,3,2^2,1^2)$.
By \cref{newbyiui}, we have that 
\[
d_{\lambda\mu}(t) = t^{11} + 2t^9 + 2t^7 + t^5.
\]
\end{eg}
 
\begin{eg}\label{level1ismany}
Let ${e}=3$, $\kappa=(2,1)$, $i=0$, and 
 let  $\theta=(0,1)$, $g=2$ (note that $\theta$ is a FLOTW weighting).  
The bipartition
\[
{\gamma}=((7,5,3,1^2),(5^2,4,2^2,1^2))
\]
is $0$-admissible.
We have that $\chi(\gamma) = (+\mathbf{d}^0_4, +\mathbf{d}^0_4, -\mathbf{d}^0_4,+\mathbf{d}^0_4, +\mathbf{d}^0_4, +\mathbf{d}^0_5, -\mathbf{d}^0_5, -\mathbf{d}^0_5,+\mathbf{d}^0_5,+\mathbf{d}^0_5)  $.  
Let $\overline e=4$, $\overline \kappa=(1)$, $\overline \theta=(0)$ and $\overline i=0$. The partition 
\[
\gamma=(19,18,17^3,16,13,12,11,8^3,7,6,5,2^2)
\]
is $0$-admissible with $\chi(\gamma)=\chi(\overline \gamma)$.  

The algebras $A_{\Gamma}$ and $A_{\overline\Gamma}$ are isomorphic (as graded $\Bbbk$-algebras)  and
 the graded decomposition numbers of the latter may be calculated using \cite[Theorem 4.4]{tanteo13} 
   in terms of nested sequences.  
Given $\lambda=((8,5,3,1^3),(6,5^2,3,2,1^3)),\mu=((7,5,4,2,1^2),(5^3,2^3,1^2))\in \Gamma$  
 we leave it as an exercise for the reader to  show that the unique well-nested path in this case
 is the generic latticed path with norm 11.  Therefore, 
\[
d_{((8,5,3,1^3),(6,5^2,3,2,1^3)),((7,5,4,2,1^2),(5^3,2^3,1^2)) }(t)=
t^{11}
\]
for $A(43, (0,1) , (2,1))$.
 \end{eg}
%
%
%
 
 \begin{eg}\label{magicexample}
Continuing with \cref{schurisomorhishi}, we let 
%
%
%
$\gamma=(30^6,28,20,19^2,15,11,9,7,3^6) \in \mptn 1{326}
$    
and $\overline{\gamma} = (10^4,9,5^4,3^3,1^8) \in \mptn 1{86}$. 
By \cref{schurisomorhishi}, we have that $\chi(\gamma) \sim \chi(\overline\gamma)$.  
The partially ordered sets $\Gamma$ and $\overline\Gamma$ each consist of six  partitions   
and define natural subquotients of the corresponding Schur algebras (as in \cite{Donkin}).  By \cite[Corollary 3.11]{Webster} these algebras are Morita equivalent to the corresponding subquotients $A_{\Gamma}$ and $A_{\overline\Gamma}$ of the diagrammatic Cherednik algebras.

Therefore by \cref{main}, the  subquotients of the classical Schur algebras of degrees $328 $ and  $88$ labelled by the
sets $\Gamma$  and $\overline\Gamma$ are Morita equivalent (to each  another).
\end{eg}

\begin{rmk}
By \cref{main}, one can calculate  decomposition numbers 
for more general $\kappa$ if one puts restrictions on $\theta$; 
see \cref{cref}, below.
\end{rmk}

\begin{eg}\label{cref}
Let $e=5$, $\kappa = (0,0)$, and $\theta\in\RR^2$ denote a FLOTW weighting. 
The bipartition $\gamma = ((10,8,7,5^3,3^3),(5,4,3^5,2,1^2))$ is  $0$-admissible with 
\[
\chi(\gamma) = (+\mathbf{d}_4^0, +\mathbf{d}_4^0, +\mathbf{d}_4^0, +\mathbf{d}_6^2, +\mathbf{d}_6^2, +\mathbf{d}_5^2, +\mathbf{d}_5^0, +\mathbf{d}_5^0).
\]
Now let $e=5$, $\kappa= (1)$ and $\theta=(0)$. If $\overline\gamma = (14,12,11,9,8,5^2,3,2,1^2)$, then
\[
\chi(\overline\gamma) = (+\mathbf{d}_4^0, +\mathbf{d}_4^0, +\mathbf{d}_4^0, +\mathbf{d}_5^2, +\mathbf{d}_5^0, +\mathbf{d}_5^0)
\]
and $\chi(\gamma) \sim \chi(\overline\gamma)$, by identifications $(iv)$ and $(v)$ in  \cref{equivalence}. Thus, we can still use \cref{main} to calculate $d_{\lambda\mu}(t)$ for $\lambda, \mu\in\Gamma$.
For instance, by \cref{hatty}, 
\[
d_{((11,8^2,5^3,3^3),(5,4,3^6,1^2))   ((10,8,7,5^3,3^3),(6,4,3^6,1^3))}(t) = d_{(15,12^2,9,8,5^2,3^2,1^2),(14,12,11,9^2,5^2,3^2,1^3)}(t),
\]
where the decomposition numbers are taken in the relevant algebra. We can now use \cref{charp} to find that
\[
d_{(15,12^2,9,8,5^2,3^2,1^2)(14,12,11,9^2,5^2,3^2,1^3)}(t) = t^5.
\]
\end{eg}

\section{Tensor product factorisation for non-adjacent residues }\label{sec5}

Throughout this paper, we have constructed isomorphisms between subquotient algebras
 corresponding to   subsets  of $\mptn l n$ consisting of  multipartitions which differ by moving nodes of a single, fixed residue.  
 In this section, we generalise these results to subsets of   multipartitions which differ by moving nodes of  many distinct residues, as long as these residues are nonadjacent.  
 We prove that one can factorise these algebras as a tensor product of the smaller, single residue subalgebras.  This lifts results of \cite{ct16}  to an isomorphism of algebras which holds over fields of arbitrary characteristic. In particular, we obtain the graded decomposition numbers of these algebras as products of the graded  decomposition numbers of the smaller algebras (over arbitrary fields).  

We suppose that $\mathcal M$ is a multiset of residues from  an adjacency-free residue set $S\subset I$ as in \cref{subqsec}, and let $\mathcal{M}=\mathcal{M}_0 \cup \mathcal{M}_1 \cup \dots \cup \mathcal{M}_{e-1} $ denote the disjoint decomposition of the multiset $\mathcal{M}$ into distinct residues; we let $m_{r}=|\mathcal{M}_r|$ for $0\leq r \leq e-1$. Note that since $S$ is adjacency-free, some of these multisets are empty.
We let $\Gamma=\Gamma(\mathcal M)$ and ${\Gamma}_r := \Gamma(\mathcal{M}_r$) for $0\leq r\leq e-1$.

\begin{lem}
For  an adjacency-free set $\mathcal{M}$, we have a bijection
\[
\psi : \Gamma \longrightarrow \Gamma_0 \times  \Gamma_1 \times \dots \times  \Gamma_{e-1}
\]
which is given by $\psi(\lambda)= \psi_0(\lambda) \times  \psi_1(\lambda) \times \dots \times \psi_{e-1}(\lambda)$ where $\psi_i(\lambda) $ is obtained from $\lambda$ by deleting all nodes of $\lambda\setminus\gamma$ whose residue  is not equal to $i\in \ZZ/e\ZZ$.
\end{lem}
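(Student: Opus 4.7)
The plan is to show $\psi$ is a well-defined set map and to construct an explicit inverse. Well-definedness amounts to showing that for each $0 \leq i \leq e-1$, the configuration $\psi_i(\lambda)$ is a valid multipartition whose added nodes (relative to $\gamma$) are exactly the $i$-nodes of $\lambda \setminus \gamma$, and hence $\psi_i(\lambda) \in \Gamma_i$. The second half of this assertion is immediate from the definition, since by construction we keep exactly the added $i$-nodes and there are $m_i$ of them.

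The key step, therefore, is to verify that $\psi_i(\lambda)$ is a Young diagram. Take $(r,c,k) \in \psi_i(\lambda)$; if it lies in $\gamma$, then $(r-1,c,k)$ and $(r,c-1,k)$ (when they exist) automatically lie in $\gamma \subseteq \psi_i(\lambda)$. If instead $(r,c,k)$ is an added $i$-node, the fact that $\lambda$ is a multipartition places $(r-1,c,k)$ and $(r,c-1,k)$ inside $\lambda$; their residues are $i+1$ and $i-1$ respectively. Here the adjacency-free hypothesis is used decisively: since $i \in S$, we have $i\pm 1 \notin S$, so neither of these nodes can belong to $\mathcal{M} \subseteq S$. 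Consequently they must already belong to $\gamma$, and so they survive in $\psi_i(\lambda)$. This is the main obstacle — once it is dispatched, everything else is routine.

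Injectivity of $\psi$ is immediate: knowing the $i$-nodes of $\lambda \setminus \gamma$ for every $i$ determines $\lambda$ completely. For surjectivity, given a tuple $(\lambda_0, \ldots, \lambda_{e-1}) \in \Gamma_0 \times \cdots \times \Gamma_{e-1}$, set $A_i := \lambda_i \setminus \gamma$ (a set of $m_i$ $i$-nodes) and define
\[
\lambda := \gamma \cup \bigcup_{i=0}^{e-1} A_i.
\]
To see $\lambda$ is a genuine multipartition, repeat the argument of the previous paragraph: for any $(r,c,k) \in A_i$ the Young-diagram condition applied to $\lambda_i$ forces $(r-1,c,k)$ and $(r,c-1,k)$ (which have residues $i\pm 1 \notin S$) to lie in $\gamma$, so they lie in $\lambda$. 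The multiset of residues of $\lambda \setminus \gamma$ is $\bigsqcup_i \mathcal{M}_i = \mathcal{M}$, whence $\lambda \in \Gamma$, and by construction $\psi_i(\lambda) = \lambda_i$ for each $i$. This establishes the bijection.
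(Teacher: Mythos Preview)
Your proof is correct and follows essentially the same approach as the paper. The paper's proof is a one-line observation that the adjacency-free condition forces any two nodes of $\lambda\setminus\gamma$ to lie in different rows and different columns; your argument unpacks this by showing that the predecessors $(r-1,c,k)$ and $(r,c-1,k)$ of any added node have residues $i\pm1\notin S$ and hence already lie in $\gamma$, which is precisely the content behind the paper's assertion.
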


\begin{proof}
The adjacency-free condition ensures that no two nodes in $\lambda\setminus \gamma$ appear in the same row or column for any $\lambda \in \Gamma$. The result follows.
\end{proof}

\begin{prop}\label{tensordecomp}
For  an adjacency-free set  $\mathcal{M}$, we have a bijection
\[
\psi :\SStd(\lambda,\mu) \longrightarrow \prod_{r=0}^{e-1} \SStd(\psi_r(\lambda),\psi_r(\mu))
\]
for $\lambda,\mu\in \Gamma$.    
This is given by setting $\psi (\SSTT) = \psi_0 (\SSTT)\times \psi_1 (\SSTT) \times \dots \times \psi_{e-1}  (\SSTT)$ where $\psi_i(\SSTT)$ is simply obtained by restriction of the domain, $\psi_i(\SSTT) = \SSTT{\downarrow}_{\psi_i(\lambda)} $.
This lifts to a graded vector space isomorphism over $\Bbbk$, given by
\[
\Psi: A_\Gamma(\calm, \theta  ) \to 
 A_{\Gamma_0}(\calm_0, \theta )
  \otimes_\Bbbk \dots \otimes_\Bbbk 
 A_{\Gamma_{e-1}}(\calm_{e-1}, \theta )
\]
where
\[
\Psi (C_{\SSTS\SSTT}) = C_{\psi_0(\SSTS)\psi_0(\SSTT)}  \otimes
 C_{\psi_1(\SSTS)\psi_1(\SSTT)} \otimes \dots \otimes 
C_{\psi_{e-1}(\SSTS)\psi_{e-1}(\SSTT)}.
\]
\end{prop}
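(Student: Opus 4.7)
My plan is to prove the proposition in three steps: (1) verify that the tableau-level map $\psi$ is a well-defined bijection; (2) lift this to a vector-space isomorphism via the cellular basis of \cref{3.6rmk}; and (3) check that the isomorphism is degree-preserving.

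For Step (1), I would start by checking that for $\SSTT\in\SStd(\lambda,\mu)$ the restriction $\psi_r(\SSTT)=\SSTT\downarrow_{\psi_r(\lambda)}$ is a valid element of $\SStd(\psi_r(\lambda),\psi_r(\mu))$.  The residue-respecting property is inherited automatically.  The key point in verifying the semistandard inequalities of \cref{sdfjhkhjkfhjdsgfhjkhjkhjgkfjshkdkjhfgkjhgdfs} is that any added residue-$r$ node of $\lambda$ has parents (in the row above or column to the left) whose residues differ from $r$ by $\pm 1$; by adjacency-freeness these parents cannot themselves be added nodes of any residue in $\mathcal{M}$, so they must lie in $[\gamma]\subseteq[\psi_r(\lambda)]$.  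Hence the predecessor-closure required of $[\psi_r(\lambda)]$ holds, and the inequalities for $\psi_r(\SSTT)$ descend from those for $\SSTT$.  Surjectivity onto $D_{\psi_r(\mu)}$ follows by residue counting, since $\lambda$ and $\mu$ share the same residue multiset.  The inverse map takes a compatible tuple $(\SSTT_r)_{r}$, all of which agree on $[\gamma]$, and pastes them into a single bijection $[\lambda]\to D_\mu$; adjacency-freeness again ensures the pasted map satisfies every semistandard inequality globally, because the two inequalities at each node only involve nodes of residue $r$, $r\pm 1$.

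For Step (2), the graded cellular bases of $A_\Gamma(\calm,\theta)$ and of each $A_{\Gamma_r}(\calm_r,\theta)$, provided by \cref{3.6rmk}, are indexed by pairs of semistandard tableaux whose shape lies in $\Gamma$ (respectively $\Gamma_r$).  The tableau bijection of Step (1) therefore transports to a bijection of basis elements, and the $\Bbbk$-linear extension $\Psi$ sending $C_{\SSTS\SSTT}\mapsto\bigotimes_r C_{\psi_r(\SSTS),\psi_r(\SSTT)}$ is immediately a $\Bbbk$-vector space isomorphism.

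The main obstacle lies in Step (3): verifying $\deg C_{\SSTS,\SSTT}=\sum_r\deg C_{\psi_r(\SSTS),\psi_r(\SSTT)}$.  Choose each diagram $B_\SSTT$ so that every strand corresponding to a node of $[\gamma]$ is drawn vertically (which is possible because $\SSTT$ acts as the identity on $[\gamma]$, as a consequence of residue counting together with semistandardness); such strands contribute nothing to the degree.  The remaining degree contributions of $B_\SSTS B^\ast_\SSTT$ thus split into three classes: (a) crossings, dots, and ghost/red interactions among added residue-$r$ strands for a single $r\in\mathcal{M}$; (b) crossings between added residue-$r$ strands and the vertical strands of $[\gamma]$ (including the red and ghost strands); (c) crossings between added residue-$r$ and residue-$r'$ strands for distinct $r,r'\in\mathcal{M}$.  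Classes (a) and (b) are common to $A_\Gamma$ and $A_{\Gamma_r}$ and together account for $\deg C_{\psi_r(\SSTS),\psi_r(\SSTT)}$.  Class (c) contributions, which exist only in $A_\Gamma$ and have no analogue in the tensor product, all vanish: since $\calm\subseteq S$ is adjacency-free we have $|r-r'|>1$, and inspection of the degree table in \cref{grsubsec} shows that every type of crossing (strand--strand, strand--ghost) between two strands whose residues differ by more than $1$ has degree $0$.  Summing over residues yields the required degree identity and completes the proof.
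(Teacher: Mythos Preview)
Your proposal is correct and follows essentially the same approach as the paper's own proof. The paper is considerably terser: for Step~(1) it first records a simplified description of $\SStd(\lambda,\mu)$ in this setting (identity on $[\gamma]$ and a residue-preserving, rightward-moving bijection on the added nodes), then notes that adjacency-freeness forces added nodes of distinct residues to lie in distinct rows and columns, from which the bijection is immediate via \cref{sdfjhkhjkfhjdsgfhjkhjkhjgkfjshkdkjhfgkjhgdfs}; your more explicit verification that the semistandard inequalities descend and reassemble is equivalent. For Step~(3) the paper argues, exactly as you do, that crossings between $i$- and $j$-strands (or their ghosts) for distinct $i,j\in S$ contribute zero to the degree by adjacency-freeness.
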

 
\begin{proof}
Given $\lambda,\mu \in \Gamma_m$, 
 the set $\SStd(\lambda,\mu)$ is particularly simple to describe.  Namely, 
  $  \SStd(\lambda,\mu)$ consists of the bijective  residue-preserving maps such that
 \begin{itemize}
    \item 
    $\SSTT (r,c,k) = \mathbf{i}_{(r,c,k)}$ 
    for $(r,c,k) \in [\gamma]$;  
 \item   
    $\SSTT(r,c,k) =  \mathbf{i}_{(r',c',k')}$ 
    for $(r,c,k) \in [\lambda \setminus \gamma]$, 
   $(r',c',k') \in   [\mu \setminus \gamma]$ 
    such that 
$\mathbf{i}_{(r',c',k')}\geq \mathbf{i}_{(r,c,k)}$.
\end{itemize}
The adjacency-free condition ensures that no two nodes in $\lambda\setminus \gamma$ (or $\mu\setminus \gamma$)  for any $\lambda,\mu \in \Gamma$ appear in the same row or column.
It follows from \cref{sdfjhkhjkfhjdsgfhjkhjkhjgkfjshkdkjhfgkjhgdfs} that the map $\Psi$ is a bijection.  
 
As the bases are in bijection, the map $\Psi$ is clearly a vector space isomorphism.
The non-adjacency condition ensures that for distinct residues $i,j \in S$, the crossings of $i$-strands (or their ghosts) with $j$-strands (or their ghosts) do not provide any non-zero contributions to the grading and so the grading is preserved.
\end{proof}

\begin{eg}\label{runner}Let $e=4$, $\gamma=(\varnothing, \ldots ,\varnothing)\in \mptn 70$,  ${\kappa} = (3,1,3,3,3,1,3)$,   $\mathcal{M} =\{1^1,3^3\}$, $g=0.99$ and $\theta =(-3,-1,1,3,5,9,11) $.  
There are $2\times {5\choose 3}=20$ simple modules for this algebra. 
Consider the    space $\Delta_\mu(\lambda)$ for 
$\lambda=((1),(1),\varnothing,(1),(1),\varnothing,\varnothing)$ and $\mu=(\varnothing,\varnothing,\varnothing,(1),(1),(1),(1))$.  
We have that $\SStd(\lambda,\mu)$ has two elements, $\SSTS$ and $\SSTT$, of degrees 2 and 4, respectively.
Therefore $\Dim{(\Delta_\mu(\lambda))}=t^4+t^2$.  The element $B_\SSTS $ of degree $2$ is pictured in Figure \ref{apicforrunner}, below. 

\begin{figure}[ht!]  \[    \scalefont{0.7}    \begin{tikzpicture}[scale=0.9] 
  \draw (-2,0) rectangle (6,2);
   \foreach \x in {-1.5,-0.5,0.5,1.5,2.5,4.5,5.5}  
     {\draw[wei2] (\x,0)--(\x,2); }
     \node [wei2,below] at (-0.5,0)  {\tiny $1$};
 
          \node [wei2,below] at (4.5,0)  {\tiny $1$};
          \node [wei2,below] at (-1.5,0)  {\tiny $3$};
          \node [wei2,below] at (1.5,0)  {\tiny $3$};
                    \node [wei2,below] at (0.5,0)  {\tiny $3$};
                    \node [wei2,below] at (2.5,0)  {\tiny $3$};
          \node [wei2,below] at (5.5,0)  {\tiny $3$};                                        
   \draw (0.15-0.5,0)  to [out=60,in=-100] (4.65,2) ; 
    \draw (-1.35,0)  to [out=70,in=-110] (0.65,2) ; 
      \draw (1.65,0)  to [out=90,in=-90] (2.65,2) ; 
      \draw (2.65,0)  to [out=70,in=-90] (5.65,2) ;   
        \draw[dashed] (-0.4+0.15-0.5,0)  to [out=90,in=-120] (-0.4+4.65,2) ; 
    \draw[dashed] (-0.4+-1.35,0)  to [out=90,in=-110] (-0.4+0.65,2) ; 
      \draw[dashed] (-0.4+1.65,0)  to [out=90,in=-90] (-0.4+2.65,2) ; 
      \draw[dashed] (-0.4+2.65,0)  to [out=70,in=-90] (-0.4+5.65,2) ;   
  \end{tikzpicture}
\]
\caption{
The basis element $B_\SSTS$ in Example \ref{runner}.
}
\label{apicforrunner}
\end{figure}

Under the graded vector space isomorphism in \cref{tensordecomp}, $B_\SSTS$ maps to the tensor product of diagrams depicted in \cref{apicforrunner2}.
\begin{figure}[ht!]
\[
\begin{minipage}{74mm}\scalefont{0.7}
 \begin{tikzpicture}[scale=0.9] 
  \draw (-2,0) rectangle (6,2);
    \foreach \x in {-1.5,-0.5,0.5,1.5,2.5,4.5,5.5}  
     {\draw[wei2] (\x,0)--(\x,2); }
     \node [wei2,below] at (-0.5,0)  {\tiny $1$};
  \node [white,above] at (4.5,2)  {\tiny $1$};
          \node [wei2,below] at (4.5,0)  {\tiny $1$};
          \node [wei2,below] at (-1.5,0)  {\tiny $3$};
          \node [wei2,below] at (1.5,0)  {\tiny $3$};
                    \node [wei2,below] at (0.5,0)  {\tiny $3$};
                    \node [wei2,below] at (2.5,0)  {\tiny $3$};
          \node [wei2,below] at (5.5,0)  {\tiny $3$};                                        
   \draw (0.15-0.5,0)  to [out=60,in=-100] (4.65,2) ; 
          \draw[dashed] (-0.4+0.15-0.5,0)  to [out=90,in=-120] (-0.4+4.65,2) ; 
   \end{tikzpicture}
  \end{minipage} 
  \otimes \ \ 
  \begin{minipage}{85mm}
 \begin{tikzpicture}[scale=0.9] 
  \draw (-2,0) rectangle (6,2);
   \node [white,above] at (4.5,2)  {\tiny $1$};   \foreach \x in {-1.5,-0.5,0.5,1.5,2.5,4.5,5.5}  
     {\draw[wei2] (\x,0)--(\x,2); }
     \node [wei2,below] at (-0.5,0)  {\tiny $1$};
 
          \node [wei2,below] at (4.5,0)  {\tiny $1$};
          \node [wei2,below] at (-1.5,0)  {\tiny $3$};
          \node [wei2,below] at (1.5,0)  {\tiny $3$};
                    \node [wei2,below] at (0.5,0)  {\tiny $3$};
                    \node [wei2,below] at (2.5,0)  {\tiny $3$};
          \node [wei2,below] at (5.5,0)  {\tiny $3$};                                        
                                   
     \draw (-1.35,0)  to [out=70,in=-110] (0.65,2) ; 
      \draw (1.65,0)  to [out=90,in=-90] (2.65,2) ; 
      \draw (2.65,0)  to [out=70,in=-90] (5.65,2) ;   
     \draw[dashed] (-0.4+-1.35,0)  to [out=90,in=-110] (-0.4+0.65,2) ; 
      \draw[dashed] (-0.4+1.65,0)  to [out=90,in=-90] (-0.4+2.65,2) ; 
      \draw[dashed] (-0.4+2.65,0)  to [out=70,in=-90] (-0.4+5.65,2) ;  
  \end{tikzpicture}  
    \end{minipage} 
\]
\caption{Decomposing the diagram in \cref{apicforrunner} as a tensor product.}
\label{apicforrunner2}
\end{figure}
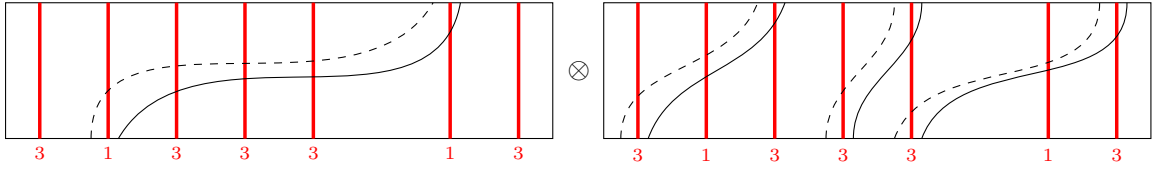
\end{eg}

%

\begin{thm}\label{tensordecomp2}
If $\mathcal{M}$ is adjacency-free, then
\[
A_\Gamma(\calm, \theta  )
 \cong 
 A_{\Gamma_0}(\calm_0, \theta )
  \otimes_\Bbbk \dots \otimes_\Bbbk 
  A_{\Gamma_{e-1}}(\calm_{e-1}, \theta )
 \]
 as  graded $\Bbbk$-algebras. This isomorphism is given by $\Psi$  from \cref{tensordecomp}.  
\end{thm}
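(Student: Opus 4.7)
The plan is to leverage \cref{tensordecomp}, which already exhibits $\Psi$ as a graded vector space isomorphism, so that it only remains to verify that $\Psi$ preserves products of cellular basis elements:
\[
\Psi(C_{\SSTS\SSTT}\cdot C_{\SSTU\SSTV}) = \Psi(C_{\SSTS\SSTT})\cdot \Psi(C_{\SSTU\SSTV}).
\]
By axiom (6) of \cref{defn1}, both sides vanish unless the weights of $\SSTT$ and $\SSTU$ agree, so I may from the outset assume that the products under consideration are \emph{a priori} nonzero.

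The crux of the argument is that the adjacency-free hypothesis on $S \supseteq \mathcal{M}$ forces strands of distinct residues in $\mathcal{M}$ to behave independently inside $A_\Gamma$. Indeed, if $r, s \in S$ are distinct, then $|r - s| \geq 2$, so every interaction between an $r$-labelled strand (or its ghost) and an $s$-labelled strand (or its ghost) is governed by relations \ref{rel2}, \ref{rel5} and \ref{rel8} alone. Each of these is a pure isotopy with no error term, and moreover, by the grading conventions in \cref{grsubsec}, every such crossing has degree zero.

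I would then show by isotopy that within any product of cellular basis elements, the subdiagram on strands (and ghosts) of residue $r$ may be separated from the subdiagram on strands of residue $s$ for $s\neq r$ in $S$, interacting only through the common (vertical) $\gamma$-strands and red strands. Concretely, one stacks the diagrams $B_\SSTS B_\SSTT^\ast B_\SSTU B_\SSTV^\ast$ representing $C_{\SSTS\SSTT} \cdot C_{\SSTU\SSTV}$, and using the observations above, isotopes all $r$-strands and their ghosts into a single spatial block for each $r\in\ZZ/e\ZZ$. Within the $r$-block, the diagram computes precisely the product $C_{\psi_r(\SSTS)\psi_r(\SSTT)} \cdot C_{\psi_r(\SSTU)\psi_r(\SSTV)}$ in the tensor factor $A_{\Gamma_r}(\mathcal{M}_r,\theta)$, since at every horizontal slice the loading restricted to $\gamma$ and the $r$-nodes is exactly the loading used to define $A_{\Gamma_r}$.

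The main delicacy I expect is to ensure that this isotopic rearrangement never causes the stacked diagram to factor through a loading which is killed in $A_\Gamma$ (via \cref{215}, \cref{remove}, or the unsteadiness relation \ref{rel15}) but which survives in the tensor product of smaller algebras, or vice versa. However, since the sorting only interchanges blocks of mutually non-adjacent residue and leaves the $\gamma$-strands vertical, any intermediate loading in the big diagram dominates $\mathbf{i}_{\gamma^+}$ if and only if its restriction to some single residue does so in the corresponding $A_{\Gamma_r}$; a parallel argument handles unsteadiness. Combining this residue-wise identification with the vector-space isomorphism of \cref{tensordecomp} yields the desired equality, completing the proof.
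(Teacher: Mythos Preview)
Your proposal correctly identifies the starting observation --- that strands of distinct residues $r, s \in S$ pass through one another freely --- but there is a genuine gap at the step where you assert that ``within the $r$-block, the diagram computes precisely the product $C_{\psi_r(\SSTS)\psi_r(\SSTT)} \cdot C_{\psi_r(\SSTU)\psi_r(\SSTV)}$''. Computing that product means rewriting the stacked diagram as a $\Bbbk$-linear combination of cellular basis elements, and that rewriting is \emph{not} a pure isotopy: it requires applying relations \ref{rel6}, \ref{rel7}, \ref{rel9}, \ref{rel10} to the interactions between the $r$-strands and the $(r\pm1)$-strands lying in the $r$-diagonals of $\gamma$. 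Each such relation produces error terms in which $\gamma$-strands are, a priori, displaced from their vertical positions. In $A_\Gamma$ a displaced $(r+1)$-strand of $\gamma$ can then interact nontrivially with an $s$-strand whenever $s = r+2 \in S$ (a black $s$-strand crossing an $(r+1)$-ghost has degree $1$ and is governed by \ref{rel6}), and nothing in your sketch rules this out. The trivial commutation you invoke applies only between strands whose residues both lie in $S$; it says nothing about $s$-strands versus $(r\pm1)$-strands of $\gamma$.

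The paper closes exactly this gap by appealing to \cref{lem:crossingthroughdiagonal,lem:dotthroughdiagonal,2crossB1247,2crossB1247b,2crossB1247backwards}. Those propositions are precisely the statement that, when one pulls an $i$-strand (or $i$-crossing, or dotted $i$-strand) through an $i$-diagonal of $\gamma$ \emph{inside the subquotient} $A_\Gamma$, every surviving term in the resulting linear combination still has all $\gamma$-strands vertical --- the would-be terms in which $\gamma$-strands move are killed by \cref{move3,move2.5}. This is what permits the paper to left-justify all $i$-strands, observe that the $\gamma$-strands remain untouched, then pass to the next residue independently, and thereby factor the structure constants as $r_{\sf W,X} = \prod_i r^i_{\sf W,X}$. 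Your argument would need to import this machinery to go through; the purely isotopic separation into blocks is not enough.
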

 
\begin{proof}
By Proposition \ref{tensordecomp} we need only check that  
\begin{align*}
\Psi(C_{\SSTS\SSTT}C_{\SSTU\SSTV}) &= 
\Psi\big(\sum_{\sf W,X}r_{\sf W,X} C_{\sf W,X } \big)\\
&=
 \sum_{\sf W,X}
 r_{\sf W,X} ( C_{
 \psi_0 ({\sf  W}) \psi_0 (\sf X  )} \otimes \dots \otimes C_{
 \psi_{e-1} ({\sf  W}) \otimes \psi_{e-1} (\sf X  )}
 )\\
&=
 \sum_{\sf W,X}
 ( r^0_{\sf W,X} C_{
 \psi_0 ({\sf  W}) \psi_0 (\sf X  )}) \otimes \dots\otimes
 ( r^{e-1}_{\sf W,X} C_{
 \psi_{e-1} ({\sf  W}) \psi_{e-1} (\sf X  )}
 )\\
  &=
\Psi(C_{\SSTS\SSTT})\Psi(C_{\SSTU\SSTV})
\end{align*}
where 
$ r_{\sf W,X} = \prod_{0\leq i \leq e-1} r^i_{\sf W,X}  \in \Bbbk$ and 
the sum is over tableaux ${\sf W, X}$ 
 whose shape and weight are multipartitions belonging to $\Gamma$.  

Given a strand $A$ in a diagram $D$, we say that the strand  is left-justifiable 
if, upon  applying one of the relations \ref{rel1} to \ref{rel15} to a local neighbourhood of $A$, we can move the strand $A$ further to the left.  Otherwise, we say that the strand $A$ is left-justified.  
  In \cite[Proof of Lemmas 2.5 and 2.20]{Webster}, Webster shows that the process of left-justifying  every strand in $D$ eventually terminates, and that the result is a  linear combination of the basis elements from $\mathcal{C}$.  
  

Therefore, by left-justifying every strand in the diagram 
$C_{\SSTS\SSTT}C_{\SSTU\SSTV}$ we can rewrite this  product  as a linear    combination  of basis elements of $A_{\Gamma}(\calm, \theta  )$.  
Similarly, by  left-justifying every strand in each of  the diagrams 
$C_{\psi_i(\SSTS)\psi_i(\SSTT)} C_{\psi_i(\SSTU)\psi_i(\SSTV)}  $ for $0\leq i \leq e-1$ we shall obtain a linear combination of basis elements of 
$ A_{\Gamma_0}(\calm_0, \theta )
  \otimes_\Bbbk \dots \otimes_\Bbbk 
  A_{\Gamma_{e-1}}(\calm_{e-1}, \theta )$.  
  
 We shall proceed one tensor  component at a time.  
 Given a strand, $A$, of residue $i\in \ZZ/e\ZZ$ in $C_{\SSTS\SSTT}C_{\SSTU\SSTV}$,
 there is a corresponding strand, $\Psi_i(A)$,  of residue $i\in \ZZ/e\ZZ$ in 
 $C_{\psi_i(\SSTS)\psi_i(\SSTT)} C_{\psi_i(\SSTU)\psi_r(\SSTV)}  $ 
 which has the  same northern and southern  terminating points.  We shall say that these  strands are paired.  Similarly, for any fixed  $0\leq i \leq e-1$ and any 
  given  node of $\gamma$, we have  corresponding vertical strands in  each of the diagrams 
 $C_{\SSTS\SSTT}C_{\SSTU\SSTV}$ and 
 $C_{\psi_i(\SSTS)\psi_i(\SSTT)} C_{\psi_i(\SSTU)\psi_i(\SSTV)}  $.
  We shall say that these  strands are paired, and denote them by $A$ and $\Psi_i(A)$ as before.  
We shall proceed one tensor  component at a time.   
When considering the $i$th  component, we shall proceed by    applying 
 local relations in unison  to a    neighbourhood  of $A$ and the corresponding neighbourhood of $\Psi_i(A)$.  
 Of course, there are strands in  $C_{\SSTS\SSTT}C_{\SSTU\SSTV}$ (of residue not equal to $i\in \ZZ/e\ZZ$) which do not have counterparts in 
  $C_{\psi_i(\SSTS)\psi_i(\SSTT)} C_{\psi_i(\SSTU)\psi_i(\SSTV)}  $.
  We shall address this problem separately below.

If $i\in \ZZ/e\ZZ$ is such that $m_i=0$, then we have that $r^i_{\sf WX}=1$ 
if $\SSTT^\gamma=\psi_i(\SSTS)=\psi_i(\SSTT)=\psi_i(\SSTU)=\psi_i(\SSTV)=\sf W =\sf X$, and 0 otherwise.  
 This is simply because 
$C_{\psi_i(\SSTS)\psi_i(\SSTT)} C_{\psi_i(\SSTU)\psi_i(\SSTV)}= 1_\gamma 1_\gamma$, and all strands in the diagram are already left-justified (in particular $1_\gamma$ is itself  a basis element).  
Now, pick any $i \in \ZZ/e\ZZ$ such that $m_{i}\neq 0$.  
 Pick   a left-justifiable  $i$-strand, $A$, in the diagram $  C_{\SSTS  \SSTT}C_{\SSTU \SSTV} $, then the paired 
 strand $\Psi_i(A)$ in the diagram 
$C_{\psi_i(\SSTS)\psi_i(\SSTT)} C_{\psi_i(\SSTU)\psi_i(\SSTV)}  $ 
 is also left-justifiable (because the only non-trivial relations are between strands of adjacent residues, which are common to both diagrams).  
 We   pull these strand to the left in unison.  Along the way we shall deal with 
 \begin{itemize} 
  \item[$(i)$]    neighbourhoods in which $A$ encounters a strand in  $C_{\SSTS\SSTT}C_{\SSTU\SSTV}$ which is not paired with any strand in  $C_{\psi_i(\SSTS)\psi_i(\SSTT)} C_{\psi_i(\SSTU)\psi_i(\SSTV)}  $;
   \item[$(ii)$]      $j$-diagonals of $\gamma$ which are common  to both diagrams for $j\neq i,i\pm 1$;  
 \item[$(iii)$]      $i$-diagonals common to both diagrams.  
 \end{itemize}
 (Note that the multipartition $\gamma$ is a union of the nodes in its  $i$-diagonals  and its 
  $j$-diagonals for $j\neq i,i\pm 1$, so this list is exhaustive.)

In case $(i)$, we note that these strands are all of residue not equal to $i-1$, $i$, or $i+1$.  Therefore, 
we   apply  relations \ref{rel5} and \ref{rel8} to   pull   $A$ through the $j$-strand  of the diagram in 
 $  A_{\Gamma }(\calm , \theta )$
    or simply apply relation \ref{rel1}
to pull
 $\Psi_i(A)$ through  the corresponding empty neighbourhood of the diagram in 
    $A_{\Gamma_i}(\calm_i, \theta )$, without cost.  
    
    Now, for $(ii)$ and $(iii)$ it is clear that pulling    $A$ through the neighbourhood of the diagram in 
 $  A_{\Gamma }(\calm , \theta )$ and 
 $\Psi_i(A)$ through the corresponding neighbourhood of the diagram in  $A_{\Gamma_i}(\calm_i, \theta )$ 
 we obtain the same linear combination of diagrams in both cases.  
 This is simply because the diagrams are locally identical!    However, we must also note the following extra information, 
 \begin{itemize}
\item  in  case $(ii)$, we can apply relations \ref{rel5} and \ref{rel8} to pull $A$ through the  
 $j$-diagonal  in 
  the diagram in 
 $  A_{\Gamma }(\calm , \theta )$ and 
 $\Psi_i(A)$ through the 
 corresponding  $j$-diagonal in the     diagram in 
 $A_{\Gamma_i}(\calm_i, \theta )$, without cost.   
 \item in case $(iii)$, we can apply  \cref{lem:dotthroughdiagonal,lem:crossingthroughdiagonal,2crossB1247,2crossB1247b,2crossB1247backwards}
 to pull $A$
  through the  $i$-diagonal 
   of 
 $  A_{\Gamma }(\calm , \theta )$ 
  (respectively  
 $\Psi_i(A)$ through the
corresponding    $i$-diagonal    in the  diagram in  
 $A_{\Gamma_i}(\calm_i, \theta )$)  
 to obtain a linear combination of  diagrams 
   which  differ  from the original diagram \emph{only} in the position and decorations of  strands  which do not correspond to nodes in $\gamma$.  
\end{itemize}
In particular, in both cases all the strands labelled by nodes of $\gamma$ remain exactly as before.  
 The up-shot of this is that we can left-justify every single $i$-strand 
 in  $  A_{\Gamma }(\calm , \theta )$
    and its paired $i$-strand 
in 
    $A_{\Gamma_i}(\calm_i, \theta )$ to obtain identical linear combinations 
%
     of diagrams 
{\em and} we can do this   without affecting any 
 strands labelled by  
  nodes    
of $\gamma$.  
Therefore the strands   labelled by  
  nodes   of  $\gamma$ continue to be  vertical lines  in the configuration of a multipartition 
 and so are  left-justified.  Therefore, the above process terminates with an element 
 $$
  \sum_{\sf W,X}
  r^{i}_{\sf W,X} C_{
 \psi_{i} ({\sf  W})   \psi_{i} (\sf X  )} \in A_{\Gamma_i }(\calm_i , \theta )
$$
and a corresponding element of $ A_{\Gamma }(\calm , \theta )$ (with the same coefficients, but with diagrams which are not yet basis elements, as we must still consider the other residues $j\neq i$ for $j \in \ZZ/e\ZZ$).    We remark that at this point we know only that ${\sf W,X}$ must exist (simply because we can rewrite any product as a linear combination  of basis elements) but we have only determined  
 the semistandard tableaux $ \psi_{i} ({\sf  W})$ and $\psi_{i} (\sf X  ) $.

Continuing in this fashion through all the residues $i \in \ZZ/e\ZZ$   we obtain,
\begin{align*} 
\Psi\big(\sum_{\sf W,X}(r^0_{\sf W,X}\times \dots \times r^{e-1}_{\sf W,X}) C_{\sf W,X } \big)
&=
 \sum_{\sf W,X}
 ( r^0_{\sf W,X} C_{
 \psi_0 ({\sf  W}) \psi_0 (\sf X  )}) \otimes \dots\otimes
 ( r^{e-1}_{\sf W,X} C_{
 \psi_{e-1} ({\sf  W})   \psi_{e-1} (\sf X  )}
 )  \end{align*}
and setting   $r_{\sf W, X}=  r^0_{\sf W,X}\times \dots \times r^{e-1}_{\sf W,X} $ we obtain the required result.  
\end{proof}
%

\bibliographystyle{amsalpha}
\bibliography{master}

\providecommand{\bysame}{\leavevmode\hbox to3em{\hrulefill}\thinspace}
\providecommand{\MR}{\relax\ifhmode\unskip\space\fi MR }
\providecommand{\MRhref}[2]{%
  \href{http://www.ams.org/mathscinet-getitem?mr=#1}{#2}
}
\providecommand{\href}[2]{#2}
\begin{thebibliography}{RSVVar}

\bibitem[Ari96]{ariki96}
S.~Ariki, \emph{\href{http://projecteuclid.org/euclid.kjm/1250518452}{On the
  decomposition numbers of the {Hecke} algebra of {$G(m,1,n)$}}}, J.\ Math.\
  Kyoto Univ. \textbf{36} (1996), no.~4, 789--808.

\bibitem[BCS]{bcs15}
C.~Bowman, A.~Cox, and L.~Speyer,
  \emph{\href{http://dx.doi.org/10.1093/imrn/rnw101}{A family of graded
  decomposition numbers for diagrammatic {Cherednik} algebras}}, Int.\ Math.\
  Res.\ Not.\ IMRN (to appear).

\bibitem[CMT08]{cmt08}
J.~Chuang, H.~Miyachi, and K.~M. Tan,
  \emph{\href{http://dx.doi.org/10.1090/S0002-9947-07-04202-X}{Kleshchev's
  decomposition numbers and branching coefficients in the {Fock} space}},
  Trans. Amer. Math. Soc. \textbf{360} (2008), no.~3, 1179--1191 (electronic).

\bibitem[CT16]{ct16}
J.~Chuang and K.~M. Tan,
  \emph{\href{http://dx.doi.org/10.1016/j.aim.2016.07.008}{Canonical bases for
  {Fock} spaces and tensor products}}, Adv.\ Math. \textbf{302} (2016),
  159--189.

\bibitem[DJM98]{DJM}
R.~Dipper, G.~D. James, and A.~Mathas,
  \emph{\href{http://dx.doi.org/10.1007/PL00004665}{Cyclotomic {$q$}-{Schur}
  algebras}}, Math.\ Z. \textbf{229} (1998), no.~3, 385--416.

\bibitem[Don98]{Donkin}
S.~Donkin, \emph{\href{http://dx.doi.org/10.1017/CBO9780511600708}{The
  {$q$}-{Schur} algebra}}, London Mathematical Society Lecture Note Series,
  vol. 253, Cambridge University Press, Cambridge, 1998.

\bibitem[HM10]{hm10}
J.~Hu and A.~Mathas,
  \emph{\href{http://dx.doi.org/10.1016/j.aim.2010.03.002}{Graded cellular
  bases for the cyclotomic {Khovanov}--{Lauda}--{Rouquier} algebras of type\
  {$A$}}}, Adv.\ Math. \textbf{225} (2010), 598--642.

\bibitem[Kle97]{klesh1box}
A.~Kleshchev, \emph{\href{http://dx.doi.org/10.1112/S0024611597000427}{On
  decomposition numbers and branching coefficients for symmetric and special
  linear groups}}, Proc. London Math. Soc. (3) \textbf{75} (1997), no.~3,
  497--558.

\bibitem[KN10]{KN10}
A.~Kleshchev and D.~Nash,
  \emph{\href{http://dx.doi.org/10.1080/00927870903386536}{An interpretation of
  the {Lascoux--Leclerc--Thibon} algorithm and graded representation theory}},
  Comm.\ Algebra \textbf{38} (2010), no.~12, 4489--4500.

\bibitem[Los16]{losev}
I.~Losev, \emph{\href{http://dx.doi.org/10.1007/s00029-015-0209-7}{Proof of
  {Varagnolo--Vasserot} conjecture on cyclotomic categories {$\mathcal{O}$}}},
  Selecta Math. \textbf{22} (2016), no.~2, 631--668.

\bibitem[Rou08]{rouq08}
R.~Rouquier,
  \emph{\href{http://www.math.ucla.edu/~rouquier/papers/qschur.pdf}{$q$-{Schur}
  algebras and complex reflection groups}}, Mosc.\ Math.\ J. \textbf{8} (2008),
  119--158.

\bibitem[RSVV16]{RSVV}
R.~Rouquier, P.~Shan, M.~Varagnolo, and E.~Vasserot,
  \emph{\href{http://dx.doi.org/10.1007/s00222-015-0623-7}{Categorifications
  and cyclotomic rational double affine {Hecke} algebras}}, Invent.\ Math.
  \textbf{204} (2016), no.~3, 671--786.

\bibitem[TT13]{tanteo13}
K.~M. Tan and W.~H. Teo,
  \emph{\href{http://dx.doi.org/10.1090/S0002-9947-2013-05860-6}{Sign sequences
  and decomposition numbers}}, Trans. Amer. Math. Soc. \textbf{365} (2013),
  no.~12, 6385--6401.

\bibitem[Ugl00]{Uglov}
D.~Uglov, \emph{\href{http://dx.doi.org/10.1007/978-1-4612-1378-9_8}{Canonical
  bases of higher-level {$q$}-deformed {Fock} spaces and {Kazhdan}--{Lusztig}
  polynomials}}, Physical combinatorics ({K}yoto, 1999), Progr. Math., vol.
  191, Birkhäuser Boston, 2000, pp.~249--299.

\bibitem[Weba]{Webster2}
B.~Webster, \emph{Weighted {Khovanov}--{Lauda}--{Rouquier} algebras},
  \href{http://arxiv.org/abs/1209.2463v3}{arXiv:1209.2463v3}, 2012, preprint.

\bibitem[Webb]{Webster}
\bysame, \emph{Rouquier's conjecture and diagrammatic algebra},
  \href{http://arxiv.org/abs/1306.0074v3}{arXiv:1306.0074v3}, 2013, preprint.

\end{thebibliography}

\end{document}